\theoremstyle{plain}
\newtheorem{thrm}{Theorem}[section]
\newtheorem{lemma}[thrm]{Lemma}
\newtheorem{prop}[thrm]{Proposition}
\newtheorem{rmrk}[thrm]{Remark}
\newtheorem{dfn}[thrm]{Definition}
\numberwithin{equation}{section}
\begin{document}


\newcommand{\SL}{\mathcal L^{1,p}( D)}
\newcommand{\Lp}{L^p( Dega)}
\newcommand{\CO}{C^\infty_0( \Omega)}
\newcommand{\Rn}{\mathbb R^n}
\newcommand{\Rm}{\mathbb R^m}
\newcommand{\R}{\mathbb R}
\newcommand{\Om}{\Omega}
\newcommand{\Hn}{\mathbb H^n}
\newcommand{\N}{\mathbb N}
\newcommand{\aB}{\alpha B}
\newcommand{\eps}{\epsilon}
\newcommand{\BVX}{BV_X(\Omega)}
\newcommand{\p}{\partial}
\newcommand{\IO}{\int_\Omega}
\newcommand{\bG}{\mathbb{G}}
\newcommand{\bg}{\mathfrak g}
\newcommand{\Bux}{\mbox{Box}}
\newcommand{\al}{\alpha}
\newcommand{\til}{\tilde}
\newcommand{\nuX}{\boldsymbol{\nu}^X}
\newcommand{\bN}{\boldsymbol{N}}
\newcommand{\nh}{\nabla_H}
\newcommand{\deh}{\Delta_H}
\newcommand{\rh}{|\nabla_H \rho|^2}
\newcommand{\uh}{|\nabla_H u|^2}
\newcommand{\Gp}{G_{D,p}}
\newcommand{\n}{\boldsymbol \nu}
\newcommand{\ve}{\varepsilon}
\newcommand{\dsh}{|\nabla_H \rho| d\sigma_H}
\newcommand{\la}{\lambda}
\newcommand{\vf}{\varphi}
\newcommand{\rhh}{|\nabla_H \rho|}
\newcommand{\Ba}{\mathcal{B}_\alpha}
\newcommand{\Za}{Z_\alpha}
\newcommand{\ra}{\rho_\alpha}
\newcommand{\na}{\nabla_\alpha}
\newcommand{\vt}{\vartheta}
\newcommand{\us}{\R^{n+1}_+}

\newcommand{\BB}{B(r)}
\newcommand{\BBone}{B(1)}
\newcommand{\divv}{{\rm div}}


\title[Regularity of the singular set in the fractional obstacle problem]
{Structure and regularity of the singular set in the obstacle problem for the fractional Laplacian}

\author{Nicola Garofalo}
\address{Dipartimento di Ingegneria Civile, Edile e Ambientale (DICEA) \\ Universit\`a di Padova\\ 35131 Padova, ITALY}

\email[Nicola Garofalo]{rembrandt54@gmail.com}


\author{Xavier Ros-Oton}
\address{The University of Texas at Austin, Department of Mathematics, 2515 Speedway, Austin, TX 78751, USA} \email[Xavier Ros-Oton]{ros.oton@math.utexas.edu}

\thanks{NG was partially supported by a grant from the University of Padova ``Progetti d'Ateneo 2013". XR was partially supported by NSF grant DMS-1565186 and by MINECO grant MTM2014-52402-C3-1-P (Spain)}


\keywords{Obstacle problem; fractional Laplacian; free boundary; monotonicity formulas.}
\subjclass[2010]{35R35; 47G20}

\maketitle

\begin{abstract}
We study the singular part of the free boundary in the obstacle problem for the fractional Laplacian, \   $\min\bigl\{(-\Delta)^su,\,u-\varphi\bigr\}=0$ in $\R^n$, for general obstacles $\varphi$.
Our main result establishes the complete structure and regularity of the singular set.
To prove it, we construct new monotonicity formulas of Monneau-type that extend those in \cite{GP} to all $s\in(0,1)$.
\end{abstract}

\tableofcontents

\section{Introduction and main results}

The goal of this paper is to study the structure and regularity of the singular part of the free boundary in the obstacle problem for the fractional Laplacian.
Given a smooth function $\varphi :\R^n\to \R$,  this problem consists in finding a function $u$ defined in $\Rn$ such that
\begin{equation}\label{pb}
\left\{\begin{array}{rcl}
\displaystyle\min\bigl\{ u-\varphi,\,(-\Delta)^su\bigr\}&=&0\quad \textrm{in}\ \R^n,\vspace{1mm}\\
\displaystyle\lim_{|x|\to\infty}u(x)&=&0,
\end{array}\right.
\end{equation}
where for any $s\in(0,1)$ the symbol $(-\Delta)^s$ denotes the nonlocal  $s$-Laplacian defined by
\begin{equation}\label{fl}
\qquad\qquad (-\Delta)^s u(x)= \frac{\gamma_{n,s}}{2} \int_{\R^n} \frac{2u(x)-u(x+z) - u(x-z)}{|z|^{n+2s}} dz, 
\end{equation}
and $\gamma_{n,s}>0$ is given by
\[
\gamma_{n,s} = \frac{s 2^{2s} \Gamma(\frac n2 + s)}{\pi^{\frac n2} \Gamma(1-s)},
\]
see for instance \cite{L}. We note in passing that $\gamma_{n,s}$ is so chosen that $\widehat{(-\Delta)^s u}(\xi) = (2\pi |\xi|)^{2s} \hat u(\xi)$, for every $u\in \mathcal S(\Rn)$, where $\hat u(\xi) = \int_{\Rn} e^{-2i\pi \xi\cdot x} u(x) dx$ is the Fourier transform of $u$. 

The function $\vf$ in \eqref{pb} denotes the obstacle, the set 
\[
 \Lambda_\vf(u) = \{x\in \Rn\,:\, u(x) = \vf(x)\}
 \]
 is the so-called \emph{coincidence set}, and its topological boundary $\Gamma_\vf(u) = \partial \Lambda_\vf(u)$ is the so-called \emph{free boundary}. 
When $\varphi = 0$ we simply write $\Lambda(u)$ and $\Gamma(u)$.

An important motivation for studying the obstacle problem \eqref{pb} comes from Probability, where \eqref{pb} arises in optimal stopping problems for stochastic processes with jumps.
In particular, such type of models are used in Mathematical Finance, see \cite{CT}.
Problem \eqref{pb} also appears in other contexts, such as the study of the regularity of minimizers of interaction energies in kinetic equations; see \cite{CDM}.
In case $s=\frac12$, it appears as well in elasticity and in the study of semipermeable membranes, see \cite{DL}.
In such case it is also known as Signorini, or thin obstacle, problem, see \cite{F} and \cite{Fr}.

\addtocontents{toc}{\protect\setcounter{tocdepth}{1}} 

\subsection{Known results}

The regularity of solutions and free boundaries for \eqref{pb} was first studied in \cite{S} and \cite{CSS}.
The main results of \cite{CSS} establish that, when the obstacle $\varphi\in C^{2,1}$, then the solution $u$ possesses the optimal regularity $C^{1+s}(\R^n)$, and that at any free boundary point $x_0\in \Gamma_\varphi(u)$ one has the following dichotomy:
\begin{itemize}
\item[(a)] either \qquad\quad $0<c\,r^{1+s}\leq \sup_{B_r(x_0)}(u-\varphi)\leq C\,r^{1+s}$;\quad \vspace{2mm}
\item[(b)] or \qquad\qquad\qquad\qquad\, $0\leq\sup_{B_r(x_0)}(u-\varphi)\leq C\,r^2$.
\end{itemize}
Moreover, it was proved that the set of points satisfying (a) is an open subset of the free boundary, and it is locally a $C^{1,\alpha}$ graph.

In the existing literature, the free boundary points are usually subdivided into
three categories:
\begin{itemize}
\item[(i)] the set of \emph{regular points}, i.e., those satisfying (a);
\item[(ii)] the set $\Sigma_\varphi$ of \emph{singular points}, consisting of those free boundary points at which the coincidence set $\Lambda_\vf(u)$ has zero $n$-dimensional density, i.e.,
\[\lim_{r\downarrow 0}\frac{\bigl|\Lambda_\vf(u) \cap B_r(x_0)\bigr|}{|B_r(x_0)|}=0;\]
\item[(iii)] those free boundary points which are neither regular, nor singular.
\end{itemize}

For instance, the function $u(x) = (x_1^+)^{1+s}\in C^{1,s}(\R^n)$ is a solution of the obstacle problem corresponding to zero obstacle and satisfying (a) above at all points of its free boundary. As explained above, the set of regular points was studied in \cite{CSS}, where Caffarelli, Salsa, and Silvestre proved that this is an open subset of the free boundary and it is locally  a $C^{1,\alpha}$ hypersurface.
Recently, the set of regular free boundary points has been proved to be $C^\infty$ independently by \cite{KRS} and \cite{JN} (in fact, the method on \cite{KRS} shows that the regular free boundary is a real-analytic hypersurface whenever $\varphi$ is analytic); see also \cite{KPS,DS} for the case $s = \frac 12$.

On the other hand, in the case $s=\frac12$ the set $\Sigma_\varphi$ of {singular points} was studied  by the first named author and Petrosyan in \cite{GP}.
The main results of \cite{GP} establish that, for problem \eqref{pb} with $s=\frac12$, the blow-up at any singular point is a unique homogeneous polynomial of degree $2m$, and that the set of singular points is contained in a countable union of $C^1$ manifolds.

Furthermore, Barrios, Figalli and the second named author proved in \cite{BFR} that, when the obstacle $\varphi$ satisfies
\begin{equation}\label{concavity}
\Delta \varphi\leq0\qquad \textrm{in}\quad\, \{\varphi>0\}\subset\subset\R^n,
\end{equation}
then regular and singular points exhaust all possible free boundary points, and showed that under the assumption \eqref{concavity} singular points are either isolated or locally contained inside a $C^1$ submanifold. 
In particular, under the assumption \eqref{concavity} there are no free boundary points in the above category (iii).

Finally, in case of {zero obstacle}, very recently Focardi and Spadaro \cite{FSpadaro} established for the first time a regularity result for the set of points (iii).
By using methods from Geometric Measure Theory, they establish a regularity result for the free boundary up to a set of null $\mathcal H^{n-1}$ measure.

The aim of this paper is to study the complete structure and regularity of the set $\Sigma_\varphi$ of \emph{singular points} in the problem \eqref{pb} above and for general obstacles $\varphi$. 
As explained in detail below, we will prove uniqueness of blow-ups at \emph{all} singular free boundary points, and deduce that the singular set is contained in a countable union of $C^1$ manifolds.
Our results are new even in the case of zero obstacle, and extend the results of \cite{GP} to all $s\in(0,1)$.
We emphasize that the set $\Sigma_\varphi(u)$ is not necessarily a small part of the free boundary $\Gamma_\varphi(u)$, and that in fact the whole free boundary could coincide with $\Sigma_\varphi(u)$.

\subsection{Main results}

We first study the obstacle problem \eqref{pb} when the obstacle $\varphi$ is \emph{real-analytic}.
In that case, after an appropriate transformation (see Lemma \ref{analytic}) we can reduce our analysis to the case in which the obstacle is zero. For any free boundary point $x_0\in \Gamma(u)$ the blow-up of $u$ at $x_0$ is homogeneous of degree $\kappa\in[1+s,\infty)$.

We denote by $\Gamma_\kappa(u)$ the set of free boundary points at which the homogeneity of blow-up's is $\kappa$.
We denote $\Sigma(u)$ the set of \emph{singular} free boundary points, and we set $\Sigma_\kappa(u)=\Sigma(u)\cap\Gamma_\kappa(u)$.
Moreover, given an homogeneous polynomial $p_{2m}$ of degree $2m$ in $\R^n$, we will denote
\[d(p_{2m}):=\textrm{dim}\bigl\{\xi\in \R^n\mid \xi\cdot \nabla p_{2m}(x) = 0\quad\textrm{for every}\ x\in\R^n\bigr\}.\]
Then, we have the following result on the complete structure and regularity of the singular set $\Sigma(u)$.

\begin{thrm}\label{th-analytic}
Let $u$ be the solution of the obstacle problem \eqref{pb}, with $\varphi:\R^n\to\R$ satisfying
\begin{equation}\label{obstacle-analytic}
\varphi\quad \textrm{is analytic in}\quad \{\varphi>0\},\quad \textrm{and}\quad\varnothing\neq \{\varphi>0\}\subset\subset\R^n.
\end{equation}
Then,
\[\Sigma(u)=\bigcup_{m=1}^\infty \Sigma_{2m}(u).\]
Moreover, the blow-up of $u$ at any $x_0\in \Sigma_{2m}(u)$ is a {unique} homogeneous polynomial $p_{2m}^{x_0}$ of degree $2m$, and
\[\Sigma_{2m}(u)=\bigcup_{d=1}^{n-1}\Sigma_{2m}^d(u),\]
where
\begin{equation}\label{Sigma-2m-d}
\qquad\qquad\qquad\qquad\Sigma_{2m}^d(u):=\bigl\{x_0\in \Sigma_{2m}(u)\mid d(p_{2m}^{x_0})=d\bigr\},\qquad d=0,1,...,n-1.
\end{equation}
Furthermore, every set $\Sigma_{2m}^d(u)$ is contained in a countable union of $d$-dimensional $C^1$ manifolds.
\end{thrm}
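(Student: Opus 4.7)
The plan is to follow the blueprint of the Signorini analysis in \cite{GP}, replacing the harmonic extension used there by the Caffarelli--Silvestre extension $U(x,y)$ that turns $(-\Delta)^s u$ into a boundary problem for the weighted operator $L_a = \divv(y^a\nabla\,\cdot\,)$ with $a=1-2s$, in the upper half space $\us$; the central new ingredient is a family of Monneau-type monotonicity formulas valid in this weighted setting for every $s\in(0,1)$.

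First I would invoke Lemma \ref{analytic} to reduce to the case $\varphi\equiv 0$, so that $U$ solves a Signorini-type obstacle problem for $L_a$ around $(x_0,0)$ with $x_0\in\Gamma(u)\subset \R^n\times\{0\}$. The Almgren-type frequency developed in \cite{CSS} ensures that every blow-up at $x_0$ is homogeneous of some degree $\kappa\in[1+s,\infty)$ lying in a discrete set of admissible values. A singular free boundary point is precisely one at which, after blow-up, the coincidence set on $\{y=0\}$ has vanishing $n$-dimensional density; classifying the nontrivial global $\kappa$-homogeneous $L_a$-harmonic solutions that are even in $y$, nonnegative on $\{y=0\}$, and have coincidence set of zero density, singles out the polynomial profiles $p_\kappa$ of even integer degree $\kappa=2m$, $m\ge 1$. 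This already yields the decomposition $\Sigma(u)=\bigcup_{m\ge 1}\Sigma_{2m}(u)$.

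The heart of the argument is then to establish, for each $m\ge 1$ and each singular blow-up candidate $p_{2m}$, that the Monneau-type functional
\[
M_{x_0,p_{2m}}(r) := \frac{1}{r^{n+a+4m}}\int_{\partial B_r(x_0,0)} y^a\,(U-p_{2m})^2\, d\mathcal H^n
\]
is nondecreasing in $r$, up to a controllable error depending on how close the obstacle is to being harmonic. When $M'(r)$ is differentiated, the cross-term coupling $U-p_{2m}$ with the obstacle condition is shown to have the correct sign by exploiting the complementarity $U\,\partial_y^a U=0$ on $\{y=0\}$ together with $p_{2m}\ge 0$ and the structure of its weighted normal derivative. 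Because $M_{x_0,p_{2m}}$ is scale-invariant on degree-$2m$ homogeneous profiles and bounded below by $0$, the standard Monneau argument then yields the uniqueness of the blow-up $p_{2m}^{x_0}$ at every $x_0\in\Sigma_{2m}(u)$, together with a quantitative non-degeneracy estimate of the type
\[
\bigl\|U(\,\cdot+(x_0,0))-p_{2m}^{x_0}\bigr\|_{L^2(\partial B_r,\, y^a d\mathcal H^n)}=o\bigl(r^{2m+(n+a)/2}\bigr),
\]
uniform on compact subsets of $\Sigma_{2m}(u)$; this in particular gives the continuity of the map $x_0\mapsto p_{2m}^{x_0}$.

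The final step is the classical Whitney-extension/Federer stratification. For each $d\in\{0,\dots,n-1\}$ define $\Sigma_{2m}^d(u)$ as in \eqref{Sigma-2m-d}. The uniform modulus obtained above lets one verify Whitney's compatibility conditions for the map $x_0\mapsto(\text{coefficients of }p_{2m}^{x_0})$ on $\Sigma_{2m}^d(u)$, extending it to a $C^1$ map on $\R^n$; since $\Sigma_{2m}^d(u)$ is characterised by the simultaneous vanishing of $n-d$ independent directional derivatives of these coefficients, the implicit function theorem produces, on each compact subset where the Whitney modulus is uniform, a covering of $\Sigma_{2m}^d(u)$ by $d$-dimensional $C^1$ manifolds, and exhausting by such subsets yields the countable covering of the statement. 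The main obstacle, and the only genuinely new content beyond \cite{GP}, is the derivation of the Monneau formula when the weight $y^a$ is either degenerate ($a>0$) or singular ($a<0$): the boundary integrals in the differentiation of $M_{x_0,p_{2m}}$ must be handled with care, and the precise sign of the cross-term requires a nontrivial use of the fractional Signorini boundary condition together with the polynomial structure of $p_{2m}$. Once this monotonicity is correctly set up, the uniqueness, continuous dependence and Whitney-stratification steps proceed as in the case $s=\tfrac12$ treated in \cite{GP}.
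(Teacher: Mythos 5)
Your proposal is correct and follows essentially the same route as the paper: reduction to the zero-obstacle case via the analytic $L_a$-harmonic extension of $\varphi$ (Lemma \ref{analytic}), characterization of singular points by polynomial blow-ups of even degree, a Monneau-type monotonicity formula with the exponent $n+a+4m$ giving uniqueness and continuous dependence of the blow-up polynomial, and then Whitney extension plus the implicit function theorem for the stratification. The only cosmetic difference is that the paper derives the Weiss and Monneau formulas for $L_a$ by transporting them from the Baouendi--Grushin operator $\mathcal B_\alpha$ (for $0\le a<1$) rather than by the direct weighted integration by parts you sketch, and after the reduction the obstacle is exactly zero, so the Monneau functional is genuinely nondecreasing with no error term.
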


When the obstacle $\varphi$ is not analytic but only $C^{k,\gamma}$, then we establish a similar result on the structure and regularity of the singular set.
It reads as follows.

\begin{thrm}\label{th-main}
Let $u$ be the solution of the obstacle problem \eqref{pb}, with $\varphi:\R^n\to\R$ satisfying
\begin{equation}\label{obstacle}
\varphi\in C^{k,\gamma}(\R^n),\quad \textrm{with}\quad k\geq2,\quad \gamma\in(0,1),\quad \textrm{and}\quad\varnothing\neq \{\varphi>0\}\subset\subset\R^n.
\end{equation}
Then, for every free boundary point $x_0\in \Gamma(u)$ we have:
\begin{itemize}
\item[(a)] either any blow-up of $u$ at $x_0$ is homogeneous of degree $\kappa$ for some $\kappa<k+\gamma$
\item[(b)] or \ $\sup_{B_r(x_0)}(u-\varphi)=o(r^{k+\gamma-\varepsilon})$ for all $\varepsilon>0$.
\end{itemize}
Given $\kappa<k+\gamma$, let $\Sigma_\kappa(u)$ be the set of singular points at which any blow-up of $u$ is homogeneous of degree $\kappa$.
Then $\kappa=2m$ for some positive integer $m$.
Moreover, the blow-up of $u$ at any $x_0\in \Sigma_{2m}(u)$ is a {unique} homogeneous polynomial $p_{2m}^{x_0}$ of degree $2m$, and
\[\Sigma_{2m}(u)=\bigcup_{d=1}^{n-1}\Sigma_{2m}^d(u),\]
where $\Sigma_{2m}^d(u)$ is defined as in \eqref{Sigma-2m-d}.
Furthermore, every set $\Sigma_{2m}^d(u)$ is contained in a countable union of $d$-dimensional $C^1$ manifolds.
\end{thrm}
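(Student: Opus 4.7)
The plan is to combine the Caffarelli--Silvestre extension with three main tools: a truncated Almgren frequency formula, the new Monneau-type monotonicity formula advertised in the abstract, and a Whitney extension/implicit function argument in the spirit of \cite{GP}. First I extend $u$ to a function $\tilde u$ on $\us$ solving a thin obstacle problem for the weighted Laplacian $\divv(y^{1-2s}\nabla\,\cdot\,)$, with the obstacle $\vf$ now appearing only as a boundary condition on $\{y=0\}$. After fixing a free boundary point $x_0\in\Gamma(u)$ and subtracting a polynomial correction built from the Taylor polynomial of $\vf$ at $x_0$ (a finite-regularity analogue of Lemma \ref{analytic}), one reduces to studying a thin obstacle problem with zero obstacle plus a right-hand-side error of order $|x-x_0|^{k+\gamma}$.

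The dichotomy (a)/(b) follows from an $\ve$-truncated Almgren frequency
\[
N(r)=\frac{r\int_{B_r^+} y^{1-2s}|\nabla \tilde v|^2}{\int_{\partial B_r^+} y^{1-2s}\tilde v^2},
\]
where $\tilde v$ is the corrected extension. The $C^{k,\gamma}$ error spoils monotonicity only above the threshold $k+\gamma$, so either $N(0^+)=\kappa<k+\gamma$ --- producing a nontrivial $\kappa$-homogeneous blow-up --- or $\tilde v$ decays faster than $r^{k+\gamma-\ve}$ for every $\ve>0$, which is case (b). At a singular point, the blow-up is $y^{1-2s}$-harmonic in $\us$ with even reflection across $\{y=0\}$ vanishing on a positive-density subset of that hyperplane; a Liouville-type argument, combined with homogeneity, forces it to be an even homogeneous polynomial independent of $y$, whose degree must therefore be a positive even integer $2m$.

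The central step is the Monneau-type formula: for every $(2m)$-homogeneous polynomial $p$ that is a global solution, the functional
\[
M(r;p)=\frac{1}{r^{n+4m+1-2s}}\int_{\partial B_r^+} y^{1-2s}(\tilde v-p)^2
\]
must be shown to be almost monotone, with the non-monotone correction summable in $r$. Combined with the homogeneity from the Almgren step, this forces the blow-up at every $x_0\in\Sigma_{2m}(u)$ to be a \emph{unique} polynomial $p_{2m}^{x_0}$, and yields a quantitative modulus of continuity for the map $x_0\mapsto p_{2m}^{x_0}$ on $\Sigma_{2m}(u)$. A Whitney extension then produces a $C^{2m}$ function on $\Rn$ whose $(2m)$-jet at each $x_0\in\Sigma_{2m}^d(u)$ agrees with $p_{2m}^{x_0}$, and the implicit function theorem applied to the vanishing of the appropriate derivatives of $p_{2m}^{x_0}$ (as prescribed by \eqref{Sigma-2m-d}) covers $\Sigma_{2m}^d(u)$ by countably many $d$-dimensional $C^1$ manifolds.

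The main obstacle I expect is the construction of the Monneau formula for \emph{all} $s\in(0,1)$. The weighted energy identities that underpin \cite{GP} are special to $s=\tfrac12$, and one must carefully reconcile the Muckenhoupt weight $y^{1-2s}$ with the thin obstacle term in the computation of $\frac{d}{dr}M(r;p)$; simultaneously the $C^{k,\gamma}$ error from the Taylor reduction of $\vf$ must be absorbed into the non-monotone correction without overwhelming the principal monotone part, which is what constrains the exponent $k+\gamma$ appearing in the dichotomy.
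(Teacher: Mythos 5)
Your proposal follows essentially the same route as the paper: Caffarelli--Silvestre extension, subtraction of the $L_a$-harmonic extension of the Taylor polynomial of $\varphi$ at the free boundary point, a truncated generalized Almgren frequency giving the dichotomy (a)/(b), an almost-monotone Monneau functional (whose error term $r^{\gamma-1}$ is integrable) yielding uniqueness and continuity of the blow-up polynomials, and Whitney extension plus the implicit function theorem to cover each $\Sigma_{2m}^d(u)$ by countably many $d$-dimensional $C^1$ manifolds. The one inaccuracy is in your characterization of singular blow-ups: they are $L_a$-harmonic homogeneous polynomials even in $y$ but in general \emph{not} independent of $y$, and the evenness of the degree comes from the sign condition $p_{\kappa}(x,0)\ge 0$ (an odd-degree nonnegative homogeneous polynomial on $\{y=0\}$ vanishes identically, hence so does its unique even $L_a$-harmonic extension), not from the Liouville step alone.
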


As said above, Theorems \ref{th-analytic} and \ref{th-main} extend to all $s\in(0,1)$ the results of \cite{GP} for $s=\frac12$.
Moreover, even in the case $s=\frac12$, Theorem \ref{th-main} above slightly improves on the analogue result in \cite{GP}, in the sense that it requires less regularity on the obstacle $\varphi$.
Indeed, to study $\Sigma_{2m}(u)$, in \cite{GP} it was required that $\varphi$ was at least $C^{2m+1,1}$, while here we only need $\varphi\in C^{2m,\gamma}$ for some small $\gamma>0$.

\subsection{Plan of the paper}

The paper is organized as follows.
In Section \ref{S:sb} we study the relation between the fractional Laplacian and the  operators $\mathcal B_\alpha$ in  \eqref{bg0} below.
Using such relation, we find in Section \ref{S:op} new monotonicity formulas of Weiss and Monneau type for the fractional obstacle problem.
In Section \ref{sec4} we use these monotonicity formulas to establish the complete structure and regularity of the set of singular free boundary points in the fractional obstacle problem with zero obstacle.
In Section \ref{sec5} we prove Theorem \ref{th-analytic} by using the results for the zero obstacle case.
In Section \ref{sec6} we start the study of the obstacle problem \eqref{pb} with nonzero $C^{k,\gamma}$ obstacles $\varphi$, and establish a generalized Almgren frequency formula for the problem.
In Section \ref{sec7} we establish a new Monneau-type monotonicity formula for the problem. Finally, in Section \ref{sec8} we prove Theorem \ref{th-main}.

\section{The extension operator $L_a$ and the operator $\mathcal B_\alpha$}

In their extension paper \cite{CS} Caffarelli and Silvestre investigated the connection between the nonlocal operator $(-\Delta)^s$ and certain (local) degenerate elliptic operators. More precisely, they showed that $(-\Delta)^s$ can be recovered as a weighted Dirichlet-to-Neumann map of a certain degenerate elliptic operator $L_a$, where $a = 1- 2s$  (the so-called \emph{extension operator}), see \eqref{cs} and \eqref{cs2} below.    The aim of this section is to further explore such connection and also, at the same time, prepare the ground for some new developments that constitute the core of our work. In particular, we will further explore the link between the fractional Laplacian and another degenerate operator $\Ba$, where now $\alpha = \frac{a}{1-a}$, see \eqref{csb} and \eqref{csb2} below. We will show that, in the range $0<s<\frac 12$, various old and new results  about the nonlocal operator $(-\Delta)^s$, and its local extension counterpart $L_a$, can be directly deduced from corresponding ones for $\Ba$. We mention that, in such range of values of $s$, we have $\alpha>0$, and thus the differential operator $\Ba$ is the prototype of a class of equations introduced by S.~Baouendi in his Ph. D. Dissertation that continues to be much studied nowadays. It is our hope that by emphasizing the beautiful link between these two areas of PDE's, the results in this paper will encourage a new interaction between two seemingly disjoint communities: that of workers in subelliptic equations, and that of workers in nonlocal PDE's.

\subsection{The extension operator}\label{SS:ext}
Let $0<s<1$ and consider in $\Rn$ the fractional Laplacian of order $s$ defined by \eqref{fl} above. In \cite{CS} Caffarelli and Silvestre considered the extension problem in $\R^{n+1}_+$ with variables $(x,y)$, $x\in \Rn, y>0$,
\begin{equation}\label{cs}
\begin{cases}
L_a u(x,y) = \operatorname{div}(y^a \nabla u)(x,y) = 0\ \ \ \text{in}\ \ \R^{n+1}_+,\ \ \ \  a = 1-2s,
\\
u(x,0) = \vf(x),\ \ \ \ \ x\in \Rn,
\end{cases}
\end{equation}
and they proved that there exists a constant $C = C(n,s)>0$ such that
\begin{equation}\label{cs2}
C (-\Delta)^s \vf(x) = - \underset{y\to 0^+}{\lim} y^a D_yu(x,y).
\end{equation}
We mention that, in probability, the extension idea had already been proposed in the 1969 paper \cite{MO}.

Since $0<s<1$, we see from \eqref{cs} that $-1<a<1$. On the other hand, given a number $a\in (-1,1)$, we can consider, independently from $(-\Delta)^s$, the operator $L_a$ in \eqref{cs}. It is clear that we can write it in non-divergence form
\begin{equation}\label{la}
L_a = y^a \left(\Delta_x + D_{yy} + \frac ay D_y\right).
\end{equation}
In order to eliminate the drift term in \eqref{la} in \cite{CS} the authors introduced a change of variable $\Phi: \R^{n+1}_+ \to \R^{n+1}_+$ in the following form
\begin{equation}\label{phi}
(x,z) = \Phi(x,y) = (x,h(y)).
\end{equation}
Now, given a function $u(x,z)$ defined for $(x,z)\in \us$, we define a function $\tilde u(x,y)$, with $(x,y) \in \us$, by the formula
\begin{equation}\label{tu}
\tilde u(x,y) := u(\Phi(x,y)) = u(x,h(y)).
\end{equation}
A simple computation gives
\[
L_a \tilde u(x,y) = y^a\left[\Delta_x u(x,h(y)) + \left(h''(y) + \frac ay h'(y)\right) D_z u(x,h(y)) + h'(y)^2 D_{zz} u(x,h(y))\right].
\]
From this equation it is clear that if we choose the function $h(y)$ to satisfy the differential equation
\begin{equation}\label{hde}
h''(y) + \frac ay h'(y)  \equiv 0,
\end{equation}
then we obtain
\begin{equation}\label{lab}
L_a \tilde u(x,y) =  (h^{-1}(z))^a\left[\Delta_x u(x,z) + h'(h^{-1}(z))^2 D_{zz} u(x,z)\right].
\end{equation}
Integrating \eqref{hde} we obtain $h(y) = A y^{1-a}$ for some $A\in \R\setminus \{0\}$. At this point we choose $A$ in such a way that $h'(h^{-1}(z)) = z^{-\frac{a}{1-a}}$, which gives $A = (1-a)^{-(1-a)}$. Since with this choice we have $h'(y) = (1-a)^a y^{-a}$, we conclude from \eqref{lab} that
\begin{align}\label{lab2}
L_a \tilde u(x,y) & =  (1-a)^a z^{\frac{a}{1-a}} \left[\Delta_x u(x,z) + z^{-\frac{2a}{1-a}} D_{zz} u(x,z)\right].
\end{align}
From \eqref{lab2} we recognize that the mapping $u\to \tilde u$ defined by \eqref{tu}, where $h:(0,\infty)\to (0,\infty)$ is
the strictly increasing function given by
\begin{equation}\label{h}
h(y) = \left(\frac{y}{1-a}\right)^{1-a},\ \ \ \text{with inverse}\ \ \ \ h^{-1}(z) = (1-a) z^{\frac{1}{1-a}},
\end{equation}
converts in a one-to-one, onto fashion, solutions of the equation $L_a$ with respect to the variables $(x,y)\in \us$ into solutions with respect to the variables $(x,z)\in \us$ of the equation
\begin{equation}\label{bg0}
\Delta_x u(x,z) + z^{-\frac{2a}{1-a}} D_{zz} u(x,z) = 0,\ \ \ \ \ \ \ \ \ \ \ \  \ \  \alpha = \frac{a}{1-a}.
\end{equation}
This is the equation (1.8) in \cite{CS}.

We note for later purpose that the Jacobian determinant of the  diffeomorphism $\Phi$, with $h$ as in \eqref{h}, is given by
\begin{equation}\label{j}
|J_\Phi(x,y)| = (1-a)^a y^{-a}.
\end{equation}

\subsection{$(-\Delta)^s$ met Salah Baouendi}\label{S:sb}
Although this aspect went unnoticed in \cite{CS}, we can also write the equation \eqref{lab2} above in the following form
\begin{align}\label{csnb}
L_a \tilde u(x,y) & =  (1-a)^a z^{-\frac{a}{1-a}} \left[D_{zz} u(x,z) + z^{\frac{2a}{1-a}} \Delta_x u(x,z)\right].
\end{align}
When $0<s<1/2$ we have $a = 1-2s>0$, and therefore also
\begin{equation}\label{alpha}
\alpha = \frac{a}{1-a} = \frac{1-2s}{2s} = \frac{1}{2s} - 1>0.
\end{equation}
 In such situation the differential operator within square brackets in the right-hand side of \eqref{csnb} is a special case of the family of operators in $\Rn_x\times \Rm_z$ given by
 \begin{equation}\label{ba}
 \Ba=\Delta_z+ |z|^{2\alpha}\Delta_x,\ \ \ \ \ \ \alpha >0.
\end{equation}
Such operators are degenerate elliptic along the $n$-dimensional subspace $M = \Rn \times  \{0\}_{\R^m} $. Nowadays, operators such as \eqref{ba} are known as Baouendi-Grushin operators. They were first introduced by S. Baouendi in 1967 in his Doctoral Dissertation \cite{B} under Malgrange. M. Vishik learnt about his results during a visit to Malgrange and obtained permission to suggest to his student Grushin some questions connected with the hypoellipticity of \eqref{ba}, see \cite{Gr1}, \cite{Gr2}.  A decade later, in the early 80's, Franchi and Lanconelli introduced a class of operators which are modeled on \eqref{ba}, and they pioneered the study of the fine properties of their weak solutions, such as H\"older continuity and Harnack type inequalities, through a deep analysis of the natural control distance associated with the relevant operators, see \cite{FL1}-\cite{FL5}, and also the subsequent work \cite{FS}. At the same time, Fabes, Kenig and Serapioni, and Fabes, Jerison and Kenig published their pioneering papers \cite{FKS}, \cite{FJK} on a general class of degenerate-elliptic operators that include the operator $L_a$ in \eqref{cs} above (as we will see, in retrospect these two lines of research are not unrelated). Yet, another decade later, the first named author established in \cite{G} strong unique continuation for the Baouendi-Grushin operator \eqref{ba}. His main result was the discovery of some monotonicity formulas of Almgren type for the operator $\Ba$ in \eqref{ba} above. Such formulas, that have been recently further generalized in the work \cite{GR}, constitute motivational ground for the results in the present paper.

In the case $s = \frac 12$, we have $a = \alpha = 0$, and as it is well-known the extension operator $L_a$ in \eqref{cs} above is just the standard Laplacian in the variables $(x,y)\in \R^{n+1}_+$. In such case, the obstacle problem for $(-\Delta)^{1/2}$ corresponds to the classical Signorini problem.

In the range $\frac 12 < s < 1$ we have $-1<a<0$, and $-\frac 12 < \alpha <0$. In this case, the operator within square brackets in the right-hand side of \eqref{csnb} is no longer of Baouendi type, since it now presents itself in the form $\Ba u = D_{zz} u + z^{-2|\alpha|} \Delta_x u$. Nonetheless, even if now $-\frac 12 < \alpha <0$, the operator $\Ba$ continues to be scale invariant with respect to the anisotropic dilations \eqref{dilB} below. Furthermore, even in this negative range of $\alpha$, the transformation \eqref{tu} above converts, in a one-to-one, onto fashion, solutions of  $\Ba u = 0$ into solutions of $L_a\tilde u  = 0$.  Since for any $-1<a<1$ the function $\omega(x,y) = |y|^a$  is an $A_2$ weight of Muckenhoupt, we infer that for every $0<s<1$ the operator $L_a$ in \eqref{cs} belongs to the class of degenerate elliptic operators introduced in \cite{FKS}. Therefore, in particular, weak solutions in the class $W^{1,2}(|y|^a dx dy)$ of \eqref{cs} satisfy a Harnack inequality, they are locally H\"older continuous and in fact they are real analytic for $y>0$. It follows that from the regularity properties for $L_a$ established in \cite{CSS} we obtain corresponding regularity properties for $\Ba$ that are necessary to carry through all relevant computations in this paper.

\subsection{Back to the extension problem}\label{S:bext} If we now return to \eqref{cs} keeping \eqref{lab2} and \eqref{ba} in mind, we see that the Caffarelli-Silvestre extension problem can be alternatively formulated in the following way: given a function $\vf\in C^\infty_0(\Rn)$, consider the problem
\begin{equation}\label{csb}
\begin{cases}
\Ba u(x,z) = D_{zz} u + z^{2\alpha} \Delta_x u = 0\ \ \ \text{in}\ \ \R^{n+1}_+,\ \ \ \  \alpha = \frac{1}{2s} -1,
\\
u(x,0) = \vf(x),\ \ \ \ \ x\in \Rn.
\end{cases}
\end{equation}
Then, one has with $C >0$ as in \eqref{cs2}
\begin{equation}\label{csb2}
C (-\Delta)^s \vf(x) = - (2s)^{1-2s} \underset{z\to 0^+}{\lim} D_z u(x,z).
\end{equation}
The proof of \eqref{csb2} follows immediately by \eqref{cs2} above, the chain rule, and by the fact that $y^a h'(y) = (1-a)^a = (2s)^{1-2s}$.

The equations \eqref{csb} and \eqref{csb2} prove the following remarkable fact that, for the operator in the right-hand side of \eqref{lab2}, was already noted in \cite{CS}.

\begin{prop}\label{P:flp}
Given any $s\in (0,1)$, the fractional Laplacian $(-\Delta)^s$ in $\Rn$ can  be interpreted as the Dirichlet-to-Neumann map of the operator $\Ba$ in $\us$, where $\alpha = \frac{1}{2s} - 1$.
\end{prop}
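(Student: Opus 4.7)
The plan is to reduce Proposition \ref{P:flp} to the original Caffarelli-Silvestre identity \eqref{cs2} by pulling back through the diffeomorphism $\Phi$ introduced in subsection \ref{SS:ext}. Given $\vf\in C^\infty_0(\Rn)$, I would first solve the Baouendi-type extension problem \eqref{csb} to obtain $u(x,z)$, and then set $\tilde u(x,y) := u(x,h(y))$ as in \eqref{tu}, with $h$ given by \eqref{h}. The chain-rule computation that produced \eqref{lab2} shows that $\tilde u$ solves $L_a \tilde u = 0$ in $\us$; since $h(0) = 0$, we also have $\tilde u(x,0) = u(x,0) = \vf(x)$, so $\tilde u$ is the Caffarelli-Silvestre extension of $\vf$.

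Next I would apply \eqref{cs2} to $\tilde u$ and rewrite the weighted normal derivative via the chain rule. Since $D_y \tilde u(x,y) = h'(y)\, D_z u(x,h(y))$, we have
\[
y^a D_y \tilde u(x,y) = \bigl(y^a h'(y)\bigr)\, D_z u(x,h(y)).
\]
The key algebraic observation, already noted in subsection \ref{SS:ext}, is that $h'(y) = (1-a)^a y^{-a}$, so the prefactor $y^a h'(y)$ collapses to the constant $(1-a)^a = (2s)^{1-2s}$. Because $h$ is a strictly increasing bijection of $[0,\infty)$ sending $0$ to $0$, letting $y\to 0^+$ is equivalent to letting $z = h(y)\to 0^+$. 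Substituting into \eqref{cs2} then yields
\[
C(-\Delta)^s \vf(x) = -(2s)^{1-2s}\lim_{z\to 0^+} D_z u(x,z),
\]
which is exactly \eqref{csb2}, and says that $(-\Delta)^s$ coincides, up to a normalizing constant, with the Dirichlet-to-Neumann map of $\Ba$.

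The only mildly technical point, and the only place where I would have to be careful, is ensuring that $\lim_{z\to 0^+} D_z u(x,z)$ exists as a pointwise limit: the operator $\Ba$ degenerates on $\{z=0\}$ for $\alpha\neq 0$, so this is not automatic. I would handle it by transporting the limit back through $\Phi$: by the $A_2$-weight theory of Fabes-Kenig-Serapioni and the regularity of $L_a$-solutions recalled in subsection \ref{S:sb}, the weighted co-normal $y^a D_y \tilde u(x,y)$ has a well-defined trace as $y\to 0^+$, and via the identity $y^a D_y \tilde u(x,y) = (2s)^{1-2s} D_z u(x,h(y))$ this immediately gives the existence of $\lim_{z\to 0^+} D_z u(x,z)$. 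With that in hand, the whole argument reduces to the two-line computation above.
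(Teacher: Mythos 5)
Your proposal is correct and follows essentially the same route as the paper: the text derives \eqref{csb2} from \eqref{cs2} via the change of variables \eqref{tu}, the chain rule, and the identity $y^a h'(y) = (1-a)^a = (2s)^{1-2s}$, exactly as you do. Your extra care about the existence of the trace $\lim_{z\to 0^+} D_z u(x,z)$, handled by transporting the weighted co-normal limit through $\Phi$, is a reasonable elaboration of a point the paper leaves implicit.
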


\subsection{Some properties of the operator $\Ba$}\label{S:sp}

 Let $\alpha >0$. Returning to the  operator $\Ba$ in \eqref{ba}, we next recall some facts from the cited 1993 work \cite{G}. If we equip $\Rn_x\times \R_z^{m}$ with the following non-isotropic dilations
\begin{align}\label{dilB}
 \delta_\lambda(x,z)&=(\lambda^{\alpha+1}x, \lambda z),\ \ \ \ \ \ \lambda>0,
\end{align}
then we say that a function $u$ is $\delta_\la$-homogeneous (or simply, homogeneous) of degree $\kappa$ if
\[
u(\delta_\la(x,z)) = \la^\kappa u(x,z),\ \ \ \ \ \la>0.
\]
It is straightforward to verify that the partial differential operator $\Ba$ is $\delta_\la$-homogeneous of degree two, i.e.,
\begin{equation*}
 \Ba(\delta_\lambda \circ u)=\lambda^2\delta_\lambda\circ (\Ba u).
\end{equation*}

The infinitesimal generator of the dilations \eqref{dilB} is given by
\begin{equation}\label{Zagen}
 Z_\alpha= (\alpha+1)\sum_{i=1}^n x_i \partial_{x_i} + \sum_{j=1}^m z_j\partial_{z_j}.
\end{equation}
It is easy to verify that $u$ is homogeneous of degree $\kappa$ if and only if
\begin{equation}\label{Zak}
\Za u = \kappa u.
\end{equation}

We note that Lebesgue measure in $\Rn_x\times \Rm_z$ changes according to the equation
\begin{align*}
 d(\delta_\lambda(x,z))&=\lambda^{(\alpha+1)n+ m} dx dz,
\end{align*}
which motivates the definition of the \emph{homogeneous dimension} for the number
\begin{align}\label{Qa}
 Q=Q_\alpha=(\alpha+1)n + m.
\end{align}

In the study of the operators \eqref{ba} the following pseudo-gauge introduced in \cite{G} plays an important role
\begin{align}\label{ra}
 \rho_\alpha(x,z) & = \left((\alpha+1)^2 |x|^2 + |z|^{2(\alpha+1)}\right)^{\frac1{2(\alpha+1)}}.
\end{align}
We clearly have
\begin{equation}\label{gaugehom}
\rho_\alpha(\delta_\la(x,z)) = \lambda \rho_\alpha(x,z),
\end{equation}
i.e., the pseudo-gauge is homogeneous of degree one. This gives in particular,
\[
Z_\alpha \rho_\alpha = \rho_\alpha.
\]
The pseudo-ball and sphere centered at the origin with radius $r>0$ are respectively defined as
\begin{align}\label{BSr}
 B_{\rho_\alpha}(r)&=\{(x,z)\in \Rn_x\times \Rm_z \mid \rho_\alpha(x,z)<r\},\ \ \ \ \ S_{\rho_\alpha}(r) = \p B_{\rho_\alpha}(r).
\end{align}
In \cite{G} it was proved that, with $Q$ as in \eqref{Qa}, and $C_\alpha>0$ given by
\[
C_\alpha^{-1} = (Q + 2\alpha)(Q-2) \int_{\Rn_x\times \Rm_z} \frac{|z|^{\alpha} dx dz}{\left[((\alpha +1)^2 |x|^2 + |z|^{\alpha+1} + 1)\right]^{1 +\frac{Q+2\alpha}{2(\alpha+1)}}},
\]
 the function
\begin{equation}\label{Ga}
 \Gamma(x,z)=\frac{C_{\alpha}}{\ra(x,z)^{Q-2}}
\end{equation}
is a fundamental solution for $-\Ba$ with singularity at $(0,0)$. Since the operator is invariant with respect to translations along $M = \Rn \times  \{0\}_{\R^m} $, from \eqref{Ga} we immediately obtain the fundamental solution for $\Ba$ with singularity at any point of the subspace $M$.

\subsection{Further interplay between the operators $\Ba$ and $L_a$}\label{S:fi}

Henceforth in this paper, we focus on the situation in which $m=1$, so that $\Ba$ is given in $\Rn_x\times \R_z$ by
\begin{equation}\label{bg}
\Ba u = D_{zz} u + |z|^{2\alpha} \Delta_x u = 0,
\end{equation}
see \eqref{csnb} above. In this case, the infinitesimal generator \eqref{Zagen} of the dilations \eqref{dilB} becomes
\begin{equation}\label{Za}
 Z_\alpha= (\alpha+1)\sum_{i=1}^n x_i \partial_{x_i} +  z\partial_{z}.
\end{equation}

Functions that satisfy $\Ba u=0$ may have a certain degree of singularity along the subspace $M = \Rn_x\times\{0\}_z$. Given an open set $\Om \subset \R^{n}_x\times \R_z$, it is thus necessary to introduce the following more intrinsic classes of ``smooth  functions''
\begin{align*}
 \Gamma^1_\alpha(\Omega)=\{f\in C(\Omega)\mid f,X_jf\in C(\Omega)\},
\end{align*}
where $X_j f$ indicates the weak derivative along the
not necessarily smooth vector fields
\begin{align}\label{XBa}
 X_j=\begin{cases}
 |z|^\alpha \partial_{x_{j}},\ \ \ j = 1, 2,\ldots,n,
 \\
\p_z,\ \ \ \ \ \ \ \ \ j=n+1.
     \end{cases}
\end{align}
  From the expression \eqref{XBa} of the $X_j$'s it is clear that outside the subspace $M$ we have
\begin{equation}\label{div}
\operatorname{div} X_j = 0,\ \ \ \ \ \ \ j=1,...,n+1.
\end{equation}
We also set
\[
\Gamma^2_\alpha(\Omega)=\{f\in C(\Omega)\mid f,X_jf\in \Gamma^1_\alpha(\Omega)\}.
\]
Thus, classical solutions to $\Ba u=0$ in $\Omega$ are taken to be of class $\Gamma^2_\alpha(\Omega)$. Notice that membership in such class implies, in particular, that $D_{zz} u$ exists and it is continuous in $\Om$.

\begin{lemma}\label{L:comm}
For every $j=1,...,n+1$, we have for the commutator of $X_j$ and $Z_\alpha$
\[
[X_j,Z_\alpha] = X_j,
\]
outside $M$.
\end{lemma}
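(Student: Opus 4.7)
The proof is a direct computation on smooth test functions $f$ defined away from $M=\R^n_x\times\{0\}_z$, where all expressions below make classical sense. I would split into two cases according to whether $j\le n$ or $j=n+1$, because the vector fields $X_j$ have quite different form in these cases.

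For $j\in\{1,\dots,n\}$, I would apply $X_j Z_\alpha f - Z_\alpha X_j f$ to a test function and track the two boundary-type terms that arise. On the one hand, $X_j Z_\alpha f = |z|^\alpha \partial_{x_j}\bigl[(\alpha+1)\sum_i x_i\partial_{x_i}f + z\partial_z f\bigr]$ produces the extra term $(\alpha+1)|z|^\alpha \partial_{x_j} f$ (coming from $\partial_{x_j}$ hitting $x_j$), all other pieces being second-order derivatives of $f$ weighted by $|z|^\alpha$. On the other hand, $Z_\alpha X_j f = Z_\alpha(|z|^\alpha \partial_{x_j}f)$ produces the extra term $z\cdot \alpha|z|^{\alpha-1}\operatorname{sgn}(z)\cdot \partial_{x_j}f = \alpha |z|^\alpha \partial_{x_j}f$ (using $z\operatorname{sgn}(z)=|z|$, valid off $M$). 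All second-order terms match, so subtracting yields $[X_j,Z_\alpha]f = (\alpha+1-\alpha)|z|^\alpha\partial_{x_j}f = X_j f$.

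For $j=n+1$, i.e.\ $X_{n+1}=\partial_z$, the computation is even cleaner: $\partial_z$ commutes with each $x_i\partial_{x_i}$, so the only nontrivial commutator to evaluate is $[\partial_z, z\partial_z]=\partial_z = X_{n+1}$, and the claim follows.

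The argument uses nothing beyond elementary calculus; there is no real obstacle, only the mild technical point that $|z|^\alpha$ is not differentiable at $z=0$ for general $\alpha>0$, which is exactly why the identity is stated outside $M$. One could alternatively appeal to the homogeneity identity \eqref{Zak}: since $Z_\alpha$ is the generator of the dilations $\delta_\lambda$ under which $X_j$ scales as a vector field of weight $1$ (i.e.\ $(\delta_\lambda)_*X_j = \lambda^{-1} X_j$ off $M$), the Lie-derivative identity $\mathcal L_{Z_\alpha} X_j = X_j$ follows formally; but the direct computation above is the most transparent route and is what I would include in the paper.
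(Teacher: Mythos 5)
Your computation is correct, and since the paper simply states that the proof is ``left to the reader,'' your direct verification is exactly the intended argument: the only nontrivial term comes from $z\partial_z$ acting on $|z|^\alpha$, producing $\alpha|z|^\alpha\partial_{x_j}f$, which combined with the $(\alpha+1)|z|^\alpha\partial_{x_j}f$ from $\partial_{x_j}$ hitting $x_j$ gives the coefficient $1$, while the case $j=n+1$ reduces to $[\partial_z,z\partial_z]=\partial_z$. Nothing further is needed.
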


\begin{proof}
It is left to the reader.

\end{proof}

In the transformation \eqref{tu} above, homogeneities transform according to the following proposition.

\begin{prop}\label{P:hom}
A function in the variables $(x,z)\in \R^{n+1}$ is homogeneous of degree $\kappa$ with respect to the non-isotropic dilations \eqref{dilB} above if and only if the function $\tilde u$ defined by \eqref{tu} is homogeneous of degree
\begin{equation}\label{tk}
\tilde \kappa \overset{def}{=} (1-a)\kappa
\end{equation} with respect to the standard Euclidean dilations $\tilde \delta_\mu(x,y) = (\mu x,\mu y)$. Accordingly, if we indicate with $\tilde Z \tilde u(x,y) = \langle x,\nabla_x \tilde u(x,y)\rangle + y \p_y \tilde u(x,y)$ the infinitesimal generator of the Euclidean dilations $\tilde \delta_\mu$,  then one has the formula
\begin{equation}\label{gd}
(\alpha + 1) \tilde Z \tilde u(x,y) = Z_\alpha u(x,z),
\end{equation}
where $z = h(y)$. We can rewrite \eqref{gd} as $(\alpha + 1) \tilde Z \tilde u = \widetilde{Z_\alpha u}$.
\end{prop}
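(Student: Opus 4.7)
The heart of the matter is that the diffeomorphism $\Phi(x,y)=(x,h(y))$ intertwines the two dilation groups. My plan is to first verify this intertwining at the level of the group action and then differentiate to obtain the identity \eqref{gd}.

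\textbf{Step 1 (intertwining of dilations).} Since $\alpha=a/(1-a)$, one has the key arithmetic identity $(1-a)(\alpha+1)=1$. Using $h(y)=(y/(1-a))^{1-a}$, a direct computation gives $h(\mu y)=\mu^{1-a}h(y)$ for every $\mu>0$. Therefore, if I set $\lambda=\mu^{1-a}$, so that $\lambda^{\alpha+1}=\mu^{(1-a)(\alpha+1)}=\mu$, I obtain
\[
\Phi\bigl(\tilde\delta_\mu(x,y)\bigr)=(\mu x,\,h(\mu y))=(\lambda^{\alpha+1}x,\,\lambda h(y))=\delta_\lambda\bigl(\Phi(x,y)\bigr).
\]
Thus $\Phi\circ\tilde\delta_\mu=\delta_{\mu^{1-a}}\circ\Phi$.

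\textbf{Step 2 (transfer of homogeneity).} From Step 1, $\tilde u(\tilde\delta_\mu(x,y))=u(\delta_{\mu^{1-a}}\Phi(x,y))$. Assuming $u$ is $\delta_\lambda$-homogeneous of degree $\kappa$, the right-hand side equals $(\mu^{1-a})^\kappa u(\Phi(x,y))=\mu^{(1-a)\kappa}\tilde u(x,y)$, so $\tilde u$ is $\tilde\delta_\mu$-homogeneous of degree $\tilde\kappa=(1-a)\kappa$. The converse follows by running the same computation backwards, using that $\Phi$ is a diffeomorphism.

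\textbf{Step 3 (infinitesimal identity).} By the chain rule applied to $\tilde u(x,y)=u(x,h(y))$,
\[
\tilde Z\tilde u(x,y)=\langle x,\nabla_x u(x,z)\rangle+y\,h'(y)\,\partial_z u(x,z),\qquad z=h(y).
\]
A direct differentiation gives $h'(y)=(y/(1-a))^{-a}$, hence
\[
(\alpha+1)\,y\,h'(y)=\frac{1}{1-a}\cdot y\Bigl(\frac{y}{1-a}\Bigr)^{-a}=\Bigl(\frac{y}{1-a}\Bigr)^{1-a}=h(y)=z.
\]
Multiplying the chain-rule expression by $\alpha+1$ and using this identity gives
\[
(\alpha+1)\tilde Z\tilde u(x,y)=(\alpha+1)\langle x,\nabla_x u(x,z)\rangle+z\,\partial_z u(x,z)=Z_\alpha u(x,z),
\]
which is \eqref{gd}. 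The infinitesimal formula could also be obtained by differentiating the intertwining relation of Step 1 in $\mu$ at $\mu=1$, which provides a conceptual cross-check.

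There is no real obstacle here; the only subtle point is keeping track of the arithmetic $(1-a)(\alpha+1)=1$ so that the anisotropic scaling $\delta_\lambda$ matches the isotropic scaling $\tilde\delta_\mu$ after the nonlinear change of variable $z=h(y)$. Once that identity is in hand, everything follows from elementary chain-rule manipulations.
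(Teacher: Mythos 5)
Your proof is correct and follows essentially the same route as the paper: the identity $h(\mu y)=\mu^{1-a}h(y)$ (equivalently, the intertwining $\Phi\circ\tilde\delta_\mu=\delta_{\mu^{1-a}}\circ\Phi$) is exactly the computation the paper performs to transfer homogeneity, and your Step 3 supplies the chain-rule verification of \eqref{gd} that the paper leaves to the reader. Your explicit packaging of the intertwining relation is a clean way to organize it, but it is the same argument.
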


\begin{proof}
Let $(x,z) = (x,h(y))$, with $h$ given by \eqref{h} above. We have from \eqref{tu}
\begin{align*}
\tilde u(\lambda^\beta x, \lambda^\gamma y) & = u(\lambda^\beta x,\lambda^{(1-a)\gamma} h(y)) = u(\lambda^{\alpha+1}x,\lambda h(y)),
\end{align*}
provided we choose $\beta = \alpha +1$ and $\gamma = \frac{1}{1-a}$. If now $u$ is homogeneous of degree $\kappa$ with respect to the dilations \eqref{dilB}, we obtain from the latter equation
\[
\tilde u(\lambda^\beta x, \lambda^\gamma y) = \lambda^\kappa u(x,h(y)) = \lambda^\kappa \tilde u(x,y).
\]
Keeping in mind that $\alpha = \frac{a}{a-1}$, we conclude that $\beta = \alpha + 1 = \frac{1}{1-a} = \gamma$. Therefore, letting $\mu = \lambda^{\frac{1}{1-a}}$, we find
\[
\tilde u(\mu x,\mu y) = \mu^{(1-a)\kappa} \tilde u(x,y) = \mu^{\tilde \kappa}\ \tilde u(x,y),
\]
which gives the sought for conclusion. The proof of \eqref{gd} now follows from \eqref{tu}, \eqref{h} and an application of the chain rule.

\end{proof}

Since it will be relevant subsequently, we write explicitly the homogeneous dimension associated with the operator $\Ba$ in \eqref{bg}. In such case we have $m = 1$, $\alpha + 1 = \frac{1}{1-a}$, and so
\begin{equation}\label{hd}
Q = 1 + \frac{n}{1-a}.
\end{equation}
We have the following simple yet important fact whose verification is left to the reader.

\begin{lemma}\label{L:rhod}
Let $m=1$ and for any $a\in (-1,1)$ let $\alpha = \frac{a}{1-a}$. Then, for any $(x,y)\in \us$ we have
\begin{equation}\label{rd}
\rho_\alpha(x,h(|y|)) = h(d_e(x,y)),
\end{equation}
where the function $h$ is given by \eqref{h} above, and
\[d_e(x,y) = (|x|^2 + y^2)^{1/2}\]
indicates the standard Euclidean distance in $\R^{n+1}$. The equation \eqref{rd} implies in particular that
\begin{equation}\label{balls}
B_{\rho_\alpha}(h(r)) = \Phi(B_e(r)),\ \ \ \ \ \ \ \ r>0,
\end{equation}
where $\Phi$ is the diffeomorphism given by \eqref{phi} above, and
\[B_e(r) = \{(x,y)\in \R^{n+1}\mid d_e(x,y)<r\}.\]
is the Euclidean ball in $\R^{n+1}$ of radius $r$ and centered at the origin.
\end{lemma}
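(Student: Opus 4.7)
The plan is to prove Lemma \ref{L:rhod} by direct computation, exploiting the key algebraic identity $(1-a)(\alpha+1)=1$, which comes from the relation $\alpha = \frac{a}{1-a}$ giving $\alpha + 1 = \frac{1}{1-a}$.

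First, I would substitute $z = h(|y|)$ into the defining formula
\[
\rho_\alpha(x,z) = \left((\alpha+1)^2 |x|^2 + |z|^{2(\alpha+1)}\right)^{\frac{1}{2(\alpha+1)}}.
\]
Using $h(|y|) = \left(\frac{|y|}{1-a}\right)^{1-a}$, I compute
\[
|h(|y|)|^{2(\alpha+1)} = \left(\frac{|y|}{1-a}\right)^{2(1-a)(\alpha+1)} = \frac{y^2}{(1-a)^2},
\]
where the exponent collapses because $(1-a)(\alpha+1)=1$. At the same time, $(\alpha+1)^2|x|^2 = \frac{|x|^2}{(1-a)^2}$. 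Adding these gives
\[
(\alpha+1)^2|x|^2 + |h(|y|)|^{2(\alpha+1)} = \frac{|x|^2 + y^2}{(1-a)^2} = \frac{d_e(x,y)^2}{(1-a)^2}.
\]

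Raising to the power $\frac{1}{2(\alpha+1)} = \frac{1-a}{2}$ yields
\[
\rho_\alpha(x,h(|y|)) = \left(\frac{d_e(x,y)}{1-a}\right)^{1-a} = h(d_e(x,y)),
\]
which is \eqref{rd}. For \eqref{balls}, since $h:(0,\infty)\to(0,\infty)$ is strictly increasing by \eqref{h}, I use the chain of equivalences $(x,y)\in B_e(r) \Leftrightarrow d_e(x,y)<r \Leftrightarrow h(d_e(x,y))<h(r) \Leftrightarrow \rho_\alpha(\Phi(x,y))<h(r) \Leftrightarrow \Phi(x,y)\in B_{\rho_\alpha}(h(r))$. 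Since $\Phi$ is a diffeomorphism of $\us$ onto itself, this gives the set equality.

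There is no real obstacle here — the statement is essentially a compatibility check between the Baouendi pseudo-gauge and the extension change of variables, and the whole content is the algebraic identity $(1-a)(\alpha+1)=1$. The only minor care needed is noting that $h$ is strictly increasing for the ball equivalence (which is already asserted in \eqref{h}), and tracking the absolute value on $|y|$, which is harmless since only $y^2$ enters $d_e$.
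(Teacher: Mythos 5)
Your computation is correct and is precisely the verification the paper omits (the lemma's proof is "left to the reader"); the key identity $(1-a)(\alpha+1)=1$ is exactly what makes the pseudo-gauge and the Euclidean distance compatible under $h$, and your monotonicity argument for the ball equality is the intended one.
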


In view of \eqref{lab2}, \eqref{bg} it is clear that if we consider the function in $\us$ given by
\[
\tilde \Gamma(x,y) = \Gamma(x,h(y)),
\]
then $\tilde \Gamma$ is a solution of $L_a u = 0$ in $\us$. Notice that from \eqref{rd}, \eqref{Ga} we have
\[
\tilde \Gamma(x,y) = \frac{C_{\alpha}}{\ra(x,h(y))^{Q-2}} = \frac{C_{\alpha}}{h(d_e(x,y))^{Q-2}} = \frac{(1-a)^{(1-a)(Q-2)}C_{\alpha}}{d_e(x,y)^{(1-a)(Q-2)}} = \frac{\tilde C_a}{d_e(x,y)^{n+a-1}},
\]
where in the last equality we have used the above expression \eqref{hd} of the homogeneous dimension associated with the dilations \eqref{dilB} in $\R^{n+1}$. Now, the function $\tilde \Gamma(x,y)$ is precisely the fundamental solution of the Laplacian
\[
\Delta = \Delta_x + D_{yy} + \frac{a}{y} D_y
\]
in the fractional dimension
\begin{equation}\label{tQ}
\tilde Q = n+1+a,
\end{equation}
 found by Caffarelli and Silvestre in formula (2.1) in \cite{CS}. We note explicitly the following connection between $Q$ in \eqref{hd} and $\tilde Q$
 \begin{equation}\label{QtQ}
 (1-a)Q = \tilde Q - 2a.
 \end{equation}

We close this section with a simple proposition that unravels the connection between certain integrals on the pseudo-balls and spheres  $B_{\rho_\alpha}(r)$ and $S_{\rho_\alpha}(r)$ in the space of the variables $(x,z)$, and corresponding integrals on the Euclidean balls and spheres in the variables $(x,y)$.

\begin{prop}\label{P:integrals}
Let $f$ be a continuous function in the space $\R^{n+1}$ with the variables $(x,z)$, even in $z$, and let $\tilde f(x,y) = f(\Phi(x,y)) = f(x,h(|y|))$. Then, we have for every $r>0$
\begin{align}\label{si}
\int_{B_{\rho_\alpha}(h(r))} f(x,z) dx dz =   (1-a)^a \int_{B_e(r)} \tilde f(x,y) |y|^{-a} dx dy.
\end{align}
We also have
\begin{equation}\label{si2}
h'(r) \int_{S_{\rho_\alpha}(h(r))} \frac{f(x,z)}{|\nabla \rho_\alpha(x,z)|} d H_n(x,z) = (1-a)^a\int_{S_e(r)} \tilde f(x,y) |y|^{-a} dH_n(x,y).
\end{equation}
\end{prop}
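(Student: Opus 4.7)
The plan is to obtain \eqref{si} as a straightforward application of the change of variables $(x,z) = \Phi(x,y)$ with the Jacobian computation \eqref{j}, and then to deduce \eqref{si2} from \eqref{si} by the coarea formula. The evenness hypothesis on $f$ in $z$ is what lets us work on $\R^{n+1}_+$ (where $\Phi$ is actually defined) and then double.

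For \eqref{si}, first I would observe that since $f(x,z)$ is even in $z$ and $h(|y|)$ is even in $y$, the integrand $\tilde f(x,y)|y|^{-a}$ is even in $y$, so it suffices to prove
\[
\int_{B_{\rho_\alpha}(h(r))\cap\{z>0\}} f(x,z)\,dx\,dz = (1-a)^a \int_{B_e(r)\cap\{y>0\}} \tilde f(x,y)\,y^{-a}\,dx\,dy,
\]
and then multiply both identities by $2$. On the upper half space, $\Phi$ is a $C^\infty$ diffeomorphism with $|J_\Phi(x,y)| = (1-a)^a y^{-a}$ by \eqref{j}, and by \eqref{balls} of Lemma \ref{L:rhod} it carries $B_e(r)\cap\{y>0\}$ bijectively onto $B_{\rho_\alpha}(h(r))\cap\{z>0\}$. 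Performing the change of variable $(x,z)=\Phi(x,y)$ yields the identity directly.

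For \eqref{si2}, the idea is to recover it as the derivative of \eqref{si} in $r$. Applying the coarea formula to the left-hand side of \eqref{si} with the Lipschitz function $\rho_\alpha$ gives
\[
\int_{B_{\rho_\alpha}(h(r))} f(x,z)\,dx\,dz = \int_0^{h(r)} \left(\int_{S_{\rho_\alpha}(t)} \frac{f(x,z)}{|\nabla\rho_\alpha(x,z)|}\,dH_n\right) dt,
\]
and a similar coarea decomposition applies to the right-hand side using the Euclidean distance. Changing variable $t = h(s)$ in the first expression and matching integrands (both sides are now $\int_0^r (\cdots)\,ds$ after absorbing $h'(s)\,ds = dt$), I would then differentiate both sides with respect to $r$. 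The identity \eqref{si2} falls out immediately once one uses again the evenness of $f$ in $z$ to pass from $S_e(r)\cap\{y>0\}$ to all of $S_e(r)$.

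I do not anticipate any real obstacle here: the only mild care needed is tracking the evenness hypothesis to double half-space integrals and to ensure that the diffeomorphism $\Phi$, which is only defined on $\R^{n+1}_+$, is being applied on the correct set. The factor $(1-a)^a$ on both formulas has the same origin, namely the Jacobian \eqref{j}, while the extra factor $h'(r)$ in \eqref{si2} is exactly what appears from the substitution $t = h(s)$ in the coarea representation.
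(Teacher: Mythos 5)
Your proposal is correct and follows essentially the same route as the paper: \eqref{si} via the change of variables $\Phi$ with the Jacobian \eqref{j} and the ball correspondence \eqref{balls}, and \eqref{si2} by writing both sides of \eqref{si} with the coarea formula and differentiating in $r$. The extra care you take with the evenness in $z$ to reduce to the upper half-space, where $\Phi$ is actually a diffeomorphism, is a point the paper leaves implicit but is handled correctly here.
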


\begin{proof}
Using \eqref{balls} we obtain
\[
\int_{B_{\rho_\alpha}(h(r))} f(x,z) dx dz =  \int_{\Phi(B_e(r))} f(x,z) dx dz.
\]
By the change of variable formula and \eqref{j}, the right-hand side of the latter equation equals that of \eqref{si}. To establish \eqref{si2} we observe that Federer's coarea formula (Cavalieri's principle) gives
\begin{equation}\label{coarea1}
\int_{B_{\rho_\alpha}(h(r))} f(x,z) dx dz = \int_0^{h(r)} \int_{S_{\rho_\alpha}(\tau)} \frac{f(x,z)}{|\nabla \rho_\alpha(x,z)|} d H_n(x,z) d\tau,
\end{equation}
where we have denoted by $H_n$ the $n$-dimensional Hausdorff measure in $\R^{n+1}$. Similarly, we find
\begin{equation}\label{coarea2}
 \int_{B_e(r)} \tilde f(x,y) |y|^{-a} dx dy =  \int_0^r \int_{S_e(t)} \tilde f(x,y) |y|^{-a} dH_n(x,y) dt.
 \end{equation}
Combining \eqref{si} with \eqref{coarea1} and \eqref{coarea2}, and differentiating with respect to $r$ we obtain
\eqref{si2}.
\end{proof}

\subsection{Almgren type monotonicity formulas for the operators $\Ba$, and their counterparts for the Caffarelli-Silvestre extension}\label{S:bg}

In order to keep the connection between the extension operator $L_a$ and the Baouendi operator $\Ba$, throughout this section we assume that $\alpha>0$. In the paper \cite{G} the first named author introduced a frequency function associated with $\Ba$, and proved that such frequency is monotone nondecreasing on solutions of $\Ba u = 0$, see Theorem \ref{T:almgrenBa} below. On the other hand, a version of the Almgren type monotonicity formula for the extension operator $L_a$ in \eqref{cs} above played an extensive role also in the mentioned recent work \cite{CSS} on the obstacle problem for the fractional Laplacian. In this section we gather some new monotonicity properties of the operators $\Ba$ that we use to establish some new results about the Caffarelli-Silvestre extension operator $L_a$ in \eqref{cs} above. We keep focusing on the situation in which $m=1$, and thus $\nabla_z = D_z$.

For $u, v\in \Gamma^1_\alpha(\R^{n+1})$, we define the $\alpha$-gradient of $u$ to be
\begin{equation*}
 \nabla_\alpha u= \left(|z|^\alpha \nabla_x u, D_z u\right)
\end{equation*}
and we set
\[
\langle\na u,\na v\rangle =  |z|^{2\alpha} \langle\nabla_x u,\nabla_x v\rangle + D_z u D_z v.
\]
The square of the length of $\nabla_\alpha u$ is
\begin{equation}\label{carre}
 |\nabla_\alpha u|^2= |z|^{2\alpha} |\nabla_x u|^2 + (D_z u)^2.
\end{equation}
The following lemma, collects the identities (2.12)-(2.14) in \cite{G}.

\begin{lemma}\label{l:gradalpharho}
Let $\ra$ be the pseudo-gauge in \eqref{ra} above. One has in $\R^{n+1}\setminus\{0\}$,
 \begin{equation}\label{nara}
  \psi_{\alpha} \overset{def}{=} |\na \ra|^2 = \frac{|z|^{2\alpha}}{\ra^{2\alpha}}.
 \end{equation}
Moreover, given a function $u$ one has
 \begin{equation}\label{nara2}
 \langle\na u,\na \ra\rangle = \frac{\Za u}{\ra} \psi_\alpha,
 \end{equation}
 where $Z_\alpha$ is the vector field in \eqref{Za} above.
 \end{lemma}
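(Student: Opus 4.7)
The proof will be a direct computation, so the plan is just to organize it cleanly.

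First I would compute the two partial gradients of $\rho_\alpha$ from the defining identity $\rho_\alpha^{2(\alpha+1)} = (\alpha+1)^2|x|^2 + |z|^{2(\alpha+1)}$. Implicit differentiation (or applying the chain rule to $\rho_\alpha = R^{1/(2(\alpha+1))}$ with $R$ the right-hand side) gives
\[
\nabla_x \rho_\alpha = (\alpha+1)\, x\, \rho_\alpha^{-(2\alpha+1)}, \qquad D_z \rho_\alpha = z\,|z|^{2\alpha}\, \rho_\alpha^{-(2\alpha+1)},
\]
where for $D_z$ I am using $|z|^{2(\alpha+1)} = (z^2)^{\alpha+1}$, which differentiates to $2(\alpha+1)z|z|^{2\alpha}$ since $m=1$.

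Next I would plug these into the definition \eqref{carre} of $|\nabla_\alpha \rho_\alpha|^2$:
\[
|\nabla_\alpha \rho_\alpha|^2 = |z|^{2\alpha}|\nabla_x \rho_\alpha|^2 + (D_z \rho_\alpha)^2 = \rho_\alpha^{-2(2\alpha+1)}\,|z|^{2\alpha}\bigl[(\alpha+1)^2 |x|^2 + |z|^{2(\alpha+1)}\bigr].
\]
The bracket is precisely $\rho_\alpha^{2(\alpha+1)}$ by definition of the gauge, and simplifying the exponent $-2(2\alpha+1) + 2(\alpha+1) = -2\alpha$ yields $\psi_\alpha = |z|^{2\alpha}/\rho_\alpha^{2\alpha}$, which is \eqref{nara}.

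For \eqref{nara2} I would carry out the same substitution into the bilinear form:
\[
\langle \nabla_\alpha u, \nabla_\alpha \rho_\alpha \rangle = |z|^{2\alpha}\langle \nabla_x u, \nabla_x \rho_\alpha\rangle + D_z u \cdot D_z \rho_\alpha = \rho_\alpha^{-(2\alpha+1)}|z|^{2\alpha}\Bigl[(\alpha+1)\langle \nabla_x u, x\rangle + z\, D_z u\Bigr].
\]
Recognizing the bracket as $Z_\alpha u$ from \eqref{Za}, and writing $\rho_\alpha^{-(2\alpha+1)}|z|^{2\alpha} = \rho_\alpha^{-1}\,(|z|^{2\alpha}/\rho_\alpha^{2\alpha}) = \psi_\alpha/\rho_\alpha$, gives exactly $\langle \nabla_\alpha u, \nabla_\alpha \rho_\alpha\rangle = (Z_\alpha u/\rho_\alpha)\,\psi_\alpha$. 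There is no real obstacle here; the only thing to be careful about is exponent bookkeeping and the fact that the identities hold away from $\{0\}$, in particular away from the degeneracy locus $M$, where the derivatives are smooth in the classical sense.
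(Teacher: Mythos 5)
Your computation is correct: the partial derivatives of $\rho_\alpha$, the exponent bookkeeping, and the identification of the bracket with $Z_\alpha u$ all check out, and this direct differentiation of the defining identity $\rho_\alpha^{2(\alpha+1)}=(\alpha+1)^2|x|^2+|z|^{2(\alpha+1)}$ is exactly the argument behind the identities (2.12)--(2.14) of \cite{G} that the paper cites in lieu of a proof. Nothing further is needed.
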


 Given a number $R>0$ and function $u\in \Gamma^1_\alpha(B_{\rho_\alpha}(R))$, we now define the Dirichlet integral, height, and frequency of $u$, respectively, as:
\begin{align}
 D(u,r) & = \int_{B_{\rho_\alpha}(r)}|\nabla_\alpha u|^2 dx dz,\ \ \ 0<r<R, \label{D}\\
 H(u,r) & = \int_{S_{\rho_\alpha}(r)} u^2\frac{\psi_\alpha}{|\nabla \ra|}\ dH_{n}\\
N(u,r) & = \frac{rD(u,r)}{H(u,r)}.
\end{align}

Suppose that $u$ is even in $z$ and consider the function $\tilde u(x,y) = u(\Phi(x,y)) = u(x,h(|y|))$, defined in $B_e(R)$ by \eqref{tu} above, and introduce now the following quantities
\begin{align}
\tilde D(\tilde u,r) & = \int_{B_e(r)}  |\nabla \tilde u|^2 |y|^a dx dy,\ \ \ 0<r<R,
 \label{tD}\\
\tilde H(\tilde u,r) & = \int_{S_{e}(r)} u^2 |y|^a  dH_{n},
\label{tH}\\
\tilde N(\tilde u,r) &=  \frac{r \tilde D(\tilde u,r)}{\tilde H(\tilde u,r)}.
\label{tN}
\end{align}
Here, $B_e(r)$ is the Euclidean ball of radius $r$ in $\R^{n+1}$ with center at the origin, and $S_e(r)=\partial B_e(r)$.
We have the following result.

\begin{lemma}\label{L:hth}
For every $0<r<R$ one has
\[
\tilde H(\tilde u,r) =  r^a H(u,h(r)).
\]
\end{lemma}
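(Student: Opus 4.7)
The identity should follow by applying the surface-integral form of the Jacobian change-of-variables, namely equation \eqref{si2} of Proposition \ref{P:integrals}, to a suitable choice of $f$. My plan is to take $f(x,z) = u(x,z)^2 \psi_\alpha(x,z)$, whose lift to $\R^{n+1}_+$ is $\tilde f(x,y) = \tilde u(x,y)^2 \psi_\alpha(x, h(|y|))$. With this choice, the left-hand side of \eqref{si2} becomes exactly $h'(r) H(u, h(r))$, by the definition of $H(u, \cdot)$.

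Next, I would unfold the factor $\psi_\alpha$ on the right-hand side. By Lemma \ref{l:gradalpharho} one has $\psi_\alpha(x,z) = |z|^{2\alpha}/\rho_\alpha(x,z)^{2\alpha}$, and by Lemma \ref{L:rhod} the identity $\rho_\alpha(x,h(|y|)) = h(d_e(x,y))$ implies that on $S_e(r)$ we have $\rho_\alpha(x,h(|y|)) = h(r)$. Substituting this, $\tilde f(x,y)|y|^{-a}$ on $S_e(r)$ becomes $\tilde u(x,y)^2 \cdot h(|y|)^{2\alpha}|y|^{-a}/h(r)^{2\alpha}$. Using the explicit form $h(y) = (y/(1-a))^{1-a}$ from \eqref{h} together with the crucial algebraic identity $\alpha(1-a) = a$ (which follows from $\alpha = a/(1-a)$), one gets $h(|y|)^{2\alpha} = |y|^{2a}/(1-a)^{2a}$, hence
\[
\tilde f(x,y)|y|^{-a} = \frac{1}{(1-a)^{2a}\,h(r)^{2\alpha}}\,\tilde u(x,y)^2\,|y|^{a}.
\]
Thus the right-hand side of \eqref{si2} equals $\frac{(1-a)^{-a}}{h(r)^{2\alpha}}\,\tilde H(\tilde u, r)$.

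Equating the two sides yields
\[
h'(r)\,H(u,h(r)) = \frac{(1-a)^{-a}}{h(r)^{2\alpha}}\,\tilde H(\tilde u, r).
\]
Finally, I would substitute $h'(r) = (1-a)^a r^{-a}$ and $h(r)^{2\alpha} = r^{2a}/(1-a)^{2a}$, after which the powers of $(1-a)$ cancel cleanly and the powers of $r$ combine to give $r^{-a} H(u,h(r)) = r^{-2a}\tilde H(\tilde u, r)$, i.e.\ $\tilde H(\tilde u, r) = r^a H(u, h(r))$.

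There is no real conceptual obstacle here; the whole statement is an exercise in the change of variables $\Phi$ together with the scaling relations between $h$, $\rho_\alpha$, and the weight $|y|^a$. The only thing to be careful about is the repeated use of the relation $\alpha(1-a) = a$ to collapse the powers of $(1-a)$ and $h(r)$ into powers of $r$, so that the final identity comes out with the clean prefactor $r^a$ and no residual constants.
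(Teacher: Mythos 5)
Your proof is correct and follows essentially the same route as the paper's: both apply \eqref{si2} with $f = u^2\psi_\alpha$, unfold $\psi_\alpha$ via \eqref{nara} and \eqref{rd}, and then collapse the constants using $h'(r) = (1-a)^a r^{-a}$ and $h(r)^{2\alpha} = r^{2a}/(1-a)^{2a}$. The algebra checks out, so nothing further is needed.
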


\begin{proof}

We choose $f = u^2 \psi_\alpha$  in \eqref{si2} above, obtaining
\begin{align*}
h'(r) H(u,h(r)) & = h'(r) \int_{S_{\rho_\alpha}(r)} u^2\frac{\psi_\alpha}{|\nabla \ra|}\ dH_{n}  = (1-a)^a \int_{S_{e}(r)} \tilde u^2 \tilde \psi_\alpha |y|^{-a}\ dH_{n}
\end{align*}
From \eqref{nara} and \eqref{rd} we obtain
\[
\tilde \psi_\alpha(x,y) = \frac{h(|y|)^{2\alpha}}{\ra^{2\alpha}(x,h(|y|))} =  \frac{h(|y|)^{2\alpha}}{h(d_e(x,y))^{2\alpha}}.
\]
Substituting in the above equation we find
\[
h(r)^{2\alpha} h'(r) H(u,h(r)) =  (1-a)^a \int_{S_{e}(r)} \tilde u^2  |y|^{-a} h(|y|)^{2\alpha}\ dH_{n}.
\]
Keeping in mind that $\alpha = \frac{a}{1-a}$, we obtain
\[
h(r)^{2\alpha} h'(r) = (1-a)^{-a} r^a,\ \ \ \ \ |y|^{-a} h(|y|)^{2\alpha} = \frac{|y|^a}{(1-a)^{2a}}.
 \]
Substitution in the latter equation yields the desired conclusion.

\end{proof}

Next, we have the following.

\begin{lemma}\label{L:dtd}
For every $0<r<R$ one has
\[
\tilde D(\tilde u,r) = (1-a)^{a}  D(u,h(r)).
\]
\end{lemma}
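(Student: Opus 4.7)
The plan is to compute each side directly via the change of variables $(x,z) = \Phi(x,y) = (x, h(|y|))$ and compare integrands. By the chain rule applied to $\tilde u(x,y) = u(x, h(|y|))$, we have $\nabla_x \tilde u(x,y) = (\nabla_x u)(x, h(|y|))$ and $\p_y \tilde u(x,y) = \operatorname{sgn}(y)\, h'(|y|)\, (D_z u)(x, h(|y|))$, so
\[
|\nabla \tilde u(x,y)|^2 = |\nabla_x u|^2 + h'(|y|)^2 (D_z u)^2,
\]
where $u$ and its derivatives are evaluated at $(x,h(|y|))$. Multiplying by $|y|^a$ and integrating over $B_e(r)$ yields an explicit expression for $\tilde D(\tilde u, r)$.

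Next, I would apply Proposition \ref{P:integrals}, specifically formula \eqref{si}, to $f(x,z) = |\nabla_\alpha u|^2 = |z|^{2\alpha}|\nabla_x u|^2 + (D_z u)^2$. Because $f$ is even in $z$ (being the sum of $|z|^{2\alpha}|\nabla_x u|^2$ and $(D_z u)^2$, and assuming $u$ is taken even in $z$ as is standard for the extension), we obtain
\[
D(u, h(r)) = (1-a)^a \int_{B_e(r)} \bigl[h(|y|)^{2\alpha}|\nabla_x u|^2 + (D_z u)^2\bigr]\, |y|^{-a}\, dx\, dy.
\]

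The remaining step is a bookkeeping exercise in the powers of $(1-a)$ and $|y|$, using the explicit form $h(y) = (y/(1-a))^{1-a}$ from \eqref{h} together with the identity $\alpha(1-a) = a$. One computes $h(|y|)^{2\alpha} = (1-a)^{-2a}|y|^{2a}$ and $h'(|y|)^2 = (1-a)^{2a} |y|^{-2a}$. Substituting these into the preceding displays, the $|\nabla_x u|^2$ terms give $\int_{B_e(r)} |y|^a |\nabla_x u|^2$ on both sides (after the factor $(1-a)^a$ is included), while the $(D_z u)^2$ terms give $(1-a)^{2a}\int_{B_e(r)} |y|^{-a}(D_z u)^2$ on both sides. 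Matching the two expressions yields $\tilde D(\tilde u, r) = (1-a)^a D(u, h(r))$.

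The only potential obstacle is tracking the exponents of $(1-a)$ correctly, but there is no conceptual difficulty: everything reduces to the single change of variables \eqref{si} combined with the chain-rule identity for $\nabla \tilde u$ and the explicit exponent calculation $\alpha(1-a) = a$. In particular, no integration by parts is required, in contrast with Lemma \ref{L:hth}, which already handled the boundary case.
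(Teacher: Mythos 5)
Your proposal is correct and follows essentially the same route as the paper: both apply the change-of-variables formula \eqref{si} to $f=|\nabla_\alpha u|^2$, use the chain rule to relate $\nabla\tilde u$ to $\nabla_\alpha u$, and finish with the exponent identities $h(|y|)^{2\alpha}=(1-a)^{-2a}|y|^{2a}$ and $h'(|y|)^2=(1-a)^{2a}|y|^{-2a}$. The only cosmetic difference is that the paper first collapses $\widetilde{|\nabla_\alpha u|^2}$ into $\frac{|y|^{2a}}{(1-a)^{2a}}|\nabla\tilde u|^2$ before integrating, whereas you match the $|\nabla_x u|^2$ and $(D_zu)^2$ contributions termwise; both bookkeepings agree.
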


\begin{proof}
From formula \eqref{si} we obtain
\[
D(u,h(r)) = \int_{B_{\rho_\alpha}(h(r))} |\na u|^2 dx dz =  (1-a)^a \int_{B_e(r)} \widetilde{|\na u|^2} |y|^{-a} dx dy.
\]
From the chain rule one obtains
\[
D_z u(x,h(|y|)) = \frac{1}{h'(|y|)} D_y \tilde u(x,y),\ \ \ \ \ \nabla_x u(x,h(|y|)) = \nabla_x \tilde u(x,y).
\]
This gives
\begin{align*}
\widetilde{|\na u|^2}(x,y) & = |\na u|^2(x,h(|y|)) = D_z u(x,h(|y|))^2 + h(|y|)^{2\alpha} |\nabla_x u(x,h(|y|))|^2
\\
& =  \frac{1}{h'(|y|)^2} D_y \tilde u(x,y)^2 + h(|y|)^{2\alpha} |\nabla_x \tilde u(x,y)|^2 = \frac{|y|^{2a}}{(1-a)^{2a}} \left[D_y \tilde u(x,y)^2 + \nabla_x \tilde u(x,y)|^2\right]
\\
& = \frac{|y|^{2a}}{(1-a)^{2a}} |\nabla \tilde u(x,y)|^2.
\end{align*}
Inserting this formula in the right-hand side of the latter equation, we find
\[
D(u,h(r)) = (1-a)^{-a} \int_{B_e(r)} |\nabla \tilde u|^2 |y|^{a} dx dy = (1-a)^{-a} \tilde D(\tilde u,r).
\]
This gives the desired conclusion.

\end{proof}

Finally, combining Lemmas \ref{L:hth} and \ref{L:dtd}, we obtain the following notable result.

\begin{prop}\label{P:ntn}
For every $0<r<R$ one has
\[
\tilde N(\tilde u,r) = (1-a) N(u,h(r)).
\]
In particular, if either one of the limits $\tilde N(\tilde u,0^+) = \underset{r\to 0^+}{\lim} \tilde N(\tilde u,r)$, $N(u,0^+) = \underset{r\to 0^+}{\lim} N(u,r)$ exists, then also the other does and we have (see Proposition \ref{P:hom} above)
\[
\tilde N(\tilde u,0^+) = (1-a)  N(u,0^+).
\]
\end{prop}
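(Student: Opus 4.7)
The approach is purely computational: substitute the two preceding lemmas into the definition of $\tilde N$ and use the explicit form of $h$ in \eqref{h} to collect constants. There is no genuine obstacle here—all the analytic work has already been done in Lemmas \ref{L:hth} and \ref{L:dtd}; the present proposition is essentially a bookkeeping exercise that packages those two identities into a single statement about frequencies.

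First I would expand the definition \eqref{tN} and apply Lemmas \ref{L:dtd} and \ref{L:hth} directly:
\[
\tilde N(\tilde u,r) \;=\; \frac{r\,\tilde D(\tilde u,r)}{\tilde H(\tilde u,r)} \;=\; \frac{r\,(1-a)^{a}\,D(u,h(r))}{r^{a}\,H(u,h(r))} \;=\; (1-a)^{a}\,r^{1-a}\,\frac{D(u,h(r))}{H(u,h(r))}.
\]
Next I would invoke the explicit formula $h(r) = \bigl(r/(1-a)\bigr)^{1-a}$ from \eqref{h}, which yields $r^{1-a} = (1-a)^{1-a}\,h(r)$. Substituting this into the previous display, the constants $(1-a)^{a}$ and $(1-a)^{1-a}$ combine to give a single factor $(1-a)$, so that
\[
\tilde N(\tilde u,r) \;=\; (1-a)\,\frac{h(r)\,D(u,h(r))}{H(u,h(r))} \;=\; (1-a)\,N(u,h(r)),
\]
which is the pointwise identity claimed in the proposition.

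For the statement about limits, I would note that by \eqref{h} the map $h:(0,\infty) \to (0,\infty)$ is a continuous, strictly increasing bijection with $h(r)\to 0^+$ as $r \to 0^+$. Consequently the change of variable $s = h(r)$ identifies the limits $r\to 0^+$ and $s\to 0^+$: the limit $\tilde N(\tilde u,0^+)$ exists if and only if $N(u,0^+)$ does, and whenever these exist the pointwise identity passes to the limit to give $\tilde N(\tilde u,0^+) = (1-a)\,N(u,0^+)$. In keeping with Proposition \ref{P:hom}, this confirms that the exponent $\tilde \kappa = (1-a)\kappa$ governing Euclidean homogeneity of $\tilde u$ is exactly the frequency-theoretic counterpart of the non-isotropic homogeneity exponent $\kappa$ of $u$ with respect to $\delta_\lambda$.
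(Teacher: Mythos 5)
Your proof is correct and follows exactly the paper's intended route: the paper presents this proposition as an immediate consequence of Lemmas \ref{L:hth} and \ref{L:dtd}, and your computation (using $r^{1-a}=(1-a)^{1-a}h(r)$ to merge the constants into a single factor of $1-a$) is precisely the bookkeeping the paper leaves implicit. The limit statement via the bijectivity and continuity of $h$ is also handled correctly.
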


We now recall the following Almgren type monotonicity formula for solutions of the operator $\Ba$ in \eqref{bg} established in \cite{G}.

\begin{thrm}\label{T:almgrenBa}
Let $u$ be a solution of $\Ba u = 0$ in $B_{\rho_\alpha}(r_0)$, and suppose that for no $r\in (0,r_0)$ we have $u\equiv 0$ in $B_{\rho_\alpha}(r)$. Then, the function $r\to N(u,r)$ is nondecreasing in $(0,r_0)$. Furthermore, $N(u,\cdot)\equiv \kappa$ if and only if $u$ is homogeneous of degree $\kappa$ with respect to the dilations \eqref{dilB} above.
\end{thrm}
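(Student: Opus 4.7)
The plan is to implement the classical Almgren strategy adapted to the sub-elliptic geometry of $\Ba$: derive differentiation formulas for $H(u,r)$ and $D(u,r)$ whose combination yields an expression for $(\log N)'(r)$ that is manifestly non-negative via Cauchy--Schwarz.

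The first step is the divergence identity
\[
rD(u,r) = \int_{S_{\ra}(r)} u\,(\Za u)\,\frac{\psi_\alpha}{|\nabla \ra|}\,dH_n,
\]
obtained by integrating $u\,\Ba u = 0$ over $B_{\ra}(r)$, writing $\Ba$ in divergence form off the singular set $M$, and converting the boundary flux into a $\Za u$ term via \eqref{nara2} together with $\Za\ra = \ra$. Using the coarea formula one has $H(u,r) = \frac{d}{dr}\int_{B_{\ra}(r)} u^2\psi_\alpha\,dxdz$; performing the change of variables $\delta_r$ (whose Jacobian is $r^Q$ by \eqref{Qa}) and using that $\psi_\alpha$ is $\delta_\la$-homogeneous of degree zero, a differentiation in $r$ followed by substitution of the divergence identity produces
\[
H'(u,r) = \frac{Q-1}{r}H(u,r) + 2D(u,r).
\]

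Next, I would perform a Rellich--Pohozaev computation: multiply $\Ba u = 0$ by $\Za u$ and integrate over $B_{\ra}(r)$. Using the commutator $[X_j,\Za] = X_j$ from Lemma \ref{L:comm}, together with $\mathrm{div}\,\Za = Q$ and the identity $\langle \Za,\nabla \ra\rangle = \Za \ra = \ra = r$ on $S_{\ra}(r)$, a careful two-step integration by parts yields
\[
D'(u,r) = \frac{Q-2}{r}D(u,r) + \frac{2}{r^2}\int_{S_{\ra}(r)}(\Za u)^2\frac{\psi_\alpha}{|\nabla \ra|}\,dH_n.
\]
Substituting both formulas into $N'/N = 1/r + D'/D - H'/H$ makes the $Q$-terms cancel, leaving
\[
\frac{N'(u,r)}{N(u,r)} = \frac{2}{r^2 D(u,r)H(u,r)}\left[H(u,r)\int_{S_{\ra}(r)}(\Za u)^2\frac{\psi_\alpha}{|\nabla \ra|}\,dH_n - r^2 D(u,r)^2\right].
\]
Cauchy--Schwarz applied to the divergence identity from the first step, with weight $\psi_\alpha/|\nabla \ra|$, shows the bracket is non-negative, hence $N(u,\cdot)$ is monotone nondecreasing. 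For the characterization of constant $N$, the relation $N\equiv\kappa$ forces equality in Cauchy--Schwarz on every gauge sphere, so $\Za u = \kappa u$ on each $S_{\ra}(r)$ and thus throughout $B_{\ra}(r_0)$; by \eqref{Zak} this is precisely homogeneity of degree $\kappa$. Conversely, for a $\kappa$-homogeneous $u$, Euler's relation $\Za u = \kappa u$ inserted into the divergence identity gives $rD(u,r) = \kappa H(u,r)$, so $N\equiv\kappa$.

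The main delicate point I anticipate is justifying both integrations by parts when $u$ is only of class $\Gamma^2_\alpha$ and the weight $|z|^{2\alpha}$ degenerates (for $\alpha>0$) along the characteristic plane $M = \Rn\times\{0\}$. One must exhaust $B_{\ra}(r)$ by the region $\{|z|>\eps\}$, verify via \eqref{div} and the continuity of $D_{zz}u$ implied by $\Gamma^2_\alpha$-membership that the extra flux on $\{|z|=\eps\}$ vanishes as $\eps\downarrow 0$, and only then pass to the limit in the two identities above.
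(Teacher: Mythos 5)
Your proposal is correct and follows essentially the same route as the paper: the divergence identity $rD=\int_{S_{\rho_\alpha}(r)}u\,Z_\alpha u\,\psi_\alpha/|\nabla\rho_\alpha|$ (Proposition \ref{P:diri}), the first variations $H'=\frac{Q-1}{r}H+2D$ and $D'=\frac{Q-2}{r}D+\frac{2}{r^2}\int_{S_{\rho_\alpha}(r)}(Z_\alpha u)^2\psi_\alpha/|\nabla\rho_\alpha|$, and then the logarithmic derivative plus Cauchy--Schwarz exactly as in \eqref{logN}, with the constant-frequency case handled via equality in Cauchy--Schwarz and \eqref{Zak}. The paper itself only cites \cite{G} for this particular theorem, but its proof of the generalization to the thin obstacle problem (Theorem \ref{T:almgrenbgo} and Lemmas \ref{l:hprime}, \ref{l:dprime}) is precisely your argument, including the $\varepsilon$-exhaustion near the characteristic plane $M$ that you flag as the delicate point.
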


If we combine Theorem \ref{T:almgrenBa} with \eqref{balls} in Lemma \ref{L:rhod} and with Proposition \ref{P:ntn}, we immediately obtain the following result for the operator $L_a$ in \eqref{cs} above.

\begin{thrm}\label{T:almgrenEO}
Let $\tilde u$ be a solution of the Caffarelli-Silvestre extension operator $L_a \tilde u = 0$ in $B_{e}(R_0)$ that is even in $y$, and suppose that for no $r\in (0,R_0)$ we have $\tilde u\equiv 0$ in $B_e(r)$. Then, the function $r\to \tilde N(\tilde u,r)$ is nondecreasing on $(0,R_0)$. Furthermore, $N(\tilde u,\cdot) \equiv \tilde \kappa$ if and only if $\tilde u$ is homogeneous of degree $\tilde \kappa$ with respect to the standard Euclidean dilations $\tilde \delta_\lambda(x,y) = (\la x,\la y)$.
\end{thrm}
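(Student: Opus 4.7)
The plan is to deduce Theorem \ref{T:almgrenEO} as a direct transfer of Theorem \ref{T:almgrenBa} through the diffeomorphism $\Phi$ in \eqref{phi}, using the identities already established in Subsection \ref{S:fi} (Propositions \ref{P:hom} and \ref{P:ntn}, Lemma \ref{L:rhod}). There is no new PDE work to do; the key is bookkeeping of the homogeneity constant and checking that the hypotheses of Theorem \ref{T:almgrenBa} transport correctly.

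First, I would pass from $\tilde u$ to its $\mathcal B_\alpha$-counterpart. Since $\tilde u$ is even in $y$ and the diffeomorphism $\Phi(x,y)=(x,h(|y|))$ is symmetric in $y$, we may define $u(x,z)=\tilde u(x,h^{-1}(|z|))$ on $\Phi(B_e(R_0))$; this function is even in $z$. By formula \eqref{lab2} (equivalently \eqref{csnb}), the map $u\mapsto \tilde u$ intertwines $\mathcal B_\alpha$ and $L_a$ in a one-to-one, onto fashion, so $L_a\tilde u=0$ in $B_e(R_0)$ is equivalent to $\mathcal B_\alpha u=0$ in $\Phi(B_e(R_0))=B_{\rho_\alpha}(h(R_0))$ (the last identity is \eqref{balls}). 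Moreover, the non-triviality hypothesis transports: $\tilde u\not\equiv 0$ in any $B_e(r)$ is equivalent to $u\not\equiv 0$ in any $B_{\rho_\alpha}(h(r))$, because $\Phi$ is a bijection of the corresponding balls.

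Next, I would apply Theorem \ref{T:almgrenBa} to $u$, obtaining that $\tau\mapsto N(u,\tau)$ is nondecreasing on $(0,h(R_0))$. By Proposition \ref{P:ntn} we have the identity
\[
\tilde N(\tilde u,r)=(1-a)\,N(u,h(r))\qquad \text{for all }0<r<R_0,
\]
and since $h:(0,R_0)\to (0,h(R_0))$ is a strictly increasing diffeomorphism by \eqref{h}, the monotonicity of $N(u,\cdot)$ immediately yields the monotonicity of $\tilde N(\tilde u,\cdot)$ on $(0,R_0)$.

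Finally, for the characterization of the equality case: by Theorem \ref{T:almgrenBa}, $N(u,\cdot)\equiv\kappa$ on $(0,h(R_0))$ is equivalent to $u$ being $\delta_\lambda$-homogeneous of degree $\kappa$ in the sense of \eqref{dilB}. By Proposition \ref{P:hom}, this in turn is equivalent to $\tilde u$ being homogeneous of degree $\tilde\kappa=(1-a)\kappa$ with respect to the standard Euclidean dilations $\tilde\delta_\mu(x,y)=(\mu x,\mu y)$. Combining with the identity of Proposition \ref{P:ntn}, we see that $\tilde N(\tilde u,\cdot)\equiv \tilde\kappa$ iff $N(u,\cdot)\equiv \tilde\kappa/(1-a)=\kappa$, and chaining the two equivalences completes the proof.

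The only subtlety worth flagging is the evenness assumption: it is what allows us to define $u$ unambiguously from $\tilde u$ and to interpret $\mathcal B_\alpha u=0$ on the full pseudo-ball $B_{\rho_\alpha}(h(R_0))$ rather than just on its upper half. Everything else is a mechanical assembly of the three ingredients already proved, so I do not expect a genuine obstacle.
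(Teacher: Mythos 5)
Your proposal is correct and is essentially identical to the paper's own argument: the paper obtains Theorem \ref{T:almgrenEO} precisely by combining Theorem \ref{T:almgrenBa} with the ball correspondence \eqref{balls} of Lemma \ref{L:rhod} and the frequency identity of Proposition \ref{P:ntn}, with Proposition \ref{P:hom} handling the homogeneity bookkeeping $\tilde\kappa=(1-a)\kappa$. The only contextual point worth keeping in mind is that this transfer argument operates under the standing assumption $\alpha>0$ (i.e.\ $0<a<1$) made at the start of that subsection; the remaining range $-1<a<0$ is treated by the paper only later, in the obstacle-problem version, by a separate argument.
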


We will soon return to Theorems \ref{T:almgrenEO} and \ref{T:almgrenBa} in the next sections in the more general setting of solutions to the thin obstacle problem for the operators $\Ba$ and $L_a$.

\section{Monotonicity formulas of Weiss and Monneau type: the case of zero obstacle}\label{S:op}

In the groundbreaking paper \cite{CSS} it was shown that if $u$ is the unique solution to the obstacle problem \eqref{pb} above for $s\in (0,1)$, then by extending $u$ to the upper half-space  $\R^{n+1}_+$, and then reflecting it evenly in the variable $y$, one obtains a solution $\tilde u$ to the following problem, equivalent to \eqref{pb}, where the parameter $a\in (-1,1)$ is related to $s$ by the formula $a = 1-2s$ (see \eqref{cs} above),
\begin{equation}\label{sb}
\begin{cases}
L_a \tilde u = \operatorname{div}(|y|^a \nabla \tilde u) = 0\ \ \ \ \ \ \ \ \text{in}\ \R^{n+1}_+\cup \R^{n+1}_-,
\\
\tilde u(x,0) \ge \vf(x),\ \ \ \ \ \ \ \ \ \ \ \ \ \ \ \ \text{for}\ x\in \Rn,
\\
\tilde u(x,-y) = \tilde u(x,y),\ \ \ \ \ \ \ \ \ \  \ \text{for}\ x\in \Rn, y\in \R,
\\
- \underset{y\to 0^+}{\lim} y^a D_y\tilde u(x,y) \ge 0, \ \ \ \ \  \text{for}\ x\in \Rn,
\\
\underset{y\to 0^+}{\lim} y^a D_y \tilde u(x,y) = 0, \ \ \ \ \ \ \ \text{on the set where}\ \tilde u(x,0) > \vf(x).
\end{cases}
\end{equation}
Using such equivalence between the nonlocal problem \eqref{pb} and the local problem \eqref{sb}, the authors proved one of their main results, namely the optimal $C^{1+s}$ regularity of the solution $u$ of \eqref{pb} (the almost optimal regularity had been previously established in \cite{S}). They also proved the regularity of the free boundary at regular points. One of the central tools in \cite{CSS} was an Almgren type truncated monotonicity formula for the problem \eqref{sb}. In the case when the obstacle $\vf \equiv 0$ truncation is not necessary and the  corresponding (pure) monotonicity formula, see Theorem 6.1 in \cite{CS}, generalizes Theorem \ref{T:almgrenEO} above to solutions of the obstacle problem \eqref{sb}.

As we have mentioned above,  in their 2007 paper \cite{CS} the authors were not aware of the 1993 work \cite{G} and the Almgren type monotonicity Theorem \ref{T:almgrenBa} above. The link between such result and Theorem 6.1 in \cite{CS} became apparent later, see Remark 3.2 in \cite{CSS}. For our subsequent objectives, we will now further discuss this aspect.

In $\R^{n+1}$ with variables $(x,z)$ consider the pseudo-ball $B_{\rho_\alpha}(r)  = \{(x,z)\in \R^{n+1}\mid \rho_\alpha(x,z) <r\}$ centered at the origin with radius $r>0$, see \eqref{ra} above, and denote by $B_{\rho_\alpha}^+(r) = \{(x,z)\in \us \mid z>0\}\cap B_{\rho_\alpha}(r)$ the upper pseudo-ball, with analogous definition for $B_{\rho_\alpha}(r)^-$. We will denote by $B_{\rho_\alpha}(r)' = B_{\rho_\alpha}(r)\cap \{(x,0)\in \R^{n+1}\}$ the thin ball in $\Rn$. If a function $\tilde u$ solves the problem \eqref{sb} above in a ball $B_e(R_0)$, then, after an even reflection in $z$,
the function $u$ defined by \eqref{tu} above verifies
\begin{equation}\label{sb2}
\begin{cases}
\Ba u = D_{zz} u + z^{2\alpha} \Delta_x u = 0\ \ \ \ \ \  \text{in}\ B_{\rho_\alpha}(r_0)^+ \cup B_{\rho_\alpha}(r_0)^-,
\\
u \ge \vf,\ \ \ \ \ \ \ \ \ \ \ \ \ \ \ \ \ \ \ \ \ \ \ \ \ \ \ \ \ \ \ \  \text{in}\ B_{\rho_\alpha}(r_0)',
\\
u(x,-z) = u(x,z),\ \ \ \ \ \ \ \ \ \ \ \ \ \ \ \ \text{for}\ x\in \Rn, z\in \R
\\
- D_z^+ u + D_z^- u  \ge 0, \ \ \ \ \ \ \ \ \ \ \ \ \ \ \ \ \text{in}\ B_{\rho_\alpha}(r_0)',
\\
u (- D_z^+ u + D_z^- u)= 0, \ \ \ \ \ \ \ \ \ \ \ \ \text{in}\ B_{\rho_\alpha}(r_0)',
\end{cases}
\end{equation}
where, $r_0 = h^{-1}(R_0)$. This is a thin obstacle  problem for the degenerate elliptic operator $\Ba$ with obstacle $\vf(x)$ concentrated on the thin set $B_{\rho_\alpha}(r_0)'$.

\subsection{Almgren type monotonicity formulas}\label{SS:almgren}
We will use the connection between the problems \eqref{sb} and \eqref{sb2} to study the singular part of the free boundary in the problem \eqref{pb} above. With this objective in mind we now recall some facts that will be useful in the sequel. Similarly to what we did in Section \ref{S:bg}, we assume here that $0<\alpha < \infty$ to keep the connection between $L_a$ and $\Ba$. Subsequently, we will free ourselves of this restriction and consider the full range $-\frac 12 < \alpha < \infty$. This corresponds to the full range $0<s<1$ for the operator $(-\Delta)^s$.

The following proposition follows from an easy adaptation of the proof of formula (4.36) in \cite{G}.
\begin{prop}\label{P:diri}
 Let $u$ be a solution of \eqref{sb2} with $\vf\equiv 0$. Then, for every $0<r<r_0$,
 \begin{align}\label{Ds}
  D(u,r)&=\int_{S_{\rho_\alpha}(r)}u\left(\frac{Z_\alpha u}{r}\right)\frac{\psi_\alpha}{|\nabla \ra|}dH_{n}.
 \end{align}
\end{prop}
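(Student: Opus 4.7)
The plan is to recognize $D(u,r)$ as the integral over $B_{\rho_\alpha}(r)$ of the divergence of a natural first-order ``flux'' vector field, and then to apply the Euclidean divergence theorem on the upper and lower half pseudo-balls $B_{\rho_\alpha}^\pm(r)$ separately. The spherical portions of the boundary will combine to yield the right-hand side of \eqref{Ds} via the identity \eqref{nara2}, while the flat (``thin'') portion $B_{\rho_\alpha}(r)'$ will contribute nothing once the Signorini-type complementarity conditions in \eqref{sb2} and the even symmetry in $z$ are invoked.

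Concretely, consider the Euclidean vector field on $\R^{n+1}\setminus M$ defined by $W=\bigl(|z|^{2\alpha} u \nabla_x u,\ u D_z u\bigr)$. A direct computation, using that $|z|^{2\alpha}$ is independent of $x$, gives
\begin{equation*}
\operatorname{div} W \,=\, |z|^{2\alpha}|\nabla_x u|^2 + (D_z u)^2 + u\bigl(|z|^{2\alpha}\Delta_x u + D_{zz} u\bigr) \,=\, |\na u|^2 + u\,\Ba u,
\end{equation*}
in view of \eqref{carre} and \eqref{bg}. Since $\varphi\equiv 0$ and $\Ba u=0$ in $B_{\rho_\alpha}^+(r)\cup B_{\rho_\alpha}^-(r)$, the second term vanishes on each half, so
\begin{equation*}
D(u,r) \,=\, \int_{B_{\rho_\alpha}^+(r)}\operatorname{div} W\,dx\,dz \,+\, \int_{B_{\rho_\alpha}^-(r)}\operatorname{div} W\,dx\,dz.
\end{equation*}

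Applying the divergence theorem on each half pseudo-ball, the boundary of $B_{\rho_\alpha}^{\pm}(r)$ splits into the curved piece $S_{\rho_\alpha}^{\pm}(r)$ and the flat piece $B_{\rho_\alpha}(r)'$. On the flat piece the outward normal to $B_{\rho_\alpha}^+(r)$ is $-e_z$ and that to $B_{\rho_\alpha}^-(r)$ is $+e_z$, so the combined thin-boundary integrand is $u(-D_z^+u+D_z^-u)$, which vanishes pointwise by the complementarity relation in \eqref{sb2}. On the spherical piece the outward unit normal is $\nabla \ra/|\nabla \ra|$, so applying \eqref{nara2} from Lemma \ref{l:gradalpharho} gives
\begin{equation*}
W\cdot \frac{\nabla\ra}{|\nabla\ra|} \,=\, \frac{u\,\langle \na u,\na \ra\rangle}{|\nabla \ra|} \,=\, u\cdot\frac{\Za u}{\ra}\cdot \frac{\psi_\alpha}{|\nabla \ra|};
\end{equation*}
since $\ra = r$ on $S_{\rho_\alpha}(r)$, summing the contributions from the two half-spheres produces exactly the right-hand side of \eqref{Ds}.

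The principal technical obstacle is justifying the divergence theorem across the degenerate set $M=\{z=0\}$: although $\Ba u=0$ off of $M$, the solution $u$ is only of Signorini type on the thin set, so one must show that $|z|^{2\alpha} u \nabla_x u$ and $u D_z u$ admit suitable one-sided traces. Since $\alpha>0$ the weight $|z|^{2\alpha}$ automatically kills the tangential flux on $\{z=0\}$, and the one-sided normal flux $u D_z^\pm u$ is pointwise controlled by the complementarity condition of \eqref{sb2}. A standard way to make this rigorous is to integrate first over $\{|z|>\varepsilon\}\cap B_{\rho_\alpha}(r)$, collect the boundary terms on $\{|z|=\varepsilon\}$, and pass to the limit $\varepsilon\to 0^+$ using the regularity of $u$ up to the thin obstacle (cf.\ \cite{CSS} and the adaptation of formula (4.36) in \cite{G}).
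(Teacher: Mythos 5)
Your proposal is correct and follows essentially the same route as the paper: the vector field $W=(|z|^{2\alpha}u\nabla_x u,\,u D_z u)$ is precisely the flux $\sum_j u\,(X_j u)\,X_j$ that the paper integrates by parts (via $\Ba(u^2/2)=|\na u|^2$ on each half pseudo-ball), and the paper likewise regularizes with the slabs $\{|z|>\ve\}$, identifies the spherical contribution through \eqref{nara2}, and kills the thin-set contribution with the complementarity condition $u(-D_z^+u+D_z^-u)=0$. No substantive differences.
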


\begin{proof}
For every $\ve >0$ we introduce the sets
\begin{align}\label{es}
B^+_{\rho_\alpha}(r,\ve) & = B_{\rho_\alpha}(r) \cap \{(x,z)\in \R^{n+1}\mid z>\ve\},
\\
B^-_{\rho_\alpha}(r,\ve) & = B_{\rho_\alpha}(r) \cap \{(x,z)\in \R^{n+1}\mid z<\ve\},
\notag\\
L^{\pm}(r,\ve) & = B_{\rho_\alpha}(r) \cap \{(x,z)\in \R^{n+1}\mid z = \pm \ve\}.
\notag
\end{align}
Since for the variational solution $u$ of \eqref{sb2} we have $|\na u|\in L^2(B_{\rho_\alpha}(r))$, by Lebesgue dominated convergence we have
\begin{align*}
D(u,r) = \int_{{B_{\rho_\alpha}(r)}} |\na u|^2   = \underset{\ve\to 0^+}{\lim} \left\{\int_{{B^+_{\rho_\alpha}(r,\ve)}} |\na u|^2 +  \int_{{B^-_{\rho_\alpha}(r,\ve)}} |\na u|^2 \right\}.
\end{align*}
In $B^\pm_{\rho_\alpha}(r,\ve)$ we have from the first equation in \eqref{sb2}
\[
\Ba(u^2/2) = u \Ba u + |\na u|^2 = |\na u|^2.
\]
We thus find
\begin{align*}
& \int_{{B_{\rho_\alpha}(r)}} |\na u|^2  = \underset{\ve\to 0^+}{\lim} \left\{\sum_{j=1}^{n+1} \int_{{B^+_{\rho_\alpha}(r,\ve)}} X_j X_j (u^2/2) + \sum_{j=1}^{n+1}  \int_{{B^-_{\rho_\alpha}(r,\ve)}} X_j X_j (u^2/2) \right\}
\\
& = \underset{\ve\to 0^+}{\lim} \left\{\sum_{j=1}^{n+1} \int_{{\p B^+_{\rho_\alpha}(r,\ve)}} u \langle X_j,\nu\rangle  X_j u + \sum_{j=1}^{n+1}  \int_{{\p B^-_{\rho_\alpha}(r,\ve)}} u \langle X_j,\nu\rangle  X_j u  \right\}
\\
& = \underset{\ve\to 0^+}{\lim} \left\{\int_{{S^+_{\rho_\alpha}(r,\ve)}} u \langle\na u,\na \rho_\alpha\rangle \frac{dH_{n}}{|\nabla \ra|}+ \int_{{S^-_{\rho_\alpha}(r,\ve)}} u \langle\na u,\na \rho_\alpha\rangle \frac{dH_{n}}{|\nabla \ra|}\right\}
\\
& + \underset{\ve\to 0^+}{\lim} \left\{- \int_{{L^+(r,\ve)}} u D_z u dx + \int_{{L^-(r,\ve)}} u D_z u dx\right\}
\\
& = \int_{{S_{\rho_\alpha}(r)}} u \langle\na u,\na \rho_\alpha\rangle \frac{dH_{n}}{|\nabla \ra|} + \int_{{B'_{\rho_\alpha}(r)}} u(-D^+_z u + D^-_z u) dx
\\
& = \int_{S_{\rho_\alpha}(r)} u\left(\frac{Z_\alpha u}{r}\right)\frac{\psi_\alpha}{|\nabla \ra|}dH_{n} + \int_{{B'_{\rho_\alpha}(r)}} u(-D^+_z u + D^-_z u) dx,
\end{align*}
where in the last equality we have used \eqref{nara2}. By the last equation in \eqref{sb2} we now have in $B'_{\rho_\alpha}(r)$
\[
u(-D^+_z u + D^-_z u)  = 0.
\]
In conclusion, we have proved that
\[
\int_{S_{\rho_\alpha}(r)} w\left(\frac{Z_\alpha w}{r}\right)\frac{\psi_\alpha}{|\nabla \ra|}dH_{n}  = \int_{{B_{\rho_\alpha}(r)}} |\na u|^2,
\]
which is \eqref{Ds} above.

\end{proof}

The proof of the next result follows that of Lemma 4.1 in \cite{G} and from Proposition \ref{P:diri} above.

\begin{lemma}[First variation of the height]\label{l:hprime}
 Let $u$ be a solution of \eqref{sb2} with $\vf\equiv 0$. Then,
 \[
  H'(u,r)=\frac{Q-1}{r}H(u,r)+2D(u,r),\hspace{1cm}\textrm{ for }r\in (0,r_0).
 \]
\end{lemma}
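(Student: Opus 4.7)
The strategy is to reduce the computation of $H'(u,r)$ to a first-variation formula for a volume integral, and then invoke Proposition \ref{P:diri}.

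First, I would write $H(u,r)$ as the derivative of a bulk integral. By Federer's coarea formula applied to $\rho_\alpha$,
\[
I(r) := \int_{B_{\rho_\alpha}(r)} u^2 \psi_\alpha\, dx dz = \int_0^r H(u,t)\, dt,
\]
so $H(u,r) = I'(r)$. Next, I rescale using the non-isotropic dilations \eqref{dilB}. Substituting $(x,z) = \delta_r(y,w) = (r^{\alpha+1}y,\, rw)$, whose Jacobian is $r^Q$, and using $\rho_\alpha(\delta_r(\cdot)) = r\, \rho_\alpha(\cdot)$ together with the $\delta_\lambda$-homogeneity of degree $0$ of $\psi_\alpha$ (immediate from \eqref{nara}), I obtain
\[
I(r) = r^Q \int_{B_{\rho_\alpha}(1)} u(\delta_r(y,w))^2\, \psi_\alpha(y,w)\, dy dw.
\]
Differentiating under the integral sign and using $\frac{d}{dr} u(\delta_r(y,w)) = \frac{1}{r}\, Z_\alpha u(\delta_r(y,w))$ (immediate from \eqref{Za}), then changing variables back, yields the key identity
\[
H(u,r) = I'(r) = \frac{Q}{r}\, I(r) + \frac{2}{r}\, J(r), \qquad J(r) := \int_{B_{\rho_\alpha}(r)} u\, Z_\alpha u\, \psi_\alpha\, dx dz.
\]

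Now I would differentiate this identity once more in $r$. Using $I'(r) = H(u,r)$,
\[
H'(u,r) = -\frac{Q}{r^2}\, I(r) - \frac{2}{r^2}\, J(r) + \frac{Q}{r}\, H(u,r) + \frac{2}{r}\, J'(r) = \frac{Q-1}{r}\, H(u,r) + \frac{2}{r}\, J'(r),
\]
where the last equality recombines the first two terms into $-\frac{1}{r}\, H(u,r)$ via the key identity. Another application of the coarea formula gives
\[
J'(r) = \int_{S_{\rho_\alpha}(r)} u\, Z_\alpha u\, \frac{\psi_\alpha}{|\nabla \rho_\alpha|}\, dH_n = r\, D(u,r),
\]
where the second equality is precisely the content of Proposition \ref{P:diri}. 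Assembling these two displays yields $H'(u,r) = \frac{Q-1}{r}\, H(u,r) + 2\, D(u,r)$, as desired.

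The main delicate point is the legitimacy of differentiating under the integral in $J(r)$, and of the coarea/dilation manipulations, given that $u$ is only an energy solution which may have a jump in the normal derivative across the thin set $\{z=0\}$. Since $u\in L^2$ and $|\na u|\in L^2$ on $B_{\rho_\alpha}(r_0)$, one verifies the pointwise bound $|Z_\alpha u \cdot \psi_\alpha| \leq C\, \rho_\alpha\, |\na u|$ (using $|x| \leq C\, \rho_\alpha^{\alpha+1}$ and $|z|\leq \rho_\alpha$), so $u\, Z_\alpha u\, \psi_\alpha \in L^1_{\mathrm{loc}}$ by Cauchy--Schwarz. This legitimizes all the manipulations through a standard $\varepsilon$-truncation argument about $\{|z|\geq \varepsilon\}$ analogous to the one used in the proof of Proposition \ref{P:diri}, exploiting the even reflection of $u$ and the complementarity conditions in \eqref{sb2} to eliminate the contribution from the thin set.
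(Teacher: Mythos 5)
Your proof is correct and follows essentially the same route as the paper's, which simply refers to Lemma 4.1 of \cite{G} together with Proposition \ref{P:diri}: a first-variation computation of $H(u,\cdot)$ based on the anisotropic scaling, with the resulting cross term $\int_{S_{\rho_\alpha}(r)} u\,\bigl(\tfrac{Z_\alpha u}{r}\bigr)\,\tfrac{\psi_\alpha}{|\nabla \rho_\alpha|}\,dH_n$ identified with $D(u,r)$ by Proposition \ref{P:diri}. Your detour through the solid integral $I(r)$ --- rescaling the volume integral and differentiating twice, rather than rescaling the weighted surface measure on $S_{\rho_\alpha}(r)$ directly --- is a harmless variant that, if anything, makes the justification of differentiating under the integral sign slightly cleaner.
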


By using the coarea formula, we have the following for the derivative of $D(u,r)$:
\begin{equation}\label{D'u}
 D'(u,r)=\int_{S_{\rho_\alpha}(r)} |\nabla_\alpha u|^2\frac{dH_{n}}{|\nabla\ra|}.
\end{equation}

The next result, which is inspired to Corollary 2.3 in \cite{G}, provides a basic property of solutions of \eqref{sb2}.

\begin{lemma}[First variation of the energy]\label{l:dprime}
 Let $u$ be a solution of \eqref{sb2} with $\vf\equiv 0$. Then,
 \begin{equation}\label{D'u2}
  D'(u,r)=2\int_{{S_{\rho_\alpha}(r)}}\left(\frac{Z_\alpha u}{r}\right)^2\frac{\psi_\alpha}{|\nabla \ra|}dH_{n}+\frac{Q-2}{r}D(u,r).
 \end{equation}
\end{lemma}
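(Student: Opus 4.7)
I would prove this as a Rellich--Pokhozhaev identity obtained by testing $\mathcal B_\alpha u=0$ against the infinitesimal generator $Z_\alpha u$, mimicking the approach of Proposition \ref{P:diri}. The core is to evaluate the quantity $I(r):=\int_{B_{\rho_\alpha}(r)}\langle\na u,\na(Z_\alpha u)\rangle\,dx\,dz$ in two different ways and equate them.

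\textbf{Key computations.} The first evaluation is algebraic: from Lemma \ref{L:comm} one has $X_j Z_\alpha = Z_\alpha X_j + X_j$, and hence
\[
\langle\na u,\na(Z_\alpha u)\rangle=\sum_{j=1}^{n+1}(X_j u)\bigl(Z_\alpha(X_j u)+X_j u\bigr)=\tfrac12 Z_\alpha(|\na u|^2)+|\na u|^2.
\]
Integrating over $B_{\rho_\alpha}(r)$, the second term contributes $D(u,r)$, and the first is handled by the divergence theorem: since $\operatorname{div}(Z_\alpha)=n(\alpha+1)+1=Q$ and since on $S_{\rho_\alpha}(r)$ the Euclidean normal is $\nu=\nabla\ra/|\nabla\ra|$ with $\langle Z_\alpha,\nabla\ra\rangle=Z_\alpha\ra=\ra=r$, one gets
\[
\int_{B_{\rho_\alpha}(r)} Z_\alpha(|\na u|^2)=r\int_{S_{\rho_\alpha}(r)}|\na u|^2\frac{dH_n}{|\nabla\ra|}-Q\,D(u,r)=r D'(u,r)-Q\,D(u,r),
\]
using the coarea formula \eqref{D'u}. (The possible contribution on the thin set $\{z=0\}$ from the inner boundary of the approximating domains $B^{\pm}_{\rho_\alpha}(r,\ve)$ in \eqref{es} is of the form $\ve\int_{L^\pm(r,\ve)}|\na u|^2dx$, which vanishes as $\ve\to 0^+$ because $|\na u|^2=z^{2\alpha}|\nabla_x u|^2+(D_z u)^2$ and $z\to 0$.) Thus
\[
I(r)=\tfrac12\bigl(rD'(u,r)-Q\,D(u,r)\bigr)+D(u,r).
\]

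\textbf{Second evaluation and thin-set analysis.} The second, integration-by-parts evaluation of $I(r)$ proceeds exactly as in the proof of Proposition \ref{P:diri}: approximate by $B^{\pm}_{\rho_\alpha}(r,\ve)$, use $\mathcal B_\alpha u=0$ in $\{z\neq 0\}$, and pass to the limit. The boundary contribution on $S_{\rho_\alpha}(r)$, using \eqref{nara2}, equals
\[
\int_{S_{\rho_\alpha}(r)} Z_\alpha u\cdot\langle\na u,\na\ra\rangle\,\frac{dH_n}{|\nabla\ra|}=r\int_{S_{\rho_\alpha}(r)}\Bigl(\frac{Z_\alpha u}{r}\Bigr)^{\!2}\frac{\psi_\alpha}{|\nabla\ra|}\,dH_n,
\]
while the thin-set contribution is $\int_{B'_{\rho_\alpha}(r)} Z_\alpha u\,(-D_z^+ u+D_z^- u)\,dx$. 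The main (and only) delicate point is to verify that this latter integral vanishes. Here the Signorini conditions in \eqref{sb2} are used pointwise: at any $x\in B'_{\rho_\alpha}(r)$ where $u(x,0)>0$ one has $-D_z^+ u+D_z^- u=0$; at any $x$ where $u(x,0)=0$, the fact that $u\ge 0$ on the thin set forces $\nabla_x u(x,0)=0$, and since $z=0$ we obtain $Z_\alpha u(x,0)=(\alpha+1)x\cdot\nabla_x u(x,0)+z\,D_z u=0$. In either case the integrand is zero. Equating the two evaluations of $I(r)$ and solving for $D'(u,r)$ yields \eqref{D'u2}. The main obstacle is precisely this thin-set verification, which is why the hypothesis $\vf\equiv 0$ enters (it gives the sign condition $u\ge 0$ used to kill tangential gradients on the contact set).
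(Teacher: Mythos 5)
Your proof is correct and is essentially the paper's own argument in a slightly repackaged form: the paper computes $\int_{B_{\rho_\alpha}(r)}\operatorname{div}(|\na u|^2 Z_\alpha)$ in two ways, while you compute $\int_{B_{\rho_\alpha}(r)}\langle\na u,\na(Z_\alpha u)\rangle$ in two ways, but the ingredients — the commutator $[X_j,Z_\alpha]=X_j$, $\operatorname{div}Z_\alpha=Q$ and $Z_\alpha\rho_\alpha=\rho_\alpha$, integration by parts against $\Ba u=0$ on the $\ve$-approximating domains, and the Signorini dichotomy (normal jump vanishes on $\{u>0\}$, tangential gradient vanishes on the contact set) — are identical and deployed in the same way. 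Both routes reduce to the same identity \eqref{Dup5}, so no gap.
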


\begin{proof}
For every $\ve>0$ we consider the sets in \eqref{es} above. We have
\begin{align*}
& \int_{B^+_{\rho_\alpha}(r,\ve)} \operatorname{div}(|\na u|^2 Z_\alpha) = Q \int_{B^+_{\rho_\alpha}(r,\ve)} |\na u|^2 +  \int_{B^+_{\rho_\alpha}(r,\ve)} Z_\alpha(|\na u|^2)
\\
& =  Q \int_{B^+_{\rho_\alpha}(r,\ve)} |\na u|^2 + 2 \sum_{j=1}^{n+1} \int_{B^+_{\rho_\alpha}(r,\ve)} Z_\alpha(X_j u) X_j u
\\
& =  Q \int_{B^+_{\rho_\alpha}(r,\ve)} |\na u|^2 + 2 \sum_{j=1}^{n+1} \int_{B^+_{\rho_\alpha}(r,\ve)} X_j(Z_\alpha u) X_j u -  2 \sum_{j=1}^{n+1} \int_{B^+_{\rho_\alpha}(r,\ve)} [X_j,Z_\alpha] u X_j u
\\
& =  Q \int_{B^+_{\rho_\alpha}(r,\ve)} |\na u|^2 + 2 \sum_{j=1}^{n+1} \int_{\p B^+_{\rho_\alpha}(r,\ve)} \langle X_j,\nu\rangle Z_\alpha u X_j u
\\
& - 2 \int_{B^+_{\rho_\alpha}(r,\ve)} Z_\alpha u \Ba u -  2 \int_{B^+_{\rho_\alpha}(r,\ve)} |\na u|^2
\\
& =  (Q-2) \int_{B^+_{\rho_\alpha}(r,\ve)} |\na u|^2 + 2 \sum_{j=1}^{n+1} \int_{\p B^+_{\rho_\alpha}(r,\ve)} \langle X_j,\nu\rangle Z_\alpha u X_j u,
\end{align*}
where in the second to the last equality we have used integration by parts and Lemma \ref{L:comm}, whereas in the last we have used the first equation in \eqref{sb2}. Now we observe that $\p B^+_{\rho_\alpha}(r,\ve) = S^+_{\rho_\alpha}(r,\ve) \cup L^+(r,\ve)$, and that we have $\nu = \frac{\nabla \rho_\alpha}{|\na \rho_\alpha|}$ on $S^+_{\rho_\alpha}(r,\ve)$, whereas we have $\nu = - e_{n+1}$ on $L^+(r,\ve)$.
This gives
\begin{align*}
& \sum_{j=1}^{n+1} \int_{\p B^+_{\rho_\alpha}(r,\ve)} \langle X_j,\nu\rangle Z_\alpha u X_j u = \sum_{j=1}^{n+1} \int_{S^+_{\rho_\alpha}(r,\ve)} \langle X_j,\nu\rangle Z_\alpha u X_j u
\\
& + \sum_{j=1}^{n+1} \int_{L^+(r,\ve)} \langle X_j,\nu\rangle Z_\alpha u X_j u =  \int_{S^+_{\rho_\alpha}(r,\ve)}  Z_\alpha u \frac{\langle\na u,\na \rho_\alpha\rangle}{|\na \rho_\alpha|}
\\
&  - \int_{L^+(r,\ve)} Z_\alpha u D_z u dx = \frac 1r \int_{S^+_{\rho_\alpha}(r,\ve)}  (Z_\alpha u)^2 \frac{\psi_\alpha}{|\na \rho_\alpha|}
- \int_{L^+(r,\ve)} Z_\alpha u D_z u dx,
\end{align*}
where in the last equality we have used \eqref{nara2} above. We conclude that
\begin{align*}
& \int_{B^+_{\rho_\alpha}(r,\ve)} \operatorname{div}(|\na u|^2 Z_\alpha) =  (Q - 2) \int_{B^+_{\rho_\alpha}(r,\ve)} |\na u|^2
\\
& + \frac 2r \int_{S^+_{\rho_\alpha}(r,\ve)}  (Z_\alpha u)^2 \frac{\psi_\alpha}{|\na \rho_\alpha|}
- 2 \int_{L^+(r,\ve)} Z_\alpha u D_z u dx.
\end{align*}
Recalling that from \eqref{Za} above we have $Z_\alpha u = (1+\alpha) \langle x,\nabla_x u\rangle + z D_zu$, we conclude
\begin{align}\label{Dup}
& \int_{B^+_{\rho_\alpha}(r,\ve)} \operatorname{div}(|\na u|^2 Z_\alpha) =  (Q - 2) \int_{B^+_{\rho_\alpha}(r,\ve)} |\na u|^2 + \frac 2r \int_{S^+_{\rho_\alpha}(r,\ve)}  (Z_\alpha u)^2 \frac{\psi_\alpha}{|\na \rho_\alpha|}
\\
&
- 2 (1+\alpha) \int_{L^+(r,\ve)}  \langle x,\nabla_x u\rangle  D_z u dx - 2 \ve \int_{L^+(r,\ve)} (D_z u)^2 dx.
\notag
\end{align}
On the other hand, the divergence theorem gives
\begin{align*}
& \int_{B^+_{\rho_\alpha}(r,\ve)} \operatorname{div}(|\na u|^2 Z_\alpha) = \int_{\p B^+_{\rho_\alpha}(r,\ve)} |\na u|^2 \langle Z_\alpha,\nu\rangle = \int_{S^+_{\rho_\alpha}(r,\ve)} |\na u|^2 \langle Z_\alpha,\nu\rangle
\\
& + \int_{L^+(r,\ve)} |\na u|^2 \langle Z_\alpha,\nu\rangle = \int_{S^+_{\rho_\alpha}(r,\ve)} |\na u|^2 \frac{Z_\alpha \rho_\alpha}{|\na \rho_\alpha|}  -  \int_{L^+(r,\ve)} |\na u|^2 \langle Z_\alpha,e_{n+1}\rangle
\end{align*}
Since by \eqref{ra} the function $\rho_\alpha$ is homogeneous of degree one with respect to the non-isotropic dilations \eqref{dilB}, we have $Z_\alpha \rho_\alpha = \rho_\alpha = r$ on $S_{\rho_\alpha}(r)$. Keeping in mind that $\langle Z_\alpha,e_{n+1}\rangle = z = \ve$ on $L^+(r,\ve)$, and recalling that \eqref{carre} gives $|\nabla_\alpha u|^2=(D_z u)^2+ |z|^{2\alpha} |\nabla_x u|^2$, we thus obtain
\begin{align}\label{Dup2}
& \int_{B^+_{\rho_\alpha}(r,\ve)} \operatorname{div}(|\na u|^2 Z_\alpha) =  r \int_{S^+_{\rho_\alpha}(r,\ve)} |\na u|^2 \frac{dH_n}{|\na \rho_\alpha|}
\\
& -  \ve \int_{L^+(r,\ve)} (D_z u)^2 -  \ve^{1+2\alpha} \int_{L^+(r,\ve)} |\nabla_x u|^2.
\notag
\end{align}
Letting $\ve\to 0^+$ in \eqref{Dup} with \eqref{Dup2}, and combining the resulting equations we find
\begin{align}\label{Dup3}
& r \int_{S^+_{\rho_\alpha}(r)} |\na u|^2 \frac{dH_n}{|\na \rho_\alpha|} = (Q - 2) \int_{B^+_{\rho_\alpha}(r)} |\na u|^2 + \frac 2r \int_{S^+_{\rho_\alpha}(r)}  (Z_\alpha u)^2 \frac{\psi_\alpha}{|\na \rho_\alpha|}
\\
&
- 2 (1+\alpha) \int_{B'_{\rho_\alpha}(r)}  \langle x,\nabla_x u\rangle  D^+_z u\ dx.
\notag
\end{align}
Arguing in a similar way on the set $B^-_{\rho_\alpha}(r,\ve)$ we find
\begin{align}\label{Dup4}
& r \int_{S^-_{\rho_\alpha}(r)} |\na u|^2 \frac{dH_n}{|\na \rho_\alpha|} = (Q - 2) \int_{B^-_{\rho_\alpha}(r)} |\na u|^2 + \frac 2r \int_{S^-_{\rho_\alpha}(r)}  (Z_\alpha u)^2 \frac{\psi_\alpha}{|\na \rho_\alpha|}
\\
&
+ 2 (1+\alpha) \int_{B'_{\rho_\alpha}(r)}  \langle x,\nabla_x u\rangle  D^-_z u\ dx.
\notag
\end{align}
If we now combine \eqref{Dup3} with \eqref{Dup4} we finally obtain
\begin{align}\label{Dup5}
& r \int_{S_{\rho_\alpha}(r)} |\na u|^2 \frac{dH_n}{|\na \rho_\alpha|} = (Q - 2) \int_{B_{\rho_\alpha}(r)} |\na u|^2 + \frac 2r \int_{S_{\rho_\alpha}(r)}  (Z_\alpha u)^2 \frac{\psi_\alpha}{|\na \rho_\alpha|}
\\
&
+ 2 (1+\alpha) \int_{B'_{\rho_\alpha}(r)}  \langle x,\nabla_x u\rangle (-D_z^+ u + D^-_z u)\ dx.
\notag
\end{align}
We now evaluate the integral on the thin ball  $B'_{\rho_\alpha}(r)$ in the right-hand side of \eqref{Dup5}. We have
\begin{align*}
& \int_{B'_{\rho_\alpha}(r)}  \langle x,\nabla_x u\rangle (-D_z^+ u + D^-_z u)\ dx = \int_{B'_{\rho_\alpha}(r)\cap \{u>0\}}  \langle x,\nabla_x u\rangle (-D_z^+ u + D^-_z u)\ dx
\\
& + \int_{B'_{\rho_\alpha}(r)\setminus \{u>0\}}  \langle x,\nabla_x u\rangle (-D_z^+ u + D^-_z u)\ dx =  0.
\end{align*}
The former integral in the right-hand side vanishes since on the set $B'_{\rho_\alpha}(r)\cap \{u>0\}$ we must have $-D_z^+ u + D^-_z u = 0$ by the fifth equation in \eqref{sb2}. At every point $(x,0)\in B'_{\rho_\alpha}(r)\setminus \{u>0\}$ we must instead have $\nabla_x u(x,0) = 0$, and thus the latter integral in the right-hand side vanishes as well. Keeping in mind \eqref{D'u}, we see that we have finally proved \eqref{D'u2}, thus completing the proof of the lemma.

\end{proof}

We are now able to extend Theorem \ref{T:almgrenBa} above to solutions of the thin obstacle problem \eqref{sb2}.

\begin{thrm}[Almgren type monotonicity for $\Ba$]\label{T:almgrenbgo}
Let $u$ be a solution of \eqref{sb2} with $\vf\equiv 0$. Then, $r\to N(u,r)$ is monotone non-decreasing in $(0,r_0)$. Furthermore, $N(u,r) \equiv \kappa$ in $(0,r_0)$ if and only if $u$ is homogeneous of degree $\kappa$ in $B_{\rho_\alpha}(r_0)$ with respect to the non-isotropic dilations \eqref{dilB} above.
\end{thrm}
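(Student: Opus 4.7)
The plan is to follow the classical Almgren strategy: compute the logarithmic derivative of $N(u,r)=rD(u,r)/H(u,r)$ and show it is nonnegative via a Cauchy--Schwarz inequality on the pseudo-sphere $S_{\rho_\alpha}(r)$. All the nontrivial integration-by-parts identities are already at our disposal: the Dirichlet representation \eqref{Ds} from Proposition \ref{P:diri}, the first variation of the height from Lemma \ref{l:hprime}, and the first variation of the energy from Lemma \ref{l:dprime}.

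First I would write
\[
\frac{d}{dr}\log N(u,r)=\frac{1}{r}+\frac{D'(u,r)}{D(u,r)}-\frac{H'(u,r)}{H(u,r)}.
\]
Substituting Lemma \ref{l:hprime} and Lemma \ref{l:dprime} into this identity, the factors $\frac{Q-1}{r}$ and $\frac{Q-2}{r}$ combine with the leading $\frac{1}{r}$ to cancel exactly, leaving
\[
\frac{N'(u,r)}{N(u,r)}=\frac{2}{D(u,r)}\int_{S_{\rho_\alpha}(r)}\!\!\left(\frac{Z_\alpha u}{r}\right)^{\!2}\frac{\psi_\alpha}{|\nabla\rho_\alpha|}\,dH_n\;-\;\frac{2D(u,r)}{H(u,r)}.
\]

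Next I would invoke Proposition \ref{P:diri}, which rewrites $D(u,r)$ as an integral over $S_{\rho_\alpha}(r)$ of the product $u\cdot(Z_\alpha u/r)$ against the measure $\psi_\alpha/|\nabla\rho_\alpha|\,dH_n$. Applying the Cauchy--Schwarz inequality to this representation with respect to that (nonnegative) measure yields
\[
D(u,r)^2\;\le\;H(u,r)\int_{S_{\rho_\alpha}(r)}\!\!\left(\frac{Z_\alpha u}{r}\right)^{\!2}\frac{\psi_\alpha}{|\nabla\rho_\alpha|}\,dH_n.
\]
Inserting this into the expression for $N'/N$ immediately gives $N'(u,r)\ge 0$, which is the monotonicity.

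For the rigidity statement, $N(u,\cdot)\equiv\kappa$ forces equality in the Cauchy--Schwarz step above for a.e.\ $r\in(0,r_0)$, which means $Z_\alpha u$ and $u$ are proportional on each sphere $S_{\rho_\alpha}(r)$, with a proportionality constant that must equal $N(u,r)=\kappa$ (as one sees by plugging back into \eqref{Ds} and using $H(u,r)>0$). Thus $Z_\alpha u=\kappa u$ in $B_{\rho_\alpha}(r_0)$, and by the characterization \eqref{Zak} of homogeneity, $u$ is $\delta_\lambda$-homogeneous of degree $\kappa$. Conversely, if $u$ is homogeneous of degree $\kappa$ then $Z_\alpha u=\kappa u$ makes equality in Cauchy--Schwarz trivial and $N(u,r)\equiv\kappa$ by direct computation.

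The only potential subtlety is that Proposition \ref{P:diri} and Lemma \ref{l:dprime} were derived for the thin obstacle problem \eqref{sb2}, in which the boundary contributions from the thin set $B'_{\rho_\alpha}(r)$ vanish thanks to the complementarity conditions $u(-D_z^+u+D_z^-u)=0$ and $\nabla_xu=0$ on $\{u=0\}$; since both identities are already established in that generality, no extra work is required here. The main conceptual step is simply the cancellation of the dimensional terms plus the Cauchy--Schwarz application, which is the analogue for $\Ba$ of the classical Almgren argument.
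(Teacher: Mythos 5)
Your proposal is correct and follows essentially the same route as the paper: the cancellation of the dimensional terms after substituting Lemmas \ref{l:hprime} and \ref{l:dprime}, the Cauchy--Schwarz inequality applied through the Dirichlet representation \eqref{Ds} of Proposition \ref{P:diri}, and the equality-case analysis yielding $Z_\alpha u=\kappa u$ and hence homogeneity via \eqref{Zak}. The only cosmetic difference is that the paper substitutes \eqref{Ds} into both terms of $\frac{d}{dr}\log N$ before applying Cauchy--Schwarz, whereas you keep $D(u,r)$ explicit and invoke \eqref{Ds} only at the Cauchy--Schwarz step; the arguments are identical in substance.
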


\begin{proof}
Using Lemmas \ref{l:hprime}, \ref{l:dprime} and Proposition \ref{P:diri}, we find
\begin{equation}\label{logN}
\frac{d}{dr} \log N(r) = 2 \frac{\int_{S_{\rho_\alpha}(r)}\left(\frac{Z_\alpha u}{r}\right)^2\frac{\psi_\alpha}{|\nabla \ra|}dH_{n}}{\int_{S_{\rho_\alpha}(r)}u\left(\frac{Z_\alpha u}{r}\right)\frac{\psi_\alpha}{|\nabla \ra|}dH_{n}} - 2 \frac{\int_{S_{\rho_\alpha}(r)}u\left(\frac{Z_\alpha u}{r}\right)\frac{\psi_\alpha}{|\nabla \ra|}dH_{n}}{\int_{S_{\rho_\alpha}(r)} u^2\frac{\psi_\alpha}{|\nabla \ra|}dH_{n}}\ge 0,
\end{equation}
where in the last inequality we have used Cauchy-Schwarz inequality. The implication that if $u$ is homogeneous of degree $\kappa$, then $N(u,r) \equiv \kappa$ follows directly from Proposition  \ref{P:diri} and \eqref{Zak}. The reverse implication is subtler since it uses the full strength of the monotonicity in \eqref{logN} (for a different proof of such implication see Remark \ref{R:sh} below). If $N(u,r) \equiv \kappa$, then $\frac{d}{dr} \log N(r)  \equiv 0$ and thus there is equality in the inequality in \eqref{logN}. This means that there is equality in the Cauchy-Schwarz inequality, hence for every $r\in (0,r_0)$ there exists $\gamma(r)$ such that $Z_\alpha u = \gamma(r) u$ on $S_{\rho_\alpha}(r)$. Inserting this information in the identity in Proposition  \ref{P:diri} we conclude that
\[
D(r) = \frac{\gamma(r)}{r} \int_{S_{\rho_\alpha}(r)}u^2 \frac{\psi_\alpha}{|\nabla \ra|}dH_{n} = \frac{\gamma(r)}{r} H(r).
\]
This implies $\kappa \equiv N(r) = \gamma(r)$ for every $r\in (0,r_0)$, and therefore $Z_\alpha u = \kappa u$. According to \eqref{Zak}
this implies that $u$ is homogeneous of degree $\kappa$ with respect to the non-isotropic dilations \eqref{dilB} above.

\end{proof}

Combining Theorem \ref{T:almgrenbgo} with Proposition \ref{P:ntn} we obtain the Caffarelli-Silvestre monotonicity theorem for solutions of \eqref{sb}.

\begin{thrm}[Almgren type monotonicity for $L_a, -1<a<1$]\label{T:almgrenext}
Let $\tilde u$ be a solution of \eqref{sb} in $B_e(R_0)$ with $\vf\equiv 0$. Then, $r\to \tilde N(\tilde u,r)$ is monotone non-decreasing in $(0,R_0)$. Furthermore, $\tilde N(\tilde u,r) \equiv \tilde \kappa$ in $(0,R_0)$ if and only if $\tilde u$ is homogeneous of degree $\tilde \kappa$ in $B_e(R_0)$ with respect to the standard Euclidean dilations.
\end{thrm}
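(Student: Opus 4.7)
The plan is to reduce Theorem~\ref{T:almgrenext} to Theorem~\ref{T:almgrenbgo} via the change of variable $y \mapsto z = h(y)$ of Section~\ref{SS:ext}. Given a solution $\tilde u$ of \eqref{sb} in $B_e(R_0)$ with $\vf\equiv 0$, first define $u(x,z) = \tilde u(x, h^{-1}(|z|))$, which is automatically even in $z$. By the computation leading to \eqref{lab2}--\eqref{bg0}, $u$ solves $\Ba u = 0$ in the upper and lower halves of $B_{\rho_\alpha}(h(R_0))$ with $\alpha = a/(1-a)$. Since $h'>0$, the sign conditions on $y^a D_y \tilde u$ in \eqref{sb} translate into the corresponding sign conditions on $-D_z^+ u + D_z^- u$ in \eqref{sb2}, and the complementarity condition on the thin space is preserved; thus $u$ is a solution of \eqref{sb2} with zero obstacle.

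For the monotonicity, Theorem~\ref{T:almgrenbgo} gives that $s \mapsto N(u, s)$ is nondecreasing on $(0, h(R_0))$, while Proposition~\ref{P:ntn} yields $\tilde N(\tilde u, r) = (1-a)\, N(u, h(r))$. Since $h : (0, R_0) \to (0, h(R_0))$ is a strictly increasing diffeomorphism, the composition $r \mapsto N(u, h(r))$ is nondecreasing, hence so is $r \mapsto \tilde N(\tilde u, r)$ on $(0, R_0)$. For the homogeneity characterization, if $\tilde N(\tilde u, r) \equiv \tilde\kappa$ on $(0, R_0)$, then Proposition~\ref{P:ntn} combined with the bijectivity of $h$ forces $N(u, s) \equiv \tilde\kappa/(1-a) =: \kappa$ on $(0, h(R_0))$. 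By Theorem~\ref{T:almgrenbgo}, $u$ is homogeneous of degree $\kappa$ with respect to the non-isotropic dilations \eqref{dilB}, and Proposition~\ref{P:hom} then makes $\tilde u$ homogeneous of degree $(1-a)\kappa = \tilde\kappa$ with respect to the standard Euclidean dilations. The converse follows by reversing this chain of implications: start from $\tilde u$ homogeneous of degree $\tilde\kappa$, apply Proposition~\ref{P:hom} to get $u$ homogeneous of degree $\kappa$, apply Theorem~\ref{T:almgrenbgo} to get $N(u,\cdot)\equiv \kappa$, then apply Proposition~\ref{P:ntn}.

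The main technical obstacle concerns the range of $a$. Theorem~\ref{T:almgrenbgo} is proved in the present subsection under the standing assumption $\alpha>0$, i.e., $0<a<1$, while Theorem~\ref{T:almgrenext} is formulated for the full range $-1 < a < 1$. When $-1 < a < 0$ one has $\alpha \in (-\tfrac12, 0)$, so the coefficient $|z|^{2\alpha}$ is singular along the thin set $\{z=0\}$, and one must verify that the integration-by-parts identities underlying Proposition~\ref{P:diri} and Lemmas~\ref{l:hprime}, \ref{l:dprime} remain valid. The necessary interior regularity of $u$ away from the thin set follows from the $A_2$ theory applied to $L_a$ and transported through \eqref{tu}; the delicate point is to justify the vanishing in the limit $\ve \to 0^+$ of the boundary integrals over the horizontal slices $L^\pm(r,\ve)$ when $2\alpha$ is negative, since the weight $|z|^{2\alpha}$ in \eqref{carre} now blows up. Once these boundary computations are confirmed in the enlarged range, the reduction described above delivers Theorem~\ref{T:almgrenext} for every $a \in (-1,1)$.
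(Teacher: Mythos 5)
Your proof is correct and follows essentially the same route as the paper: transfer to the Baouendi problem \eqref{sb2} via \eqref{tu}, apply Theorem \ref{T:almgrenbgo}, and pull the conclusion back through Proposition \ref{P:ntn} (and Proposition \ref{P:hom} for the homogeneity statement). Your honest flag about the range $-1<a<0$ matches the paper, which likewise does not carry out the details there but defers either to a regularization of $\Ba$ as in \cite{G} or to a direct argument on $L_a$ as in \cite{CS}.
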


\begin{proof}
Assume first that $0\le a <1$, and thus $0 \le \alpha = \frac{a}{1-a}<\infty$. Let $\tilde u$ be as in the statement of the theorem and consider the function $u(x,z)$ defined by \eqref{tu}. With $r_0 = h^{-1}(R_0)$, such function satisfies the problem \eqref{sb2} above in the pseudo-ball $B_{\rho_\alpha}(r_0)$ with obstacle $\vf \equiv 0$. From Theorem \ref{T:almgrenbgo} we infer that $r\to N(u,h(r))$ is non-decreasing in $(0,R_0)$. The desired conclusion now follows from Proposition \ref{P:ntn}. In the remaining range $-1<a<0$, corresponding to $-\frac 12<\alpha<0$, the proof can be obtained either by adapting the regularization procedure for the operator $\Ba$ as in in \cite{G}, or working directly on the operator $L_a$ as in \cite{CS}.

\end{proof}

\subsection{Weiss type monotonicity}\label{SS:weiss}
In this section we  establish a one-parameter family of monotonicity formulas similar to those in Theorem~1.4.1 in \cite{GP}. Since we want to maintain the connection between $L_a$ and $\Ba$, similarly to the way we have obtained Theorem \ref{T:almgrenext}, we first consider the range $0\le \alpha<\infty$ for $\Ba$.

\begin{thrm}[Weiss type monotonicity for $\Ba, 0\le \alpha <\infty$]\label{T:weissB}
 Let $u$ be a solution of \eqref{sb2} with $\vf\equiv 0$. For every $\kappa> 0$ we define for $0<r<r_0$
 \begin{equation}\label{WBa}
  \mathcal{W}_\kappa(u,r)=\frac{1}{r^{Q-2+2\kappa}}D(u,r)-\frac{\kappa}{r^{Q-1+2\kappa}}H(u,r),
 \end{equation}
 where $Q = n(\alpha +1) + 1$ (see \eqref{hd} above). Then,
 \begin{equation}\label{W'Ba}
  \frac{d}{dr}\mathcal{W}_\kappa(u,r)=\frac{2}{r^{Q+2\kappa}}\int_{{S_{\rho_\alpha}(r)}}(Z_\alpha u-\kappa u)^2\frac{\psi_\alpha}{|\nabla \ra|}dH_{n}.
 \end{equation}
 Consequently, $r\mapsto \mathcal{W}_\kappa(u,r)$ is a non-decreasing function in $(0,r_0)$, and $\mathcal{W}_\kappa(u,\cdot)$ is constant if and only if $u$ is homogeneous of degree $\kappa$ in $B_{\rho_\alpha}(r_0)$.
\end{thrm}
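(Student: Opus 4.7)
The proof will be a direct computation: differentiate $\mathcal{W}_\kappa(u,r)$, substitute the first-variation formulas from Lemmas \ref{l:hprime} and \ref{l:dprime}, use Proposition \ref{P:diri} to convert the remaining interior quantity $D(u,r)$ into a boundary integral, and recognize the result as a perfect square.

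\textbf{Step 1: Differentiate.} Applying the product rule to \eqref{WBa},
\[
\frac{d}{dr}\mathcal{W}_\kappa(u,r) = -\frac{Q-2+2\kappa}{r^{Q-1+2\kappa}}D(u,r) + \frac{D'(u,r)}{r^{Q-2+2\kappa}} + \frac{(Q-1+2\kappa)\kappa}{r^{Q+2\kappa}}H(u,r) - \frac{\kappa\,H'(u,r)}{r^{Q-1+2\kappa}}.
\]

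\textbf{Step 2: Substitute the variation formulas.} By Lemma \ref{l:hprime}, $H'=\frac{Q-1}{r}H+2D$, and by Lemma \ref{l:dprime},
\[
D'(u,r)=\frac{2}{r^{2}}\int_{S_{\rho_\alpha}(r)}(Z_\alpha u)^{2}\frac{\psi_\alpha}{|\nabla\rho_\alpha|}dH_n+\frac{Q-2}{r}D(u,r).
\]
Inserting both expressions and collecting like terms, the coefficient of $D(u,r)$ becomes
\[
\frac{-(Q-2+2\kappa)+(Q-2)-2\kappa}{r^{Q-1+2\kappa}}=-\frac{4\kappa}{r^{Q-1+2\kappa}},
\]
and the coefficient of $H(u,r)$ becomes
\[
\frac{\kappa(Q-1+2\kappa)-\kappa(Q-1)}{r^{Q+2\kappa}}=\frac{2\kappa^{2}}{r^{Q+2\kappa}}.
\]
So
\[
\frac{d}{dr}\mathcal{W}_\kappa(u,r)=\frac{2}{r^{Q+2\kappa}}\int_{S_{\rho_\alpha}(r)}(Z_\alpha u)^{2}\frac{\psi_\alpha}{|\nabla\rho_\alpha|}dH_n-\frac{4\kappa}{r^{Q-1+2\kappa}}D(u,r)+\frac{2\kappa^{2}}{r^{Q+2\kappa}}H(u,r).
\]

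\textbf{Step 3: Boundaryize $D$ and complete the square.} Proposition \ref{P:diri} gives
\[
\frac{4\kappa}{r^{Q-1+2\kappa}}D(u,r)=\frac{4\kappa}{r^{Q+2\kappa}}\int_{S_{\rho_\alpha}(r)} u\,Z_\alpha u\,\frac{\psi_\alpha}{|\nabla\rho_\alpha|}dH_n,
\]
and by definition $H(u,r)=\int_{S_{\rho_\alpha}(r)}u^{2}\psi_\alpha/|\nabla\rho_\alpha|\,dH_n$. Plugging in, the three terms combine into
\[
\frac{d}{dr}\mathcal{W}_\kappa(u,r)=\frac{2}{r^{Q+2\kappa}}\int_{S_{\rho_\alpha}(r)}\bigl[(Z_\alpha u)^{2}-2\kappa\,u\,Z_\alpha u+\kappa^{2}u^{2}\bigr]\frac{\psi_\alpha}{|\nabla\rho_\alpha|}dH_n,
\]
which is exactly \eqref{W'Ba} since the bracket equals $(Z_\alpha u-\kappa u)^{2}$.

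\textbf{Step 4: Consequences.} Monotonicity is immediate from non-negativity of the integrand. For the rigidity statement, if $\mathcal{W}_\kappa(u,\cdot)$ is constant then $Z_\alpha u=\kappa u$ holds $H_n$-a.e.\ on $S_{\rho_\alpha}(r)$ for a.e.\ $r\in(0,r_0)$, hence everywhere on $B_{\rho_\alpha}(r_0)\setminus M$ by continuity; by \eqref{Zak} this is equivalent to $u$ being $\delta_\lambda$-homogeneous of degree $\kappa$. The converse is trivial: homogeneity gives $Z_\alpha u-\kappa u\equiv 0$, so \eqref{W'Ba} vanishes. No step is a serious obstacle; the only ``trick'' is that the exponents $Q-2+2\kappa$ and $Q-1+2\kappa$ in \eqref{WBa} are calibrated precisely so that the $D$ and $H$ contributions align to yield the square $(Z_\alpha u-\kappa u)^{2}$.
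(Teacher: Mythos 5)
Your proof is correct and follows essentially the same route as the paper: differentiate $\mathcal{W}_\kappa$, substitute the first variations of $H$ and $D$ from Lemmas \ref{l:hprime} and \ref{l:dprime}, convert the remaining $D(u,r)$ term to a boundary integral via Proposition \ref{P:diri}, and recognize the perfect square $(Z_\alpha u-\kappa u)^2$. Your Step 4 spelling out the rigidity statement is a slight (and welcome) elaboration on the paper, which leaves that consequence implicit.
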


\begin{proof}

With Proposition \ref{P:diri}, Lemmas \ref{l:hprime} and \ref{l:dprime} in hands, the proof of Theorem \ref{T:weissB} proceeds as follows.
Differentiating \eqref{WBa} we obtain
\begin{align*}
 \frac{d}{dr}\mathcal{W}_\kappa(u,r) & = \frac{1}{r^{Q-1+2\kappa}} \left\{-(Q-2+2\kappa) D(u,r) + r D'(u,r) + \frac{\kappa(Q-1+2\kappa)}{r} H(u,r) - \kappa H'(u,r)\right\}
\\
& = \frac{1}{r^{Q-1+2\kappa}} \bigg\{-(Q-2+2\kappa) D(u,r) + r \left[\frac{2}{r^2} \int_{S_{\rho_\alpha}(r)} (Z_\alpha u)^2 \frac{\psi_\alpha}{|\nabla \ra|}dH_{n} + \frac{Q-2}{r} D(u,r)\right]
\\
& + \frac{\kappa(Q-1+2\kappa)}{r} H(u,r) - \kappa \left[\frac{Q-1}{r}H(u,r)+2D(u,r)\right]\bigg\}
\\
& = \frac{2}{r^{Q+2\kappa}}  \left\{\int_{S_{\rho_\alpha}(r)} (Z_\alpha u)^2 \frac{\psi_\alpha}{|\nabla \ra|}dH_{n}  + \int_{S_{\rho_\alpha}(r)} u^2 \frac{\psi_\alpha}{|\nabla \ra|}dH_{n} - 2\kappa \int_{S_{\rho_\alpha}(r)} u Z_\alpha u \frac{\psi_\alpha}{|\nabla \ra|}dH_{n}\right\}
\\
& = \frac{2}{r^{Q+2\kappa}}\int_{{S_{\rho_\alpha}(r)}}(Z_\alpha u-\kappa u)^2\frac{\psi_\alpha}{|\nabla \ra|}dH_{n}.
 \end{align*}

\end{proof}

\begin{rmrk}\label{R:sh}
We note that the proof of the last statement in Theorem \ref{T:almgrenbgo} can also be derived from Theorem \ref{T:weissB}. In fact, it suffices to  write \eqref{WBa} as follows
\begin{equation}\label{WkBa}
\mathcal W_\kappa(u,r) = \frac{H(u,r)}{r^{Q-1+2\kappa}} \big(N(u,r) - \kappa\big).
\end{equation}
By the hypothesis that $N(u,r) = \kappa$ for $0<r<r_0$, we see that $\mathcal W_\kappa(u,\cdot) =0$ in $(0,r_0)$, and therefore $\frac{d}{dr} \mathcal W_\kappa(u,\cdot) = 0$. In view of \eqref{W'Ba} this gives $\Za u = \kappa u$ in $B_{\rho_\alpha}(r_0)$.
\end{rmrk}

Combining Theorem \ref{T:weissB} with Lemmas \ref{L:hth} and \ref{L:dtd} we obtain the following one-parameter family of monotonicity formulas.

\begin{thrm}[Weiss type monotonicity for $L_a, -1<a<1$]\label{T:weissext}
Let $\tilde u$ be a solution of \eqref{sb} in $B_e(R_0)$ with $\vf\equiv 0$. For every $\tilde \kappa > 0$ and $0<r<R_0$ we define
\begin{equation}\label{tW}
\tilde{\mathcal W}_{\tilde \kappa}(\tilde u,r) = \frac{1}{r^{\tilde Q-2+2\tilde \kappa}}\tilde D(\tilde u,r)-\frac{\tilde \kappa}{r^{\tilde Q-1+2\tilde \kappa}}\tilde H(\tilde u,r),
 \end{equation}
where $\tilde Q =  n+1+a$ (see \eqref{tQ} above). Then,
 \begin{equation}\label{tW'}
  \frac{d}{dr}\tilde{\mathcal W}_{\tilde \kappa}(\tilde u,r)=\frac{2}{r^{\tilde Q+2\tilde \kappa}}\int_{{S_{e}(r)}}(\tilde Z \tilde u-\tilde \kappa \tilde u)^2 |y|^a dH_{n}.
 \end{equation}
 Consequently, $r\mapsto \mathcal{\tilde W}_{\tilde \kappa}(\tilde u,r)$ is a non-decreasing function in $(0,r_0)$, and $\tilde{\mathcal W}_{\tilde \kappa}(\tilde u,\cdot)$ is constant if and only if $\tilde u$ is a standard homogeneous function of degree $\tilde \kappa$ in $B_{e}(R_0)$.
\end{thrm}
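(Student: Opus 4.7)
\noindent\textbf{Proof proposal for Theorem \ref{T:weissext}.}

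My plan is to transfer Theorem \ref{T:weissB} from the Baouendi--Grushin setting to the extension operator setting, exactly in the spirit of how Theorem \ref{T:almgrenext} was deduced from Theorem \ref{T:almgrenbgo}. I first handle the range $0\le a < 1$, where $\alpha = \frac{a}{1-a}\ge 0$, and then indicate the adjustment needed for $-1<a<0$.

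For $0\le a <1$, let $u(x,z)=\tilde u(x,h^{-1}(|z|))$ be the Baouendi--Grushin counterpart of $\tilde u$ via the diffeomorphism \eqref{phi}, extended evenly in $z$. By the first equation in \eqref{sb}, together with \eqref{lab2}, \eqref{bg}, $u$ solves the thin obstacle problem \eqref{sb2} with zero obstacle in $B_{\rho_\alpha}(h(R_0))$. Setting $\rho = h(r)$ and $\kappa = \tilde\kappa/(1-a)$ (consistent with Proposition \ref{P:hom}), I would substitute the identities
\[
\tilde H(\tilde u,r)=r^a H(u,\rho),\qquad \tilde D(\tilde u,r)=(1-a)^a D(u,\rho)
\]
from Lemmas \ref{L:hth}--\ref{L:dtd} into the defining expression \eqref{tW} for $\tilde{\mathcal W}_{\tilde\kappa}$. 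Using \eqref{QtQ}, a direct bookkeeping check gives
\[
\tilde Q-2+2\tilde\kappa=(1-a)(Q-2+2\kappa),\qquad \tilde Q-1+2\tilde\kappa-a=(1-a)(Q-1+2\kappa),
\]
and, since $\rho=h(r)$ implies $r^{(1-a)(\cdots)} = (1-a)^{(1-a)(\cdots)}\rho^{(\cdots)}$, one sees that the two terms share a common constant prefactor, so that
\[
\tilde{\mathcal W}_{\tilde\kappa}(\tilde u,r)=C_{a,\kappa}\,\mathcal W_\kappa(u,h(r))
\]
for an explicit positive constant $C_{a,\kappa}$. This already gives the monotonicity and the rigidity statement by virtue of Theorem \ref{T:weissB} and the fact that $h$ is strictly increasing.

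Next, to obtain the integral identity \eqref{tW'}, I would differentiate the displayed identity with the chain rule, $\tfrac{d}{dr}=h'(r)\tfrac{d}{d\rho}$, and apply \eqref{W'Ba}. The final step is to convert the pseudo-spherical integral over $S_{\rho_\alpha}(\rho)$ into the Euclidean spherical integral over $S_e(r)$. For this I would use Proposition \ref{P:integrals} applied to $f=(Z_\alpha u-\kappa u)^2 |z|^{2\alpha}$ (pulling the factor $\rho^{-2\alpha}$ from $\psi_\alpha=|z|^{2\alpha}/\rho_\alpha^{2\alpha}$ outside), together with formula \eqref{gd} which translates the integrand as
\[
(Z_\alpha u-\kappa u)(x,h(|y|))=\tfrac{1}{1-a}\bigl(\tilde Z\tilde u-\tilde\kappa\tilde u\bigr)(x,y).
\]
After plugging in $h(|y|)^{2\alpha}=|y|^{2a}(1-a)^{-2a}$ and $h'(r)=(1-a)^a r^{-a}$ and collecting powers of $(1-a)$ and of $r$, the $C_{a,\kappa}$ and $h'(r)$ prefactors conspire with the Jacobian-type constants in Proposition \ref{P:integrals} to yield precisely \eqref{tW'}. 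I expect this last algebraic bookkeeping to be the main (though purely mechanical) obstacle, analogous to the constant-chasing already performed in the proofs of Lemmas \ref{L:hth}--\ref{L:dtd}.

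For the remaining range $-1<a<0$, the operator $\Ba$ becomes $D_{zz}u + z^{-2|\alpha|}\Delta_x u$ and the Baouendi framework used above breaks down at the boundary $\{z=0\}$. Following the comment at the end of the proof of Theorem \ref{T:almgrenext}, I would proceed either by a regularization procedure (replacing $|z|^{2\alpha}$ by $(z^2+\varepsilon)^\alpha$ and passing to the limit, as in \cite{G}) or, more directly, by reproving the three building blocks (Proposition \ref{P:diri}, Lemma \ref{l:hprime}, Lemma \ref{l:dprime}) directly for $L_a$ in the Euclidean variables $(x,y)$ using the $A_2$-weight $|y|^a$, which is valid for all $a\in(-1,1)$ by the results of \cite{FKS}. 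The Weiss derivative identity \eqref{tW'} then follows from the same algebraic manipulation as in the proof of Theorem \ref{T:weissB}, with $Q$ replaced by $\tilde Q$, $Z_\alpha$ by $\tilde Z$, and $\psi_\alpha/|\nabla\rho_\alpha|\,dH_n$ by $|y|^a\,dH_n$.
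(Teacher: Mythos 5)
Your proposal is correct and follows essentially the same route as the paper: for $0\le a<1$ it pulls back to the Baouendi setting via $\Phi$, uses Lemmas \ref{L:hth}--\ref{L:dtd} and \eqref{QtQ} to obtain the identity $\mathcal W_\kappa(u,h(r))=(1-a)^{\tilde Q-a-2+2\tilde\kappa}\,\tilde{\mathcal W}_{\tilde\kappa}(\tilde u,r)$ (your exponent bookkeeping checks out), and then differentiates and converts the spherical integral with \eqref{si2} and \eqref{gd}. Your treatment of the range $-1<a<0$ (rerunning the Weiss computation directly for $L_a$ with the $A_2$ weight $|y|^a$) is exactly what the paper indicates as well.
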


\begin{proof}
Suppose first that $0\le a<1$, so that $0\le \alpha <\infty$. Let $\tilde u$ be as in the statement of the theorem and consider the function $u(x,z)$ defined by \eqref{tu}. The even extension in $z$ of such function satisfies the problem \eqref{sb2} above with obstacle $\vf \equiv 0$. In view of Theorem \ref{T:weissB}
the functional in \eqref{WBa} is monotonically increasing, and therefore such is also $r\to  \mathcal{W}_\kappa(u,h(r))$, where $h$ is as in \eqref{h} above and $\kappa = \tilde \kappa/(1-a)$, see \eqref{tk} above. Now, using \eqref{h}, \eqref{hd}, \eqref{tQ}, and Lemmas \ref{L:hth} and \ref{L:dtd} we find
\begin{align}\label{Ws}
\mathcal{W}_\kappa(u,h(r)) & =\frac{1}{h(r)^{Q-2+2\kappa}} D(u,h(r)) -\frac{\kappa}{h(r)^{Q-1+2\kappa}}H(u,h(r))
\\
& = \frac{(1-a)^{(1-a)(Q-2+2\kappa)}}{r^{(1-a)(Q-2+2\kappa)}} \frac{\tilde D(\tilde u,r)}{(1-a)^{a}} -  \kappa \frac{(1-a)^{(1-a)(Q-1+2\kappa)}}{r^{(1-a)(Q-1+2\kappa)}}\frac{\tilde H(\tilde u,r)}{r^a}
\notag\\
& = (1-a)^{\tilde Q-a -2+2\tilde \kappa}\left\{ \frac{1}{r^{\tilde Q-2+2\tilde \kappa}} \tilde D(\tilde u,r) - \frac{\tilde \kappa}{r^{\tilde Q-1+2\tilde \kappa}} \tilde H(\tilde u,r)\right\}
\notag\\
& = (1-a)^{\tilde Q-a -2+2\tilde \kappa} \tilde{\mathcal W}_{\tilde \kappa}(\tilde u,r).
\notag
\end{align}
Since we know that $r\to \mathcal W_\kappa(u,h(r))$ is monotone non-decreasing, the latter equation already tells us that also $r\to \tilde{\mathcal W}_{\tilde \kappa}(\tilde u,r)$ is monotone non-decreasing. Furthermore, the chain rule and \eqref{W'Ba} give
\begin{align*}
\frac{d}{dr} \tilde{\mathcal W}_{\tilde \kappa}(\tilde u,r) & = (1-a)^{-\tilde Q+a +2-2\tilde \kappa} h'(r) \left(\frac{d}{d\tau}\mathcal{W}_\kappa(u,\cdot)\right)(h(r))
\\
& = (1-a)^{-\tilde Q+a +2-2\tilde \kappa} \frac{2}{h(r)^{Q+2\kappa}} h'(r)  \int_{{S_{\rho_\alpha}(h(r))}}(Z_\alpha u-\kappa u)^2\frac{\psi_\alpha}{|\nabla \ra|}dH_{n}.
 \end{align*}
 We now use \eqref{si2} and argue as in the proof of Lemma \ref{L:hth} to find
 \begin{align*}
& h'(r)  \int_{{S_{\rho_\alpha}(h(r))}}(Z_\alpha u-\kappa u)^2\frac{\psi_\alpha}{|\nabla \ra|}dH_{n} =
(1-a)^a \int_{{S_{e}(r)}}(\widetilde{Z_\alpha u} -\kappa \tilde u)^2 \tilde \psi_\alpha |y|^{-a} dH_{n}
\\
& = \frac{(1-a)^a}{(1-a)^{2a} h(r)^{2\alpha}} \int_{{S_{e}(r)}}((\alpha + 1) \tilde Z \tilde u -\kappa \tilde u)^2   |y|^{a} dH_{n}
\\
& = \frac{1}{(1-a)^{a + 2} h(r)^{2\alpha}} \int_{{S_{e}(r)}}(\tilde Z \tilde u -\tilde \kappa \tilde u)^2  |y|^{a} dH_{n}
\end{align*}
where in the second to the last equality we have used \eqref{gd}. We conclude that
\begin{align*}
\frac{d}{dr} \tilde{\mathcal W}_{\tilde \kappa}(\tilde u,r) & = (1-a)^{-\tilde Q+a +2-2\tilde \kappa} \frac{2}{(1-a)^{a + 2} h(r)^{Q + 2 \kappa + 2\alpha}} \int_{{S_{e}(r)}}(\tilde Z \tilde u -\tilde \kappa \tilde u)^2 |y|^{a} dH_{n}
\\
& = (1-a)^{-\tilde Q-2\tilde \kappa} \frac{2 (1-a)^{(1-a)(Q+2\kappa +2a/(1-a))}}{r^{(1-a)(Q + 2 \kappa + 2\alpha)}} \int_{{S_{e}(r)}}(\tilde Z \tilde u -\tilde \kappa \tilde u)^2 |y|^{a} dH_{n}
\\
& = \frac{2}{r^{\tilde Q + 2 \tilde\kappa}} \int_{{S_{e}(r)}}(\tilde Z \tilde u -\tilde \kappa \tilde u)^2 |y|^{a} dH_{n},
\end{align*}
which proves \eqref{tW'}. The last part of the theorem follows immediately from \eqref{tW'}.

The proof of the theorem in the remaining range $-1<a<0$, for which $-\frac 12 <\alpha <0$, now follows easily by mimicking the above proof of Theorem \ref{T:weissB}. What we mean by this is that, once we have the correct guess \eqref{tW} of the Weiss functional for $L_a$ in the case $0\le a<1$, all we need to do is to use the same  arguments that we have used to prove \eqref{W'Ba}.

\end{proof}

\subsection{Global solutions of polynomial growth}\label{SS:pol}

It is interesting here to analyze, in the range $0\le \alpha <\infty$, global solutions of \eqref{sb2} having polynomial growth.

\begin{dfn}\label{D:shBa}
Let $\kappa\ge 0$. We denote by $\mathfrak P_{\alpha,\kappa}(\R^{n+1})$ the space of all functions $P_\kappa\in \Gamma^2_\alpha(\R^{n+1})$ such that $\Ba P_\kappa = 0$ and $\Za P_\kappa = \kappa P_\kappa$. The elements of such space will be called $\Ba$-\emph{solid harmonics} of degree $\kappa$. We indicate by $\mathfrak P^+_{\alpha,\kappa}(\R^{n+1})$ the space of those functions $P_\kappa\in \mathfrak P_{\alpha,\kappa}(\R^{n+1})$ such that $P_\kappa(x,0) \ge 0$ for which $P_\kappa(x,-z) = P_\kappa(x,z)$.
\end{dfn}

Let us note explicitly that $D_z P_\kappa(x,0) = 0$. We emphasize that, for $P_\kappa\in \mathfrak P_{\alpha,\kappa}(\R^{n+1})$, the number $\kappa$ needs not be an integer. For instance, if $A = \frac{(\alpha+1)(2\alpha+1)}{n}$, then the function $P_\kappa(x,z) =  A |x|^2 - z^{2(\alpha + 1)}$ is a solution of $\Ba f = 0$, homogeneous of degree $\kappa = 2(\alpha+1)$. Thus, $P_\kappa \in \mathfrak P_{\alpha,\kappa}(\R^{n+1})$. We also have $P_\kappa \in \mathfrak P^+_{\alpha,\kappa}(\R^{n+1})$.

\begin{prop}\label{P:cm}
For every $\kappa\ge 0$ the space $\mathfrak P_{\alpha,\kappa}(\R^{n+1})$ is finite-dimensional.
\end{prop}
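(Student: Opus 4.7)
The plan is to equip $\mathfrak P_{\alpha,\kappa}(\R^{n+1})$ with a natural Banach norm, prove that its closed unit ball is compact via the De Giorgi-Nash-Moser-Hölder theory for $\Ba$ (or, equivalently, for $L_a$), and then invoke Riesz's lemma: a normed space whose closed unit ball is compact must be finite-dimensional.

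First I would set $\|P\|_\ast := \sup_{\overline{B_{\rho_\alpha}(1)}} |P|$, which defines a norm on $\mathfrak P_{\alpha,\kappa}(\R^{n+1})$. It is finite since $P\in \Gamma^2_\alpha(\R^{n+1})$ is continuous and $\overline{B_{\rho_\alpha}(1)}$ is compact; and if $\|P\|_\ast=0$ then the identity $P(\delta_\lambda(x,z)) = \lambda^\kappa P(x,z)$ coming from $\Za P = \kappa P$ forces $P \equiv 0$ throughout $\R^{n+1}$. The same homogeneity yields $|P(x,z)|\le \max\{1,\rho_\alpha(x,z)^\kappa\}\|P\|_\ast$, so any family inside the unit ball is uniformly bounded on every compact subset of $\R^{n+1}$.

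Given $\{P_n\}\subset \mathfrak P_{\alpha,\kappa}(\R^{n+1})$ with $\|P_n\|_\ast \le 1$, the transported functions $\tilde P_n(x,y) := P_n(x,h(|y|))$, extended evenly in $y$, satisfy $L_a \tilde P_n = 0$ weakly in $\R^{n+1}$ by \eqref{lab2}. Since $|y|^a$ is a Muckenhoupt $A_2$ weight for every $a\in(-1,1)$, the Hölder regularity theory of Fabes-Kenig-Serapioni \cite{FKS} supplies uniform local Hölder estimates for $\{\tilde P_n\}$ \emph{including across the degenerate axis} $\{y=0\}$; transporting back through the diffeomorphism $\Phi$ and using \eqref{rd} gives equicontinuity of $\{P_n\}$ on each compact subset of $\R^{n+1}$, including across $M$. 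Arzela-Ascoli then extracts a subsequence converging uniformly on compact sets to a continuous limit $P$. The limit inherits $\Za$-homogeneity of degree $\kappa$ pointwise from the pointwise limit of $P_n(\delta_\lambda(x,z)) = \lambda^\kappa P_n(x,z)$, solves $\Ba P = 0$ distributionally by passing to the limit, and belongs to $\Gamma^2_\alpha(\R^{n+1})$ by the same regularity theory applied to the limit. Hence $P\in \mathfrak P_{\alpha,\kappa}(\R^{n+1})$ and $P_n \to P$ uniformly on $\overline{B_{\rho_\alpha}(1)}$, i.e., in the $\|\cdot\|_\ast$-topology. This proves compactness of the closed unit ball, and Riesz's lemma finishes the argument.

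The main obstacle is the equicontinuity step across the characteristic subspace $M$, where $\Ba$ fails to be classically elliptic and standard interior estimates are not directly available. It is resolved precisely by the one-to-one correspondence of Subsection \ref{SS:ext} between solutions of $\Ba$ and of $L_a$, which imports the Muckenhoupt $A_2$ Hölder theory of \cite{FKS} into the $\Ba$ setting uniformly up to $M$.
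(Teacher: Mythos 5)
Your argument is correct in substance but follows a genuinely different route from the paper. The paper's proof is a two-line reduction: exact homogeneity gives the polynomial growth bound $\sup_{r\ge1}\bigl(r^{-\kappa}\sup_{B_{\rho_\alpha}(r)}|u|\bigr)<\infty$, and finite-dimensionality is then delegated wholesale to the Colding--Minicozzi type theorem of Kogoj and Lanconelli \cite{KL} for $X$-elliptic operators, which bounds the dimension of the a priori larger space of \emph{all} $\Ba$-harmonic functions of polynomial growth of degree $\kappa$. You instead give a self-contained compactness argument: $\delta_\lambda$-homogeneity converts the unit-ball bound on $\overline{B_{\rho_\alpha}(1)}$ into uniform bounds on every $B_{\rho_\alpha}(R)$, interior H\"older estimates (via \cite{FKS} after transport to $L_a$, or directly via the Franchi--Lanconelli/CSS regularity theory for $\Ba$ that the paper invokes) give local equicontinuity up to $M$, and Arzel\`a--Ascoli together with Riesz's theorem finishes. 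Your route avoids the external Liouville-type theorem entirely, but it uses exact homogeneity in an essential way, so unlike \cite{KL} it says nothing about the non-homogeneous polynomial-growth space; each approach buys what the other does not.

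Two points to tighten. First, the definition of $\mathfrak P_{\alpha,\kappa}(\R^{n+1})$ does not require evenness in $z$, so the \emph{even} extension $\tilde P_n(x,y)=P_n(x,h(|y|))$ satisfies $L_a\tilde P_n=0$ weakly across $\{y=0\}$ only when $D_zP_n(\cdot,0)\equiv0$: in general $\lim_{y\to0^+}y^aD_y\tilde P_n=(1-a)^aD_zP_n(\cdot,0)\neq0$, and the reflected function carries a distributional source on $\{y=0\}$. This is harmless but needs a word: split $P_n$ into its even and odd parts in $z$ (both $\Ba$ and $Z_\alpha$ commute with $z\mapsto-z$, so both parts remain in the space) and use the even, respectively odd, reflection in $y$; or apply the H\"older estimates to $\Ba$ directly. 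Second, you do not actually need to verify that the Arzel\`a--Ascoli limit lies in $\mathfrak P_{\alpha,\kappa}(\R^{n+1})$ (in particular its $\Gamma^2_\alpha$ regularity): local uniform convergence already shows that every sequence in the unit ball has a $\|\cdot\|_\ast$-Cauchy subsequence, i.e.\ the ball is totally bounded, and Riesz's theorem applies to normed spaces with totally bounded unit ball.
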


\begin{proof}
Let $u\in  \mathfrak P_{\alpha,\kappa}(\R^{n+1})$. Since $\Za u = \kappa u$ in $\R^{n+1}$,
for any $(x,z)\in \R^{n+1}$ such that $\ra(x,z)\ge 1$ we must have
\[
u(x,z) = \ra(x,z)^\kappa u(\delta_{\ra(x,z)^{-1}}(x,z)),
\]
and therefore
\[
|u(x,z)| \le \left(\underset{S_{\rho_\alpha(1)}}{\max}\ |u|\right)  \ra(x,z)^\kappa.
\]
This estimate gives
\begin{equation}\label{growth}
\underset{r\ge 1}{\sup}\ \left(\frac{1}{r^\kappa} \underset{B_{\rho_\alpha}(r)}{\sup} |u|\right) <\infty.
\end{equation}
With \eqref{growth} in hands, we can now invoke the Colding-Minicozzi type theorem at the end of the paper \cite{KL} by Kogoj and Lanconelli to conclude that $\mathfrak P_{\alpha,\kappa}(\R^{n+1})$ is finite-dimensional.

\end{proof}

It is quite notable that solid harmonics of different degrees enjoy the following orthogonality property. A similar orthogonality property fails  for the solid harmonics in the Heisenberg group $\Hn$.

\begin{prop}\label{P:ortho}
For every $\kappa\not= \mu$, let $P_\kappa\in \mathfrak P_{\alpha,\kappa}(\R^{n+1})$ and $P_\mu \in \mathfrak P_{\alpha,\mu}(\R^{n+1})$. Then, for every $r>0$ one has
\[
\int_{{S_{\rho_\alpha}(r)}} P_\kappa P_\mu \frac{\psi_\alpha}{|\nabla \ra|}dH_{n} = 0.
\]
\end{prop}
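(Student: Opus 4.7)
I would establish the orthogonality via a Green-type identity for $\Ba$ applied to the pair $(P_\kappa,P_\mu)$ on the pseudo-ball $B_{\ra}(r)$. Specifically, I aim to prove the identity
\begin{equation*}
\int_{B_{\ra}(r)} \langle \na P_\kappa, \na P_\mu \rangle\, dx dz \;=\; \frac{\kappa}{r}\int_{S_{\ra}(r)} P_\kappa P_\mu \,\frac{\psi_\alpha}{|\nabla \ra|}\, dH_n,
\end{equation*}
and then compare with the analogous identity obtained by swapping the roles of $\kappa$ and $\mu$; since the left-hand side is symmetric in $(P_\kappa,P_\mu)$, subtracting gives $(\kappa-\mu)$ times the desired surface integral equals zero, and $\kappa\neq\mu$ yields the conclusion.

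\textbf{Derivation of the identity.} The plan for the displayed identity is to mimic the proof of Proposition \ref{P:diri}. Writing $\Ba=\sum_{j=1}^{n+1} X_j^2$ and using $\operatorname{div} X_j=0$ off $M$ (see \eqref{div}), I would split $B_{\ra}(r)=B^+_{\ra}(r,\ve)\cup B^-_{\ra}(r,\ve)$ as in \eqref{es}, apply the divergence theorem on each half to each summand of
\begin{equation*}
\sum_{j=1}^{n+1} X_j\bigl(P_\mu\, X_j P_\kappa\bigr) \;=\; \langle \na P_\mu,\na P_\kappa\rangle + P_\mu\,\Ba P_\kappa,
\end{equation*}
and let $\ve\to 0^+$. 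Since $\Ba P_\kappa=0$, the bulk integral drops out. On the spherical pieces $S^\pm_{\ra}(r,\ve)$, the outward conormal flux sums to $\langle \na P_\kappa,\na\ra\rangle/|\nabla \ra|$, and \eqref{nara2} together with $\Za P_\kappa=\kappa P_\kappa$ and $\ra=r$ on $S_{\ra}(r)$ produces exactly the right-hand side above. On the flat pieces $L^\pm(r,\ve)$, only the $j=n+1$ summand contributes, giving
\begin{equation*}
-\int_{L^+(r,\ve)} P_\mu\, D_z P_\kappa\, dx \;+\; \int_{L^-(r,\ve)} P_\mu\, D_z P_\kappa\, dx,
\end{equation*}
which vanishes in the limit because $P_\kappa\in \Gamma^2_\alpha(\R^{n+1})$ forces $D_z P_\kappa = X_{n+1} P_\kappa$ to be continuous across $\{z=0\}$, so $D_z P_\kappa(x,0^+)=D_z P_\kappa(x,0^-)$.

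\textbf{Conclusion.} Having the identity in hand, the symmetric counterpart is
\begin{equation*}
\int_{B_{\ra}(r)} \langle \na P_\mu,\na P_\kappa\rangle\, dx dz \;=\; \frac{\mu}{r} \int_{S_{\ra}(r)} P_\kappa P_\mu \,\frac{\psi_\alpha}{|\nabla \ra|}\, dH_n,
\end{equation*}
and subtracting yields
\begin{equation*}
\frac{\kappa-\mu}{r}\int_{S_{\ra}(r)} P_\kappa P_\mu \,\frac{\psi_\alpha}{|\nabla \ra|}\, dH_n = 0,
\end{equation*}
from which the orthogonality follows since $\kappa\neq\mu$.

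\textbf{Main obstacle.} The only delicate point is justifying the integration by parts across the degeneracy locus $M=\{z=0\}$, where $|z|^{2\alpha}$ in the principal part of $\Ba$ vanishes. This is handled exactly as in Proposition \ref{P:diri}, by performing the computation on $B^\pm_{\ra}(r,\ve)$ and observing that the two boundary contributions on $L^\pm(r,\ve)$ annihilate each other in the limit $\ve\to 0^+$. The continuity of $D_z P_\kappa$ across $\{z=0\}$ built into the class $\Gamma^2_\alpha(\R^{n+1})$ is precisely what is needed for this cancellation.
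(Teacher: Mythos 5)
Your proof is correct and is essentially the paper's argument: both rest on a Green identity for $\Ba$ on the pseudo-ball combined with \eqref{nara2} and the Euler relations $\Za P_\kappa=\kappa P_\kappa$, $\Za P_\mu=\mu P_\mu$. The only difference is presentational — the paper invokes the second (antisymmetric) Green identity, formula (2.30) of \cite{G}, in one step, whereas you derive the first Green identity via the $\ve$-splitting across $M$ (as in Proposition \ref{P:diri} and the claim \eqref{c} in the proof of Theorem \ref{T:MBa}) and then antisymmetrize.
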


\begin{proof}
By formula (2.30) in \cite{G} we have
\begin{align*}
0\ & = \int_{B_{\rho_\alpha}(r)} \big(P_\kappa\ \Ba P_\mu - P_\mu\ \Ba P_\kappa\big) dx dz
\\
& = \int_{{S_{\rho_\alpha}(r)}} \bigg[P_\kappa\langle\na P_\mu,\na \ra\rangle - P_\mu\langle\na P_\kappa,\na \ra\rangle \bigg]\frac{dH_{n}}{|\nabla \ra|}
\end{align*}
Using the equation \eqref{nara2} in the latter identity we find
\[
\langle\na P_\kappa,\na \ra\rangle = \frac{\Za P_\kappa}{\ra} \psi_\alpha =  \kappa P_\kappa \frac{\psi_\alpha}{\ra}
\]
and similarly,
\[
\langle\na P_\mu,\na \ra\rangle = \frac{\Za P_\mu}{\ra} \psi_\alpha = \mu P_\mu \frac{\psi_\alpha}{\ra}.
\]
Combining the last three equations we obtain
\[
\frac{\mu - \kappa}{r} \int_{{S_{\rho_\alpha}(r)}} P_\kappa P_\mu \frac{\psi_\alpha}{|\nabla \ra|}dH_{n} = 0.
\]
Since $\kappa\not= \mu$, the desired conclusion follows.

\end{proof}

\begin{thrm}\label{T:N=k-B}
 Let $u\not\equiv 0$ be a solution of \eqref{sb2} in $\R^{n+1}$ with $\vf \equiv 0$. If for some number $\kappa> 0$ we have $N(u,r)\equiv \kappa$, then $u\in \mathfrak P_{\alpha,\kappa}(\R^{n+1})$.
\end{thrm}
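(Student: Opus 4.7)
First, I would invoke Theorem \ref{T:almgrenbgo}: the global constancy $N(u,r)\equiv \kappa$ on $(0,\infty)$ is equivalent to $u$ being homogeneous of degree $\kappa$ on $\R^{n+1}$ with respect to the non-isotropic dilations \eqref{dilB}, so by \eqref{Zak} we have $Z_\alpha u = \kappa u$ pointwise. This already furnishes one of the two conditions defining $\mathfrak P_{\alpha,\kappa}(\R^{n+1})$ in Definition \ref{D:shBa}.

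Second, the first equation in \eqref{sb2} yields $\Ba u = 0$ classically in $\R^{n+1}\setminus M$, where $M = \R^n\times\{0\}$. To upgrade this to the global $\Ba u = 0$ needed for membership in $\mathfrak P_{\alpha,\kappa}$, I would test against an arbitrary $\phi\in C^\infty_c(\R^{n+1})$. Mimicking the integration-by-parts scheme from the proof of Proposition \ref{P:diri} on the split regions $B_{\rho_\alpha}^\pm(r,\ve)$ and passing $\ve\to 0^+$ gives
\[
\langle -\Ba u,\phi\rangle \;=\; \int_{\R^n}\phi(x,0)\bigl(-D_z^+ u + D_z^- u\bigr)(x,0)\,dx.
\]
The evenness $u(x,-z)=u(x,z)$ from \eqref{sb2} forces $D_z^-u(x,0) = -D_z^+u(x,0)$, so the right-hand side equals $-2\int\phi(x,0)D_z^+u(x,0)\,dx$, and the complementarity $u(-D_z^+u+D_z^-u)=0$ in \eqref{sb2} gives $D_z^+u(x,0)=0$ wherever $u(x,0)>0$.

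The crux is therefore to verify $D_z^+u(x,0)\equiv 0$ also on the coincidence set $\Lambda(u):=\{u(\cdot,0)=0\}$. My plan is to exploit homogeneity: differentiating $Z_\alpha u = \kappa u$ in $z$ shows that the trace $D_z^+u(\cdot,0)$ is homogeneous of degree $\kappa-1$ on $\R^n$ with respect to the partial dilations $x\mapsto \lambda^{\alpha+1}x$, while the sign condition $-D_z^+u(\cdot,0)\ge 0$ coming from \eqref{sb2} makes $\mu := -2\,D_z^+u(\cdot,0)\,dx$ a nonnegative measure supported on a $\delta_\lambda$-invariant subcone of $\Lambda(u)$. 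Reinserting $\mu$ as an extra thin-set contribution in the derivation of Lemma \ref{l:dprime} produces a nonnegative boundary term in the formula for $D'(u,r)$; combining this with $N(u,r)\equiv \kappa$, which via \eqref{logN} forces equality in Cauchy--Schwarz at every scale, should leave no room for $\mu$ and hence force $\mu\equiv 0$. Once $\Ba u = 0$ holds distributionally on all of $\R^{n+1}$, the Fabes--Kenig--Serapioni regularity theory for the weighted operator $L_a$, transferred through Proposition \ref{P:hom} and \eqref{lab2}, yields $u\in\Gamma^2_\alpha(\R^{n+1})$, completing the proof. The genuine difficulty lies in this thin-set step: unlike the region $\{u>0\}$ where pointwise complementarity gives the vanishing for free, on $\Lambda(u)$ one truly needs the global constancy of $N$ and not merely its value at a single radius.
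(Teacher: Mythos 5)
Your first step coincides with the paper's entire proof: the authors simply note (via Remark \ref{R:sh}, i.e.\ the identity \eqref{WkBa} combined with \eqref{W'Ba}) that $N(u,\cdot)\equiv\kappa$ forces $\Za u=\kappa u$ in $\R^{n+1}$, and they conclude $u\in\mathfrak P_{\alpha,\kappa}(\R^{n+1})$ on that basis alone. Up to your citing Theorem \ref{T:almgrenbgo} instead of Remark \ref{R:sh} (the paper states these are equivalent), this part reproduces the published argument.

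Everything after that is your own addition, and it contains a genuine gap. You are right that Definition \ref{D:shBa} formally requires $\Ba u=0$ in all of $\R^{n+1}$ (and $u\in\Gamma^2_\alpha$), not merely off the thin space $M$, and the paper's one-line proof never checks this. But the mechanism you propose cannot deliver it: equality in the Cauchy--Schwarz step of \eqref{logN} only yields $\Za u=\gamma(r)u$ on each sphere, which is exactly the homogeneity you already have, and it carries no information whatsoever about the measure $\mu=-2D_z^+u(\cdot,0)\,dx$. Indeed no argument can derive $\mu\equiv0$ from the hypotheses of the theorem: the regular global homogeneous solution (for $s=\tfrac12$, $u=\operatorname{Re}(x_1+i|z|)^{3/2}$; for general $s$, the image under \eqref{tu} of the $(1+s)$-homogeneous solution of \cite{CSS}) solves \eqref{sb2} in all of $\R^{n+1}$ with $\vf\equiv0$ and has $N(u,r)\equiv\kappa=\frac{1+s}{2s}$ by Proposition \ref{P:diri}, yet $\Ba u$ is a nontrivial negative measure on the coincidence set and $D_zu$ jumps across it. The vanishing of $\mu$ becomes available only with the extra information $\kappa=2m$, and the paper obtains it there by a different device, namely the test-function argument with a positive solid harmonic in the proof of Proposition \ref{characterization} ((iii)$\Rightarrow$(ii)). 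So your step 2 correctly flags an imprecision that the paper glosses over, but the "no room for $\mu$" step is both unsubstantiated and unrescuable as stated.
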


\begin{proof}
From \eqref{WkBa} in Remark \ref{R:sh} we conclude that $\Za u = \kappa u$ in $\R^{n+1}$, and thus $u\in \mathfrak P_{\alpha,\kappa}(\R^{n+1})$.

\end{proof}

\subsection{Monneau type monotonicity formulas}\label{SS:monneau}

Our next result represents a generalization to solutions of the problem \eqref{sb2} with zero obstacle $\vf$ of the basic monotonicity Theorem 1.4.3 established in \cite{GP} for solutions of the lower-dimensional obstacle problem for the standard Laplacian $\Delta$. We will use such result to obtain a new monotonicity theorem for the problem \eqref{sb} above. For the case of the single homogeneity $\kappa = 2$, and in connection with solutions of the classical obstacle problem for $\Delta$, this monotonicity theorem was first proved by Monneau in \cite{M}.

\begin{thrm}[One-parameter monotonicity formulas of Monneau type for $\Ba, 0\le \alpha <\infty$]\label{T:MBa}
Let $u$ be a solution to \eqref{sb2} in $B_{\rho_\alpha}(r_0)$ with $\vf \equiv 0$, and denote by $\kappa = N(u,0^+)$. Let $P_\kappa\in \mathfrak P^+_{\alpha,\kappa}(\R^{n+1})$,  and consider the functional defined for $0<r<r_0$
\begin{equation}\label{M}
\mathcal M_\kappa(u,P_\kappa,r) = \frac{1}{r^{Q-1+2\kappa}} \int_{{S_{\rho_\alpha}(r)}} (u - P_\kappa)^2 \frac{\psi_\alpha}{|\nabla \ra|}dH_{n}.
\end{equation}
Then,
\begin{equation}\label{M'}
\frac{d}{dr} \mathcal M_\kappa(u,P_\kappa,r) \ge \frac{2}{r} \mathcal W_\kappa(u,r),
\end{equation}
and therefore by \eqref{WkBa} and \eqref{Nk}, $r \to  \mathcal M_\kappa(u,P_\kappa,r)$ is non-decreasing in $(0,r_0)$.
\end{thrm}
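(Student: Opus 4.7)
The approach is a Monneau-style differentiation, treating $\mathcal M_\kappa$ as the Rayleigh-type quantity $H(v,r)/r^{Q-1+2\kappa}$ with $v := u - P_\kappa$. Before proceeding, I would establish the purely geometric first-variation identity
\begin{equation*}
H'(w,r) = \frac{Q-1}{r} H(w,r) + \frac{2}{r}\int_{S_{\rho_\alpha}(r)} w\, Z_\alpha w\, \frac{\psi_\alpha}{|\nabla \ra|}\, dH_n,
\end{equation*}
valid for any smooth enough $w$ without any PDE hypothesis. This follows by parameterizing $S_{\rho_\alpha}(r)$ as $\delta_r(S_{\rho_\alpha}(1))$, using the $\delta_\lambda$-invariance of $\psi_\alpha$ (direct from \eqref{nara}) and the fact that $Z_\alpha$ generates $\{\delta_\lambda\}$. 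Lemma \ref{l:hprime} is the specialization of this to solutions, obtained by combining it with Proposition \ref{P:diri}.

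Applying the identity to $w = v$ and using $Z_\alpha P_\kappa = \kappa P_\kappa$ (Definition \ref{D:shBa}) to write $v Z_\alpha v = \kappa v^2 + v(Z_\alpha u - \kappa u)$, a direct substitution yields
\begin{equation*}
\frac{d}{dr}\mathcal M_\kappa(u,P_\kappa,r) = \frac{2}{r^{Q+2\kappa}}\int_{S_{\rho_\alpha}(r)} v(Z_\alpha u - \kappa u)\,\frac{\psi_\alpha}{|\nabla \ra|}\, dH_n.
\end{equation*}
Splitting $v = u - P_\kappa$, the $u$-part collapses to $\tfrac{2}{r}\mathcal W_\kappa(u,r)$ by Proposition \ref{P:diri} together with \eqref{WBa}. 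Hence \eqref{M'} reduces to the single inequality
\begin{equation*}
\int_{S_{\rho_\alpha}(r)} P_\kappa\, (Z_\alpha u - \kappa u)\,\frac{\psi_\alpha}{|\nabla \ra|}\, dH_n \le 0.
\end{equation*}

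For the sign, I would write $\Ba = \operatorname{div}(A\nabla)$ with $A = \operatorname{diag}(|z|^{2\alpha},\dots,|z|^{2\alpha},1)$ and observe via \eqref{nara2} that $(A\nabla g)\cdot \nu = \tfrac{Z_\alpha g}{r}\tfrac{\psi_\alpha}{|\nabla \ra|}$ on $S_{\rho_\alpha}(r)$. Applying Green's identity to the pair $(u, P_\kappa)$ on $B_{\rho_\alpha}(r)^+$ and $B_{\rho_\alpha}(r)^-$ separately, as in the proof of Proposition \ref{P:diri}, the bulk integrals vanish since $\Ba u = 0$ off the thin set and $\Ba P_\kappa \equiv 0$. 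Summing, the boundary contribution on $S_{\rho_\alpha}(r)$ reproduces $\tfrac{1}{r}\int_{S_{\rho_\alpha}(r)}(u Z_\alpha P_\kappa - P_\kappa Z_\alpha u)\,\psi_\alpha/|\nabla \ra|\, dH_n$, while the two thin-ball contributions combine to $\int_{B'_{\rho_\alpha}(r)}\bigl[u(\partial_z^- P_\kappa - \partial_z^+ P_\kappa) - P_\kappa(\partial_z^- u - \partial_z^+ u)\bigr]\, dx$. The $u$-factor drops out because $P_\kappa$ is $z$-even (Definition \ref{D:shBa}), forcing $\partial_z P_\kappa = 0$ on $\{z=0\}$. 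The surviving integral $-\int_{B'_{\rho_\alpha}(r)} P_\kappa(-\partial_z^+ u + \partial_z^- u)\, dx$ is $\le 0$ since $P_\kappa \ge 0$ on $\{z=0\}$ (as $P_\kappa \in \mathfrak P^+_{\alpha,\kappa}$) and $-\partial_z^+ u + \partial_z^- u \ge 0$ by \eqref{sb2}. Combined with $Z_\alpha P_\kappa = \kappa P_\kappa$ this is exactly the desired inequality, proving \eqref{M'}. Monotonicity of $r \mapsto \mathcal M_\kappa$ then follows because $\mathcal W_\kappa(u,r)\ge 0$: by \eqref{WkBa} this equals $H(u,r)(N(u,r)-\kappa)/r^{Q-1+2\kappa}$, and $N(u,r)\ge N(u,0^+) = \kappa$ by Theorem \ref{T:almgrenbgo}.

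The main technical obstacle is the Green-identity bookkeeping across the thin set: one must split into upper and lower pseudo-half-balls, keep track of the one-sided traces $\partial_z^\pm u$, and crucially exploit both the $z$-evenness of $P_\kappa$ (to annihilate the $u\,D_z P_\kappa$ contribution) and the $\mathfrak P^+_{\alpha,\kappa}$ positivity of $P_\kappa$ together with the Signorini complementarity in \eqref{sb2} (to sign the remaining integrand).
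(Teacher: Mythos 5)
Your proof is correct, and it reorganizes the paper's argument in a way worth noting. The paper proceeds in two separate steps: first it proves the exact identity $\int_{B_{\rho_\alpha}(r)}\langle\na w,\na P_\kappa\rangle\,dxdz=\frac{\kappa}{r}\int_{S_{\rho_\alpha}(r)}wP_\kappa\frac{\psi_\alpha}{|\nabla\ra|}dH_n$ (by integrating $w$ against $\Ba P_\kappa=0$), which yields $\mathcal W_\kappa(u,r)=\mathcal W_\kappa(w,r)$ with $w=u-P_\kappa$; then it differentiates $\mathcal M_\kappa=H(w,r)/r^{Q-1+2\kappa}$ and proves the separate inequality $\int_{S_{\rho_\alpha}(r)}w\frac{Z_\alpha w}{r}\frac{\psi_\alpha}{|\nabla\ra|}dH_n\ge D(w,r)$ by integrating $\Ba(w^2/2)$ and signing $w(-D_z^+u+D_z^-u)=-P_\kappa(-D_z^+u+D_z^-u)\le 0$. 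You instead exploit the cancellation $Z_\alpha v-\kappa v=Z_\alpha u-\kappa u$ to reduce everything at once to the single surface inequality $\int_{S_{\rho_\alpha}(r)}P_\kappa(Z_\alpha u-\kappa u)\frac{\psi_\alpha}{|\nabla\ra|}dH_n\le 0$, which you obtain from one symmetric Green identity for the pair $(u,P_\kappa)$; this is essentially the two integrations by parts of the paper merged into one, and it lets you bypass the Weiss functional of $w$ and the identity $\mathcal W_\kappa(u,r)=\mathcal W_\kappa(w,r)$ entirely. The ingredients are identical ($\Ba P_\kappa=0$, $Z_\alpha P_\kappa=\kappa P_\kappa$, $D_zP_\kappa=0$ and $P_\kappa\ge 0$ on the thin set, the Signorini complementarity for $u$, Proposition \ref{P:diri}, and $N(u,r)\ge\kappa$ for the final sign of $\mathcal W_\kappa(u,r)$), but your bookkeeping is leaner; the paper's route has the mild advantage of producing the identity \eqref{M5}, which it reuses elsewhere, whereas yours isolates more transparently the one place where the obstacle conditions and the positivity of $P_\kappa$ enter.
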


\begin{proof}
We begin with some preliminary considerations. Suppose that $u$ be a solution to \eqref{sb2} in $B_{\rho_\alpha}(r_0)$  with $\vf \equiv 0$. According to Theorem \ref{T:almgrenbgo} the frequency $N(u,\cdot)$ is monotone non-decreasing, and therefore the limit
\[
N(u,0^+) = \underset{r\to 0^+}{\lim}\ N(u,r),
\]
exists. If $\kappa = N(u,0^+)$, then again by Theorem \ref{T:almgrenbgo} we know that
\begin{equation}\label{Nk}
N(u,r) \ge \kappa,\ \ \ \ \ 0<r<r_0.
\end{equation}

By formulas \eqref{Nk} and \eqref{WkBa} we thus find
\begin{equation}\label{M2}
\mathcal W_\kappa(u,r) \ge 0,\ \ \ \ \ r>0.
\end{equation}
Therefore, the nondecreasing character of $r\to \mathcal M_\kappa(u,P_\kappa,r)$ will follow once we establish formula \eqref{M'}. We turn to this objective now.

The last part of Theorem  \ref{T:weissB}
 guarantees that $N(P_\kappa,r) \equiv \kappa$, and so, again by \eqref{WkBa}, we conclude that
\[
\mathcal W_\kappa(P_\kappa,r) \equiv 0.
\]
This observation allows to write, with $w = u - P_\kappa$,
\begin{align}\label{M3}
\mathcal W_\kappa(u,r) & = \mathcal W_\kappa(u,r) - \mathcal W_\kappa(P_\kappa,r)
\\
& = \frac{1}{r^{Q-2+2\kappa}} \int_{{B_{\rho_\alpha}(r)}} (|\na w|^2 + 2 \langle\na w,\na P_\kappa\rangle)dx dz
\notag\\
& - \frac{\kappa}{r^{Q-1+2\kappa}} \int_{{S_{\rho_\alpha}(r)}} (w^2 + 2 w P_\kappa) \frac{\psi_\alpha}{|\nabla \ra|}dH_{n}
\notag\\
& = \mathcal W_\kappa(w,r)  + \frac{2}{r^{Q-2+2\kappa}} \int_{{B_{\rho_\alpha}(r)}} \langle\na w,\na P_\kappa\rangle dx dz
\notag
\\
& - \frac{2\kappa}{r^{Q-1+2\kappa}} \int_{{S_{\rho_\alpha}(r)}} w P_\kappa \frac{\psi_\alpha}{|\nabla \ra|}dH_{n}.
\notag
\end{align}
We now claim that for each $r\in (0,r_0)$
\begin{equation}\label{c}
\frac{2}{r^{Q-2+2\kappa}} \int_{{B_{\rho_\alpha}(r)}} \langle\na w,\na P_\kappa\rangle dx dz - \frac{2\kappa}{r^{Q-1+2\kappa}} \int_{{S_{\rho_\alpha}(r)}} w P_\kappa \frac{\psi_\alpha}{|\nabla \ra|}dH_{n} = 0.
\end{equation}
To prove \eqref{c} we need to compute $\int_{{B_{\rho_\alpha}(r)}} \langle\na w,\na P_\kappa\rangle$.
We consider the sets introduced in \eqref{es} above. Then,
\[
\int_{{B_{\rho_\alpha}(r)}} \langle\na w,\na P_\kappa\rangle   = \underset{\ve\to 0^+}{\lim} \left\{\int_{{B^+_{\rho_\alpha}(r,\ve)}} \langle\na w,\na P_\kappa\rangle  +  \int_{{B^-_{\rho_\alpha}(r,\ve)}} \langle\na w,\na P_\kappa\rangle \right\}.
\]
Integrating by parts, and using \eqref{div} and the fact that on the set $L^+(r,\ve)$ we have $\nu = - e_{n+1}$, we find
\begin{align*}
& \int_{{B^+_{\rho_\alpha}(r,\ve)}}  \langle\na w,\na P_\kappa\rangle   = \int_{{S^+_{\rho_\alpha}(r,\ve)}} w \sum_{j=1}^m \langle X_j,\nu\rangle X_j P_\kappa dH_{n}
\\
& + \int_{{L^+(r,\ve)}} w \sum_{j=1}^m \langle X_j,\nu\rangle X_j P_\kappa dH_{n}
 -  \int_{{B^+_{\rho_\alpha}(r,\ve)}} w \sum_{j=1}^m \text{div}(X_j P_\kappa\ X_j)
\\
& =  \int_{{S^+_{\rho_\alpha}(r,\ve)}} w \langle\na P_\kappa,\na \ra\rangle \frac{dH_{n}}{|\nabla \ra|}  - \int_{{L^+(r,\ve)}} w  D_z P_\kappa dx -  \int_{{B^+_{\rho_\alpha}(r,\ve)}} w \Ba P_\kappa
\\
& =  \frac{\kappa}{r}  \int_{{S^+_{\rho_\alpha}(r,\ve)}} w P_\kappa \frac{\psi_\alpha}{|\nabla \ra|} dH_{n} - \int_{{L^+(r,\ve)}} w  D_z P_\kappa dx,
\end{align*}
since $\Ba P_\kappa = 0$. In the last equality we have used the crucial identity \eqref{nara2} in Lemma \ref{l:gradalpharho} and the homogeneity of $P_\kappa$, which gives $\Za P_\kappa = \kappa P_\kappa$. If we now use the fact that $D_z P_\kappa(x,0) = 0$, by Lebesgue dominated convergence we see that $\int_{{L^+(r,\ve)}} w  D_z P_\kappa dx \to 0$ as $\ve \to 0^+$, and thus
\begin{equation}\label{keystep}
\underset{\ve\to 0^+}{\lim} \int_{{B^+_{\rho_\alpha}(r,\ve)}} \langle\na w,\na P_\kappa\rangle = \frac{\kappa}{r} \int_{{S^+_{\rho_\alpha}(r)}} w P_\kappa \frac{\psi_\alpha}{|\nabla \ra|}dH_{n}.
\end{equation}
Similarly, we obtain
\begin{equation}\label{keystep2}
\underset{\ve\to 0^+}{\lim} \int_{{B^-_{\rho_\alpha}(r,\ve)}} \langle\na w,\na P_\kappa\rangle = \frac{\kappa}{r} \int_{{S^-_{\rho_\alpha}(r)}} w P_\kappa \frac{\psi_\alpha}{|\nabla \ra|}dH_{n}.
\end{equation}
Combining \eqref{keystep} and \eqref{keystep2} we finally conclude that
\[
\int_{{B_{\rho_\alpha}(r)}} \langle\na w,\na P_\kappa\rangle   = \frac{\kappa}{r} \int_{{S_{\rho_\alpha}(r)}} w P_\kappa \frac{\psi_\alpha}{|\nabla \ra|}dH_{n}.
\]
This proves the claim \eqref{c}. If we now use \eqref{c} in \eqref{M3}, we obtain the following crucial identity
\begin{equation}\label{M5}
\mathcal W_\kappa(u,r)  = \mathcal W_\kappa(w,r)  = \frac{1}{r^{Q-1+2\kappa}} H(w,r),\ \ \ \ \ 0<r<r_0.
\end{equation}
From the defining equation  \eqref{M} of the functional $\mathcal M_\kappa(u,P_\kappa,r)$ we have
\begin{equation}\label{Hw}
\mathcal M_\kappa(u,P_\kappa,r) = \frac{1}{r^{Q-1+2\kappa}} \int_{{S_{\rho_\alpha}(r)}}  w^2 \frac{\psi_\alpha}{|\nabla \ra|}dH_{N-1} = \frac{1}{r^{Q-1+2\kappa}} H(w,r).
\end{equation}
Differentiating \eqref{Hw} we find
\begin{align*}
\frac{d}{dr} \mathcal M_\kappa(u,P_\kappa,r) & = - \frac{Q-1+2\kappa}{r^{Q+2\kappa}} H(w,r)
 +  \frac{1}{r^{Q-1+2\kappa}} H'(w,r).
\end{align*}
Now, similarly to the expression in the right-hand side of \eqref{Ds} we see that
\[
H'(w,r)  = \frac{Q-1}{r} H(w,r)  + 2 \int_{S_{\rho_\alpha}(r)} w\left(\frac{Z_\alpha w}{r}\right)\frac{\psi_\alpha}{|\nabla \ra|}dH_{n}.
\]
Combining the latter two equations we find
\begin{align*}
\frac{d}{dr} \mathcal M_\kappa(u,P_\kappa,r) & = - \frac{2\kappa}{r^{Q+2\kappa}} H(w,r) + \frac{2}{r^{Q-1+2\kappa}} \int_{S_{\rho_\alpha}(r)} w\left(\frac{Z_\alpha w}{r}\right)\frac{\psi_\alpha}{|\nabla \ra|}dH_{n}.
\end{align*}
We now claim that
\begin{equation}\label{c2}
- \frac{2\kappa}{r^{Q+2\kappa}} H(w,r) + \frac{2}{r^{Q-1+2\kappa}} \int_{S_{\rho_\alpha}(r)} w\left(\frac{Z_\alpha w}{r}\right)\frac{\psi_\alpha}{|\nabla \ra|}dH_{n} \ge \frac 2r \mathcal W_\kappa(w,r).
\end{equation}
From \eqref{c2} and \eqref{M5} we would conclude
\[
\frac{d}{dr} \mathcal M_\kappa(u,P_\kappa,r)  \ge \frac 2r \mathcal W_\kappa(w,r) = \frac 2r \mathcal W_\kappa(u,r).
\]
In view of \eqref{M2} this would complete the proof of the theorem.
We are thus left with proving \eqref{c2}.

With this objective in mind we consider
\begin{align*}
\int_{{B_{\rho_\alpha}(r)}} |\na w|^2   = \underset{\ve\to 0^+}{\lim} \left\{\int_{{B^+_{\rho_\alpha}(r,\ve)}} |\na w|^2 +  \int_{{B^-_{\rho_\alpha}(r,\ve)}} |\na w|^2 \right\}.
\end{align*}
In $B^\pm_{\rho_\alpha}(r,\ve)$ we have from \eqref{sb2} and Definition \ref{D:shBa}
\[
\Ba(w^2/2) = w \Ba w + |\na w|^2 = w(\Ba u - \Ba P_\kappa) + |\na w|^2 =  |\na w|^2.
\]
We thus find
\begin{align*}
& \int_{{B_{\rho_\alpha}(r)}} |\na w|^2  = \underset{\ve\to 0^+}{\lim} \left\{\sum_{j=1}^{n+1} \int_{{B^+_{\rho_\alpha}(r,\ve)}} X_j X_j (w^2/2) + \sum_{j=1}^{n+1}  \int_{{B^-_{\rho_\alpha}(r,\ve)}} X_j X_j (w^2/2) \right\}
\\
& = \underset{\ve\to 0^+}{\lim} \left\{\sum_{j=1}^{n+1} \int_{{\p B^+_{\rho_\alpha}(r,\ve)}} w \langle X_j,\nu\rangle  X_j w + \sum_{j=1}^{n+1}  \int_{{\p B^-_{\rho_\alpha}(r,\ve)}} w \langle X_j,\nu\rangle  X_j w  \right\}
\\
& = \underset{\ve\to 0^+}{\lim} \left\{\int_{{S^+_{\rho_\alpha}(r,\ve)}} w \langle\na w,\na \rho_\alpha\rangle \frac{dH_{n}}{|\nabla \ra|}+ \int_{{S^-_{\rho_\alpha}(r,\ve)}} w \langle\na w,\na \rho_\alpha\rangle \frac{dH_{n}}{|\nabla \ra|}\right\}
\\
& + \underset{\ve\to 0^+}{\lim} \left\{- \int_{{L^+(r,\ve)}} w D_z w dx + \int_{{L^-(r,\ve)}} w D_z w dx\right\}
\\
& = \int_{{S_{\rho_\alpha}(r)}} w \langle\na w,\na \rho_\alpha\rangle \frac{dH_{n}}{|\nabla \ra|} + \int_{{B'_{\rho_\alpha}(r)}} w(-D^+_z w + D^-_z w) dx
\\
& = \int_{S_{\rho_\alpha}(r)} w\left(\frac{Z_\alpha w}{r}\right)\frac{\psi_\alpha}{|\nabla \ra|}dH_{n} + \int_{{B'_{\rho_\alpha}(r)}} w(-D^+_z u + D^-_z u) dx,
\end{align*}
where in the last equality we have used \eqref{nara2} and the fact that $D_z^+ P_\kappa = D_z^-P_\kappa = 0$ on the thin ball $B'_{\rho_\alpha}(r)$. By the last two equations in \eqref{sb2} we now have on $B'_{\rho_\alpha}(r)$
\[
w(-D^+_z u + D^-_z u) = u(-D^+_z u + D^-_z u) - P_\kappa(-D^+_z u + D^-_z u)  = - P_\kappa(-D^+_z u + D^-_z u)  \le 0,
\]
since $P_\kappa(x,0) \ge 0$. In conclusion, we have proved that
\[
\int_{S_{\rho_\alpha}(r)} w\left(\frac{Z_\alpha w}{r}\right)\frac{\psi_\alpha}{|\nabla \ra|}dH_{n} \ge \int_{{B_{\rho_\alpha}(r)}} |\na w|^2 dx dz = D(w,r).
\]
Keeping in mind the definition of $\mathcal W_\kappa(w,r)$, see \eqref{WBa} above, we conclude that the latter inequality proves the claim \eqref{c2}, thus completing the proof of the theorem.

\end{proof}

With Theorem \ref{T:MBa} in hands we will now establish a corresponding result for solutions to the problem \eqref{sb} above. We begin with introducing the space of the relevant ``harmonic" polynomials. In what follows we have indicated with $\tilde Z$ the generator of the standard Euclidean dilations with respect to the variables $(x,y)$.

\begin{dfn}\label{D:shBa}
Let $\tilde \kappa\ge 0$. We denote by $\tilde{\mathfrak P}_{a,\tilde \kappa}(\R^{n+1})$ the space of all polynomials $\tilde P_{\tilde \kappa}$ in $\R^{n+1}$ such that $L_a \tilde P_{\tilde \kappa} = 0$ and $\tilde Z \tilde P_{\tilde \kappa} = \tilde \kappa \tilde P_{\tilde \kappa}$. The elements of such space will be called $L_a$-\emph{solid harmonics} of degree $\tilde \kappa$. We indicate by $\tilde{\mathfrak P}^+_{a,\tilde \kappa}(\R^{n+1})$  the space of those elements $\tilde P_{\tilde \kappa}\in \tilde{\mathfrak P}_{a,\tilde \kappa}(\R^{n+1})$  such that $\tilde P_{\tilde \kappa}(x,0) \ge 0$ and $\tilde P_{\tilde \kappa}(x,-y) = \tilde P_{\tilde \kappa}(x,y)$. In particular, if $\tilde P_{\tilde \kappa}\in \tilde{\mathfrak P}^+_{a,\tilde \kappa}(\R^{n+1})$, then $\lim_{y\to 0^+} y^aD_y \tilde P_{\tilde \kappa}(x,y) = 0$.
\end{dfn}

\begin{lemma}\label{L:iso}
Let $0 \le a<1$, $\tilde \kappa > 0$, and consider the numbers $0 \le \alpha <\infty$ and $\kappa >0$ defined by $\alpha = \frac{a}{1-a}$, and $\kappa = \frac{\tilde \kappa}{1-a}$. Then, the equation \eqref{tu} above, with $h$ given by \eqref{h}, establishes a one-to-one onto correspondence between the spaces $\tilde{\mathfrak P}^+_{a,\tilde \kappa}(\R^{n+1})$ and $\mathfrak P^+_{\alpha,\kappa}(\R^{n+1})$. By slightly abusing the notation, such correspondence can be expressed by the following equation
\begin{equation}\label{Ps}
\widetilde{(P_\kappa)} = \tilde P_{\tilde \kappa},
\end{equation}
where we emphasize that in the right-hand side of \eqref{Ps} we have indicated with $\tilde P_{\tilde \kappa}$ an element of the space $\tilde{\mathfrak P}^+_{a,\tilde \kappa}(\R^{n+1})$, and not the operation that takes $f\to \tilde f$.
\end{lemma}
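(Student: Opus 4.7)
The plan is to exhibit the correspondence explicitly by setting $\tilde u(x,y)=u(x,h(|y|))$ with inverse $u(x,z)=\tilde u(x,h^{-1}(z))$, and to verify that each of the structural conditions in Definition \ref{D:shBa} transports correctly. Since $h:[0,\infty)\to[0,\infty)$ is a strictly monotone diffeomorphism with $h(0)=0$, the map \eqref{tu} is set-theoretically bijective and immediately intertwines evenness in the last variable as well as the values on the thin slice $\{y=0\}=\{z=0\}$, so the positivities $\tilde P_{\tilde\kappa}(x,0)\geq 0$ and $P_\kappa(x,0)\geq 0$ correspond. From \eqref{csnb} one reads
\[
L_a\tilde u(x,y) \;=\; (1-a)^a\,h(y)^{-\alpha}\,\Ba u(x,h(y))\qquad\text{for }y>0,
\]
so $\Ba u=0$ in $\{z>0\}$ if and only if $L_a\tilde u=0$ in $\{y>0\}$. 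The hypothesis $u\in\Gamma^2_\alpha$ together with $u(x,-z)=u(x,z)$ forces $D_z u(x,0)=0$, and since $y^aD_y\tilde u=(1-a)^a D_z u(x,h(y))\to 0$ as $y\to 0^+$, the even-in-$y$ extension of $\tilde u$ is $L_a$-harmonic across $\{y=0\}$ in the weak sense. The homogeneity correspondence $Z_\alpha u=\kappa u\Leftrightarrow \tilde Z\tilde u=\tilde\kappa\tilde u$ with $\tilde\kappa=(1-a)\kappa$ is precisely \eqref{gd} of Proposition \ref{P:hom}.

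It remains to match the regularity classes: $u\in\Gamma^2_\alpha$ on one side versus ``polynomial'' on the other. For the direction $\tilde P_{\tilde\kappa}\mapsto u$, evenness in $y$ lets me write $\tilde P_{\tilde\kappa}(x,y)=Q(x,y^2)$ with $Q$ a polynomial, so
\[
u(x,z) \;=\; Q\bigl(x,(1-a)^2 z^{2(\alpha+1)}\bigr).
\]
Since $\alpha\geq 0$ gives $2(\alpha+1)\geq 2$, direct differentiation shows that $D_z u$, $D_{zz}u$ and $|z|^\alpha\nabla_x u$ are all continuous on $\R^{n+1}$, hence $u\in\Gamma^2_\alpha$, and $L_a\tilde P_{\tilde\kappa}=0$ translates via \eqref{csnb} to $\Ba u=0$.

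The reverse direction $u\mapsto\tilde u$ is the hardest step: a priori, $\tilde u$ is only known to be an even-in-$y$, Euclidean $\tilde\kappa$-homogeneous, weakly $L_a$-harmonic function on $\R^{n+1}$, and one must argue that it is a genuine polynomial. My plan is to invoke the $A_2$-weighted regularity theory of \cite{FKS} recalled in Section \ref{S:sb} (applicable because $|y|^a$ is an $A_2$ weight for every $-1<a<1$) to conclude that $\tilde u$ is locally H\"older continuous on $\R^{n+1}$ and real-analytic off $\{y=0\}$; then the growth estimate of type \eqref{growth}, transferred from $u$ through Proposition \ref{P:hom}, together with a Colding--Minicozzi--type Liouville argument as in the proof of Proposition \ref{P:cm}, forces $\tilde u$ to lie in a finite-dimensional space of $L_a$-harmonic functions, after which Euclidean $\tilde\kappa$-homogeneity and smoothness at the origin force $\tilde\kappa\in\N$ and $\tilde u$ to be a polynomial of degree $\tilde\kappa$. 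Combining the above gives $\tilde u\in\tilde{\mathfrak P}^+_{a,\tilde\kappa}(\R^{n+1})$, and the set-theoretic bijectivity of \eqref{tu} then produces the claimed one-to-one onto correspondence.
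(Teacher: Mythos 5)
Your write-up is far more detailed than the paper's own proof, which consists of the single sentence ``It follows in a straightforward fashion from Proposition \ref{P:hom}.'' You correctly identify that the only nontrivial content of the lemma is the asymmetry of the two definitions: $\mathfrak P^+_{\alpha,\kappa}$ consists of $\Gamma^2_\alpha$ functions, while $\tilde{\mathfrak P}^+_{a,\tilde\kappa}$ consists of \emph{polynomials}, so the direction $u\mapsto\tilde u$ requires an actual argument that $\tilde u$ is polynomial. Your treatment of the bijectivity of \eqref{tu}, the evenness and positivity on the thin slice, the PDE correspondence via \eqref{csnb}, the vanishing of the conormal derivative (from $D_zu(x,0)=0$), the homogeneity correspondence via \eqref{gd}, and the forward direction $\tilde P_{\tilde\kappa}\mapsto Q\bigl(x,(1-a)^2z^{2(\alpha+1)}\bigr)$ are all sound and consistent with what the paper leaves implicit.

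The one genuine gap is the last step of the reverse direction. Finite-dimensionality of the space of $L_a$-harmonic functions of polynomial growth (Kogoj--Lanconelli, as in Proposition \ref{P:cm}) does \emph{not} by itself imply that a particular homogeneous member of that space is a polynomial, and the ``smoothness at the origin'' you then invoke is not available a priori: the FKS theory you cite gives only local H\"older continuity of $\tilde u$ across $\{y=0\}$, and degree-$\tilde\kappa$ homogeneity plus H\"older continuity does not force $\tilde\kappa\in\N$ or polynomiality. The correct tool to close this step is the Liouville-type result \cite[Lemma 5.3]{CSS} (a globally defined, even-in-$y$, $L_a$-harmonic function which is homogeneous of degree $\tilde\kappa$ is a polynomial of degree $\tilde\kappa$), which this paper itself invokes for exactly this purpose in the proof of Proposition \ref{characterization}. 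Replacing your finite-dimensionality/smoothness paragraph with that citation makes the argument complete; as written, the step would not go through.
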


\begin{proof}
It follows in a straightforward fashion from Proposition \ref{P:hom}.

\end{proof}

We are now ready to establish the following basic result for the extension problem \eqref{sb}.

\begin{thrm}[One-parameter monotonicity formulas of Monneau type for $L_a, -1<a<1$]\label{T:monneauext}
Let $\tilde u$ be a solution of \eqref{sb} in $B_e(R_0)$ with $\vf\equiv 0$. We denote by $\tilde \kappa = \tilde N(\tilde u,0^+)$. Let $\tilde P_{\tilde \kappa}\in \tilde{\mathfrak P}^+_{a,\tilde \kappa}(\R^{n+1})$,  and consider the functional defined for $0<r<R_0$
\begin{equation}\label{Me}
\tilde{\mathcal M}_{\tilde \kappa}(\tilde u,\tilde P_{\tilde \kappa},r) = \frac{1}{r^{\tilde Q-1+2\tilde \kappa}} \int_{{S_{e}(r)}} (\tilde u - \tilde P_{\tilde \kappa})^2 |y|^a dH_{n},
\end{equation}
where $\tilde Q$ is given by  \eqref{tQ} above. Then,
\begin{equation}\label{M'e}
\frac{d}{dr} \tilde{\mathcal M}_{\tilde \kappa}(\tilde u,\tilde P_{\tilde \kappa},r)  \ge \frac{2}{r} \tilde{\mathcal W}_{\tilde \kappa}(\tilde u,r),
\end{equation}
and therefore by \eqref{WkBa} and \eqref{Nk}, $r \to  \tilde{\mathcal M}_{\tilde \kappa}(\tilde u,\tilde P_{\tilde \kappa},r)$ is non-decreasing in $(0,R_0)$.
\end{thrm}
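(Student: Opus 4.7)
The plan is to reduce to Theorem \ref{T:MBa} in the range $0\le a<1$ via the correspondence \eqref{tu}, and to mimic the proof of Theorem \ref{T:MBa} directly in the extension setting for $-1<a<0$. For the first range, let $\alpha = a/(1-a)\ge 0$ and $\kappa = \tilde\kappa/(1-a)$. By Lemma \ref{L:iso}, there exists a unique $P_\kappa\in\mathfrak P^+_{\alpha,\kappa}(\R^{n+1})$ with $\widetilde{P_\kappa}=\tilde P_{\tilde\kappa}$, and the extension $u$ of $\tilde u$ is a solution of \eqref{sb2} with $\kappa = N(u,0^+)$ by Proposition \ref{P:ntn}. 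Setting $w = u - P_\kappa$, so that $\tilde w = \tilde u - \tilde P_{\tilde\kappa}$, Theorem \ref{T:MBa} applied to $u$ and $P_\kappa$ supplies the inequality \eqref{M'}.

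Next, I would compute how the functionals transform. Applying Lemma \ref{L:hth} to $w$ gives $\tilde H(\tilde w,r) = r^a H(w,h(r))$. Combining this with the power-law relations \eqref{h}, \eqref{hd}, \eqref{tQ}, and the identity $(1-a)(Q-1+2\kappa)=\tilde Q-1-a+2\tilde\kappa$ (which is exactly \eqref{QtQ} shifted), one finds
\[
\tilde{\mathcal M}_{\tilde\kappa}(\tilde u,\tilde P_{\tilde\kappa},r) = (1-a)^{1+a-\tilde Q-2\tilde\kappa}\, \mathcal M_\kappa(u,P_\kappa,h(r)).
\]
Together with the analogous identity \eqref{Ws} for the Weiss functional and the elementary fact that $h'(r)/h(r)=(1-a)/r$, the chain rule and \eqref{M'} yield
\[
\frac{d}{dr}\tilde{\mathcal M}_{\tilde\kappa}(\tilde u,\tilde P_{\tilde\kappa},r)
\ge (1-a)^{1+a-\tilde Q-2\tilde\kappa}\cdot \frac{2(1-a)}{r}\cdot (1-a)^{\tilde Q-a-2+2\tilde\kappa}\,\tilde{\mathcal W}_{\tilde\kappa}(\tilde u,r),
\]
where the product of the $(1-a)$-powers is exactly $1$, delivering \eqref{M'e}. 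The monotonicity then follows from $\tilde{\mathcal W}_{\tilde\kappa}\ge 0$, which in turn comes from \eqref{WkBa} combined with $\tilde N(\tilde u,r)\ge \tilde\kappa$ for $0<r<R_0$ by Theorem \ref{T:almgrenext}.

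For the remaining range $-1<a<0$, Theorem \ref{T:MBa} is not available, since the underlying Baouendi operator has $\alpha<0$. I would instead replay the proof of Theorem \ref{T:MBa} directly for $L_a$, with $|y|^a\,dH_n$ replacing $\psi_\alpha/|\nabla\rho_\alpha|\,dH_n$, $\tilde Z$ replacing $Z_\alpha$, and $\tilde Q=n+1+a$ replacing $Q$. The three ingredients transfer: (i) since $\tilde P_{\tilde\kappa}$ is a polynomial even in $y$, $\lim_{y\to 0^+}y^a D_y\tilde P_{\tilde\kappa}=0$, and integration by parts on $B_e^{\pm}(r,\varepsilon)$ in combination with $L_a\tilde P_{\tilde\kappa}=0$ and $\tilde Z\tilde P_{\tilde\kappa}=\tilde\kappa\tilde P_{\tilde\kappa}$ gives the orthogonality
\[
\int_{B_e(r)}\langle\nabla\tilde w,\nabla\tilde P_{\tilde\kappa}\rangle |y|^a\,dx\,dy = \frac{\tilde\kappa}{r}\int_{S_e(r)}\tilde w\,\tilde P_{\tilde\kappa}\,|y|^a\,dH_n,
\]
so that $\tilde{\mathcal W}_{\tilde\kappa}(\tilde u,r)=\tilde{\mathcal W}_{\tilde\kappa}(\tilde w,r)=r^{1-\tilde Q-2\tilde\kappa}\tilde H(\tilde w,r)$ as in \eqref{M5}; (ii) on $B_e(r)\cap\{y=0\}$, the complementarity in \eqref{sb} and the sign condition $\tilde P_{\tilde\kappa}(x,0)\ge 0$ give $\tilde w(-\lim y^aD_y^+\tilde u+\lim y^aD_y^-\tilde u)\le 0$; (iii) differentiating \eqref{Me} and substituting the extension analogue of Lemma \ref{l:hprime} produces exactly \eqref{M'e}.

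The main obstacle will be ensuring the $\varepsilon$-regularization arguments of Proposition \ref{P:diri} and Lemma \ref{l:dprime} remain justified when the weight $|y|^a$ is singular on $\{y=0\}$ in the negative range of $a$. However, the $A_2$-nature of $|y|^a$, together with the $C^{1,s}$ (resp.\ $C^s$) regularity up to the thin set of $\tilde u$ (resp.\ $y^aD_y\tilde u$) from \cite{CSS}, guarantees that the boundary integrals over $L^\pm(r,\varepsilon)$ vanish in the limit by dominated convergence, exactly as in the proof of Theorem \ref{T:MBa}. The evenness of $\tilde P_{\tilde\kappa}$ in $y$ kills any boundary contribution from $\tilde P_{\tilde\kappa}$ on $\{y=0\}$, so only the signed contribution from $\tilde u$ survives, preserving the correct inequality direction.
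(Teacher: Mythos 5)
Your proposal is correct and follows essentially the same route as the paper: for $0\le a<1$ it reduces to Theorem \ref{T:MBa} via Lemma \ref{L:iso}, the scaling identity for $\tilde{\mathcal M}$ versus $\mathcal M$ (your exponent $(1-a)^{1+a-\tilde Q-2\tilde\kappa}$ is in fact the correct one), the Weiss identity \eqref{Ws}, and the chain rule, exactly as in the paper. For $-1<a<0$ the paper merely says the details are left to the reader, and your sketch of replaying the Baouendi-case argument directly for $L_a$ (orthogonality of $\tilde w$ against $\tilde P_{\tilde\kappa}$, the sign of the thin-set term, and the first variation of $\tilde H$) is precisely what is intended.
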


\begin{proof}
We assume first that $0\le a<1$, and thus $0\le \alpha <\infty$. Let $\tilde u$ and $\tilde P_{\tilde \kappa}\in \tilde{\mathfrak P}^+_{a,\tilde \kappa}(\R^{n+1})$ be as in the statement of the theorem, and denote by $u$ and $P_\kappa$ the functions defined by \eqref{tu}. Then, $u$ satisfies \eqref{sb2} with $\vf \equiv 0$ and by Lemma \ref{L:iso} we have $P_{\kappa}\in \mathfrak P^+_{\alpha,\kappa}(\R^{n+1})$. By Theorem \ref{T:MBa} we know that $r\to \mathcal M_\kappa(u,P_\kappa,r)$ is non-decreasing in $(0,r_0)$, and \eqref{M'} holds. We now claim that for every $r\in (0,r_0)$ one has
\begin{equation}\label{tMM}
\mathcal M_{\kappa}(u,P_{\kappa},h(r)) = (1-a)^{\tilde Q - 1 - 2 \tilde \kappa - 2a} \tilde{\mathcal M}_{\tilde \kappa}(\tilde u,\tilde P_{\tilde \kappa},r).
\end{equation}
Notice that, if true, the claim implies that $r\to \tilde{\mathcal M}_{\tilde \kappa}(\tilde u,\tilde P_{\tilde \kappa},r)$ is
also non-decreasing on $(0,R_0)$ and that moreover
\begin{equation}\label{MM'}
\frac{d}{dr} \tilde{\mathcal M}_{\tilde \kappa}(\tilde u,\tilde P_{\tilde \kappa},r) =  (1-a)^{-\tilde Q + 1 + 2 \tilde \kappa + 2a} h'(r) \left(\frac{d}{d\tau} \mathcal M_{\kappa}(u,P_{\kappa},\cdot)\right)(h(r)).
\end{equation}
Using \eqref{M'} we find
\[
\left(\frac{d}{d\tau}\mathcal M_\kappa(u,P_\kappa,\cdot)\right)(h(r)) \ge \frac{2}{h(r)} \mathcal W_\kappa(u,h(r)).
\]
Substituting this inequality in \eqref{MM'}, and using \eqref{h} and the basic identity \eqref{Ws}, we obtain
\begin{align}\label{Ms}
& \frac{d}{dr} \tilde{\mathcal M}_{\tilde \kappa}(\tilde u,\tilde P_{\tilde \kappa},r) \ge (1-a)^{-\tilde Q + 1 + 2 \tilde \kappa + 2a}  \frac{2h'(r)}{h(r)} \mathcal W_\kappa(u,h(r))
\\
& =  (1-a)^{-\tilde Q + 2 + 2 \tilde \kappa + 2a}  (1-a)^{\tilde Q-2a -2+2\tilde \kappa} \frac 2r  \tilde{\mathcal W}_{\tilde \kappa}(\tilde u,r) = \frac 2r  \tilde{\mathcal W}_{\tilde \kappa}(\tilde u,r),
\notag
\end{align}
which proves \eqref{M'e}. In order to complete the proof of the theorem we are thus left with proving the claim \eqref{tMM}. However, this follows easily by observing that, if we set $w = u - P_\kappa$, then \eqref{M} gives
\begin{align*}
& \mathcal M_{\kappa}(u,P_{\kappa},h(r)) = \frac{1}{h(r)^{Q-1+2\kappa}} H(w,h(r))
\\
& = \frac{1}{h(r)^{Q-1+2\kappa}} (1-a)^{-a} r^{-a} \tilde H(\tilde w,r) = \frac{(1-a)^{-a+(1-a)(Q-1+2\kappa)}}{r^{(1-a)(Q-1+2\kappa)}} \tilde H(\tilde w,r),
\end{align*}
where in the second to the last equality we have used Lemma \ref{L:hth}. To see that, if we let $\tilde Q$ be given by \eqref{tQ}  (so that \eqref{QtQ} hold), and $\tilde \kappa = (1-a)\kappa$, then the right-hand side of the latter equation equals $(1-a)^{\tilde Q - 1 - 2 \tilde \kappa - 2a} \tilde{\mathcal M}_{\tilde \kappa}(\tilde u,\tilde P_{\tilde \kappa},r)$, all we need at this point is to observe that $\tilde w = \widetilde{u-P_\kappa} = \tilde u - \widetilde{(P_\kappa)}$, and use \eqref{Ps} in Lemma \ref{L:iso}.

Once we have the correct guess of the Monneau functional \eqref{Me} for $L_a$, and of the remarkable monotonicity result \eqref{M'e},   it is easy to complete the  proof in the remaining case $-1<a<0$. We leave the relevant details to the reader.

\end{proof}

\section{Structure and regularity of the singular set: the case of zero obstacle}
\label{sec4}

We study in this section the structure and the regularity of the singular part of the free boundary in the obstacle problem \eqref{pb} above with \emph{zero obstacle} for the fractional  Laplacian $(-\Delta)^s$. This means that, with $a = 1-2s$, in the equivalent formulation as a thin obstacle problem for the extension operator $L_a$ in \eqref{sb} above, we take $\vf = 0$. We also localize the problem to a ball centered at the origin of fixed radius, say $R_0 = 1$. For the sake of simplicity, henceforth we write $B(R)$, instead of $B_e(R)$, for the Euclidean ball centered at the origin with radius $R$ in $\R^{n+1}$ (with respect to the variables $(x,y)$), and we denote by $B'(R) = B(R)\cap \{y=0\}$ the corresponding thin ball. Thus, we consider a function $\tilde u$ that solves
\begin{equation}\label{obst-0}
\begin{cases}
L_a \tilde u = \operatorname{div}(|y|^a \nabla \tilde u) = 0\ \ \ \ \ \ \ \ \text{in}\ B(1)_+\cup B(1)_-,
\\
\tilde u(x,0) \ge 0,\ \ \ \ \ \ \ \ \ \ \ \ \ \ \ \ \ \ \ \ \ \ \  \text{for}\ (x,0)\in B'(1),
\\
\tilde u(x,-y) = \tilde u(x,y),\ \ \ \ \ \ \ \ \ \  \ \ \ \  \text{for}\ (x,y)\in B(1),
\\
- \underset{y\to 0^+}{\lim} y^a D_y\tilde u(x,y) \ge 0, \ \ \ \ \  \ \ \  \text{in}\ B(1)^+,
\\
\underset{y\to 0^+}{\lim} y^a D_y \tilde u(x,y) = 0, \ \ \ \ \ \ \ \ \ \ \text{on the subset of $B'(1)$ where}\ \tilde u(x,0) > 0.
\end{cases}
\end{equation}

Recall that, by Theorem \ref{T:almgrenext}, the Almgren-type frequency function
\[\tilde N(\tilde u,r)=\frac{r\int_{B(r)} |\nabla \tilde u|^2 |y|^a}{\int_{\partial B(r)} \tilde u^2 |y|^a}\]
is nondecreasing in $r\in(0,1)$.
Moreover, $\tilde N(\tilde u,r)\equiv \tilde \kappa$ if and only if $\tilde u$ is homogeneous of degree~$\tilde \kappa$.
Furthermore, by Theorem \ref{T:monneauext}, if  for $\tilde \kappa>0$ we have $\tilde N(\tilde u,0^+)=\tilde \kappa$, then for any polynomial $P_{\tilde \kappa}\in \tilde{\mathfrak P}_{a,\tilde \kappa}^+$ the quantity
\[
\tilde{\mathcal M}_{\kappa}(\tilde u, \tilde P_{\tilde \kappa},r)
=\frac{1}{r^{n+a+2\tilde \kappa}}\int_{\partial B(r)} \bigl(\tilde u-\tilde P_{\tilde \kappa}\bigr)^2 |y|^a
\]
is nondecreasing in $r\in(0,1)$. Here, we notice that, according to \eqref{tQ}, the exponent in \eqref{Me} is given by $\tilde Q - 1 + 2\tilde \kappa = n+a+2\tilde \kappa$.
Using those two results, we will next study the structure and regularity of the singular set.

\subsection{Notation}

Throughout this section, we let $a=1-2s$, and for $0<r<1$ continue to use the notation \eqref{tD}, \eqref{tH} and \eqref{tN} above for $\tilde D(\tilde u,r)$, $\tilde H(\tilde u,r)$ and $\tilde N(\tilde u,r)$ respectively, except that in all integrals we will write for simplicity $B(r)$, instead of $B_e(r)$, and we will routinely omit the indication of the differential of volume $dxdy$, or surface measure $dH_{n}$.

\subsection{Blow-ups}

We establish first some results that will be needed later.

\begin{lemma}\label{bound-H-0}
Let $\tilde u$ be a solution of \eqref{obst-0}.
Assume that $0$ is a free boundary point, i.e., $0\in \Gamma(\tilde u)$, and suppose that $\tilde N(\tilde u,0^+)=\kappa$.
Then, for $0<r<1$ one has
\begin{equation}\label{growth-H-kappa-0}
\tilde H(\tilde u,r) \leq \tilde H(\tilde u,1) r^{n+a+2\kappa}.
\end{equation}
Moreover, for any $\varepsilon>0$ there exists $r_{\varepsilon}>0$ such that for $0<r<r_{\varepsilon}$ one has
\begin{equation}\label{growth-H-kappa-below-0}
\tilde H(\tilde u,r) \ge \frac{\tilde H(\tilde u,r_\ve)}{r_\ve^{n+a+2\tilde \kappa + \ve}} r^{n+a+2\kappa+\varepsilon}.
\end{equation}
\end{lemma}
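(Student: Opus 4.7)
The plan is to integrate the logarithmic derivative of $\tilde H(\tilde u,\cdot)$ using the hypothesis $\tilde N(\tilde u,0^+) = \kappa$ together with the monotonicity established in Theorem~\ref{T:almgrenext}. The key identity I would use is the analogue of Lemma~\ref{l:hprime} for the operator $L_a$, namely
\begin{equation}\label{Hderiv}
\tilde H'(\tilde u,r) = \frac{\tilde Q-1}{r}\,\tilde H(\tilde u,r) + 2\,\tilde D(\tilde u,r) = \frac{n+a}{r}\,\tilde H(\tilde u,r) + 2\,\tilde D(\tilde u,r),
\end{equation}
which can be obtained either by differentiating $\tilde H(\tilde u,r) = r^{n+a}\int_{\partial B(1)} \tilde u(r\theta)^2|\theta_{n+1}|^a\,dH_n(\theta)$ and using the Dirichlet energy identity (justified via an $\varepsilon$-limiting argument as in the proof of Proposition~\ref{P:diri} to handle the thin boundary terms, which vanish thanks to the complementary conditions in \eqref{obst-0}), or by translating Lemma~\ref{l:hprime} through the transformation~\eqref{tu} and the conversion formulas in Lemmas~\ref{L:hth}--\ref{L:dtd}. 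Dividing \eqref{Hderiv} by $\tilde H(\tilde u,r)$ and recognising $r\tilde D/\tilde H = \tilde N$ yields the crucial formula
\[
\frac{d}{dr}\log \tilde H(\tilde u,r) = \frac{n+a+2\tilde N(\tilde u,r)}{r}.
\]

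For the upper bound \eqref{growth-H-kappa-0}, I would invoke Theorem~\ref{T:almgrenext}: since $\tilde N(\tilde u,\cdot)$ is non-decreasing and $\tilde N(\tilde u,0^+) = \kappa$, we have $\tilde N(\tilde u,r) \ge \kappa$ for every $r\in(0,1)$. Plugging this into the log-derivative formula gives
\[
\frac{d}{dr}\log \tilde H(\tilde u,r) \ge \frac{n+a+2\kappa}{r},
\]
and integrating between $r$ and $1$ produces $\tilde H(\tilde u,r)\le \tilde H(\tilde u,1)\,r^{n+a+2\kappa}$.

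For the lower bound \eqref{growth-H-kappa-below-0}, I would use the reverse: fix $\varepsilon>0$ and choose $r_\varepsilon>0$ so that $\tilde N(\tilde u,r)\le \kappa + \varepsilon/2$ for all $0<r<r_\varepsilon$, which is possible by the definition of $\tilde N(\tilde u,0^+) = \kappa$. Then
\[
\frac{d}{dr}\log \tilde H(\tilde u,r) \le \frac{n+a+2\kappa+\varepsilon}{r}\qquad\text{for } 0<r<r_\varepsilon,
\]
and integrating from $r$ up to $r_\varepsilon$ yields precisely
\[
\tilde H(\tilde u,r) \ge \frac{\tilde H(\tilde u,r_\varepsilon)}{r_\varepsilon^{n+a+2\kappa+\varepsilon}}\,r^{n+a+2\kappa+\varepsilon}.
\]

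The only nontrivial step I anticipate is justifying \eqref{Hderiv} for the obstacle problem solution rather than for a smooth solution of $L_a\tilde u = 0$. This is where the thin boundary $B'(r)$ contributes boundary terms when one integrates by parts, and these must be shown to vanish. The argument, however, is standard and parallel to the one carried out for $\Ba$ in Proposition~\ref{P:diri}: the relevant boundary integral $\int_{B'(r)} \tilde u\bigl(-D_y^+\tilde u + D_y^-\tilde u\bigr)\,|y|^a\,dH_n$ vanishes because $\tilde u = 0$ on $\{u(x,0) = 0\}$ while $\lim_{y\to 0^+} y^aD_y\tilde u = 0$ on the complement, by the last condition in \eqref{obst-0}. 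Once \eqref{Hderiv} is in hand, everything else is a one-line integration of an ODE.
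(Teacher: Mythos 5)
Your proposal is correct and follows essentially the same route as the paper: the paper derives the identity $\tilde H'(\tilde u,r)=\frac{n+a}{r}\tilde H(\tilde u,r)+2\tilde D(\tilde u,r)$ precisely by translating Lemma~\ref{l:hprime} through Lemmas~\ref{L:hth} and \ref{L:dtd}, then integrates the resulting logarithmic-derivative formula using the monotonicity and the limit $\tilde N(\tilde u,0^+)=\kappa$ exactly as you do. The only cosmetic difference is that the paper does not re-justify the first-variation identity from scratch, relying instead on the already-established $\Ba$-side results.
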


\begin{proof}
First, from Lemma \ref{L:hth} we obtain
\[
\tilde H'(\tilde u,r) = \frac ar \tilde H(\tilde u,r) + (1-a)^a r^a h'(r) H'(u,h(r)).
\]
Using now Lemma \ref{l:hprime} and then Lemma \ref{L:dtd} we obtain
\[
\tilde H'(\tilde u,r)=\frac{\tilde Q -1}{r} \tilde H(\tilde u,r)+2\tilde D(\tilde u,r) = \frac{n+a}{r} \tilde H(\tilde u,r)+2\tilde D(\tilde u,r),
\]
where in the second equality we have used \eqref{tQ}.
Thus, we have
\begin{equation}\label{freq-H-0}
\frac{d}{dr}\log \tilde H(\tilde u,r)=2\frac{\tilde N(\tilde u,r)}{r}+\frac{n+a}{r}.
\end{equation}
Since by Theorem \ref{T:almgrenext} the function $r\to \tilde N(\tilde u,r)$ is non-decreasing in $(0,1)$, it follows that $\tilde N(\tilde u,r) \ge \tilde \kappa$. Therefore, for $r\in(0,1)$ we have
\[
\frac{n+a+2\tilde \kappa}{r}\leq \frac{d}{dr}\log \tilde H(\tilde u,r).
\]
Integrating from $r$ to $1$, we obtain \eqref{growth-H-kappa-0}.

To prove \eqref{growth-H-kappa-below-0} it suffices to observe that for every $\ve>0$ there exists $0<r_\ve<1$ such that $0\le \tilde N(\tilde u,r) - \tilde \kappa < \frac{\ve}{2}$ for $0<r<r_\ve$. We thus find from \eqref{freq-H-0}
\[
\frac{d}{dr}\log \tilde H(\tilde u,r) \le \frac{n+a+2\tilde \kappa + \ve}{r}.
\]
Integrating from $r$ to $r_\varepsilon$ we obtain the desired conclusion \eqref{growth-H-kappa-below-0}.

\end{proof}

\begin{dfn}\label{D:almgrenres}
Given a solution $\tilde u$ of \eqref{obst-0}, we define the \emph{Almgren rescalings} of $\tilde u$ as
\[
\tilde u_r(x,y):=\frac{\tilde u(rx,ry)}{d_r},
\]
where
\[
d_r:=\left(\frac{\tilde H(\tilde u,r)}{r^{n+a}}\right)^{1/2}.
\]
We note that for every $0<r<1$ one has
\begin{equation}\label{tilde H_1}
\int_{\p B(1)} \tilde u_r^2\ |y|^a = 1,
\end{equation}
and moreover we have the following scale invariant property
\begin{equation}\label{tilde N_scale}
\tilde N(\tilde u_r,\rho) = \tilde N(\tilde u,r\rho).
\end{equation}
\end{dfn}

We next show the following.

\begin{prop}\label{blow-ups-0}
Let $\tilde u$ be a solution of \eqref{obst-0}.
Assume that $0\in \Gamma(\tilde u)$ and that $\tilde N(\tilde u,0^+)=\tilde \kappa$.
Then, up to a subsequence, $\tilde u_r$ converge as $r\to 0^+$ to a homogeneous function $\tilde u_0$ which is a global solution to the zero obstacle problem and it is homogeneous of degree $\tilde \kappa$.
\end{prop}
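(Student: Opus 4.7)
The plan is to combine a compactness argument for the Almgren rescalings with the frequency-monotonicity of Theorem \ref{T:almgrenext} to extract a blow-up and identify its homogeneity. First, I would verify the scale invariance: since $L_a$ is $\tilde \delta_\mu$-homogeneous and the complementarity conditions on the thin set are translation/dilation invariant, each $\tilde u_r$ still solves \eqref{obst-0} in $B(1/r)$, and the normalization $\int_{\partial B(1)}\tilde u_r^2|y|^a=1$ combined with \eqref{tilde N_scale} are the key rigid pieces of data we want to propagate to the limit.

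The second step is to obtain uniform local bounds on $\tilde u_r$ in the weighted Sobolev space $W^{1,2}(B(R), |y|^a dxdy)$ for each fixed $R>0$. A direct change of variables gives
\[
\int_{\partial B(R)} \tilde u_r^2\, |y|^a\, dH_n = \frac{\tilde H(\tilde u, rR)}{\tilde H(\tilde u, r)},\qquad \int_{B(R)} |\nabla \tilde u_r|^2\,|y|^a = R\,\frac{\tilde N(\tilde u, rR)\, \tilde H(\tilde u, rR)/(rR)^{n+a}}{\tilde H(\tilde u,r)/r^{n+a}}\cdot R^{n+a-1}.
\]
Using the upper growth bound \eqref{growth-H-kappa-0} together with the lower bound \eqref{growth-H-kappa-below-0} from Lemma \ref{bound-H-0}, the ratio $\tilde H(\tilde u, rR)/\tilde H(\tilde u, r)$ is controlled by $C_\ve R^{n+a+2\tilde\kappa+\ve}$ for all small $r$; since $\tilde N(\tilde u, rR)\to \tilde\kappa$, the Dirichlet integral is bounded by a constant depending only on $R$. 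Because $|y|^a$ is an $A_2$ Muckenhoupt weight, the Fabes--Kenig--Serapioni compact embedding $W^{1,2}(B(R),|y|^a) \hookrightarrow L^2(B(R),|y|^a)$ applies. Passing to a diagonal subsequence $r_j\downarrow 0$, we extract a limit $\tilde u_0$ with $\tilde u_{r_j}\to \tilde u_0$ strongly in $L^2_{\rm loc}(|y|^a)$ and weakly in $W^{1,2}_{\rm loc}(|y|^a)$, and moreover locally uniformly by the H\"older regularity (hence $\tilde u_0$ is continuous up to $\{y=0\}$).

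The third step is to show $\tilde u_0$ inherits both the PDE and the Signorini-type boundary conditions. The identity $L_a \tilde u_r = 0$ off the thin space and the even symmetry pass to the limit by the weak $W^{1,2}(|y|^a)$ convergence together with the linearity of the weighted divergence form equation. The sign conditions $\tilde u_r(x,0)\ge 0$ and $-\lim_{y\to 0^+} y^a D_y \tilde u_r\ge 0$ are preserved under the uniform and distributional limits, and the complementarity $\tilde u_r \cdot \lim y^a D_y \tilde u_r =0$ passes to the limit as an equality of nonnegative measures (or can be read off the fact that the coincidence set is closed under Hausdorff convergence). Finally, the normalization \eqref{tilde H_1} together with strong $L^2(\partial B(1), |y|^a)$ convergence give $\int_{\partial B(1)} \tilde u_0^2 |y|^a = 1$, so $\tilde u_0\not\equiv 0$. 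For the homogeneity, fix $\rho>0$ and use $\tilde N(\tilde u_{r_j},\rho)=\tilde N(\tilde u, r_j\rho)\to \tilde \kappa$ by Theorem \ref{T:almgrenext}; on the other hand, by the established convergence, $\tilde N(\tilde u_{r_j},\rho)\to \tilde N(\tilde u_0,\rho)$. Hence $\tilde N(\tilde u_0,\cdot)\equiv \tilde\kappa$ on $(0,\infty)$, and the rigidity part of Theorem \ref{T:almgrenext} forces $\tilde u_0$ to be $\tilde \delta_\mu$-homogeneous of degree $\tilde\kappa$.

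The delicate step, and the one I would spend most care on, is justifying the convergence of the frequencies $\tilde N(\tilde u_{r_j},\rho)\to \tilde N(\tilde u_0,\rho)$ for a.e.\ (and eventually every) $\rho>0$. This requires strong $L^2(|y|^a)$ convergence on spheres $\partial B(\rho)$ (to handle the denominator $\tilde H$ and ensure it does not degenerate) and convergence of the weighted Dirichlet integrals on $B(\rho)$. Strong convergence of the Dirichlet energy is the real point: one either promotes weak to strong $W^{1,2}(|y|^a)$ convergence by testing the equation with $\tilde u_{r_j}-\tilde u_0$ and using the weighted Rellich compactness together with the Signorini boundary term (which has a favorable sign by \eqref{obst-0}), or one works with the Weiss functional $\tilde{\mathcal W}_{\tilde\kappa}$ of Theorem \ref{T:weissext}, noting that its monotonicity and the convergence $\tilde{\mathcal W}_{\tilde\kappa}(\tilde u,r)\to 0$ as $r\to 0^+$ (using $\tilde N(\tilde u,0^+)=\tilde\kappa$) give the equality case in \eqref{tW'}, forcing $\tilde Z \tilde u_0 = \tilde\kappa \tilde u_0$ directly.
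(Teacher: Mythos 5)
Your argument is essentially the paper's: the normalization \eqref{tilde H_1}, the scale invariance \eqref{tilde N_scale} and the monotonicity of $\tilde N$ give the uniform $H^1(|y|^a)$ bound and the identity $\tilde N(\tilde u_0,\rho)=\lim_{r\to0}\tilde N(\tilde u,r\rho)=\tilde\kappa$, after which the rigidity part of Theorem \ref{T:almgrenext} yields homogeneity. The only difference is the ``delicate step'' you flag at the end: the paper disposes of it at once by invoking the uniform $C^{1,\alpha}$ estimates of \cite{CSS} for the rescalings (which solve \eqref{obst-0} in $B(1/r)$), so that $\tilde u_r\to\tilde u_0$ in $C^1_{\rm loc}$ and the convergence of $\tilde D$ and $\tilde H$, hence of the frequency, is immediate.
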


\begin{proof}
The proof is standard and follows from the monotonicity of $\tilde N(\tilde u,r)$.
For completeness, we sketch the details here.
First, we have
\[
\tilde D(\tilde u_r,1) = \tilde N(\tilde u_r,1) = \tilde N(\tilde u,r) \le \tilde N(\tilde u,1).
\]
Notice that in the first equality we have used \eqref{tilde H_1}, in the second we have used \eqref{tilde N_scale}, whereas in the last inequality we have used the monotonicity of $r\to \tilde N(\tilde u,r)$ in Theorem \ref{T:almgrenext}. Since again by \eqref{tilde H_1} we know that $\|\tilde u_r\|_{L^2(\partial B(1))}=1$, we infer that the sequence $\{\tilde u_r\}$ is uniformly bounded in $H^1(B(1),|y|^a)$, so that (up to a subsequence) the functions $\tilde u_r$ converge to $\tilde u_0$ weakly in $H^1(B(1),|y|^a)$, strongly in $L^2(S(1))$, and a.e. in $B(1)$.

Moreover, the rescaled functions $\tilde u_r$ solve the zero obstacle problem \eqref{obst-0} in the ball $B(1/r)$.
Thus, by the $C^{1,\alpha}$ estimates in \cite{CSS} we conclude that the sequence $\tilde u_r$ is uniformly bounded in $C^{1,\alpha}_{\rm loc}(B(1))$, and hence $\tilde u_r\to \tilde u_0$ in $C^1_{\rm loc}(B(1))$.
Letting $r\to0$, we find that $\tilde u_0$ is a global solution to the zero obstacle problem, with $\|\tilde u_0\|_{L^2(\partial B(1))}=1$, thus in particular $\tilde u_0\not\equiv 0$.

Finally, the global solution $\tilde u_0$ is homogeneous of degree $\tilde \kappa$.
For this, we notice that from the above convergence properties and from \eqref{tilde N_scale}
\[\tilde N(\tilde u_0,r)=\lim_{\rho\to0^+}\tilde N(\tilde u_\rho,r)=\lim_{\rho\to0^+}\tilde N(\tilde u,r\rho)=\tilde N(\tilde u,0^+) = \tilde \kappa.\]
This means that $\tilde N(\tilde u_0,r)\equiv \tilde \kappa$. By the second part of Theorem \ref{T:almgrenext} we conclude that $\tilde u_0$ is homogeneous of degree $\kappa$.

\end{proof}

Thanks to the previous result, we may define $\Gamma_{\tilde \kappa}(\tilde u)$ as the set of free boundary points at which $\tilde N(\tilde u,0^+)=\tilde \kappa$.

\subsection{Characterization of singular points}

We next prove a characterization of singular points similar to \cite[Theorem 1.3.2]{GP}.
Recall that we denote $\Sigma(\tilde u)$ the set of \emph{singular} free boundary points, and that
\[
\Sigma_{\tilde \kappa}(\tilde u)=\Sigma(\tilde u)\cap\Gamma_{\tilde \kappa}(\tilde u).
\]

\begin{prop}\label{characterization}
Let $\tilde u$ be a solution of \eqref{obst-0}.
Assume that $0$ is a free boundary point, and that $\tilde N(\tilde u,0^+)=\tilde \kappa$.
Then, the following statements are equivalent:
\begin{itemize}
\item[(i)] $0\in \Sigma_{\tilde \kappa}(\tilde u)$
\item[(ii)] any blow-up of $\tilde u$ at the origin is a nonzero homogeneous polynomial $p_\kappa(x,y)$ of degree $\tilde \kappa$ satisfying
    \[L_ap_{\tilde \kappa}=0,\qquad p_{\tilde \kappa}(x,0)\geq0,\qquad p_\kappa(x,-y)=p_{\tilde \kappa}(x,y)\]
\item[(iii)] $\tilde \kappa=2m$ for some $m\in \{1,2,3,...\}$
\end{itemize}
\end{prop}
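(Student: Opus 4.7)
The plan is to cycle through $(i)\Rightarrow(ii)\Rightarrow(iii)\Rightarrow(i)$: $(ii)\Rightarrow(iii)$ is a short parity argument, while the other two implications rely on the compactness of blow-ups combined with, respectively, the zero-density hypothesis and the Monneau monotonicity of Theorem~\ref{T:monneauext}.

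For $(i)\Rightarrow(ii)$, I would pick a blow-up $\tilde u_0$ via Proposition~\ref{blow-ups-0}: nonzero, globally $\tilde\kappa$-homogeneous, even in $y$, and solving \eqref{obst-0} on $\R^{n+1}$. Since $0\in\Sigma_{\tilde\kappa}(\tilde u)$, the coincidence set $\Lambda(\tilde u)$ has vanishing $n$-density at $0$; the $C^{1,s}$ convergence $\tilde u_r\to\tilde u_0$ transfers this to $\Lambda(\tilde u_0)$, and homogeneity makes $\Lambda(\tilde u_0)$ a cone, so $\mathcal H^n(\Lambda(\tilde u_0))=0$. By the $C^{1,s}$ theory of \cite{CSS} the weighted normal derivative $\mu(x):=-\lim_{y\to 0^+}y^a D_y\tilde u_0(x,y)$ is a bounded nonnegative function supported on $\Lambda(\tilde u_0)$, hence $\mu\equiv 0$; combined with evenness in $y$ this yields $L_a\tilde u_0=0$ as a distribution on all of $\R^{n+1}$. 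Via the Caffarelli-Silvestre correspondence, $\tilde u_0(\cdot,0)$ is then $s$-harmonic on $\R^n$ and hence real analytic, so $\tilde u_0$ itself extends real analytically across $\{y=0\}$; homogeneity of integer-compatible degree then forces $\tilde u_0$ to be a homogeneous polynomial $p_{\tilde\kappa}\in\tilde{\mathfrak P}^+_{a,\tilde\kappa}(\R^{n+1})$. Alternatively one invokes Lemma~\ref{L:iso} together with Proposition~\ref{P:cm}.

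For $(ii)\Rightarrow(iii)$, note that $p_{\tilde\kappa}(x,0)$ cannot vanish identically: otherwise evenness in $y$ and analyticity would force $p_{\tilde\kappa}\equiv 0$, contradicting $\|\tilde u_0\|_{L^2(\partial B(1),|y|^a)}=1$. Hence $x\mapsto p_{\tilde\kappa}(x,0)$ is a nonzero homogeneous polynomial on $\R^n$ of integer degree $\tilde\kappa$, pointwise nonnegative. Evaluating at $\pm x$ gives $(-1)^{\tilde\kappa}p_{\tilde\kappa}(x,0)\geq 0$, so $\tilde\kappa$ is even; optimal $C^{1+s}$ regularity forces $\tilde\kappa\geq 1+s>1$, giving $\tilde\kappa=2m$ with $m\geq 1$.

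For $(iii)\Rightarrow(i)$, which I expect to be the main obstacle, assume $\tilde N(\tilde u,0^+)=2m$ and let $\tilde u_0$ be any blow-up, homogeneous of degree $2m$. A direct computation from \eqref{tW} coupled with $\tilde N(\tilde u_0,\cdot)\equiv 2m$ yields $\tilde{\mathcal W}_{2m}(\tilde u_0,r)\equiv 0$ for all $r$; by compactness of the blow-up procedure and monotonicity of $r\mapsto \tilde{\mathcal W}_{2m}(\tilde u,r)$, one also has $\tilde{\mathcal W}_{2m}(\tilde u,0^+)=0$. The plan is to take the $L^2(\partial B(1),|y|^a)$-orthogonal projection $\tilde P_{2m}$ of $\tilde u_0$ onto the finite-dimensional space $\tilde{\mathfrak P}^+_{a,2m}(\R^{n+1})$ (finite-dimensionality via Lemma~\ref{L:iso} and Proposition~\ref{P:cm}), invoke the analogue of the identity \eqref{M5} on the $L_a$-side to get $\tilde{\mathcal W}_{2m}(\tilde u_0,r)=\tilde{\mathcal W}_{2m}(\tilde u_0-\tilde P_{2m},r)$, and then exploit the $L^2$-orthogonality of the remainder $\tilde w:=\tilde u_0-\tilde P_{2m}$ to $\tilde{\mathfrak P}^+_{a,2m}$ to deduce $\tilde w\equiv 0$. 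Once $\tilde u_0\equiv\tilde P_{2m}$ is a polynomial, $\Lambda(\tilde u_0)=\{p_{2m}(\cdot,0)=0\}$ is the zero locus of a nonzero real-analytic function on $\R^n$, hence has vanishing $n$-measure, translating to vanishing density of $\Lambda(\tilde u)$ at $0$. The delicate point is that $\tilde P_{2m}$ is only an $L^2$-projection and need not a priori be compatible with the Signorini complementarity of $\tilde u_0$, so the step ``vanishing Weiss forces $\tilde u_0=\tilde P_{2m}$'' must be executed by a careful accounting of the boundary integrals in the Monneau machinery, using the sign condition $p_{2m}(x,0)\geq 0$ together with $\tilde u_0(x,0)=0$ on $\Lambda(\tilde u_0)$.
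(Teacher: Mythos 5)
Your implications (i)$\Leftrightarrow$(ii) follow the paper's route in spirit, but one step is backwards. You claim that the $C^{1}$ convergence $\tilde u_r\to\tilde u_0$ ``transfers'' the vanishing density $|\Lambda(\tilde u_r)\cap B_1|\to 0$ to $\mathcal H^n(\Lambda(\tilde u_0))=0$. The semicontinuity goes the other way: uniform convergence of nonnegative traces gives $\{\tilde u_0(\cdot,0)>0\}\subseteq\liminf_r\{\tilde u_r(\cdot,0)>0\}$, hence $|\Lambda(\tilde u_0)\cap B_1|\geq\limsup_r|\Lambda(\tilde u_r)\cap B_1|$, and smallness of the zero sets along the sequence gives \emph{no} upper bound on the zero set of the limit (think of $w_r=w_0+r$). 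The paper's proof of (i)$\Rightarrow$(ii) avoids this by arguing on the sequence itself: $L_a\tilde u_r$ is a measure whose density is uniformly bounded (by \cite[Proposition 4.3]{CSS}) and supported on $\Lambda(\tilde u_r)$, so its total mass tends to zero and the weak limit satisfies $L_a\tilde u_0=0$ in $B(1)$, whence $\tilde u_0$ is a polynomial by \cite[Lemma 5.3]{CSS}. You have all these ingredients; they just need to be applied to the rescalings rather than to the blow-up. (The direction of semicontinuity you do have is exactly what drives the paper's (ii)$\Rightarrow$(i) and the final reduction in your (iii)$\Rightarrow$(i); that part is fine.)

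The substantive gap is (iii)$\Rightarrow$(i), equivalently the paper's (iii)$\Rightarrow$(ii). Your plan --- vanishing Weiss energy, an $L^2(\partial B(1),|y|^a)$-projection of $\tilde u_0$ onto $\tilde{\mathfrak P}^+_{a,2m}$ (which is a convex cone, not a subspace), and a ``careful accounting of the boundary integrals in the Monneau machinery'' --- is precisely the point you flag as delicate and do not carry out, and it is not clear it closes: $\tilde{\mathcal W}_{2m}(\tilde u_0-\tilde P_{2m},r)=0$ only says $r\,\tilde D(\tilde w,r)=2m\,\tilde H(\tilde w,r)$ for the remainder $\tilde w$, and since $\tilde w$ solves no equation the Almgren rigidity cannot be invoked to force $\tilde w\equiv0$. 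The paper's argument is different and self-contained: writing $\mu=L_a\tilde u_0$, a nonpositive measure on $\{y=0\}$, it tests $\mu$ against $\Psi P$, where $P$ is a $2m$-homogeneous $L_a$-harmonic polynomial positive on $\{y=0\}\setminus\{0\}$ and $\Psi$ is a nonnegative radial cutoff; the equal homogeneities of $P$ and $\tilde u_0$ give the pointwise identity $\tilde u_0\,\nabla\Psi\cdot\nabla P=P\,\nabla\Psi\cdot\nabla\tilde u_0$, which makes the integration by parts collapse to $\langle\mu,\Psi P\rangle=0$; hence $\mu=c\,\delta_0$, and the $(2m-2)$-homogeneity of $\mu$ forces $c=0$, so $\tilde u_0$ is again an $L_a$-harmonic polynomial. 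You would need this (or an equivalent) argument to complete the proof.
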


\begin{proof}
(i) $\Longrightarrow$ (ii)  By assumption, the rescalings $\tilde u_r$ satisfy $|\{\tilde u_r(x,0)=0\}\cap B_1\}|\to0$ as $r\to0$.
(Recall that $B_1$ denotes the Euclidean unit ball in $\R^n=\R^{n+1}\cap\{y=0\}$.)
Let us denote $\Gamma(\tilde u_r)=\{\tilde u_r(x,0)=0\}\cap \{y=0\}$.
Then, it is not difficult to see that $L_a \tilde u_r=c\bigl(\lim_{y\downarrow0}|y|^a\partial_y \tilde u_r\bigr)\left.\mathcal H^n\right|_{\Gamma(\tilde u_r)}$.
Moreover, by \cite[Proposition 4.3]{CSS} we have that $|y|^a\partial_y \tilde u_r$ is uniformly bounded in $B(1)$.
Since $|\Gamma(\tilde u_r)\cap B_1|\to0$, then we find that $L_a \tilde u_r$ converges weakly to 0, and therefore any blow-up $\tilde u_0$ satisfies $L_a\tilde u_0=0$ in $B(1)$.
Since $\tilde u_0$ is homogeneous, then $L_a\tilde u_0=0$ in $\R^{n+1}$.
By \cite[Lemma 5.3]{CSS} this means that $\tilde u_0$ is a polynomial of degree $\kappa$ satisfying $L_a\tilde u_0=0$ in $\R^{n+1}$.
The properties of $\tilde u_r$ also imply that $\tilde u_0$ is not identically zero, $\tilde u_0(x,0)\geq0$ in $\R^n$, and $\tilde u_0(x,-y)=\tilde u_0(x,y)$.

(ii) $\Longrightarrow$ (i) Suppose that 0 is not a singular point, that is, we have $|\{\tilde u_r(x,0)=0\}\cap B_1\}|\geq\delta>0$ over some sequence $r=r_j\to0^+$.
Taking a subsequence if necessary, we may assume that $\tilde u_{r_j}$ converges to a blow-up $\tilde u_0$.
Assume that $|\{\tilde u_0(x,0)=0\}\cap B_1\}|<\delta$, and let us see that this is not possible.
Indeed, if that happens then there exists an open set $A$ with $|A|<\delta$ and such that $\{\tilde u_0(x,0)=0\}\cap B_1\}\subset A$.
But then for large $j$ we have $\{\tilde u_{r_j}(x,0)=0\}\cap B_1\}\subset A$, a contradiction.

(ii) $\Longrightarrow$ (iii) If $\kappa$ is odd, then $p_\kappa(x,0)\geq0$ implies that $p_\kappa(x,0)\equiv0$ on $\R^n$.
Thus, by Lemma \ref{polynomials} this yields $p_\kappa\equiv0$ in $\R^{n+1}$.
Therefore, $\kappa$ is even.

(iii) $\Longrightarrow$ (ii) Let $\tilde u_0$ be any blow-up of $\tilde u$ at $0$, which by assumption is homogeneous of degree $\kappa=2m$.
Notice that, $\mu=L_a \tilde u_0$ is a nonpositive measure on $\{y=0\}$.
We want to show that $\mu=0$.

Let $P(x,y)$ be a $2m$-homogeneous polynomial satisfying $L_aP=0$ in $\R^{n+1}$, which is positive in $\{y=0\}\setminus\{0\}$ (take for example $q_{2m}(x)=|x|^{2m}$ in Lemma \ref{polynomials}).
Let $\Psi\in C^\infty_c(\R^{n+1}\setminus\{0\})$ be a nonnegative radial function.

Notice that
\[(x,y)\cdot \nabla P(x,y)=2mP(x,y),\qquad (x,y)\cdot \nabla \tilde u_0(x,y)=2m\tilde u_0(x,y),\]
and
\[\begin{split}
\tilde u_0(x,y)\nabla \Psi(x,y)\cdot \nabla P(x,y) &= 2m\tilde u_0(x,y)P(x,y)\nabla \Psi(x,y)\cdot \frac{(x,y)}{|x|^2+|y|^2}\\
&= P(x,y)\nabla \Psi(x,y)\cdot\nabla \tilde u_0(x,y).
\end{split}\]
Also, since $\mu$ is supported on $\{y=0\}$ and $P(x,0)\geq0$ we see that
\[\langle\mu,\Psi P\rangle \geq0.\]
Thus, integrating by parts and using that $L_aP=0$ we find
\[\begin{split}
0\leq \langle\mu,\Psi P\rangle &= \langle L_a\tilde u_0,\Psi P\rangle=\int_{\R^{n+1}} |y|^a \nabla \tilde u_0\cdot \nabla(\Psi P)\\
&=\int_{\R^{n+1}} |y|^a\bigl(\Psi \nabla P\cdot \nabla \tilde u_0+P\nabla \tilde u_0\cdot\nabla\Psi\bigr) \\
&=\int_{\R^{n+1}} |y|^a\bigl(-\Psi \tilde u_0\nabla P|y|^{-a}L_aP-\tilde u_0\nabla\Psi\cdot\nabla P+P\nabla \tilde u_0\cdot\nabla\Psi\bigr) =0,
\end{split}\]
that is,
\[\int_{\R^n} P(x,0)\Psi(x,0)d\mu(x)=0.\]
Since $\Psi\geq0$ is arbitrary, it follows that $\mu=c\delta_0$ for some $c\geq0$.
However, since $\mu$ is $(2m-2)$-homogeneous (it is a second derivative of a $2m$-homogeneous function), the only possibility is that $\mu\equiv0$, i.e., $L_a\tilde u_0=0$ in $\R^{n+1}$.
Finally, by \cite[Lemma 5.3]{CSS} $\tilde u_0$ is a polynomial.
\end{proof}

\subsection{Structure and regularity of the singular set in the zero obstacle case}

In this section we establish the structure and regularity of the singular set for the problem \eqref{obst-0}.
We start with the following nondegeneracy Lemma.
Recall that $\Sigma_\kappa(u)$ denotes the set of singular points with frequency $\kappa$.

\begin{lemma}\label{nondegeneracy-0}
Let $\tilde u$ be a solution of \eqref{obst-0}.
Assume that $0\in \Sigma_\kappa(u)$, $\kappa=2m$.
Then, for all $r\in(0,1)$ we have
\[C^{-1}r^\kappa\leq \sup_{\partial B(r)}|\tilde u|\leq Cr^\kappa\]
for some $C>0$.
\end{lemma}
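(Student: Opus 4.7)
My plan is as follows.

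\emph{Upper bound.} I will combine Lemma~\ref{bound-H-0}, giving $\tilde H(\tilde u,r)\leq Cr^{n+a+2\kappa}$, with uniform $C^{1,\alpha}$ regularity from~\cite{CSS}. Since the Almgren rescalings $\tilde u_r$ satisfy $\|\tilde u_r\|_{L^2(\partial B(1),|y|^a)}=1$ by construction, they are uniformly bounded in $L^\infty(B(1/2))$; rescaling then yields $\sup_{\partial B(r)}|\tilde u|\leq Cd_r\leq Cr^\kappa$.

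\emph{Lower bound via Monneau.} The strategy is to compare $\tilde u$ with a suitably scaled blow-up polynomial. By Proposition~\ref{blow-ups-0} and Proposition~\ref{characterization}, there is a blow-up $p=p_{2m}\in\tilde{\mathfrak P}^+_{a,2m}$ of $\tilde u$ at $0$, a non-zero $2m$-homogeneous polynomial with $\|p\|_{L^2(\partial B(1),|y|^a)}=1$. Set $v_r(x,y):=r^{-\kappa}\tilde u(rx,ry)$ and
\[
A(r):=\int_{\partial B(1)}v_r^2|y|^a=\bigl(d_r/r^\kappa\bigr)^2,\qquad B(r):=\int_{\partial B(1)}v_r\,p\,|y|^a.
\]
The identity $A'(r)=\tfrac{2}{r}A(r)(\tilde N(\tilde u,r)-\kappa)\geq 0$ (a direct consequence of $\tilde H'=(n+a+2\tilde N)\tilde H/r$ and $\tilde N\geq\kappa$) shows $A$ is non-decreasing, so $\mu^2:=\lim_{r\downarrow 0}A(r)$ exists with $A(r)\geq\mu^2$ for every $r\in(0,1)$. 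Along the Almgren subsequence $r_k\to 0$ with $\tilde u_{r_k}\to p$ in $L^2(\partial B(1),|y|^a)$, one has $v_{r_k}=\sqrt{A(r_k)}\,\tilde u_{r_k}\to\mu p$ in $L^2$, so for any $\lambda>0$ (noting $\lambda p\in\tilde{\mathfrak P}^+_{a,2m}$) a direct change of variables gives $\tilde{\mathcal M}_{2m}(\tilde u,\lambda p,r)=A(r)-2\lambda B(r)+\lambda^2$ with $\lim_{r\downarrow 0}\tilde{\mathcal M}_{2m}(\tilde u,\lambda p,r)=(\mu-\lambda)^2$. Monneau's monotonicity (Theorem~\ref{T:monneauext}) then forces
\[
A(r)-2\lambda B(r)\geq \mu^2-2\mu\lambda\qquad\text{for all }r\in(0,1),\ \lambda>0,
\]
and letting $\lambda\to+\infty$ yields $B(r)\leq\mu$ for every $r\in(0,1)$.

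\emph{The hard step: $\mu>0$, and conclusion.} I claim $\mu>0$. Indeed, $B(r)=\sqrt{A(r)}\int_{\partial B(1)}\tilde u_r\,p\,|y|^a$, and along the Almgren subsequence $\int_{\partial B(1)}\tilde u_{r_k}\,p\,|y|^a\to\|p\|^2=1$, while $A(r_k)>0$ (since $\tilde u\not\equiv 0$ on $\partial B(r_k)$, else $\tilde N$ would be undefined); hence $B(r_k)>\tfrac{1}{2}\sqrt{A(r_k)}>0$ for $k$ large, which contradicts $B(r)\leq 0$ if $\mu=0$. Therefore $\mu>0$. Taking $\lambda=\mu$ now gives $\tilde{\mathcal M}_{2m}(\tilde u,\mu p,r)\to 0$, so for any $\epsilon>0$ there is $r_\epsilon>0$ with $\int_{\partial B(r)}(\tilde u-\mu p)^2|y|^a\leq\epsilon\,r^{n+a+2\kappa}$ for $r\leq r_\epsilon$. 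Combined with $\int_{\partial B(r)}(\mu p)^2|y|^a=\mu^2\,r^{n+a+2\kappa}$ and the triangle inequality in $L^2(\partial B(r),|y|^a)$, choosing $\epsilon=\mu^2/4$ yields $\tilde H(\tilde u,r)\geq(\mu/2)^2\,r^{n+a+2\kappa}$; the crude bound $\tilde H(\tilde u,r)\leq Cr^{n+a}\sup_{\partial B(r)}|\tilde u|^2$ then gives $\sup_{\partial B(r)}|\tilde u|\geq cr^\kappa$ for small $r$, and this extends to all $r\in(0,1)$ by adjusting the constant.
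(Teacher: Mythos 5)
Your argument is correct, and both halves rest on the same two pillars as the paper's proof: the growth of $\tilde H$ coming from Almgren monotonicity for the upper bound, and the Monneau formula of Theorem \ref{T:monneauext} applied to a blow-up polynomial for the lower bound. The route through the lower bound is nonetheless genuinely different in structure. The paper argues by contradiction: assuming $\sup_{\partial B(r_j)}|\tilde u|=o(r_j^\kappa)$ forces $d_{r_j}=o(r_j^\kappa)$, and inserting the unit-norm blow-up $p_\kappa$ into the Monneau inequality and passing to the limit along $r_j$ yields $-\int_{\partial B(1)}|y|^a p_\kappa^2\ge 0$, which is absurd. You instead run a direct argument: you note that $A(r)=\tilde H(\tilde u,r)/r^{n+a+2\kappa}$ is nondecreasing (the same computation as \eqref{freq-H-0} plus $\tilde N\ge\kappa$), identify $\mu=\lim_{r\downarrow 0}\sqrt{A(r)}$, exploit the whole cone $\{\lambda p\}_{\lambda>0}\subset\tilde{\mathfrak P}^+_{a,2m}$ in Monneau and send $\lambda\to\infty$ to get $B(r)\le\mu$, and rule out $\mu=0$ via the strong $L^2(\partial B(1),|y|^a)$ convergence $\tilde u_{r_k}\to p$. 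When $\mu=0$ your contradiction ($B(r_k)>0$ versus $B(r)\le 0$) is, after unwinding, the same inequality the paper reaches, so the mechanism is shared; but your formulation buys more: taking $\lambda=\mu$ gives $\tilde{\mathcal M}_{2m}(\tilde u,\mu p,r)\to 0$, i.e.\ the quantitative asymptotics $\tilde u=\mu p+o(r^\kappa)$ in $L^2(\partial B(r),|y|^a)$, which essentially contains the uniqueness of blow-ups proved separately in Proposition \ref{uniqeness-0}, at the cost of a longer argument. Two small points to tighten: for the upper bound, passing from $\|\tilde u_r\|_{L^2(\partial B(1),|y|^a)}=1$ to a uniform $L^\infty(B(1/2))$ bound needs the intermediate solid bound $\int_{B(1)}\tilde u_r^2|y|^a\le C$ (from $\tilde H(\tilde u,r\rho)\le \rho^{n+a+2\kappa}\tilde H(\tilde u,r)$) before local boundedness applies --- this is exactly what the paper outsources to \cite{BFR}; and your final extension of the lower bound from $(0,r_\epsilon)$ to all of $(0,1)$ tacitly uses $\tilde H(\tilde u,r)>0$ on $[r_\epsilon,1)$, which follows from the doubling of $\tilde H$ (the paper's own proof elides the same point).
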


\begin{proof}
The upper bound follows from \eqref{growth-H-kappa-0} and \cite[Lemma 3.4]{BFR}.

To prove the lower bound, we argue by contradiction and assume that for a sequence $r=r_j\to0$ we have $\sup_{\partial B(r)}|\tilde u|= o(r^\kappa)$.
Then,
\begin{equation}\label{contr-0}
d_r=\left(\frac{1}{r^{n+a}}\int_{\partial B(r)} \tilde u^2 |y|^a \right)^{1/2}=o(r^\kappa).
\end{equation}
Passing to a subsequence if necessary we may assume that
\[\tilde u_r(x,y)=\frac{\tilde u(rx,ry)}{d_r}\to p_\kappa(x,y)\qquad \textrm{uniformly on}\quad \partial B(1),\]
for some nonzero $p_\kappa\in\tilde{\mathfrak P}_{a,\kappa}^+$.
Now, for such $p_\kappa$ we use the Monneau formula in Theorem \ref{T:monneauext}.
If \eqref{contr-0} holds then we have
\[\tilde{\mathcal M_\kappa}(\tilde u,p_\kappa,0^+)=\int_{\partial B(1)} p_\kappa^2 |y|^a=\frac{1}{r^{n+a+\kappa}}\int_{\partial B(r)}p_\kappa^2|y|^a.
\]
Therefore, using the monotonicity of $\tilde{\mathcal M_\kappa}(\tilde u,p_\kappa,r)$, we will have that
\[\frac{1}{r^{n+a+\kappa}}\int_{\partial B(r)}|y|^a(\tilde u-p_\kappa)^2\geq \frac{1}{r^{n+a+\kappa}}\int_{\partial B(r)}|y|^ap_\kappa^2\]
or, equivalently,
\[\frac{1}{r^{n+a+\kappa}}\int_{\partial B(1)}|y|^a(\tilde u^2-2v p_\kappa)\geq0.\]
After rescaling we obtain
\[\frac{1}{r^{2\kappa}}\int_{\partial B(1)} |y|^a(d_r^2\tilde u_r^2-2d_rr^\kappa \tilde u_rp_\kappa)\geq 0,\]
and thus
\[\int_{\partial B(1)} |y|^a\left(\frac{d_r}{r^\kappa}\tilde u_r^2-2 \tilde u_rp_\kappa\right)\geq 0.\]
Since $\tilde u_r\to p_\kappa$ as $r\to0$, then letting $r\to0$ in the last inequality and using \eqref{contr-0} we find
\[-\int_{\partial B(1)}p_\kappa^2\geq0,\]
a contradiction.
\end{proof}

We next prove uniqueness and continuity of blow-ups.

\begin{prop}\label{uniqeness-0}
Let $\tilde u$ be a solution of \eqref{obst-0}.
Then, there exists a modulus of continuity $\omega:\R^+\to\R^+$ such that, for any $x_0\in \Sigma_\kappa(\tilde u)\cap B(1/2)'$, we have
\[\tilde u(x,0)=p_\kappa^{x_0}(x-x_0)+\omega(|x-x_0|)|x-x_0|^\kappa\]
for some polynomial $p_\kappa\in \tilde{\mathfrak P}_{a,\kappa}^+$.
In addition, the mapping $\Sigma_\kappa(\tilde u)\cap B(1/2)'\ni x_0\mapsto p_\kappa^{x_0}\in\tilde{\mathfrak P}_{a,\kappa}^+$ is continuous, with
\[\int_{\partial B(1)} |y|^a\bigl(p_\kappa^{x_0'}-p_\kappa^{x_0}\bigr)^2\leq \omega(|x_0'-x_0|)\]
for all $x_0,x_0'\in \Sigma_\kappa(\tilde u)\cap B(1/2)$.
\end{prop}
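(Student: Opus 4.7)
The plan is to exploit the Monneau monotonicity formula of Theorem \ref{T:monneauext}, centered at an arbitrary $x_0 \in \Sigma_\kappa(\tilde u) \cap B(1/2)'$, in combination with the nondegeneracy estimate of Lemma \ref{nondegeneracy-0} and the classification of singular blow-ups in Proposition \ref{characterization}. I first establish \emph{uniqueness} of the blow-up at each such $x_0$. The upper bound in Lemma \ref{nondegeneracy-0} together with Theorem \ref{T:almgrenext} guarantees that the homogeneous rescalings $v_{x_0,r}(X) := r^{-\kappa} \tilde u(x_0 + rX)$ are uniformly bounded in $H^1(B(1),|y|^a) \cap C^{1,\alpha}_{\mathrm{loc}}(B(1))$ as $r \to 0^+$; by Propositions \ref{blow-ups-0} and \ref{characterization}, every subsequential limit is an element of $\tilde{\mathfrak P}_{a,\kappa}^+$. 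Fixing one such limit $p_\kappa^{x_0}$, Theorem \ref{T:monneauext} applied at $x_0$ asserts that $r \mapsto \tilde{\mathcal M}_\kappa(\tilde u(\cdot + x_0),p_\kappa^{x_0},r)$ is nondecreasing, and along the subsequence $r_j\to 0^+$ realizing the blow-up one has
\[
\tilde{\mathcal M}_\kappa(\tilde u(\cdot + x_0), p_\kappa^{x_0}, r_j) = \int_{\partial B(1)} |y|^a (v_{x_0, r_j} - p_\kappa^{x_0})^2 \to 0.
\]
Monotonicity together with non-negativity force $\tilde{\mathcal M}_\kappa(\tilde u(\cdot + x_0), p_\kappa^{x_0}, r) \to 0$ as $r \to 0^+$, so any other subsequential blow-up coincides with $p_\kappa^{x_0}$ in $L^2(\partial B(1),|y|^a)$ and hence identically, yielding uniqueness.

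The vanishing of the Monneau functional gives the Campanato-type decay $\int_{\partial B(r)} |y|^a (\tilde u(\cdot + x_0) - p_\kappa^{x_0}(\cdot-x_0))^2 = o(r^{n+a+2\kappa})$, equivalently $\|v_{x_0, r} - p_\kappa^{x_0}\|_{L^2(\partial B(1),|y|^a)} \to 0$. Since $v_{x_0,r} - p_\kappa^{x_0}$ satisfies the same Signorini-type boundary conditions on the thin set as $\tilde u(\cdot + x_0)$ and solves $L_a = 0$ away from it, the interior $C^{1,\alpha}$ estimates of \cite{CSS} applied to this difference promote the $L^2$ decay to the pointwise bound $\|v_{x_0, r} - p_\kappa^{x_0}\|_{L^\infty(B(1/2))} \to 0$; rescaling back, this yields the first claim of the proposition with a pointwise modulus $\omega_{x_0}$ depending a priori on $x_0$.

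Finally, I need to upgrade $\omega_{x_0}$ to a modulus $\omega$ uniform over $x_0 \in \Sigma_\kappa(\tilde u) \cap B(1/2)'$, and simultaneously extract the continuous dependence $x_0 \mapsto p_\kappa^{x_0}$. I proceed by a standard compactness/contradiction argument: if either statement fails, there exist $\varepsilon_0 > 0$ and sequences $x_0^{(j)} \in \Sigma_\kappa \cap B(1/2)'$ (possibly paired with radii $r_j \to 0^+$) along which the asserted uniform bound is violated. Passing to a subsequence $x_0^{(j)} \to \bar x_0$, the main obstacle is to identify $\bar x_0$ itself as a singular point of frequency $\kappa$. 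This step relies on upper semicontinuity of $x_0 \mapsto \tilde N(\tilde u(\cdot + x_0), 0^+)$ together with the classification in Proposition \ref{characterization} (which confines the admissible singular frequencies to the discrete set $\{2m : m \in \N\}$, so the frequency cannot jump in the limit), and on Lemma \ref{nondegeneracy-0} to prevent the limit of $p_\kappa^{x_0^{(j)}}$ in the finite-dimensional space $\tilde{\mathfrak P}_{a,\kappa}^+$ from being trivial. Once $\bar x_0 \in \Sigma_\kappa$ is in hand, the joint continuity of $(x_0, p, r) \mapsto \tilde{\mathcal M}_\kappa(\tilde u(\cdot + x_0), p, r)$ for $r > 0$, combined with the uniqueness at $\bar x_0$ established above and the Monneau monotonicity of Theorem \ref{T:monneauext}, produces a contradiction and yields simultaneously the continuity of $x_0 \mapsto p_\kappa^{x_0}$ and the desired uniform modulus $\omega$.
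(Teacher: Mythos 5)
Your treatment of the uniqueness of the blow-up is correct and is essentially the paper's argument: the two-sided nondegeneracy of Lemma \ref{nondegeneracy-0} makes the $\kappa$-homogeneous rescalings compact with nonzero polynomial limits in $\tilde{\mathfrak P}_{a,\kappa}^+$, and the vanishing of the Monneau functional along one blow-up subsequence, combined with the monotonicity of Theorem \ref{T:monneauext}, forces $\tilde{\mathcal M}_\kappa(v^{x_0},p_\kappa^{x_0},0^+)=0$ and hence uniqueness. Two small imprecisions there: in your first paragraph you invoke only the \emph{upper} bound of Lemma \ref{nondegeneracy-0}, whereas it is the \emph{lower} bound that prevents the rescalings $r^{-\kappa}\tilde u(x_0+r\,\cdot)$ from degenerating to the zero polynomial; and the difference $v_{x_0,r}-p_\kappa^{x_0}$ is not itself a solution of the Signorini-type problem, so the $C^{1,\alpha}$ estimates of \cite{CSS} should be applied to $v_{x_0,r}$ alone (which has uniform bounds) and the pointwise expansion extracted from compactness of that family, as in Proposition \ref{blow-ups-0}.

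The genuine gap is in your last paragraph. Your compactness/contradiction scheme hinges on showing that the limit $\bar x_0$ of the contradicting sequence $x_0^{(j)}\in\Sigma_\kappa(\tilde u)$ again lies in $\Sigma_\kappa(\tilde u)$, and you justify this by upper semicontinuity of the frequency together with the discreteness of the admissible singular frequencies. This inference fails: upper semicontinuity (the frequency at a point is an infimum over $r>0$ of quantities continuous in the base point) only yields that the frequency of $\tilde u$ at $\bar x_0$ is $\geq\kappa$; it does not exclude an upward jump to another value $2m'>2m$, nor does it force $\bar x_0$ to be a singular point at all, and $\Sigma_\kappa(\tilde u)$ is in general not closed. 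The paper's proof of the continuity statement sidesteps this entirely with a direct two-step estimate: for fixed $x_0\in\Sigma_\kappa(\tilde u)$ and $\varepsilon>0$ choose $r_\varepsilon>0$ with $\tilde{\mathcal M}_\kappa(v^{x_0},p_\kappa^{x_0},r_\varepsilon)<\varepsilon$, use continuity of $x_0'\mapsto v^{x_0'}$ at the \emph{fixed positive scale} $r_\varepsilon$ to get $\tilde{\mathcal M}_\kappa(v^{x_0'},p_\kappa^{x_0},r_\varepsilon)<2\varepsilon$ for all $x_0'\in\Sigma_\kappa(\tilde u)$ near $x_0$, and then run the Monneau monotonicity downward to $r\to0^+$, where the limit equals $\int_{\partial B(1)}|y|^a\bigl(p_\kappa^{x_0'}-p_\kappa^{x_0}\bigr)^2$; only points already known to lie in $\Sigma_\kappa(\tilde u)$ ever appear. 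You should prove continuity first by this direct argument, and only afterwards obtain the uniform modulus for the remainder by the compactness argument of \cite{PSU} and \cite{GP}.
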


\begin{proof}
Let $v^{x_0}(x,y)=\tilde u(x_0+x,y)$, and let
\[v_r^{x_0}(x,y)=\frac{v^{x_0}(rx,ry)}{r^\kappa}.\]
By Lemma \ref{nondegeneracy-0} we have that
\[C^{-1}\rho^\kappa\leq \sup_{B(\rho)}|v_r^{x_0}|\leq C\rho^\kappa,\]
for all $\rho\in (0,1/r)$.
Thus, exactly as in Proposition \ref{blow-ups-0} we get
\begin{equation}\label{71-0}
v_{r_j}^{x_0}\rightarrow v_0^{x_0}\qquad\textrm{in}\quad C^1_{\rm loc}(\R^{n+1})\quad \textrm{along a subsequence}\ r_j\to0.
\end{equation}
Moreover, $v_0^{x_0}$ is not identically zero, and it is an homogeneous polynomial $p_\kappa\in\tilde{\mathfrak P}_{a,\kappa}^+$.

Hence, using \eqref{71-0} we get
\[\tilde{\mathcal M}_\kappa(v_{x_0},p_\kappa^{x_0},0^+) =\lim_{r_j\to0}\int_{\partial B(1)}|y|^a(v_{r_j}^{x_0}-p_\kappa^{x_0})^2=0.\]
Thus, the Monneau-type monotonicity formula in Theorem \ref{T:monneauext} implies
\begin{equation}\label{72-0}
\int_{\partial B(1)}|y|^a(v_r^{x_0}-p_\kappa^{x_0})^2=\tilde{\mathcal M}_\kappa(v_{x_0},p_\kappa^{x_0},r)\longrightarrow0\qquad \textrm{as}\quad r\downarrow0
\end{equation}
(not just along a subsequence).
This immediately implies that the blow-up is unique, and since $v^{x_0}(x,0)=\tilde u(x,0)$, we deduce that $\tilde u(x,0)=p_\kappa^{x_0}(x-x_0)+o(|x-x_0|^\kappa)$.
The fact that the rest $o(|x-x_0|^\kappa)$ is uniform with respect to $x_0$ follows from a simple compactness argument, as in \cite[Lemma 7.3 and Proposition 7.7]{PSU}.

We now prove continuous dependence of $p_\kappa^{x_0}$ with respect to $x_0$.
Given $\varepsilon>0$ it follows from \eqref{72-0} that there exists $r_\varepsilon=r_\varepsilon(x_0)>0$ such that
\[\tilde{\mathcal M}_\kappa(v^{x_0},p_\kappa^{x_0},r_\varepsilon)<\varepsilon.\]
Now, by continuous dependence of $v^{x_0}$ with respect to $x_0$, there exists $\delta_\varepsilon=\delta_\varepsilon(x_0)>0$ such that
\[\tilde{\mathcal M}_\kappa(v^{x_0'},p_\kappa^{x_0},r_\varepsilon)<2\varepsilon\]
for all $x_0'\in \Gamma_\kappa(\tilde u)$ satisfying $|x_0'-x_0|<\delta_\varepsilon$.
Then, it follows from Theorem \ref{T:monneauext} that
\[\tilde{\mathcal M}_\kappa(v^{x_0'},p_\kappa^{x_0},r)<2\varepsilon\]
for all $r\in(0,r_\varepsilon]$.
Letting $r\to0$ we obtain
\[\int_{\partial B(1)}|y|^a(p_\kappa^{x_0'}-p_\kappa^{x_0})^2=\tilde{\mathcal M}_\kappa(v_{x_0'},p_\kappa^{x_0},0^+)\leq 2\varepsilon.\]
Since $\varepsilon$ is arbitrary, we deduce that $p_\kappa^{x_0}$ is continuous with respect to $x_0$.
Finally, the uniform continuity follows exactly as in the proof of \cite[Theorem 1.5.4]{GP}.
\end{proof}

To end this section, we show the following.

\begin{thrm}\label{structure-0}
Let $\tilde u$ be a solution of \eqref{obst-0}.
Then,
\[\Sigma(\tilde u)\cap B(1/2)=\cup_{m=1}^\infty \Sigma_{2m}(\tilde u)\cap B(1/2).\]
Moreover, the blow-up of $\tilde u$ at any $x_0\in \Sigma_{2m}(\tilde u)\cap B(1/2)'$ is a {unique} homogeneous polynomial $p_{2m}^{x_0}$ of degree $2m$, and
\[\Sigma_{2m}(\tilde u)\cap B(1/2)=\cup_{d=1}^{n-1}\Sigma_{2m}^d(\tilde u)\cap B(1/2),\]
where
\begin{equation}\label{Sigma-2m-d-0}
\qquad\qquad\qquad\qquad\Sigma_{2m}^d(\tilde u):=\bigl\{x_0\in \Sigma_{2m}(\tilde u)\,:\, d(p_{2m}^{x_0})=d\bigr\},\qquad d=0,1,...,n-1.
\end{equation}
Furthermore, every set $\Sigma_{2m}^d(\tilde u)\cap B(1/2)$ is contained in a countable union of $d$-dimensional $C^1$ manifolds.
\end{thrm}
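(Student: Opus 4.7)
The first assertion, $\Sigma(\tilde u)\cap B(1/2) = \bigcup_{m=1}^\infty \Sigma_{2m}(\tilde u)\cap B(1/2)$, is immediate from Proposition \ref{characterization}, which forces $\tilde\kappa$ to be an even positive integer at every singular point. Uniqueness of the blow-up polynomial $p_{2m}^{x_0}$ at every $x_0 \in \Sigma_{2m}(\tilde u)\cap B(1/2)'$ is precisely Proposition \ref{uniqeness-0}. For the decomposition into strata $\Sigma_{2m}^d$ with $0 \le d \le n-1$, one must check that $d(p_{2m}^{x_0}) \le n-1$: if $d(p_{2m}^{x_0}) = n$, then $\nabla_x p_{2m}^{x_0} \equiv 0$ on $\{y=0\}$, so by Proposition \ref{characterization}(ii) together with $L_a$-harmonicity and evenness, $p_{2m}^{x_0}$ is a polynomial in $y$ only, nonnegative on $\{y=0\}$, homogeneous of positive degree, and hence identically zero. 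This contradicts the nondegeneracy Lemma \ref{nondegeneracy-0}.

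The main work is the containment of each $\Sigma_{2m}^d(\tilde u)\cap B(1/2)$ in a countable union of $d$-dimensional $C^1$ manifolds. I would follow the Whitney-extension strategy of \cite{GP}. For each multi-index $\alpha$ with $|\alpha|\le 2m$, set
\[
f_\alpha(x_0) \;:=\; \frac{1}{\alpha!}\,\partial^\alpha p_{2m}^{x_0}(0), \qquad x_0 \in E := \Sigma_{2m}(\tilde u)\cap \overline{B(1/2)'}.
\]
Since the family $\tilde{\mathfrak P}^+_{a,2m}$ is finite-dimensional (Proposition \ref{P:cm}) and $x_0\mapsto p_{2m}^{x_0}$ is continuous in $L^2(\p B(1),|y|^a)$ by Proposition \ref{uniqeness-0}, each $f_\alpha$ is continuous on $E$. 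Combining the expansion
\[
\tilde u(x,0) \;=\; p_{2m}^{x_0}(x-x_0) + |x-x_0|^{2m}\omega(|x-x_0|)
\]
from Proposition \ref{uniqeness-0} with the uniform modulus $\omega$ and the continuous dependence of $p_{2m}^{x_0}$ on $x_0$, one verifies that $\{f_\alpha\}_{|\alpha|\le 2m}$ is a Whitney field of class $C^{2m}$ on $E$. Whitney's extension theorem then yields $F \in C^{2m}(\R^n)$ with $\partial^\alpha F(x_0) = \alpha!\,f_\alpha(x_0)$ for every $x_0 \in E$ and every $|\alpha|\le 2m$.

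Now fix $x_0 \in \Sigma_{2m}^d(\tilde u)\cap B(1/2)'$ and let $V := \{\xi \in \R^n : \xi\cdot\nabla p_{2m}^{x_0}(x) = 0 \ \forall x \in \R^n\}$, $\dim V = d$. Since every blow-up polynomial has vanishing $(2m-1)$-th derivatives at the origin, $f_\alpha \equiv 0$ on $E$ for every $\alpha$ with $|\alpha| = 2m-1$; consequently
\[
\Sigma_{2m}(\tilde u)\cap B(1/2)' \;\subset\; \bigcap_{|\alpha|=2m-1}\bigl\{\partial^\alpha F = 0\bigr\}.
\]
The $(2m)$-th order derivatives $\nabla\partial^\alpha F(x_0) = \nabla\partial^\alpha p_{2m}^{x_0}(0)$ with $|\alpha|=2m-1$ span $V^\perp \subset \R^n$, a subspace of dimension $n-d$. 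Hence we can choose multi-indices $\alpha_1,\ldots,\alpha_{n-d}$ of length $2m-1$ so that $\{\nabla\partial^{\alpha_i}F(x_0)\}_{i=1}^{n-d}$ are linearly independent. By the implicit function theorem applied to $G := (\partial^{\alpha_1}F,\ldots,\partial^{\alpha_{n-d}}F)$, the zero set $\{G=0\}$ is a $d$-dimensional $C^1$ manifold in a neighborhood of $x_0$, which contains $\Sigma_{2m}^d$ locally. Stratifying $\Sigma_{2m}^d$ by the (countably many) possible $(n-d)$-tuples of multi-indices and by the (countably many) rational neighborhoods where the nondegeneracy of $DG$ is preserved yields the desired countable cover.

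The principal obstacle is the verification of the Whitney compatibility conditions, which requires upgrading the thin-space expansion of Proposition \ref{uniqeness-0} into controlled expansions for all partial derivatives $\partial^\beta \tilde u(\cdot,0)$ of order $|\beta|\le 2m$, uniformly in $x_0 \in E$. This is done by a standard compactness/blow-up argument exploiting the continuity of $x_0 \mapsto p_{2m}^{x_0}$ and the finite dimensionality of $\tilde{\mathfrak P}^+_{a,2m}$, reducing the matching condition between Taylor polynomials at nearby singular points to the $L^2$-continuity already established.
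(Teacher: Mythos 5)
Your proposal is correct and follows essentially the same route as the paper, which likewise deduces the first two assertions from Propositions \ref{characterization} and \ref{uniqeness-0} and then invokes Whitney's extension theorem plus the implicit function theorem exactly as in \cite[Theorem 1.3.8]{GP}; you simply unpack that last step (and add the easy check that $d(p_{2m}^{x_0})\le n-1$), and your linear-algebra observation that the vectors $\nabla\partial^\alpha p_{2m}^{x_0}(0)$, $|\alpha|=2m-1$, span $V^\perp$ is the right justification for the implicit function theorem step. The only part left at sketch level — the verification of the Whitney compatibility conditions from the uniform expansion and the $L^2$-continuity of $x_0\mapsto p_{2m}^{x_0}$ — is exactly the part the paper also delegates to \cite{GP}.
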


\begin{proof}
Let $x_0\in \Sigma(\tilde u)\cap B(1/2)$.
Then, by Proposition \ref{characterization} we have that $x_0\in \Sigma_{2m}(\tilde u)$ for some $m\in\{1,2,3,...\}$.
Moreover, the blow-up of $\tilde u$ at $x_0$ is an homogeneous polynomial $p_{2m}^{x_0}$ of degree $2m$, and it is unique by Proposition \ref{uniqeness-0}.

Finally, the fact that every set $\Sigma_{2m}^d(\tilde u)\cap B(1/2)$ is contained in a countable union of $d$-dimensional $C^1$ manifolds follows Proposition \ref{uniqeness-0} above combined with Whitney's extension theorem and the implicit function theorem, exactly as in \cite[Theorem 1.3.8]{GP}.

\end{proof}

\section{Analytic obstacles: reduction to the zero obstacle case}
\label{sec5}

We next study problem \eqref{pb} in case that $\varphi$ is \emph{analytic}.
First, we have the following.

\begin{lemma}\label{analytic}
Let $\varphi(x)$ be an analytic function in $B_1(x_0)\subset \R^n$.
Then, there exists an analytic function $\tilde \varphi(x,y)$, defined for $(x,y)$ in a neighborhood of $(x_0,0)$, satisfying:
\begin{itemize}
\item $\tilde \varphi(x,0)=\varphi(x)$ on $\{y=0\}$
\item $\tilde\varphi(x,y)$ is analytic in a neighborhood of $(x_0,0)$
\item $L_a\tilde\varphi=0$ in a neighborhood of $(x_0,0)$
\end{itemize}
\end{lemma}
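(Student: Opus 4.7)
The plan is to build $\tilde\varphi$ by the classical power-series method, noting that the hyperplane $\{y=0\}$ is characteristic for $L_a$ so Cauchy--Kovalevskaya does not directly apply; restricting to solutions even in $y$ sidesteps the singular Frobenius resonance and yields a convergent series.

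I would look for $\tilde\varphi$ in the form
\[\tilde\varphi(x,y)=\sum_{k=0}^\infty a_k(x)\,y^{2k},\qquad a_0(x)=\varphi(x).\]
The evenness forces $D_y\tilde\varphi|_{y=0}=0$, which is needed for the patching across $\{y=0\}$. Inserting the ansatz into the non-divergence form
\[L_au=y^a\left(\Delta_x u+D_{yy} u+\frac{a}{y}D_y u\right),\qquad y>0,\]
and matching coefficients of $y^{2j}$ yields the recursion
\[a_{j+1}(x)=-\frac{\Delta_x a_j(x)}{2(j+1)(2j+1+a)},\qquad j\ge 0.\]
Because $a=1-2s\in(-1,1)$, every denominator $2(j+1)(2j+1+a)$ is strictly positive, so each $a_j$ is a well-defined real-analytic function on the same domain as $\varphi$.

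For convergence in a neighborhood of $(x_0,0)$, I would start from Cauchy estimates $|\partial^\alpha\varphi(x)|\le M\alpha!/r^{|\alpha|}$ valid on a small ball around $x_0$, and prove by induction on $k$ that
\[|\partial^\alpha a_k(x)|\le \frac{n^k M\,(|\alpha|+2k)!}{r^{|\alpha|+2k}\prod_{j=1}^k 2j(2j-1+a)}.\]
The combinatorial inequality $\prod_{j=1}^k 2j(2j-1+a)\ge c_a(2k)!\,C^{-k}$, which follows by comparing $\prod_{j=1}^k(2j-1+a)$ to $(2k-1)!!$ via $\prod_{j=1}^k(1+a/(2j-1))$ (uniformly bounded above and below for $a\in(-1,1)$), then yields $|a_k(x)|\le C_1\rho_0^{-2k}$ for some $\rho_0>0$. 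Hence the double series converges absolutely on $B_\rho(x_0)\times(-\rho,\rho)$ for $\rho$ sufficiently small, and defines a jointly real-analytic function $\tilde\varphi$ with $\tilde\varphi(x,0)=\varphi(x)$.

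For the PDE, the recursion was designed precisely so that $L_a\tilde\varphi=0$ pointwise for $y\ne 0$. To extend this across $\{y=0\}$ in the distributional sense, I would use that the evenness of $\tilde\varphi$ in $y$ forces $D_y\tilde\varphi(x,y)=O(y)$ near $\{y=0\}$, so $|y|^aD_y\tilde\varphi=O(|y|^{a+1})$ is continuous across $\{y=0\}$ since $a>-1$. Thus $|y|^a\nabla\tilde\varphi$ is a continuous vector field; integrating by parts against a compactly supported test function, split into the half-spaces $\{y\gtrless 0\}$, produces no boundary mass on $\{y=0\}$, so $L_a\tilde\varphi=0$ as a distribution in the full neighborhood. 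The main technical obstacle is the convergence estimate in the previous paragraph; once the combinatorial inequality for $\prod 2j(2j-1+a)$ is in hand, the rest is routine.
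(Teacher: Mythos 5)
Your proof is correct and is essentially the same as the paper's: the paper constructs, for each monomial $x^\alpha/\alpha!$, the even $L_a$-harmonic polynomial extension $E_\alpha=\sum_k(-1)^kc_{2k}\Delta^k(x^\alpha/\alpha!)\,y^{2k}/(2k)!$ with $c_{2k}=\prod_{i=1}^k\frac{2i-1}{2i-2s}$ and then sums against the Taylor coefficients of $\varphi$, which yields exactly your series $\sum_k a_k(x)y^{2k}$ with $a_k=(-1)^k\Delta^k\varphi\,\prod_{i=1}^k\bigl(2i(2i-1+a)\bigr)^{-1}$. Your convergence estimate (via $\prod_i(1+a/(2i-1))\sim k^{a/2}$) is the same as the paper's bound $|c_{2k}|\le C\sqrt{k}$, and your distributional matching across $\{y=0\}$ via evenness is the standard point the paper handles implicitly.
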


Lemma \ref{analytic} will be proved below.
But first, using this result, we give the:

\begin{proof}[Proof of Theorem \ref{th-analytic}]
Let $\tilde u$ be the Caffarelli-Silvestre extension of $u$ in $\R^{n+1}$.

Let $x_0\in \Gamma(u)\subset \R^n$ be any free boundary point, and let $\tilde \varphi$ be the analytic extension of $\varphi$ near $(x_0,0)$ given by Lemma \ref{analytic}.
Let $r_0$ be such that $\tilde \varphi$ is analytic and satisfies $L_a\tilde\varphi=0$ in the ball (in $\R^{n+1}$) of radius $r_0$ centered at $(x_0,0)$.
Then, the function
\begin{equation}\label{v-analytic}
v^{x_0}(x,y):=\tilde{u}(x_0+r_0x,r_0y)-\tilde\varphi(x_0+r_0x,r_0y)
\end{equation}
solves the zero obstacle problem \eqref{obst-0} in the unit ball $B(1)$ of $\R^{n+1}$.

Since $v^{x_0}$ solves the zero obstacle problem \eqref{obst-0}, it follows from Theorem \ref{structure-0} that:
\[\Sigma(v^{x_0})\cap B(1/2)=\cup_{m=1}^\infty \Sigma_{2m}(v^{x_0})\cap B(1/2).\]
Moreover, the blow-up of $v^{x_0}$ at any $x_1\in \Sigma_{2m}(v^{x_0})\cap B(1/2)$ is a {unique} homogeneous polynomial $p_{2m}^{x_1}$ of degree $2m$, and
\[\Sigma_{2m}(v^{x_0})\cap B(1/2)=\cup_{d=1}^{n-1}\Sigma_{2m}^d(v^{x_0})\cap B(1/2),\]
where
\[\qquad\qquad\qquad\qquad\Sigma_{2m}^d(v^{x_0}):=\bigl\{x_1\in \Sigma_{2m}(v^{x_0})\,:\, d(p_{2m}^{x_1})=d\bigr\},\qquad d=0,1,...,n-1.\]
Furthermore, every set $\Sigma_{2m}^d(v^{x_0})\cap B(1/2)$ is contained in a countable union of $d$-dimensional $C^1$ manifolds.

Translating such result into $u$, we find that
\[\Sigma(u)\cap B_{r_0/2}(x_0)=\cup_{m=1}^\infty \Sigma_{2m}(u)\cap B_{r_0/2}(x_0).\]
Moreover, the blow-up of $u$ at any $x_1\in \Sigma_{2m}(u)\cap B_{r_0/2}(x_0)$ is a {unique} homogeneous polynomial $p_{2m}^{x_1}$ of degree $2m$, and
\[\Sigma_{2m}(u)\cap B_{r_0/2}(x_0)=\cup_{d=1}^{n-1}\Sigma_{2m}^d(u)\cap B_{r_0/2}(x_0),\]
where
\[\qquad\qquad\qquad\qquad\Sigma_{2m}^d(u):=\bigl\{x_1\in \Sigma_{2m}(u)\,:\, d(p_{2m}^{x_1})=d\bigr\},\qquad d=0,1,...,n-1.\]
Furthermore, every set $\Sigma_{2m}^d(u)\cap B_{r_0/2}(x_0)$ is contained in a countable union of $d$-dimensional $C^1$ manifolds.

Since this can be done for every $x_0\in \Sigma(u)$, the result follows.
\end{proof}

To prove Lemma \ref{analytic}, we need the following.

\begin{lemma}\label{polynomials}
Let $q_\kappa(x)$ be an homogeneous polynomial of degree $\kappa$ on $\R^n$.
Then, there exists a unique homogeneous polynomial $\tilde q_\kappa(x,y)$ of degree $\kappa$ on $\R^{n+1}$ such that
\[L_a \tilde q_\kappa=0\qquad \textrm{in}\quad \R^{n+1},\]
\[\tilde q_\kappa(x,0)=q_\kappa(x)\qquad \textrm{on}\quad \R^n,\]
and
\[\tilde q_\kappa(x,-y)=\tilde q_\kappa(x,y)\qquad \textrm{in}\quad \R^{n+1}.\]
\end{lemma}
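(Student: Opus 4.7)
The plan is to construct $\tilde q_\kappa$ as an explicit finite sum using the even-in-$y$ ansatz dictated by the symmetry and homogeneity constraints, and then to show that $L_a\tilde q_\kappa = 0$ forces a recursion on the coefficients which admits a unique solution with prescribed trace $q_\kappa$ on $\{y=0\}$.

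First, I would observe that any polynomial $\tilde q_\kappa$ on $\R^{n+1}$ which is homogeneous of degree $\kappa$ and even in $y$ must have the form
\begin{equation*}
\tilde q_\kappa(x,y)=\sum_{j=0}^{m} y^{2j} P_{\kappa-2j}(x), \qquad m=\lfloor\kappa/2\rfloor,
\end{equation*}
where $P_{\kappa-2j}$ is a polynomial on $\R^n$ homogeneous of degree $\kappa-2j$. The trace condition $\tilde q_\kappa(x,0) = q_\kappa(x)$ then simply reads $P_\kappa = q_\kappa$. Away from $\{y=0\}$ we have $L_a = |y|^a\bigl(\Delta_x + D_{yy} + \tfrac{a}{y} D_y\bigr)$, and a direct computation gives
\begin{equation*}
\frac{L_a\tilde q_\kappa}{|y|^a} = \sum_{j=0}^{m}y^{2j}\Delta_x P_{\kappa-2j}(x) + \sum_{j=1}^{m} 2j(2j-1+a)\,y^{2j-2}P_{\kappa-2j}(x).
\end{equation*}

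Matching coefficients of $y^{2j}$ for $j=0,\ldots,m-1$, the vanishing of the right-hand side is equivalent to the recursion
\begin{equation*}
P_{\kappa-2(j+1)}(x) = -\frac{1}{2(j+1)(2j+1+a)}\,\Delta_x P_{\kappa-2j}(x),\qquad j=0,1,\ldots,m-1,
\end{equation*}
together with $\Delta_x P_{\kappa-2m}=0$, which holds trivially since $P_{\kappa-2m}$ has degree $0$ or $1$. The crucial point is that the denominators $2(j+1)(2j+1+a)$ are nonzero: since $a\in(-1,1)$, one has $2j+1+a>0$ for all $j\geq 0$, so the recursion is well-posed and uniquely determines all $P_{\kappa-2j}$ with $j\geq 1$ from $P_\kappa=q_\kappa$. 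Setting $\tilde q_\kappa$ as above produces a polynomial satisfying $L_a\tilde q_\kappa = 0$ pointwise off $\{y=0\}$, and by continuity of $L_a\tilde q_\kappa$ (as a polynomial in $(x,y)$ against the weight $|y|^a$) it vanishes as a distribution across $\{y=0\}$ as well, since $D_y\tilde q_\kappa(x,0) = 0$ by construction.

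For uniqueness, any two such extensions differ by a homogeneous polynomial of degree $\kappa$, even in $y$, vanishing on $\{y=0\}$ and with $L_a=0$; the factorization by $y^2$ combined with the same recursion forces this difference to be identically zero. The main (very minor) point requiring care is ensuring that the recursion denominators do not vanish in the range $a\in(-1,1)$ relevant here; all other steps are purely algebraic.
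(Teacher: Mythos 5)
Your proposal is correct and follows essentially the same route as the paper: an even power-series-in-$y$ ansatz, the two-term recursion on the coefficient polynomials forced by $L_a\tilde q_\kappa=0$ (your recursion $P_{\kappa-2(j+1)}=-\tfrac{1}{2(j+1)(2j+1+a)}\Delta_x P_{\kappa-2j}$ is exactly the paper's $p_{2k+2}=-\tfrac{2k+1}{2k+2-2s}\Delta p_{2k}$ after the normalization $P_{\kappa-2j}=p_{2j}/(2j)!$, whose closed-form solution is the paper's explicit formula with the constants $c_{2k}$), and uniqueness by running the same recursion from zero initial data. The only cosmetic difference is that the paper first reduces to monomials $x^\alpha/\alpha!$ by linearity and writes the coefficients explicitly.
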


\begin{proof}
It suffices to show the theorem in case $q_\kappa(x)=x^\alpha/\alpha!$, with $|\alpha|=\kappa$.
Here, $\alpha=(\alpha_1,...,\alpha_n)\in \mathbb N^n$, $x^\alpha=x_1^{\alpha_1}\cdots x_n^{\alpha_n}$, and $\alpha!=\alpha_1!\cdots \alpha_n!$.

\emph{Existence}. We claim that
\[E_\alpha(x,y):=\frac{x^\alpha}{\alpha!}-\Delta\left(\frac{x^\alpha}{\alpha!}\right)\frac{y^2}{2!}c_2+ \Delta^2\left(\frac{x^\alpha}{\alpha!}\right)\frac{y^4}{4!}c_4-...\]
with
\[c_{2k}:=\prod_{i=1}^k \frac{2i-1}{2i-2s},\]
is a polynomial of order $|\alpha|$ in $(x,y)$ that satisfies
\begin{itemize}
\item[(i)] $E_\alpha(x,0)=x^\alpha/\alpha!$ on $\{y=0\}$
\item[(ii)] $L_aE_\alpha=0$ in $\R^{n+1}$
\item[(iii)] $E_\alpha(x,-y)=E_\alpha(x,y)$ in $\R^{n+1}$.
\end{itemize}
Indeed, notice that
\[E_\alpha(x,y)=\sum_{k\geq0}p_{2k}(x)\frac{y^{2k}}{(2k!)},\]
where
\[p_{2k}(x):=(-1)^k c_{2k}\Delta^k\frac{x^\alpha}{\alpha!}.\]
Then, by definition of $c_{2k}$ we have
\[p_{2k+2}=-\frac{2k-1}{2k-2s}\Delta p_{2k}\]
or, equivalently,
\[\Delta p_{2k}+\left(1+\frac{a}{2k-1}\right)p_{2k+2}=0.\]
Thus, for $k\geq1$ we have
\[\begin{split}&\left(\Delta_{x,y}+\frac{a}{y}\partial_y\right)\left(p_{2k}(x)\frac{y^{2k}}{(2k)!}\right) \\
&=-\left(1+\frac{a}{2k-1}\right) p_{2k+2}(x)\frac{y^{2k}}{(2k)!}+ p_{2k}(x)\frac{y^{2k-2}}{(2k-2)!}+ a\,p_{2k}(x)\frac{y^{2k-2}}{(2k-1)!}\\
&=-\left(1+\frac{a}{2k-1}\right)p_{2k+2}(x)\frac{y^{2k}}{(2k)!} +\left(1+\frac{a}{2k-1}\right)p_{2k}(x)\frac{y^{2k-2}}{(2k-2)!}.
\end{split}\]
Adding up the previous expressions, and using that $\Delta p_{2k}=0$ when $2k>|\alpha|$, we get
\[\left(\Delta_{x,y}+\frac{a}{y}\partial_y\right)\sum_{k\geq0}p_{2k}(x)\frac{y^{2k}}{(2k)!}=\Delta_x p_0-\left(1+\frac{a}{2k-1}\right)p_2(x)=0,\]
and thus
\[L_a E_\alpha(x,y)=0,\]
as claimed.

\emph{Uniqueness}. By linearity of $L_a$, it suffices to show that the only extension of $q_\kappa=0$ is $\tilde q_\kappa=0$.
Assume that there is a polynomial $\tilde q_\kappa$ of degree $\kappa$ such that $L_a\tilde q_\kappa=0$ in $\R^{n+1}$, $\tilde q_\kappa$ is even in $y$, and $\tilde q_\kappa$ vanishes on $\{y=0\}$.
Then, we have
\[\tilde q_\kappa(x,y)=\sum_{k\geq1} p_{2k}(x)\frac{y^{2k}}{(2k)!},\]
for some polynomials $p_{2k}$ of order $\kappa-2k$.
Now, exactly as before, we have
\[\left(\Delta_{x,y}+\frac{a}{y}\partial_y\right)\left(p_{2k}(x)\frac{y^{2k}}{(2k)!}\right)=\Delta_x p_{2k}(x)\frac{y^{2k}}{(2k)!}+\frac{2k-2s}{2k-1}\,p_{2k}(x)\frac{y^{2k-2}}{(2k-2)!}.\]
Therefore, setting $p_0(x)\equiv0$, we have
\[\begin{split}
|y|^{-a}L_a\tilde q_\kappa(x,y)&=
 \left(\Delta_{x,y}+\frac{a}{y}\partial_y\right)\sum_{k\geq1}p_{2k}(x)\frac{y^{2k}}{(2k)!}\\
&=\sum_{k\geq1}\Delta_x p_{2k}(x)\frac{y^{2k}}{(2k)!} +\left(1+\frac{a}{2k-1}\right)\,p_{2k}(x)\frac{y^{2k-2}}{(2k-2)!}\\
&=\sum_{k\geq0} \left(\Delta_x p_{2k}(x)+\frac{2k-2s}{2k-1}\,p_{2k+2}(x)\right)\frac{y^{2k}}{(2k)!}.
\end{split}\]
This means that
\[p_{k+2}(x)=-\frac{2k-1}{2k-2s}\Delta_x p_{2k}(x)\qquad\textrm{for all}\quad k\geq0,\]
and since $p_0\equiv0$ this yields $p_{2k}\equiv0$ for all $k$.
Hence, $\tilde q_\kappa\equiv0$ in $\R^{n+1}$, as desired.
\end{proof}

We now give the:

\begin{proof}[Proof of Lemma \ref{analytic}]
We may assume that $x_0=0$.

Since $\varphi(x)$ is analytic near the origin, then we have
\[\varphi(x)=\sum_{\alpha\in \mathbb N^n} a_\alpha \frac{x^\alpha}{\alpha!},\]
with
\[|a_\alpha|\leq M^{|\alpha|}\]
for some $M$ independent of $\alpha$.

Let
\[E_\alpha(x,y):=\frac{x^\alpha}{\alpha!}-\Delta\left(\frac{x^\alpha}{\alpha!}\right)\frac{y^2}{2!}c_2+ \Delta^2\left(\frac{x^\alpha}{\alpha!}\right)\frac{y^4}{4!}c_4-...\]
with
\[c_{2k}:=\prod_{i=1}^k \frac{2i-1}{2i-2s},\]
be given by Lemma \ref{polynomials}.
Then, $E_\alpha$ satisfies:
\begin{itemize}
\item $E_\alpha(x,0)=x^\alpha/\alpha!$ on $\{y=0\}$
\item $L_aE_\alpha=0$ in $\R^{n+1}$
\item $E_\alpha(x,-y)=E_\alpha(x,y)$ in $\R^{n+1}$
\end{itemize}
It is easy to check that $|c_{2k}|\leq C\sqrt{k}$ and thus, in particular, $E_\alpha(x,y)$ is a polynomial of order $|\alpha|$ in $(x,y)$, with coefficients bounded by $M^\alpha$.

We now consider the function
\[\tilde \varphi(x,y)=\sum_{\alpha} a_\alpha E_\alpha(x,y).\]
By the above properties of $E_\alpha$, it is easy to check that the power series defining $\tilde\varphi$ converges in a neighborhood of the origin, and thus we are done.
\end{proof}

\section{$C^{k,\gamma}$ obstacles: preliminaries and a generalized Almgren frequency formula}
\label{sec6}

We start now our study of singular points in the obstacle problem for the fractional Laplacian in $\R^n$.
The obstacle $\varphi$ is assumed to be $C^{k,\gamma}$, with $k\geq2$ and $\gamma\in(0,1)$.

As before, given the solution $u(x)$ of the obstacle problem \eqref{pb}, we consider its extension $\widetilde u(x,y)$, for $y\geq0$, defined as the solution of
\[\left\{ \begin{array}{rcll}
\widetilde{u}(x,0)&=&u(x) &\textrm{on}\ \{y=0\},\\
L_a \widetilde{u}(x,y)&=& 0&\textrm{in}\ \{y>0\},
\end{array}\right.\]
where
\begin{equation}\label{eq:La}
L_a\widetilde u:=-\divv_{x,y}\bigl(|y|^{a}\nabla_{x,y}\widetilde u\bigr),\qquad a:=1-2s.
\end{equation}
We may extend the function $\widetilde u(x,y)$ to the whole $\R^{n+1}$ as $\widetilde u(x,-y)=\widetilde u(x,y)$.
Then, $\widetilde u$ solves~\eqref{sb}.

\addtocontents{toc}{\protect\setcounter{tocdepth}{1}} 

\subsection{Subtracting the Taylor polynomial}

We will prove a generalized Almgren frequency formula to study free boundary points where the blow-ups of our solution $u$ have homogeneity $\kappa<k+\gamma$.
In particular, we will study singular free boundary points, at which $\kappa=2m$ with $m\in\mathbb N$ and $2\leq \kappa\leq k$.

For this, given a free boundary point $x_0$ for our solution $\tilde u$, it will be important to replace $\widetilde u-\varphi$ with a suitable variant of it for which the $L_a$ operator is very small near $x_0$.

More precisely, given a free boundary point $x_0\in\Gamma(u)$ we define
\[\tilde \varphi(x,y):=\varphi(x)-q_k(x)+\tilde q_k(x,y),\]
where $q_k(x)$ the Taylor polynomial of $\varphi$ at $x_0$ of degree $k$, and $\tilde q_k(x,y)$ is the $s$-harmonic extension of $q_k$ given by Lemma \ref{polynomials}.
Then, we have $L_a \tilde q_k(x,y)=0$, $\tilde q_k(x,0)=q_k(x)$, $\tilde q_k(x,-y)=\tilde q_k(x,y)$, and $|\varphi(x)-q_k(x)|\leq C|x-x_0|^{k+\gamma}$.

Moreover, we define
\begin{equation}\label{v}
v^{x_0}(x,y):=\widetilde{u}(x_0+x,y)-\tilde \varphi(x_0+x,y).
\end{equation}
Notice that
\[v^{x_0}(x,0)=u(x_0+x)-\varphi(x_0+x)\]
for all $x\in \R^n$, hence $v^{x_0}(x,0)\geq0$.
Furthermore, we have $v^{x_0}(x,-y)=v^{x_0}(x,y)$, and
\begin{eqnarray}\label{ext2}
|L_a v^{x_0}(x,y)|&=&|y|^{a}\bigl|\Delta_x(\varphi-q_k)(x_0+x)\bigr|\nonumber\\
&\leq& C\,|y|^{a}|x|^{k+\gamma-2},
\end{eqnarray}
for every $(x,y)\in \mathbb{R}^{n+1}\setminus\left\{(x,0):\, v^{x_0}(x,0)=0\right\}$.
It is also important to observe that $v^{x_0}$ depends continuously on $x_0$.

Throughout the rest of the paper, we shall use $v$ instead of $v^{x_0}$ whenever the dependence on point $x_0$ is clear.
Also, as before, we denote by $\BB$ the ball in $\mathbb{R}^{n+1}$ of radius $r$ centered at $(0,0)$.

\subsection{Almgren-type frequency formula}

We now establish the following generalized Almgren-type frequency formula, which extends the ones in \cite{CSS,GP,BFR,CDS}.

\begin{prop}[Generalized Almgren's frequency formula]\label{Almgren}
Let $u$ solve the obstacle problem for the fractional Laplacian, with $\varphi\in C^{k,\gamma}(\R^n)$,  $k\geq2$ and $\gamma\in(0,1)$.

Let $x_0\in \Gamma(u)$ be a free boundary point, let $v=v^{x_0}$ be defined as in \eqref{v},
and set
\begin{equation}\label{Hx_0}
\tilde{H}^{x_0}(r,v):=\int_{\partial \BB}{|y|^{a}v^2 }.
\end{equation}
Let $\theta\in(0,\gamma)$.
Then there exist constants $C_0,\, r_0>0$, independent of $x_0$, such that the function
\begin{equation}\label{generalized-frequency}
r\mapsto \Phi^{x_0}(r,v):=\bigl(r+C_0\,r^{1+\theta}\bigr)\,\frac{d}{dr}\log \max\left\{\tilde{H}^{x_0}(r,v),\ r^{n+a+2(k+\gamma-\theta)}\right\},
\end{equation}
is monotone nondecreasing on $(0,r_0)$.
In particular the limit $\lim_{r\downarrow0}\Phi^{x_0}(r,v):=\Phi^{x_0}(0^+,v)$ exists.
\end{prop}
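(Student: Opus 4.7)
The plan is to adapt the classical Almgren monotonicity argument of Theorem~\ref{T:almgrenext} to accommodate both the perturbation $L_a v\not\equiv 0$ (controlled by \eqref{ext2}) and the truncation of $\tilde{H}^{x_0}$ from below at scale $r^{n+a+2(k+\gamma-\theta)}$. First I would derive perturbed first-variation formulas for $\tilde H^{x_0}(r,v)$ and $\tilde D^{x_0}(r,v):=\int_{B(r)}|y|^a|\nabla v|^2$. Mimicking the proofs of Lemmas~\ref{l:hprime} and~\ref{l:dprime}, with integration by parts against $v$ and $\langle(x,y),\nabla v\rangle$ respectively, and with the thin-set boundary integrals killed by the obstacle condition $v\,(\lim_{y\to 0^+}y^a\partial_y v)=0$ on $\{y=0\}$ together with the evenness of $v$ in $y$, yields
\begin{align*}
\partial_r\tilde H^{x_0}(r,v) &= \tfrac{n+a}{r}\tilde H^{x_0}(r,v) + 2\tilde D^{x_0}(r,v) - 2\int_{B(r)} v\,L_a v,\\
\partial_r\tilde D^{x_0}(r,v) &= \tfrac{n+a-1}{r}\tilde D^{x_0}(r,v) + 2\int_{\partial B(r)}|y|^a(\partial_\nu v)^2 - \tfrac{2}{r}\int_{B(r)}\langle (x,y),\nabla v\rangle L_a v.
\end{align*}
Using \eqref{ext2}, Cauchy--Schwarz in the measure $|y|^a\,dxdy$, and the averaging identity $\int_{B(r)}|y|^a v^2 = \int_0^r \tilde H^{x_0}(s,v)\,ds$, both error integrals can be bounded by a universal constant times $r^{k+\gamma-1+(n+a)/2}$ times $\sqrt{\tilde H^{x_0}(r,v)}$ or $\sqrt{r\,\tilde D^{x_0}(r,v)}$.

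Next I would split into the two regimes induced by the $\max$ in \eqref{generalized-frequency}. On the closed set $T:=\{r:\tilde H^{x_0}(r,v)\le r^{n+a+2(k+\gamma-\theta)}\}$ the $\max$ equals the polynomial floor, so $\Phi^{x_0}(r,v)=(n+a+2(k+\gamma-\theta))(1+C_0 r^\theta)$, which is plainly non-decreasing. On the complementary open set $T^c$, the $\max$ equals $\tilde H^{x_0}$, and the strict lower bound $\tilde H^{x_0}(r,v)>r^{n+a+2(k+\gamma-\theta)}$ converts the polynomial error bounds of the previous step into estimates of the form
\[
\left|\frac{\int v\,L_a v}{\tilde H^{x_0}(r,v)}\right|\ \lesssim\ r^{\theta-1},\qquad \left|\frac{\int \langle(x,y),\nabla v\rangle L_a v}{r\,\tilde H^{x_0}(r,v)}\right|\ \lesssim\ r^{\theta-1}\bigl(1+\sqrt{\tilde N^{x_0}(r,v)}\bigr),
\]
where $\tilde N^{x_0}=r\tilde D^{x_0}/\tilde H^{x_0}$. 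The specific exponent $k+\gamma-\theta$ (rather than $k+\gamma$) in the truncation floor is precisely what creates the buffer factor $r^\theta$ here.

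On $T^c$, writing $\Phi^{x_0}(r,v)=(1+C_0 r^\theta)\bigl(n+a+2\tilde N^{x_0}(r,v)+\mathcal E(r)\bigr)$ with $|\mathcal E(r)|\lesssim r^{\theta}(1+\sqrt{\tilde N^{x_0}})$ by the previous step, I would differentiate and insert the classical Almgren--Cauchy--Schwarz computation of \eqref{logN} to express the derivative of the bracketed frequency as a non-negative quantity plus a $r^{\theta-1}$-size error. This gives
\[
\partial_r\Phi^{x_0}(r,v)\ =\ (1+C_0 r^\theta)\cdot (\text{Almgren-nonnegative part})\ +\ C_0\theta r^{\theta-1}\bigl(n+a+2\tilde N^{x_0}\bigr)\ +\ (\text{errors}),
\]
with errors bounded by $Cr^{\theta-1}(1+\tilde N^{x_0})$. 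Choosing $C_0$ large (depending only on $k,\gamma,\theta$ and $\|\varphi\|_{C^{k,\gamma}}$) and $r_0$ correspondingly small, the strictly positive term $C_0\theta r^{\theta-1}(n+a+2\tilde N^{x_0})$ dominates the errors, giving $\partial_r\Phi^{x_0}\ge 0$ on $T^c$. Matching at $\partial T$ is automatic because $\Phi^{x_0}$ is defined through a $\max$ and the two expressions coincide there. Uniformity of $C_0$ and $r_0$ in $x_0$ follows from continuity of $x_0\mapsto v^{x_0}$ and the uniform $C^{k,\gamma}$ bound on $\varphi$.

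The main obstacle is the calibration in the second step: one must arrange the polynomial decay rate $r^{k+\gamma-2}$ of $L_a v$, the averaging step converting volume integrals of $v^2$ to boundary integrals, and the truncation floor $r^{n+a+2(k+\gamma-\theta)}$ to interlock so that all error terms in $\partial_r\Phi^{x_0}$ acquire the factor $r^{\theta-1}$ and can be absorbed by the positive contribution generated by the correction $(1+C_0 r^\theta)$ to the naive Almgren functional. The existence of the limit $\Phi^{x_0}(0^+,v)$ then follows from monotonicity together with the elementary observation that $\Phi^{x_0}(r,v)$ is bounded below by $n+a$ (from $\tilde N^{x_0}\geq 0$ in the non-truncated regime and by the explicit constant in the truncated regime).
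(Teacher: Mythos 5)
Your proposal follows essentially the same route as the paper: split on the two branches of the $\max$, derive the perturbed first-variation identities for $\tilde H$ and $\tilde D$, use the truncation floor together with the Poincar\'e-type bound $\tilde H\le C(r\tilde D+r^{n+a+2(k+\gamma)})$ to show all error terms are $O(r^{\theta-1})$ relative to the main quantities, and absorb them by taking $C_0$ large. The only caution is that your bound via $\int_{B(r)}|y|^a v^2=\int_0^r\tilde H(s)\,ds\lesssim r\tilde H(r)$ would need monotonicity of $\tilde H$, which is not available a priori; the alternative you also mention, controlling the volume integral by $r^2\tilde D$ plus a polynomial remainder, is the one that actually closes the argument (and is what the paper does).
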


To simplify the notation we shall denote $\Phi=\Phi^{x_0}$ and $\tilde{H}=\tilde{H}^{x_0}$ when no confusion is possible.

\begin{rmrk}
Similar Almgren-type frequency formulas have been previously established in \cite{CSS}, \cite{GP}, \cite{BFR}, and \cite{CDS}.
In particular, the formula established in \cite[Theorem 6.2]{CDS} corresponds to the case $k=1$ and $\gamma=s+\delta$ in our Proposition \ref{Almgren}.
\end{rmrk}

Before proving Proposition \ref{Almgren} we establish an auxiliary lemma that provides us with some upper bounds for the functions
\begin{equation}\label{HG}
\tilde{G}(r,v):=\int_{\BB}{|y|^{a}v^2 }
\qquad\mbox{and}\qquad   \tilde{H}(r,v)=\int_{\partial \BB}{|y|^{a}v^2 }=\tilde{G}'(r,v).
\end{equation}

\begin{lemma}\label{auxiliar_frecuency}
Let $v$ be as in Proposition \ref{Almgren}, and define
\begin{equation}\label{D}
\tilde{D}(r,v):=\int_{\BB}{|y|^{a}|\nabla v|^{2}}.
\end{equation}
Then there exist constants $\bar{C},\, \bar{r}>0$, independent of $x_0$, such that
\begin{equation}\label{H1}
\tilde{H}(r,v)\leq \bar{C}\left(r\,\tilde{D}(r,v)+ r^{n+a+2(k+\gamma)}\right)\qquad \textrm{for all}\ r\in (0,\bar{r}),
\end{equation}
and
\begin{equation}\label{G1}
\tilde{G}(r,v)\leq \bar{C}\left(r^2\,\tilde{D}(r,v)+ r^{n+a+1+2(k+\gamma)}\right)\qquad \textrm{for all}\ r\in (0,\bar{r}).
\end{equation}
\end{lemma}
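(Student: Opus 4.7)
The strategy is to establish \eqref{G1} by a compactness/blow-up argument, and then derive \eqref{H1} from \eqref{G1} via a weighted trace inequality. Specifically, the standard trace estimate on $B(r)$ with the $A_2$-weight $|y|^a$ (obtained by a routine rescaling from $r=1$),
\begin{equation*}
\int_{\partial B(r)} |y|^a w^2 \,\leq\, C\,\Bigl(r\int_{B(r)}|y|^a|\nabla w|^2 + r^{-1}\int_{B(r)}|y|^a w^2\Bigr),
\end{equation*}
applied to $w=v$ gives $\tilde H(r,v)\leq C\bigl(r\tilde D(r,v) + r^{-1}\tilde G(r,v)\bigr)$; substituting \eqref{G1} into this produces \eqref{H1} after adjusting the constant.

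To prove \eqref{G1} I argue by contradiction: suppose there exist sequences $r_j\downarrow 0$ and free boundary points $x_0^j\in\Gamma(u)$ (lying in a compact set) such that $v_j:=v^{x_0^j}$ satisfies
\begin{equation*}
\tilde{G}(r_j,v_j) \,>\, j\Bigl(r_j^2\,\tilde{D}(r_j,v_j) + r_j^{n+a+1+2(k+\gamma)}\Bigr).
\end{equation*}
Introduce the rescalings $w_j(x,y):=v_j(r_jx,r_jy)/d_j$ with $d_j^2:=r_j^{-(n+a+1)}\tilde G(r_j,v_j)$, so that $\int_{B(1)}|y|^a w_j^2=1$. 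A direct change of variables translates the contradiction hypothesis into the two key properties
\begin{equation*}
\int_{B(1)}|y|^a|\nabla w_j|^2 \,<\, \tfrac{1}{j}\qquad\textrm{and}\qquad \frac{r_j^{k+\gamma}}{d_j}\,<\,\tfrac{1}{\sqrt{j}}.
\end{equation*}
Moreover, the bound $|L_a v_j|\leq C|y|^a|x|^{k+\gamma-2}$ from \eqref{ext2} rescales to
\begin{equation*}
|L_a w_j(x,y)| \,\leq\, C\,\frac{r_j^{k+\gamma}}{d_j}\,|y|^a|x|^{k+\gamma-2}\,\longrightarrow 0
\end{equation*}
uniformly on compact subsets of $B(1)\setminus\{(x,0):w_j(x,0)=0\}$, and each $w_j$ continues to solve a thin obstacle problem with zero obstacle.

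Invoking the uniform $C^{0,\alpha}$ estimates for the fractional thin obstacle problem from \cite[Proposition~4.3]{CSS}, the sequence $\{w_j\}$ is equi-H\"older continuous on compact subsets of $B(1)$. By Arzel\`a--Ascoli, up to extracting a subsequence, $w_j\to w_\infty$ locally uniformly on $B(1)$. Uniform convergence preserves the normalization $\int_{B(1)}|y|^aw_\infty^2=1$, while weak lower semicontinuity of the weighted Dirichlet integral forces $\int_{B(1)}|y|^a|\nabla w_\infty|^2=0$, so $w_\infty$ is constant on $B(1)$. On the other hand, $w_j(0,0)=v_j(0,0)/d_j=0$ for every $j$ (since $x_0^j$ is a free boundary point), and uniform convergence at the origin yields $w_\infty(0,0)=0$. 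Hence $w_\infty\equiv 0$, contradicting $\int|y|^aw_\infty^2=1$ and proving \eqref{G1}.

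The main obstacle is controlling $\{w_j\}$ up to the thin set $\{y=0\}$, where the Signorini-type conditions are active and the $L_a$-right-hand side is only asymptotically small: one must invoke the H\"older estimate of \cite{CSS} in its quantitative form allowing a small $L_a$-error, check that the bounds are stable under rescaling, and verify that the constants are uniform as $x_0^j$ ranges over a compact subset of $\Gamma(u)$.
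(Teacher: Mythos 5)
Your route is genuinely different from the paper's. The paper proves \eqref{H1} first and directly: since $v(0,0)=0$ at a free boundary point, the sub-mean-value inequality of \cite[Lemma 2.9]{CSS} bounds the spherical average $\frac{1}{r^{n+a}}\int_{\partial B(r)}|y|^a v$ by $C\,r^{k+\gamma}$, and the Poincar\'e-type inequality on $\partial B(r)$ from the proof of \cite[Lemma 2.13]{CSS} then controls $\tilde H(r,v)$ by $r\,\tilde D(r,v)$ plus the square of that average; \eqref{G1} follows simply by integrating \eqref{H1} in $r$, since $\tilde G'=\tilde H$. You reverse the order: a compactness/contradiction argument for \eqref{G1}, then a weighted trace inequality to deduce \eqref{H1}. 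The paper's argument is quantitative and shorter; yours is more robust in spirit (it uses only that $v$ vanishes at the origin, the smallness of $L_av$ from \eqref{ext2}, and standard compactness) but produces no explicit constants and leans on more regularity theory than is needed. Your rescaling computations and the derivation of \eqref{H1} from \eqref{G1} via the trace inequality are correct.

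There is, however, one step that fails as written: the claim that ``uniform convergence preserves the normalization $\int_{B(1)}|y|^a w_\infty^2=1$.'' The convergence you obtain from the interior H\"older estimates is only locally uniform on compact subsets of $B(1)$ (both your normalization and your energy bound live on $B(1)$, so you have no uniform control of $w_j$ on any larger ball, hence no equicontinuity up to $\partial B(1)$), and locally uniform convergence is perfectly compatible with all of the $L^2$ mass escaping into a thin annulus near $\partial B(1)$; in that scenario $w_\infty\equiv 0$ yields no contradiction. The standard repair is to invoke the compact Rellich-type embedding $H^1(B(1),|y|^a\,dxdy)\hookrightarrow\hookrightarrow L^2(B(1),|y|^a\,dxdy)$, valid because $|y|^a$ is an $A_2$ weight of Muckenhoupt: the bounds $\|w_j\|_{L^2(|y|^a)}=1$ and $\|\nabla w_j\|^2_{L^2(|y|^a)}<1/j$ then give a subsequence converging strongly in $L^2(B(1),|y|^a)$, so the limit has unit norm and vanishing Dirichlet energy, hence is a nonzero constant; the locally uniform convergence near the origin (which you do still need, in order to use $w_j(0,0)=0$) identifies that constant as $0$, the desired contradiction. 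With this correction your argument goes through.
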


\begin{proof}
Let $r\in(0,1)$.
By \cite[Lemma 2.9]{CSS} it follows that
$$v(0)\geq \frac{1}{\omega_{n+a}r^{n+a}}\int_{\partial \BB}{|y|^{a} v}- C\, r^{k+\gamma},$$
so one can follow the proof of \cite[Lemma 2.13]{CSS} to get
$$
\int_{\partial \BB}{|y|^{a} v^2}\leq C\, r\int_{\BB}{|y|^{a}|\nabla v|^{2}}+ C \,r^{(n+a)+2(k+\gamma)}.
$$
The previous inequality proves \eqref{H1}, and integrating it with respect to $r$ we obtain \eqref{G1}.
\end{proof}

We now prove the main result of this section.

\begin{proof}[Proof of Proposition \ref{Almgren}]
As observed in \cite[Proof of Theorem 3.1]{CSS},
in order to prove that $\Phi(r,v)$ is increasing one can
concentrate in each of the two values for the maximum separately.

In case
\[\Phi(r,v)=\bigl(r+C_0r^{1+\theta}\bigr)\,\frac{d}{dr}\log r^{n+a+2(k+\gamma-\theta)}=(1+C_0r^\theta)\bigl(n+a+2(k+\gamma-\theta)\bigr),\]
the function $\Phi(\cdot,v)$ is clearly monotonically increasing.
Thus, we only need to prove that $\Phi'(r,v)\geq0$ in case $\tilde{H}(r,v)>r^{n+a+2(k+\gamma-\theta)}$.

First, notice that
$$\tilde{H}(r,v)=r^{n+a}\int_{\partial\BBone}{|y|^{a}v^{2}(rx,ry)},$$
and thus
\begin{eqnarray}
\tilde{H}'(r,v)&=&(n+a)\,\frac{\tilde{H}(r,v)}{r}+2\, r^{n+a}\int_{\partial\BBone}{|y|^{a}v(rx,ry)\,\nabla v(rx,ry)\cdot(x,y)}\nonumber\\
&=&(n+a)\,\frac{\tilde{H}(r,v)}{r}+2\,\mathcal{I}(r,v),\label{h'}
\end{eqnarray}
where
\begin{eqnarray}
\mathcal{I}(r,v)&:=&\int_{\partial \BB}{|y|^{a} v \,v_{\nu}}=\tilde{D}(r,v)+\int_{\BB}{v\, \divv (|y|^{a}\nabla v)}\nonumber \\
&=&\tilde{D}(r,v)-\int_{\BB}{v\, L_a v}.
\label{i}
\end{eqnarray}
Hence
\begin{equation}\label{mastarde}
\Phi(r,v)=(n+a)\,(1+C_0r^\theta)+2\,r\,(1+C_0r^\theta)\,\frac{\mathcal{I}(r,v)}{\tilde{H}(r,v)},
\end{equation}
and it is enough to show that $r\,(1+C_0r)\,\frac{\mathcal{I}(r,v)}{\tilde{H}(r,v)}$ is monotone or, equivalently,
\[\frac{d}{dr}\log\left( r\,(1+C_0r^\theta)\,\frac{\mathcal{I}(r,v)}{\tilde{H}(r,v)} \right) \geq0.\]
To show this, we notice that that, since
\[\tilde{D}'(r,v)=\frac{n+a-1}{r}\,\tilde{D}(r,v)-\frac{2}{r}\int_{\BB}{\bigl((x,y)\cdot\nabla v\bigr)\, \divv(|y|^{a}\nabla v)}+2\int_{\partial \BB}{|y|^{a}v_{\nu}^{2}},\]
it follows by \eqref{i} that
\begin{eqnarray*}
\mathcal{I}'(r,v)&=&\frac{n+a-1}{r}\,\mathcal{I}(r,v)-\frac{n+a-1}{r}\int_{\BB}v\, \divv (|y|^{a}\nabla v)\\
& &-\,\frac{2}{r}\int_{\BB}{\bigl((x,y)\cdot\nabla v\bigr)\, \divv(|y|^{a}\nabla v)}+2\int_{\partial \BB}{|y|^{a}v_{\nu}^{2}}+\int_{\partial \BB}{v\, \divv (|y|^{a}\nabla v)}.
\end{eqnarray*}
Thus, recalling that $\divv (|y|^{a}\nabla v)=-L_a v$, by \eqref{h'} and the Cauchy-Schwarz inequality, we obtain
\[\begin{split}
\frac{d}{dr}\log\left( r\,(1+C_0r^\theta)\,\frac{\mathcal{I}(r,v)}{\tilde{H}(r,v)} \right)&=\\
&= \frac{\theta C_0r^{\theta-1}}{1+C_0r^\theta} + \frac1r +\frac{\mathcal{I}'(r,v)}{\mathcal{I}(r,v)}-\frac{\tilde{H}'(r,v)}{\tilde{H}(r,v)}\\
&= \frac{\theta C_0r^{\theta-1}}{1+C_0r^\theta}+ 2\left(\frac{\int_{\partial\BB}|y|^av_\nu^2}{\mathcal{I}(r,v)}-\frac{\mathcal{I}(r,v)}{\tilde{H}(r,v)}\right) -\mathcal{E}(r,v)\\
&\geq\frac{\theta C_0r^{\theta-1}}{1+C_0r^\theta}-\mathcal{E}(r,v),
\end{split}\]
where
\begin{equation}\label{E}
\mathcal{E}(r,v):=\frac{-\frac{1}{r}\left(\int_{\BB}{\left[2\bigl((x,y)\cdot\nabla v\bigr)+(n+a-1)v\right]L_a v}\right)+\int_{\partial \BB}{vL_a v}}{\mathcal{I}(r,v)}.
\end{equation}
Now, since
\[\frac{C_0r^{\theta-1}}{1+C_0r^\theta}\geq \frac{C_0}{2}r^{\theta-1}\]
provided $r \leq r_0$, with $r_0$ small enough,
and since $C_0$ can be chosen arbitrarily large, to conclude the proof we only need to show that
\begin{equation}\label{we-want}
\mathcal{E}(r,v)\leq Cr^{\theta-1}.
\end{equation}
For this, we will estimate separately each term of the numerator and denominator of $\mathcal{E}$.

Since $v$ satisfies \eqref{ext2} outside $\{v=0\}\cap \{y=0\}$ while $v\,L_av=0$ on the set $\{v=0\}\cap \{y=0\}$
(because $L_av$ is a signed measure),
using the Cauchy-Schwarz inequality, \eqref{i}, and Lemma \ref{auxiliar_frecuency}, we obtain that
\begin{eqnarray}
\mathcal{I}(r,v)&=&\tilde{D}(r,v)-\int_{\BB} v\,L_av =\tilde{D}(r,v)-\int_{\BB\setminus \{v=0\}} v\,L_av\nonumber\\
&\geq& \tilde{D}(r,v)-2\left(\int_{\BB}{|y|^{a} v^2}\right)^{1/2}\left(\int_{\BB\setminus \{v=0\}}{|y|^{-a}\left(L_av\right)^2}\right)^{1/2}\label{milveces}\\
&\geq& \tilde{D}(r,v)-2\,\tilde{G}(r,v)^{1/2}\left(\int_{\BB}{|y|^{a}|x|^{2(k+\gamma-2)}}\right)^{1/2}\nonumber\\
&\geq& \tilde{D}(r,v)-2\,\tilde{G}(r,v)^{1/2}r^{\frac{n+1+a}{2}+k+\gamma-2}\nonumber\\
&\geq& \tilde{D}(r,v)
-C\left(\tilde{D}(r,v)^{1/2}r^{\frac{n+1+a}{2}+k+\gamma-1}+r^{(n+1+a)+2(k+\gamma-1)}\right).  \label{denominator}
\end{eqnarray}
Similarly, since $(x,y)\cdot\nabla v=0$ on the set $\{v=0\}\cap \{y=0\}$ we get
\begin{equation}\label{numerador1}
\left|\frac{1}{r}\int_{\BB}{\bigl((x,y)\cdot\nabla v\bigr)L_a v}\right|
    \leq C\, \tilde{D}(r,v)^{1/2} r^{\frac{n+1+a}{2}+k+\gamma-2}
\end{equation}
and
\begin{equation}\label{numerador2}
\max\left\{\left|\frac{1}{r}\int_{\BB} v\, L_av\right|, \left|\int_{\partial \BB}{v\,L_av}\right|\right\}
    \leq C\, \Bigl(\tilde{D}(r,v)^{1/2} r^{\frac{n+1+a}{2}+k+\gamma-2}+r^{n+a+2(k+\gamma-1)}\Bigr).
\end{equation}
Thus, it follows by \eqref{E}-\eqref{numerador2} that
\[|\mathcal{E}(r,v)| \leq C\,
\frac{ \tilde{D}(r,v)^{1/2}r^{\frac{n+1+a}{2}+k+\gamma-2}+r^{n+a+2(k+\gamma-1)} }{ \tilde{D}(r,v)-C\left(\tilde{D}(r,v)^{1/2}r^{\frac{n+1+a}{2}+k+\gamma-1}+r^{(n+a+1)+2(k+\gamma-1)}\right)}.\]
Now, recalling that $\tilde{H}(r,v)>r^{n+a+2(k+\gamma-\theta)}$,
thanks to $\eqref{H1}$ we get
\[\tilde{D}(r,v)\geq c \,r^{n+a+2(k+\gamma-\theta)-1}.\]
This yields
\[\begin{split}
|\mathcal{E}(r,v)|&\leq C\,\frac{\sqrt{\tilde{D}(r,v)}r^{\frac{n+a+1}{2}+k+\gamma-2}+r^{n+a+2(k+\gamma-1)}}{\frac12 \tilde{D}(r,v)}\\
&= C\,\frac{r^{\frac{n+a+1}{2}+k+\gamma-2}}{\sqrt{\tilde{D}(r,v)}}+ C\frac{r^{n+a+2(k+\gamma-1)}}{\tilde{D}(r,v)}\\
&\leq C\,\frac{r^{\frac{n+a+1}{2}+k+\gamma-2}}{r^{\frac{n+a+1}{2}+k+\gamma-\theta-1}}+ C\frac{r^{n+a+2(k+\gamma-1)}}{r^{n+a+2(k+\gamma-\theta)-1}}\\
&=Cr^{\theta-1}+Cr^{2\theta-1}\leq Cr^{\theta-1}.
\end{split}\]
Therefore, \eqref{we-want} is proved, as desired.
\end{proof}

\subsection{Growth near the free boundary and blow-ups}

We establish here some results that will be needed later.
First, we show the following.

\begin{lemma}\label{bound-H}
Let $u$ be a solution of the obstacle problem for the fractional Laplacian, with $\varphi\in C^{k,\gamma}(\R^n)$,  $k\geq2$ and $\gamma\in(0,1)$.

Let $x_0\in \Gamma(u)$ be a free boundary point, $v=v^{x_0}$ be defined as in \eqref{v}, and $\Phi^{x_0}(r,v)$ be defined as in \eqref{generalized-frequency}.

Suppose that
\[\Phi^{x_0}(0+,v)=n+a+2\kappa,\qquad \textrm{with}\quad \kappa<k+\gamma.\]
Then,
\begin{equation}\label{growth-H-kappa}
\tilde H(r,v)=\int_{\partial \BB}{|y|^{a}v^2 }\leq Cr^{n+a+2\kappa}
\end{equation}
for $0<r<r_0$.
Moreover, for any $\varepsilon>0$ there exists $r_{\varepsilon,x_0}>0$ such that
\begin{equation}\label{growth-H-kappa-below}
\tilde H(r,v)=\int_{\partial \BB}{|y|^{a}v^2 }\geq Cr^{n+a+2\kappa+\epsilon}
\end{equation}
for $0<r<r_{\varepsilon,x_0}$.
\end{lemma}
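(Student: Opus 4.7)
The plan is to follow the template of Lemma \ref{bound-H-0} (the zero-obstacle case), with Theorem \ref{T:almgrenext} replaced by the generalized frequency formula in Proposition \ref{Almgren}. Setting
\[
F(r) := \max\bigl\{\tilde H(r,v),\ r^{n+a+2(k+\gamma-\theta)}\bigr\},
\]
the definition \eqref{generalized-frequency} of $\Phi^{x_0}$ immediately yields, at points of differentiability, the pointwise identity
\[
\frac{d}{dr}\log F(r) = \frac{\Phi^{x_0}(r,v)}{r(1+C_0 r^\theta)}.
\]
I will sandwich $\Phi^{x_0}(r,v)$ between $n+a+2\kappa$ from below (by monotonicity) and $n+a+2\kappa+\varepsilon$ from above (by the hypothesis $\Phi^{x_0}(0^+,v) = n+a+2\kappa$), and integrate the resulting differential inequalities for $\log F(r)$.

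For the upper bound, I will fix any $\theta \in (0,\gamma)$ with $\theta < k+\gamma-\kappa$, which is possible since $\kappa < k+\gamma$. Monotonicity of $\Phi^{x_0}$ then gives $\frac{d}{dr}\log F(r) \geq \frac{n+a+2\kappa}{r(1+C_0 r^\theta)}$ on $(0,r_0)$. Using the decomposition
\[
\frac{1}{r(1+C_0 r^\theta)} = \frac{1}{r} - \frac{C_0 r^{\theta-1}}{1+C_0 r^\theta},
\]
the second term has antiderivative $\tfrac{1}{\theta}\log(1+C_0 r^\theta)$, which stays bounded as $r\to 0$. Integrating from $r$ to $r_0$ and exponentiating produces $F(r) \leq C\,r^{n+a+2\kappa}$, and the choice $\theta < k+\gamma-\kappa$ ensures $r^{n+a+2(k+\gamma-\theta)} \leq r^{n+a+2\kappa}$ for $r \leq 1$, so the desired bound $\tilde H(r,v) \leq F(r) \leq C\,r^{n+a+2\kappa}$ follows.

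For the lower bound, given $\varepsilon > 0$ I will choose $\theta = \theta(\varepsilon) \in (0,\gamma)$ small enough that $k+\gamma-\theta > \kappa + \varepsilon$. Since $\Phi^{x_0}(\cdot,v)$ is nondecreasing and tends to $n+a+2\kappa$ at $0$, there exists $r_{\varepsilon,x_0}\in(0,r_0)$ such that $\Phi^{x_0}(r,v) \leq n+a+2\kappa + \varepsilon$ on $(0,r_{\varepsilon,x_0})$. The inequality $\frac{d}{dr}\log F(r) \leq (n+a+2\kappa+\varepsilon)/r$ then integrates to $F(r) \geq c\,r^{n+a+2\kappa+\varepsilon}$. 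The choice of $\theta$ guarantees that $r^{n+a+2(k+\gamma-\theta)}$ decays strictly faster than $r^{n+a+2\kappa+\varepsilon}$ as $r\to 0$, so for small $r$ the maximum defining $F$ must be attained by $\tilde H(r,v)$, yielding the desired $\tilde H(r,v) \geq c\,r^{n+a+2\kappa+\varepsilon}$.

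The main subtlety is the interplay between the choice of $\theta$ and the exponent sought. The upper bound can be handled with a single fixed admissible $\theta$, but the lower bound forces $\theta$ to depend on $\varepsilon$ so that the auxiliary threshold $r^{n+a+2(k+\gamma-\theta)}$ built into $F$ does not spoil the estimate. This is harmless because Proposition \ref{Almgren} supplies a monotonicity result for every $\theta \in (0,\gamma)$; the constants $C_0$ and $r_0$ it produces will then depend on $\theta$, and hence on $\varepsilon$, which is compatible with the dependence allowed in the lemma's constants. A second minor technical point is that the correction factor $(1+C_0 r^\theta)$ appearing in $\Phi^{x_0}$ perturbs the integration, but its contribution to the antiderivative is bounded, so the sharp exponent $n+a+2\kappa$ is preserved.
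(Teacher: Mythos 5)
Your proof is correct and follows essentially the same route as the paper: sandwich $\Phi^{x_0}(r,v)$ between $n+a+2\kappa$ (monotonicity) and $n+a+2\kappa+\varepsilon$ (the limit hypothesis), and integrate the resulting differential inequality for $\log\max\{\tilde H, r^{n+a+2(k+\gamma-\theta)}\}$, the correction $(1+C_0r^\theta)^{-1}$ contributing only a bounded term. The paper dismisses the lower bound as ``analogous''; your explicit treatment of it — in particular, noting that $\theta$ must be taken small relative to $k+\gamma-\kappa-\varepsilon$ so that the auxiliary threshold in the max does not absorb the estimate — is a correct and welcome filling-in of that gap (for the upper bound, by contrast, no constraint on $\theta$ is actually needed since $\tilde H\leq F$ always).
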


\begin{proof}
Since $\Phi$ is monotone increasing in $r$, it follows from the definition of $\kappa$ and $\Phi$ that
\[n+a+2\kappa\leq r(1+C_0r^\theta)\frac{d}{dr}\log\max\left\{\tilde{H}^{x_0}(r,v),\ r^{n+a+2(k+\gamma-\theta)}\right\}\]
for $r\in(0,r_0)$.
Therefore,
\[\frac{n+a+2\kappa}{r}-\frac{(n+a+2\kappa)C_0r^{\theta-1}}{1+C_0r^\theta}=\frac{n+a+2\kappa}{r(1+C_0r^\theta)}\leq \frac{d}{dr}\log\max\left\{\tilde{H}^{x_0}(r,v),\ r^{n+a+2(k+\gamma-\theta)}\right\}\]
Integrating from $r$ to $r_0$, we get
\[\log r^{n+a+2\kappa}+C_1\geq \log\max\left\{\tilde{H}^{x_0}(r,v),\ r^{n+a+2(k+\gamma-\theta)}\right\},\]
and thus \eqref{growth-H-kappa} follows.

The proof of \eqref{growth-H-kappa-below} is analogous.
\end{proof}

We will also need the following.

\begin{lemma}\label{bound-???}
Let $u$ be a solution of the obstacle problem for the fractional Laplacian, with $\varphi\in C^{k,\gamma}(\R^n)$,  $k\geq2$ and $\gamma\in(0,1)$.

Let $x_0\in \Gamma(u)$ be a free boundary point, $v=v^{x_0}$ be defined as in \eqref{v}, and $\Phi^{x_0}(r,v)$ be defined as in \eqref{generalized-frequency}.

Suppose that
\[\Phi^{x_0}(0+,v)=n+a+2\kappa,\qquad \textrm{with}\quad \kappa<k+\gamma.\]
Then,
\begin{equation}\label{???}
r\left|\int_{\BB}v\,L_av\right|\leq Cr^{n+a+\kappa+(k+\gamma)}
\end{equation}
for $0<r<r_0$.
\end{lemma}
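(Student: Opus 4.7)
The plan is to bound the integral by Cauchy--Schwarz, using \eqref{ext2} for the $L_a v$ factor and Lemma \ref{bound-H} together with the relation $\tilde G = \int_0^r \tilde H\,d\tau$ for the $v$ factor. This is essentially the same maneuver carried out in \eqref{milveces} during the proof of Proposition \ref{Almgren}, and I would replicate its structure.

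First I would observe that the product $v\,L_a v$ vanishes on the contact set $\{v=0\}\cap\{y=0\}$ in the sense of distributions: on this set $v\equiv 0$, and this is precisely where the singular part of $L_a v$ is supported (since $L_a v$ is a Radon measure whose singular part lives there). Hence
\[
\int_{B(r)} v\,L_a v = \int_{B(r)\setminus(\{v=0\}\cap\{y=0\})} v\,L_a v,
\]
and by Cauchy--Schwarz
\[
\left|\int_{B(r)} v\,L_a v\right|
\leq \left(\int_{B(r)} |y|^{a} v^{2}\right)^{1/2}
\left(\int_{B(r)\setminus\{v=0\}} |y|^{-a}(L_a v)^{2}\right)^{1/2}.
\]

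For the second factor I would invoke \eqref{ext2}, which gives $|L_a v|\leq C|y|^{a}|x|^{k+\gamma-2}$ away from the contact set, so that $|y|^{-a}(L_a v)^{2}\leq C|y|^{a}|x|^{2(k+\gamma-2)}$. Since $k\geq 2$ and $\gamma>0$, the exponent $2(k+\gamma-2)$ is $\geq 0$, and a direct scaling computation yields
\[
\int_{B(r)} |y|^{a}|x|^{2(k+\gamma-2)}\,dx\,dy \leq C\,r^{n+1+a+2(k+\gamma-2)}.
\]
For the first factor I would apply Lemma \ref{bound-H}, which yields $\tilde H(\tau,v)\leq C\tau^{n+a+2\kappa}$ for $\tau\in(0,r_0)$; integrating over $\tau\in(0,r)$ and recalling from \eqref{HG} that $\tilde G(r,v)=\int_0^r \tilde H(\tau,v)\,d\tau$, I get
\[
\tilde G(r,v)=\int_{B(r)}|y|^{a}v^{2} \leq C\,r^{n+a+2\kappa+1}.
\]

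Plugging both bounds into the Cauchy--Schwarz inequality gives
\[
\left|\int_{B(r)} v\,L_a v\right|
\leq C\,r^{\frac{n+a+1}{2}+\kappa}\cdot r^{\frac{n+a+1}{2}+(k+\gamma)-2}
= C\,r^{n+a+\kappa+(k+\gamma)-1},
\]
and multiplying by $r$ yields \eqref{???}. The only delicate point is the distributional justification for discarding the contact set in the integral; beyond that, the proof is a mechanical combination of \eqref{ext2} with the already-established growth bound from Lemma \ref{bound-H}.
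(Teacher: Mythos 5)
Your proof is correct and follows essentially the same route as the paper: both arguments discard the contact set where $v\,L_a v=0$, apply Cauchy--Schwarz together with the pointwise bound \eqref{ext2} on $L_a v$, and control $\tilde G(r,v)\leq Cr^{n+a+2\kappa+1}$ by integrating the bound on $\tilde H$ from Lemma \ref{bound-H}. The only cosmetic difference is how the weight is split in Cauchy--Schwarz (the paper first bounds $|L_av|\leq C|y|^a r^{k+\gamma-2}$ and pairs $|y|^{a/2}$ with $|y|^{a/2}|v|$, whereas you follow the splitting of \eqref{milveces}); both yield the same exponent.
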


\begin{proof}
Let $\tilde{G}(r,v)$ be given by \eqref{HG}.
Since $\tilde{G}'(r,v)=\tilde{H}(r,v)$, then the previous Lemma yields
\[\tilde{G}(r,v)\leq Cr^{n+a+2\kappa+1}.\]
Using this, the Cauchy-Schwarz inequality, and \eqref{ext2}, we get
\[\begin{split}
\left|\int_{\BB}v\,L_av\right|&\leq C r^{k+\gamma-2}\int_{\BB}|v|\,|y|^a\leq C r^{k+\gamma-2} \left(\int_{\BB}|y|^a\right)^{1/2} \left(\int_{\BB}v^2|y|^a\right)^{1/2}\\
&\leq C r^{k+\gamma-2} r^{\frac{n+a+1}{2}} r^{\frac{n+a+2\kappa+1}{2}}=Cr^{n+a+\kappa+(k+\gamma)-1}.\end{split}\]
Thus, \eqref{???} follows.
\end{proof}

We next show the following.

\begin{prop}\label{blow-ups}
Let $u$ be a solution of the obstacle problem \eqref{pb}, with $\varphi\in C^{k,\gamma}(\R^n)$,  $k\geq2$ and $\gamma\in(0,1)$.

Let $x_0\in \Gamma(u)$ be a free boundary point, $v=v^{x_0}$ be defined as in \eqref{v}, and $\Phi^{x_0}(r,v)$ be defined as in \eqref{generalized-frequency}.

Suppose that
\[\Phi^{x_0}(0+,v)=n+a+2\kappa,\qquad \textrm{with}\quad \kappa<k+\gamma,\]
and let
\[v_r(x,y)=v_r^{x_0}(x,y):=\frac{v(x_0+rx,ry)}{d_r},\]
where
\[d_r:=\left(\frac{\tilde H^{x_0}(r,v)}{r^{n+a}}\right)^{1/2}.\]
Then, up to a subsequence, $v_r$ converge as $r\to 0^+$ to a homogeneous function $v_0$ which is a global solution to the zero obstacle problem and it is homogeneous of degree $\kappa$.
\end{prop}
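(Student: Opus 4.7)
The plan is to follow the standard blow-up strategy, closely paralleling Proposition \ref{blow-ups-0} for the zero obstacle problem. The two ingredients requiring modification are (i) replacing the usual Almgren frequency by the generalized functional $\Phi^{x_0}$ of Proposition \ref{Almgren}, and (ii) handling the fact that $L_a v\not\equiv 0$, although its rescaling will vanish in the limit thanks to the assumption $\kappa<k+\gamma$.

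First I would establish uniform $H^1(B(R),|y|^a)$ bounds for $\{v_r\}_{r>0}$. By the choice of $d_r$ we have $\tilde{H}^{x_0}(1,v_r)=1$, and a direct scaling computation yields the identity
\[R\,\frac{\tilde D(R,v_r)}{\tilde H(R,v_r)}\,=\,\tilde N(v,Rr),\qquad \tilde N(v,\sigma):=\frac{\sigma\tilde D(\sigma,v)}{\tilde H(\sigma,v)}.\]
By \eqref{mastarde}, the generalized functional $\Phi^{x_0}(\sigma,v)$ differs from $(n+a)(1+C_0\sigma^\theta)+2(1+C_0\sigma^\theta)\tilde N(v,\sigma)$ only by a term proportional to $\sigma\int v\,L_av/\tilde H(\sigma,v)$. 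Combining Lemma \ref{bound-H} with Lemma \ref{bound-???}, this error is bounded by $C\sigma^{k+\gamma-\kappa-\varepsilon}$, which tends to zero as $\sigma\to 0^+$. Since $\Phi^{x_0}(\sigma,v)\to n+a+2\kappa$, it follows that $\tilde N(v,\sigma)\to\kappa$, and hence $\tilde D(R,v_r)$ is uniformly bounded for every fixed $R$.

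With the uniform $H^1$ bound in hand, weak compactness yields subsequential convergence $v_r\rightharpoonup v_0$ in $H^1_{\rm loc}(|y|^a)$ and strong $L^2_{\rm loc}(|y|^a)$ convergence. To upgrade this to $C^1_{\rm loc}$ convergence, I would observe that scaling gives
\[|L_a v_r(x,y)|\,\le\,C\,\frac{r^{k+\gamma}}{d_r}\,|y|^a|x|^{k+\gamma-2},\]
while Lemma \ref{bound-H} provides $d_r\ge c\,r^{\kappa+\varepsilon/2}$, so the right-hand side tends to $0$ locally uniformly when $\varepsilon$ is chosen small. Consequently $v_r$ solves the thin obstacle problem for $L_a$ with a vanishing source, and the $C^{1,\alpha}$ estimates of \cite{CSS} give uniform interior bounds and $v_r\to v_0$ in $C^1_{\rm loc}(\R^{n+1})$. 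Passing to the limit in the Signorini-type complementarity conditions and in the equation shows that $v_0$ is a global solution of \eqref{obst-0}, and $\tilde H(1,v_0)=\lim \tilde H(1,v_r)=1$ guarantees $v_0\not\equiv 0$.

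The final step is homogeneity of $v_0$. By $C^1_{\rm loc}$ convergence, $\tilde N(v_0,R)=\lim_{r\to 0^+}\tilde N(v_r,R)=\lim_{r\to 0^+}\tilde N(v,Rr)=\kappa$ for every $R>0$, so Theorem \ref{T:almgrenext} applied to the zero obstacle solution $v_0$ forces $v_0$ to be homogeneous of degree $\kappa$ with respect to the Euclidean dilations. The main obstacle is the first step: one must use Lemmas \ref{bound-H}--\ref{bound-???} to transfer the limiting value $n+a+2\kappa$ of the generalized frequency $\Phi^{x_0}$ to the corresponding limit $\kappa$ of the ordinary frequency $\tilde N$; once this bookkeeping is done, the remainder of the argument follows familiar lines.
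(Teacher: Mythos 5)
Your proposal is correct and follows essentially the same route as the paper: the uniform $H^1$ bound via the monotone generalized frequency, the error term $r\int v\,L_av/\tilde H$ controlled by Lemmas \ref{bound-H} and \ref{bound-???} so that it vanishes like $r^{k+\gamma-\kappa-\varepsilon}$, the scaling bound $|L_av_r|\le C r^{k+\gamma}d_r^{-1}|y|^a|x|^{k+\gamma-2}\to 0$ giving $C^1_{\rm loc}$ convergence to a global zero-obstacle solution, and the identification $\tilde N(v_0,\cdot)\equiv\kappa$ via the relation \eqref{mastarde} between $\Phi$ and the pure Almgren frequency. The only cosmetic difference is that you extract the limit $\tilde N(v,\sigma)\to\kappa$ up front and use it for both the compactness and the homogeneity steps, whereas the paper first uses the cruder bound $r\tilde H'/\tilde H\le C$ for compactness and only later passes to the limit of $\tilde N$.
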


\begin{proof}
The proof is a minor modification of that in \cite[Lemma 6.2]{CSS}; see also \cite[Proposition 5.3]{BFR}.
For completeness, we sketch the proof here.

Let $r_1>0$ be such that $\tilde H(r,v)>r^{n+a+2(k+\gamma-\theta)}$ for $r\in(0,r_1)$.
Then, the inequality $\Phi^{x_0}(r,v)\leq \Phi^{x_0}(r_1,v)$ yields
\[r\frac{\tilde H'(r,v)}{\tilde H(r,v)}\leq C.\]
Using \eqref{h'} and \eqref{i}, we find
\[r\frac{\int_{\BB}|y|^a|\nabla v|-\int_{\BB}v\,L_av}{\tilde H(r,v)}\leq C.\]
Now, it follows from \eqref{???} and \eqref{growth-H-kappa-below} that
\begin{equation}\label{1234}
r\frac{\int_{\BB}v\,L_av}{\tilde H(r,v)}\leq Cr^{k+\gamma-\kappa-\epsilon}\rightarrow0
\end{equation}
as $r\to0^+$, and hence
\[r\frac{\int_{\BB}|y|^a|\nabla v|}{\tilde H(r,v)}\leq C\]
for $r>0$ small enough.

By definition of $v_r$, the previous inequality is equivalent to
\[\int_{B(1)}|y|^a|\nabla v_r|\leq C.\]
Also, it follows from the definition of $v_r$ that $\|v_r\|_{L^2(\partial B(1))}=1$.

This implies that the sequence $\{v_r\}$ is uniformly bounded in $H^1(\BBone,|y|^a)$, so that (up to a subsequence) the functions $v_r$ converge to $v_0$ weakly in $H^1(\BBone,|y|^a)$, strongly in $L^2(\partial\BBone)$, and a.e. in $\BBone$.

Moreover, by \eqref{ext2} and \eqref{growth-H-kappa-below}, we have that
\begin{equation}\label{eq-blow-ip}
|L_a v_r(x,y)|\leq C\frac{r^2}{d_r} r^{k+\gamma-2}|y|^a|x|^{k+\gamma-2}\leq Cr^{k+\gamma-\kappa-\varepsilon/2}|y|^a|x|^{k+\gamma-2}
\end{equation}
in $\R^{n+1}\setminus(\{v_r=0\}\cap \{y=0\})$.
Since $\kappa<k+\gamma$, then we may take $\epsilon>0$ such that $k+\gamma-\kappa-\varepsilon/2>0$.

Thus, by $C^{1,\alpha}$ estimates (see \cite{CSS}), we get that the sequence $v_r$ is uniformly bounded in $C^{1,\alpha}_{\rm loc}(\BBone)$, and hence $v_r\to v_0$ in $C^1_{\rm loc}(\BBone)$.

Now, letting $r\to0$ in \eqref{eq-blow-ip}, we find that $v_0$ is a global solution to the zero obstacle problem, with $\|v_0\|_{L^2(\partial B(1))}=1$.

Finally, let us see that $v_0$ is homogeneous of degree $\kappa$.
For this, we consider the ``pure'' Almgren frequency formula
\[\tilde N(r,v):=\frac{r\int_{\BB}|y|^a|\nabla v|^2}{\int_{\partial\BB}|y|^av^2}=\frac{r\,\tilde D(r,v)}{\tilde H(r,v)}.\]
Then, for $r>0$ small enough we have $\tilde H(r,v)>r^{n+a+2(k+\gamma-\theta)}$ and thus
\[\Phi(r,v)=(1+C_0r^\theta)\left((n+a)+2\tilde N(r,v)-2r\frac{\int_{\BB}v\,L_av}{\tilde H(r,v)}\right).\]
Using \eqref{1234} we see that
\[\Phi(0+,v)=(n+a)+2\tilde N(0+,v).\]
Therefore,
\[\tilde N(\rho,v_0)=\lim_{r\to0^+}\tilde N(\rho,v_r)=\lim_{r\to0^+}\tilde N(r\rho,v)=\kappa.\]
This means that the Almgren's frequency formula $\tilde N(\cdot,v_0)$ is constant, and hence by Theorem \ref{T:almgrenEO} $v_0$ is homogeneous.
\end{proof}

\section{Singular points and Monneau-type monotonicity formulas}
\label{sec7}

We start here our study of the singular set of the free boundary for problem \eqref{pb}.

Given $\kappa<k+\gamma$, we define
\[\Gamma_\kappa(u):=\{x_0\in \Gamma(u)\,:\, \Phi(0+,v^{x_0})=n+a+2\kappa\}.\]

Moreover, recall that a free boundary point $x_0\in \Gamma(u)$ is said to be singular if
\[\lim_{r\downarrow 0}\frac{\bigl|\{u=\varphi\}\cap B_r(x_0)\bigr|}{|B_r(x_0)|}=0.\]
We will denote $\Sigma(u)$ the set of singular points, and
\[\Sigma_\kappa(u):=\Gamma_\kappa(u)\cap \Sigma(u).\]
We next start the study of the structure and regularity of the singular set.
For this, we first need a characterization of singular points similar to Proposition \ref{characterization} above.

\begin{prop}
Let $u$ be a solution of the obstacle problem \eqref{pb}, with $\varphi\in C^{k,\gamma}(\R^n)$,  $k\geq2$ and $\gamma\in(0,1)$.
Let $x_0\in \Gamma(u)$ be a free boundary point, $v=v^{x_0}$ be defined as in \eqref{v}.
Assume $x_0\in \Gamma_\kappa(u)$, with $\kappa<k+\gamma$.
Then, the following statements are equivalent:
\begin{itemize}
\item[(i)] $x_0\in \Sigma_\kappa(u)$
\item[(ii)] any blow-up of $v$ at the origin is a nonzero homogeneous polynomial $p_\kappa(x,y)$ of degree $\kappa$ satisfying
    \[L_ap_\kappa=0,\qquad p_\kappa(x,0)\geq0,\qquad p_\kappa(x,-y)=p_\kappa(x,y)\]
\item[(iii)] $\kappa=2m$ for some $m\in \{1,2,3,...\}$
\end{itemize}
\end{prop}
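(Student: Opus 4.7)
The strategy is to transplant the proof of Proposition \ref{characterization} from the zero-obstacle setting, using the blow-up analysis of Proposition \ref{blow-ups} to obtain a homogeneous limit and Lemma \ref{bound-???} to show that the inhomogeneity $L_a v$ is genuinely of lower order after rescaling, thanks to the standing assumption $\kappa<k+\gamma$. Recall that by \eqref{ext2} we have $|L_a v(x,y)|\leq C|y|^a|x|^{k+\gamma-2}$ off the coincidence set, whereas on the thin coincidence set $\{v(\cdot,0)=0\}\cap\{y=0\}$ the quantity $L_a v$ is a nonpositive Radon measure coming from the obstacle condition.

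For (i) $\Rightarrow$ (ii), I would first invoke Proposition \ref{blow-ups} to extract along a sequence $r_j\downarrow 0$ a nontrivial blow-up $v_0$ which is a $\kappa$-homogeneous global solution of the zero obstacle problem. It remains to show $v_0$ is a polynomial. From the estimate \eqref{eq-blow-ip}, the bulk part of $L_a v_{r_j}$ tends to zero locally uniformly since $k+\gamma-\kappa-\varepsilon/2>0$. For the singular part, the uniform $L^\infty$ bound on $|y|^a D_y v_{r_j}$ given by \cite[Proposition 4.3]{CSS}, combined with the singularity hypothesis $|\{v_{r_j}(\cdot,0)=0\}\cap B_1|\to 0$, implies that the measure part of $L_a v_{r_j}$ also converges weakly to $0$. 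Therefore $L_a v_0 = 0$ in $\BBone$, and by $\kappa$-homogeneity this extends to all of $\R^{n+1}$. Then \cite[Lemma 5.3]{CSS} forces $v_0$ to be a polynomial; the symmetry in $y$ and the sign $v_0(x,0)\geq 0$ pass to the limit.

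For (ii) $\Rightarrow$ (i), I would argue by contradiction: if $x_0$ is not singular, there exist $\delta>0$ and $r_j\downarrow 0$ with $|\{v_{r_j}(\cdot,0)=0\}\cap B_1|\geq\delta$. Up to a subsequence $v_{r_j}\to p_\kappa$ uniformly on $\overline{B_1}$, and since $p_\kappa(x,0)\geq 0$ is a polynomial, its zero set on $\{y=0\}$ has zero Lebesgue measure (otherwise $p_\kappa(\cdot,0)\equiv 0$, and the uniqueness in Lemma \ref{polynomials} together with the evenness in $y$ would give $p_\kappa\equiv 0$, contradicting nontriviality). Choosing an open $A$ of measure less than $\delta$ covering this zero set, uniform convergence forces $\{v_{r_j}(\cdot,0)=0\}\cap B_1\subset A$ for large $j$, a contradiction as in Proposition \ref{characterization}. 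The implication (ii) $\Rightarrow$ (iii) follows because a nonzero homogeneous polynomial has nonnegative integer degree, and if $\kappa$ were odd then $p_\kappa(x,0)\geq 0$ together with $p_\kappa(-x,0)=-p_\kappa(x,0)\geq 0$ would give $p_\kappa(\cdot,0)\equiv 0$, hence $p_\kappa\equiv 0$ by Lemma \ref{polynomials}.

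The main obstacle is the implication (iii) $\Rightarrow$ (ii), which requires upgrading the (merely zero-obstacle) global homogeneous blow-up $v_0$ to an $L_a$-harmonic polynomial in $\R^{n+1}$. For this I would reproduce the test-polynomial argument from Proposition \ref{characterization}: write $\mu:=L_a v_0$, a nonpositive Radon measure supported on $\{y=0\}$, and pair it with $\Psi(x,y) P(x,y)$ where $\Psi\in C^\infty_c(\R^{n+1}\setminus\{0\})$ is nonnegative and $P\in\tilde{\mathfrak P}^+_{a,2m}(\R^{n+1})$ is the $L_a$-harmonic extension of $|x|^{2m}$ provided by Lemma \ref{polynomials}, so that $P>0$ on $\{y=0\}\setminus\{0\}$. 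Using the Euler identities $(x,y)\cdot\nabla P=2m P$ and $(x,y)\cdot\nabla v_0=2m v_0$, together with integration by parts and $L_a P=0$, one deduces $\langle\mu,\Psi P\rangle=0$; since $\Psi\geq 0$ is arbitrary, $\mu$ is concentrated at the origin, and its $(2m-2)$-homogeneity then forces $\mu\equiv 0$. Hence $L_a v_0=0$ in $\R^{n+1}$, and \cite[Lemma 5.3]{CSS} yields that $v_0$ is a homogeneous polynomial of degree $2m$ with the required properties.
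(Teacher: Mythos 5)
Your proposal is correct and follows exactly the route the paper intends: the paper omits this proof with the remark that it is a ``minor modification'' of Proposition \ref{characterization}, and your argument is precisely that modification, using Proposition \ref{blow-ups} to produce the homogeneous blow-up and the estimates \eqref{ext2}, \eqref{eq-blow-ip} to show the inhomogeneous term is of lower order and disappears in the limit. The three implications (i)$\Rightarrow$(ii), (ii)$\Rightarrow$(i), (ii)$\Leftrightarrow$(iii) are handled exactly as in the zero-obstacle case, including the test-polynomial argument with $P\in\tilde{\mathfrak P}^+_{a,2m}$ for (iii)$\Rightarrow$(ii).
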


\begin{proof}
The proof is a minor modification of that of Proposition \ref{characterization} and is therefore omitted.
\end{proof}

We next start the study of the regularity of the set $\Sigma_\kappa(u)$, with $\kappa=2m$, $\kappa\leq k$.

\subsection{Monneau-type monotonicity formulas}

Throughout this Section, we denote
\begin{equation}\label{kappa}
\kappa=2m,\qquad \textrm{with}\quad m\in \mathbb N \quad \textrm{and}\quad 2\leq \kappa\leq k.
\end{equation}
By the previous Proposition, $x_0\in\Gamma_\kappa(u)$ is a singular point if and only if its frequency is \eqref{kappa}.
Recall also that $\varphi\in C^{k,\gamma}(\R^n)$, with $k\geq2$ and $\gamma\in(0,1)$.

As before, we denote by $\tilde{\mathfrak P}_{a,\kappa}^+=\tilde{\mathfrak P}_{a,2m}^+$ the set of $\kappa$-homogeneous polynomials $p_\kappa(x,y)$ satisfying
\[L_ap_\kappa=0\quad \textrm{in}\ \R^{n+1},\qquad p_\kappa\geq0\quad \textrm{for}\ \{y=0\},\qquad p_\kappa(x,y)=p_\kappa(x,-y).\]

\begin{prop}\label{monneau}
Let $u$ be a solution of the obstacle problem \eqref{pb}, with $\varphi\in C^{k,\gamma}(\R^n)$,  $k\geq2$ and $\gamma\in(0,1)$.

Let $x_0\in \Gamma_\kappa(u)$, with $\kappa$ as in \eqref{kappa}, let $v=v^{x_0}$ be defined as in \eqref{v}, and let $p_\kappa\in \tilde{\mathfrak P}_{a,\kappa}^+$.
There exists $C_M>0$ such that the quantity
\[\tilde{\mathcal{M}}^{x_0}(r,v, p_\kappa):=\frac{1}{r^{n+a+2\kappa}}\int_{\partial \BB}|y|^{a}\bigl(v(x,y)-p_\kappa(x-x_0,y)\bigr)^2\]
satisfies
\[\frac{d}{dr}\tilde{\mathcal{M}}^{x_0}(r,v, p_\kappa)\geq -C_M\,r^{\gamma-1}\qquad \forall\,r \in (0,r_0),\]
where $r_0$ is as in Proposition \ref{Almgren}.
\end{prop}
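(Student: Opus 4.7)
The strategy is to mimic the zero-obstacle Monneau proof of Theorems~\ref{T:MBa}/\ref{T:monneauext}, carefully tracking the errors coming from $L_a v\neq 0$. Working in the shifted coordinates and setting $w:=v-p_\kappa$, direct differentiation gives
\[
\frac{d}{dr}\tilde{\mathcal M}^{x_0}(r,v,p_\kappa) = -\frac{n+a+2\kappa}{r^{n+a+2\kappa+1}}\tilde H(r,w) + \frac{\tilde H'(r,w)}{r^{n+a+2\kappa}}.
\]
A divergence theorem computation on the upper/lower half-balls $B(r)^\pm$, combined with the obstacle sign conditions (on the contact set, $w(x,0)=-p_\kappa(x,0)\le 0$ and $\partial_y^+v\le 0$, so the thin-set boundary term has a favorable sign and can be dropped), yields
\[
\tilde H'(r,w) \ge \frac{n+a}{r}\tilde H(r,w) + 2\tilde D(r,w) - 2\int_{B(r)} w\,L_av,
\]
and hence
\[
\frac{d}{dr}\tilde{\mathcal M}^{x_0}(r,v,p_\kappa) \ge \frac{2}{r^{n+a+2\kappa}}\left[\tilde D(r,w) - \frac{\kappa}{r}\tilde H(r,w) - \int_{B(r)} w\,L_av\right].
\]

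The next step is the algebraic identity analogous to \eqref{c} in the proof of Theorem~\ref{T:MBa}. Since $L_ap_\kappa=0$, $p_\kappa$ is $\kappa$-homogeneous, and $\partial_yp_\kappa=O(y)$ near $\{y=0\}$ by evenness (so the thin boundary contribution in integration by parts vanishes), one gets both $\int_{B(r)}|y|^a\nabla v\cdot\nabla p_\kappa=(\kappa/r)\int_{\partial B(r)}|y|^a v\,p_\kappa$ and $\tilde D(p_\kappa,r)=(\kappa/r)\tilde H(p_\kappa,r)$. Expanding $w=v-p_\kappa$ then telescopes to
\[
\tilde D(r,w)-\frac{\kappa}{r}\tilde H(r,w) = \tilde D(r,v)-\frac{\kappa}{r}\tilde H(r,v) = r^{n+a+2\kappa-1}\,\tilde{\mathcal W}_\kappa(v,r),
\]
with $\tilde{\mathcal W}_\kappa$ the Weiss functional from \eqref{tW}. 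The error term is controlled via \eqref{ext2}, Cauchy--Schwarz, Lemma~\ref{bound-H} applied to $v$, and the trivial bound $|p_\kappa|\le Cr^\kappa$ on $B(r)$, yielding $|\int w\,L_av|\le Cr^{n+a+\kappa+k+\gamma-1}$; divided by $r^{n+a+2\kappa}$ this contributes $\ge -Cr^{k+\gamma-\kappa-1}\ge -Cr^{\gamma-1}$, since $\kappa\le k$.

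\textbf{The main obstacle} will be bounding $\tilde{\mathcal W}_\kappa(v,r)$ from below, for which I plan to prove an almost-monotonicity formula. A Pohozaev-type identity (the $L_a$ analogue of Lemma~\ref{l:dprime}, whose thin-set contribution vanishes because $\nabla_xv(x,0)=0$ in the interior of the contact set while $\partial_y^+v=0$ off it) gives
\[
\tilde D'(r,v) = \frac{n+a-1}{r}\tilde D(r,v) + \frac{2}{r^2}\int_{\partial B(r)}|y|^a(\tilde Zv)^2 + \frac{2}{r}\int_{B(r)}(\tilde Zv)L_av.
\]
Combining with $\tilde H'(r,v)=\frac{n+a}{r}\tilde H(r,v)+2\tilde D(r,v)-2\int vL_av$ and simplifying exactly as in the proof of Theorem~\ref{T:weissext} yields the Weiss-type identity
\[
\frac{d}{dr}\tilde{\mathcal W}_\kappa(v,r) = \frac{2}{r^{n+a+2\kappa+1}}\int_{\partial B(r)}|y|^a(\tilde Zv-\kappa v)^2 + \frac{2}{r^{n+a+2\kappa}}\int_{B(r)}(\tilde Zv-\kappa v)L_av.
\]
The first term is nonnegative, and the second is $\ge -Cr^{\gamma-1}$ by the same Cauchy--Schwarz estimate used above (bounding $\tilde D(r,v)\le Cr^{n+a+2\kappa-1}$ via Lemma~\ref{bound-H} and the generalized Almgren formula); thus $r\mapsto\tilde{\mathcal W}_\kappa(v,r)+(C/\gamma)r^\gamma$ is non-decreasing on $(0,r_0)$.

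To pin down the value at $r=0^+$, I invoke Proposition~\ref{blow-ups}: along some $r_j\downarrow 0$ the Almgren rescalings $v_{r_j}$ converge in $H^1$ to a $\kappa$-homogeneous global zero-obstacle solution $v_0$, for which Theorem~\ref{T:almgrenext} yields $\tilde N(v_0,1)=\kappa$. Using the scaling identity
\[
\tilde{\mathcal W}_\kappa(v,r) = \frac{\tilde H(r,v)}{r^{n+a+2\kappa}}\bigl(\tilde N(v,r)-\kappa\bigr)
\]
together with the boundedness of $\tilde H(r,v)/r^{n+a+2\kappa}$ from Lemma~\ref{bound-H}, one obtains $\tilde{\mathcal W}_\kappa(v,r_j)\to 0$, so by almost-monotonicity $\tilde{\mathcal W}_\kappa(v,r)\ge -(C/\gamma)r^\gamma$. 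Substituting this back into the inequality for $\frac{d}{dr}\tilde{\mathcal M}^{x_0}$ and combining with the error bound from the second paragraph produces the claimed inequality $\frac{d}{dr}\tilde{\mathcal M}^{x_0}(r,v,p_\kappa)\ge -C_M\,r^{\gamma-1}$.
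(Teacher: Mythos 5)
Your overall skeleton coincides with the paper's: differentiate $\tilde{\mathcal M}^{x_0}$, use the orthogonality identity $\int_{\BB}|y|^a\nabla v\cdot\nabla p_\kappa=\tfrac{\kappa}{r}\int_{\partial\BB}|y|^a v\,p_\kappa$ (valid since $L_ap_\kappa=0$ and $\lim_{y\to0^+}y^a\partial_yp_\kappa=0$) to reduce everything to the Weiss quantity $\tilde{\mathcal W}^{x_0}(r,v)$, control the $\int w\,L_av$ error by $Cr^{n+a+\kappa+k+\gamma-1}$, and exploit the sign of the thin-set term on the contact set. Where you genuinely diverge is in the proof of the key lower bound $\tilde{\mathcal W}^{x_0}(r,v)\ge -Cr^{\gamma}$ (the paper's Lemma \ref{weiss}). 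The paper reads this off directly from the already-established monotonicity of the generalized Almgren functional $\Phi$: either $\tilde H$ realizes the maximum in \eqref{generalized-frequency}, in which case $\Phi(r,v)\ge n+a+2\kappa$ translates into $r\tilde D-\kappa\tilde H\ge -Cr^{n+a+2\kappa+\gamma}$ via \eqref{i} and Lemma \ref{bound-???}, or else $\tilde H(r,v)\le r^{n+a+2(k+\gamma-\theta)}$ and one simply drops $\tilde D\ge0$ (this is where the choice $\theta\le\gamma/2$ enters). You instead prove an almost-monotonicity formula for $\tilde{\mathcal W}^{x_0}$ itself via a Pohozaev identity and then identify $\tilde{\mathcal W}^{x_0}(0^+,v)=0$ through the blow-up and the identity $\tilde{\mathcal W}^{x_0}(r,v)=r^{-(n+a+2\kappa)}\tilde H(r,v)(\tilde N(r,v)-\kappa)$. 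Your route is more structural (it yields the stronger statement that $r\mapsto\tilde{\mathcal W}^{x_0}(r,v)+Cr^\gamma$ is nondecreasing, which the paper never claims in the perturbed setting) but costs you an extra Rellich-type computation that the paper only carries out in the zero-obstacle Baouendi setting (Lemma \ref{l:dprime}); the paper's route is shorter because it recycles Proposition \ref{Almgren}.

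One step of your argument needs repair. To control the error $r^{-(n+a+2\kappa)}\int_{\BB}(\tilde Zv)L_av$ in your Weiss almost-monotonicity you need $\tilde D(r,v)\le Cr^{n+a+2\kappa-1}$ for \emph{all} $r\in(0,r_0)$, and you attribute this to ``Lemma \ref{bound-H} and the generalized Almgren formula.'' But $\Phi$ only controls $\tilde D$ (through $\mathcal I$) on the set of radii where $\tilde H(r,v)>r^{n+a+2(k+\gamma-\theta)}$; on the complementary set $\Phi$ carries no information about the Dirichlet energy, and Lemma \ref{bound-H} bounds only $\tilde H$. The bound is true, but you should obtain it either from a Caccioppoli inequality (test with $v\eta^2$, use $vL_av=0$ on the thin set and \eqref{ext2} for the bulk term, then invoke $\tilde G(2r,v)\le Cr^{n+a+2\kappa+1}$ from Lemma \ref{bound-H}), or by restricting to radii below the $x_0$-dependent threshold from \eqref{growth-H-kappa-below} — the latter, however, would compromise the uniformity of $C_M$ in $x_0$, which is used later in the continuity argument of Theorem \ref{uniqeness}. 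With the Caccioppoli fix the proposal is complete and correct.
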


To prove Proposition \ref{monneau} we need the following bound on a Weiss-type energy.

\begin{lemma}\label{weiss}
Let $v$ be as in Proposition \ref{monneau}, and let $r_0$ be as in Proposition \ref{Almgren}.
Then there exists a constant $C_W>0$ such that the following holds:

The quantity
\[\tilde{\mathcal{W}}^{x_0}(r,v):=\frac{1}{r^{n+a-1+2\kappa}}\int_{\BB}|y|^{a}|\nabla v|^2 -\frac{\kappa}{r^{n+a+2\kappa}}\int_{\partial \BB}|y|^{a}v^2\]
satisfies
\begin{equation}\label{ineq-weiss}
\tilde{\mathcal{W}}^{x_0}(r,v)\geq -C_W\,r^{\gamma}\qquad \forall\, r \in (0,r_0).
\end{equation}
\end{lemma}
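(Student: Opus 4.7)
The plan is to mimic the zero-obstacle Weiss monotonicity (Theorem \ref{T:weissext}), tracking the extra terms produced because $v=v^{x_0}$ satisfies $|L_a v|\le C|y|^a|x|^{k+\gamma-2}$ rather than $L_a v=0$. First I would differentiate $\tilde{\mathcal W}^{x_0}(r,v)$ directly, substituting $\tilde H'(r,v)=\tfrac{n+a}{r}\tilde H(r,v)+2\mathcal I(r,v)$ with $\mathcal I(r,v)=\tilde D(r,v)-\int_{\BB}v\,L_av$, and the standard Pohozaev--Rellich identity for $L_a$,
\[\tilde D'(r,v)=2\int_{\partial\BB}|y|^a v_\nu^2\,dH_n+\frac{n+a-1}{r}\tilde D(r,v)-\frac{2}{r}\int_{\BB}\bigl((x,y)\!\cdot\!\nabla v\bigr)L_av.\]
Both integration by parts identities are valid because $v$ and $(x,y)\!\cdot\!\nabla v$ vanish on the coincidence set $\{y=0\}\cap\{v=0\}$ where the singular part of $L_av$ is supported. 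After completing the square $(\tilde Zv-\kappa v)^2$ on $\partial \BB$ with $\tilde Z=(x,y)\!\cdot\!\nabla$ as in Proposition \ref{P:hom}, every term not involving $L_av$ reorganizes exactly as in Theorem \ref{T:weissext}, producing the identity
\[\frac{d}{dr}\tilde{\mathcal W}^{x_0}(r,v)=\frac{2}{r^{n+a+1+2\kappa}}\int_{\partial\BB}|y|^a(\tilde Zv-\kappa v)^2\,dH_n-\frac{2}{r^{n+a+2\kappa}}\int_{\BB}(\tilde Zv+\kappa v)\,L_av.\]

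I would then drop the non-negative first term and estimate the second by Cauchy--Schwarz, using \eqref{ext2} to bound $\int_{\BB}|y|^{-a}(L_av)^2\le C r^{n+a-1+2(k+\gamma)}$, and the growth estimates $\tilde D(r,v)\le Cr^{n+a+2\kappa-1}$ (obtained from $\tilde N(r,v)\to\kappa$ and Lemma \ref{bound-H}) and $\tilde G(r,v)\le Cr^{n+a+1+2\kappa}$ (by integrating the $\tilde H$-bound of Lemma \ref{bound-H}) in
\[\int_{\BB}|\tilde Zv+\kappa v|^2|y|^a\,dxdy\le 2r^2\tilde D(r,v)+2\kappa^2\tilde G(r,v).\]
This yields the correction bound $\bigl|\int_{\BB}(\tilde Zv+\kappa v)L_av\bigr|\le C\,r^{n+a+\kappa+(k+\gamma)-1}$, so that dividing by $r^{n+a+2\kappa}$ and using $\kappa\le k$,
\[\frac{d}{dr}\tilde{\mathcal W}^{x_0}(r,v)\ge -C\,r^{k+\gamma-\kappa-1}\ge -C\,r^{\gamma-1}\qquad \forall\,r\in(0,r_0).\]
This is the analytic heart of the proof, and the delicate point is precisely to verify that the $L_av$ correction is indeed of size $r^{\gamma-1}$ so that antidifferentiation produces an acceptable $r^{\gamma}$ error.

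With this differential inequality in hand I set $F(r):=\tilde{\mathcal W}^{x_0}(r,v)+(C/\gamma)r^\gamma$, which is therefore nondecreasing on $(0,r_0)$; since $\tilde D(r,v)\ge 0$ and $\tilde H(r,v)\le Cr^{n+a+2\kappa}$ by Lemma \ref{bound-H}, one has $\tilde{\mathcal W}^{x_0}(r,v)\ge -C\kappa$, so $F(0^+)$ exists and is finite. To identify it, I would use the Almgren rescalings $v_r(x,y)=v(rx,ry)/d_r$ of Proposition \ref{blow-ups} and observe the clean algebraic identity
\[\tilde{\mathcal W}^{x_0}(r,v)=\frac{d_r^2}{r^{2\kappa}}\bigl(\tilde D(1,v_r)-\kappa\bigr),\]
in which the prefactor $d_r^2/r^{2\kappa}$ is bounded by Lemma \ref{bound-H}, while the scale invariance $\tilde N(1,v_r)=\tilde N(r,v)$ together with the computation at the end of the proof of Proposition \ref{blow-ups} (which uses $\Phi^{x_0}(0^+,v)=n+a+2\kappa$ to force $\tilde N(r,v)\to\kappa$ along the full sequence) gives $\tilde D(1,v_r)=\tilde N(1,v_r)\to\kappa$. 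Hence $\tilde{\mathcal W}^{x_0}(r,v)\to 0$, whence $F(0^+)=0$, and the monotonicity of $F$ yields $F(r)\ge 0$ on $(0,r_0)$, which is exactly \eqref{ineq-weiss} with $C_W=C/\gamma$.
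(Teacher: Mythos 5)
Your argument reaches the right conclusion but by a genuinely different route from the paper. The paper never differentiates $\tilde{\mathcal W}^{x_0}$: it splits $(0,r_0)$ according to which branch realizes the maximum in the generalized frequency $\Phi^{x_0}$, and in the branch $\tilde H(r,v)>r^{n+a+2(k+\gamma-\theta)}$ reads off $r\,\mathcal I(r,v)/\tilde H(r,v)\ge\kappa-Cr$ directly from $\Phi^{x_0}(r,v)\ge\Phi^{x_0}(0^+,v)=n+a+2\kappa$, then converts this into the lower bound on $r\tilde D-\kappa\tilde H$ using \eqref{growth-H-kappa} and \eqref{???}; in the other branch it simply discards $\tilde D\ge0$ and uses the smallness of $\tilde H$ (this is where the choice $\theta\le\gamma/2$ enters). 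You instead prove a genuine Weiss almost-monotonicity: a Rellich--Pohozaev computation giving $\frac{d}{dr}\tilde{\mathcal W}^{x_0}\ge -Cr^{\gamma-1}$, plus the identification $\tilde{\mathcal W}^{x_0}(0^+,v)=0$ via the rescalings and $\tilde N(r,v)\to\kappa$, followed by integration. Your route is closer in spirit to Theorem \ref{T:weissB}/\ref{T:weissext} and to \cite{GP}, and it yields more (an almost-monotone Weiss functional with identified limit), at the cost of needing the differentiation identities for a right-hand side that is only a signed measure; the paper's route is shorter because it recycles Proposition \ref{Almgren} wholesale.

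Two points in your write-up need attention. The substantive one is the bound $\tilde D(r,v)\le Cr^{n+a+2\kappa-1}$: Lemma \ref{bound-H} controls $\tilde H$, not $\tilde D$, and ``$\tilde N(r,v)\to\kappa$'' only controls the frequency for $r$ below an $x_0$-dependent threshold (and not at all in the regime where the max in \eqref{generalized-frequency} is realized by the power $r^{n+a+2(k+\gamma-\theta)}$, where $\Phi$ says nothing about $\tilde N$). Since the constant $C_W$ must be uniform in $x_0$ for the later compactness/continuity arguments (e.g.\ Theorem \ref{uniqeness}), you should prove this Dirichlet bound separately with uniform constants --- for instance from the uniform growth $\sup_{\BB}|v|\le Cr^\kappa$ (as in Lemma \ref{nondegeneracy}) together with a Caccioppoli inequality, noting that the singular part of $L_av$ pairs to zero against $v$. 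The remaining issues are harmless: the sign in your Rellich identity and in the error term (the correct pairing is with $\tilde Zv-\kappa v$, not $\tilde Zv+\kappa v$) and the exponent in $\int_{\BB}|y|^{-a}(L_av)^2$ (which is $Cr^{n+a+2(k+\gamma)-3}$, not $Cr^{n+a+2(k+\gamma)-1}$) do not affect the final estimate $\bigl|\int_{\BB}(\tilde Zv\mp\kappa v)L_av\bigr|\le Cr^{n+a+\kappa+(k+\gamma)-1}$, since everything is bounded in absolute value by Cauchy--Schwarz.
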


\begin{proof}
We will use the Almgren-type monotonicity formula proved above.
Throughout this proof, we denote $\Phi=\Phi^{x_0}$, $\tilde{H}=\tilde{H}^{x_0}$, $\mathcal{I}=\mathcal{I}^{x_0}$, and $\tilde{D}=\tilde{D}^{x_0}$.

By definition of $\Gamma_{\kappa}(u)$, we have $\Phi(0^{+},v)=n+a+2\kappa$.
Thus, by the monotonicity of $\Phi(\cdot,v)$ on $(0,r_0)$ (see Proposition \ref{Almgren}), for any $r \in (0,r_0)$ we have that either
\begin{equation}\label{1}
\Phi(r,v)=\bigl(r+C_0r^{1+\theta}\bigr)\,\frac{\tilde{H}'(r,v)}{\tilde{H}(r,v)}\geq n+a+2\kappa
\end{equation}
or
\begin{equation}\label{2}
\tilde{H}(r,v)\leq r^{n+a+2(k+\gamma-\theta)}.
\end{equation}
We split the proof of \eqref{ineq-weiss} in two cases.

\vspace{2mm}

\noindent
\emph{- Case 1}. If \eqref{1} holds then it follows by \eqref{h'} that
$$
(r+C_0r^2)\,\left(\frac{n+a}{r}+2\,\frac{{\mathcal{I}(r,v)}}{\tilde{H}(r,v)}\right)\geq n+a+2\kappa,
$$
that is
$$
n+a+2\,r\,\frac{{\mathcal{I}(r,v)}}{\tilde{H}(r,v)}\geq
 n+a+2\kappa- C_0r^2\,\left(\frac{n+a}{r}+2\,\frac{{\mathcal{I}(r,v)}}{\tilde{H}(r,v)}\right),
$$
and since $r\left(\frac{n+a}{r}+2\,\frac{{\mathcal{I}(r,v)}}{\tilde{H}(r,v)}\right)\leq \Phi(r,v)\leq C$ we get
$$
r\,\frac{{\mathcal{I}(r,v)}}{\tilde{H}(r,v)}\geq \kappa-\frac{C_0}{2}\,r^2\left(\frac{n+a}{r}+2\frac{{\mathcal{I}(r,v)}}{\tilde{H}(r,v)}\right)\geq \kappa-C\,r.
$$
Hence, using \eqref{i}, \eqref{growth-H-kappa}, and \eqref{???}, we obtain
\[r\tilde{D}(r,v)-\kappa \tilde{H}(r,v)\geq -Cr\tilde{H}(r,v)-Cr^{n+a+\kappa+k+\gamma}\geq -Cr^{n+a+2\kappa+\gamma}.\]
Here we used that $\kappa\leq k$ and $\gamma\leq1$.
This gives
\[\tilde{\mathcal{W}}^{x_0}(r,v)\geq -C\,\frac{r^{n+a+2\kappa+\gamma}}{r^{n+a+2\kappa}}\geq -C\,r^{\gamma},\]
as desired.

\vspace{2mm}

\noindent
\emph{- Case 2}. If \eqref{2} holds then we simply use that $\tilde{D}(r,v)\geq0$ to obtain
\[\frac{1}{r^{n+a-1+2\kappa}}{\tilde{D}(r,v)}-\frac{2}{r^{n+a+2\kappa}}\tilde{H}(r,v)\geq -C\,r^{2(k+\gamma-\theta-\kappa)}\geq -C\,r^{\gamma},\]
provided that we take $\theta\leq \gamma/2$.
This concludes the proof of \eqref{ineq-weiss}.
\end{proof}

We can now prove Proposition \ref{monneau}.

\begin{proof}[Proof of Proposition \ref{monneau}]
Set $w:=v-p_\kappa$ and let us use the notation $z=(x,y) \in \R^{n+1}$.
We have
\begin{eqnarray}
\frac{d}{dr}\tilde{\mathcal{M}}^{x_0}(r,v, p_{\kappa})&=&\frac{d}{dr}\int_{\partial \BBone}\frac{|y|^{a}|w(rz)|^2}{r^{2\kappa}}\nonumber\\
&=&\int_{\partial \BBone}|y|^{a}\,\frac{2\,w(rz)\bigl(rz\cdot\nabla w(rz)-2\,w(rz)\bigr)}{r^{2\kappa+1}} \nonumber\\
&=&\frac{2}{r^{n+a+2\kappa+1}}\int_{\partial \BB}|y|^{a}w\bigl(z\cdot\nabla w-2\,w\bigr).\label{d dr M}
\end{eqnarray}
We now claim that
\begin{equation}\label{claim-weiss}
\tilde{\mathcal{W}}^{x_0}(r,v)\leq  \frac{1}{r^{n+a+2\kappa}}\int_{\partial \BB}|y|^{a}w(z\cdot \nabla w-2w)+C\,r^{\gamma}.
\end{equation}
Indeed, since $L_ap_{\kappa}=0$ in $\R^{n+1}$ and $p_{\kappa}$ is $\kappa$-homogeneous,
we then have $\tilde{\mathcal{W}}^{x_0}(r,p_{\kappa})\equiv0$.
Hence, using again that $L_ap_{\kappa}=0$ and that $z\cdot \nabla p_{\kappa}=\kappa\,p_{\kappa}$ (by homogeneity), integrating by parts we get
\begin{eqnarray}
\tilde{\mathcal{W}}^{x_0}(r,v)&=&\tilde{\mathcal{W}}^{x_0}(r,v)-\tilde{\mathcal{W}}^{x_0}(r,p_{\kappa}) \nonumber\\
&=&\frac{1}{r^{n+a-1+2\kappa}}\int_{\BB}|y|^{a}\left(|\nabla w|^2+2\,\nabla w\cdot\nabla p_{\kappa}\right)
        -\frac{\kappa}{r^{n+a+2\kappa}}\int_{\partial \BB}|y|^{a}\left(w^2+2\,w\,p_{\kappa}\right)\nonumber\\
&=&\frac{1}{r^{n+a-1+2\kappa}}\int_{\BB}|y|^{a}|\nabla w|^2
        +\frac{1}{r^{n+a-1+2\kappa}}\int_{\BB}2\,w\,L_ap_{\kappa}\nonumber\\
&& +\frac{1}{r^{n+a+2\kappa}}\int_{\partial \BB}|y|^{a}w\left(2z\cdot \nabla p_{\kappa}-2\kappa\,p_{\kappa}\right)
        -\frac{\kappa}{r^{n+a+2\kappa}}\int_{\partial \BB}|y|^{a}w^2\nonumber\\
&=&\frac{1}{r^{n+a-1+2\kappa}}\int_{\BB}|y|^{a}|\nabla w|^2
        -\frac{\kappa}{r^{n+a+2\kappa}}\int_{\partial \BB}|y|^{a}w^2.
\label{split}
\end{eqnarray}
Using now that $p_{\kappa}\leq C\,r^{\kappa}$ in $\BB$, that $L_ap_{\kappa}=0$, the growth of $v$ in $\BB$, and \eqref{ext2}, we get
\[\left|\int_{\BB}{w\, L_a w}\right|=\left|\int_{\BB}{(p_2-v)\, L_a v}\right|\leq C\, r^{n+a+\kappa+k+\gamma-\theta}.\]
Integrating by parts in \eqref{split} and using the previous bound, we conclude that
\begin{eqnarray*}
\tilde{\mathcal{W}}^{x_0}(r,v)&=&\frac{1}{r^{n+a-1+2\kappa}}\int_{\BB}w\,L_aw
                            +\frac{1}{r^{n+a+2\kappa}}\int_{\partial \BB}|y|^{a}w\,(z\cdot \nabla w-\kappa\,w)\\
&\leq& \frac{1}{r^{n+a+2\kappa}}\int_{\partial \BB}|y|^{a}w(z\cdot \nabla w-\kappa\, w)+ C\,r^{\gamma},
\end{eqnarray*}
and \eqref{claim-weiss} follows.

Finally, combining \eqref{d dr M}, \eqref{claim-weiss}, and Lemma \ref{weiss}, we get
\[\frac{d}{dr}\tilde{\mathcal{M}}^{x_0}(r,v,p_2) \geq \frac{2}{r}\,\tilde{\mathcal{W}}^{x_0}(r,v)-C\,r^{\gamma}\geq -C\,r^{\gamma-1},\]
as desired.
\end{proof}

\section{Proof of Theorem \ref{th-main}}
\label{sec8}

In this Section we prove Theorem \ref{th-main}.
We start with the following nondegeneracy Lemma.
Recall that $\Sigma_\kappa(u)$ denotes the set of singular points with frequency $\kappa$.

\begin{lemma}\label{nondegeneracy}
Let $u$ be a solution of the obstacle problem \eqref{pb}, with $\varphi\in C^{k,\gamma}(\R^n)$,  $k\geq2$ and $\gamma\in(0,1)$.

Let $x_0\in \Gamma_\kappa(u)$, with $\kappa$ as in \eqref{kappa}, and let $v=v^{x_0}$ be defined as in \eqref{v}.
Then, for all $r\in(0,r_0)$ we have
\[C^{-1}r^\kappa\leq \sup_{\partial\BB}|v|\leq Cr^\kappa\]
for some $C>0$.
\end{lemma}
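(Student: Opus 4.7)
The plan is to mirror closely the proof of Lemma \ref{nondegeneracy-0}, with modifications to accommodate the \emph{almost}-monotonicity of the Monneau functional in Proposition \ref{monneau} in place of the exact monotonicity available in the zero-obstacle case. For the upper bound $\sup_{\partial B(r)}|v|\le Cr^\kappa$ I would invoke the growth estimate $\tilde H(r,v)\le Cr^{n+a+2\kappa}$ from Lemma \ref{bound-H}, combined with an $L^2$-to-$L^\infty$ sub-mean-value-type inequality for $L_a$-subsolutions that accommodates the small right-hand side in \eqref{ext2}. This is the direct analogue of \cite[Lemma 3.4]{BFR}: the perturbation $|L_a v|\le C|y|^a|x|^{k+\gamma-2}$ is of order $r^{k+\gamma-2}$, which is absorbed into the leading $r^\kappa$ since $\kappa\le k$.

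For the lower bound, I would argue by contradiction and suppose there is a sequence $r_j\downarrow 0$ along which $\sup_{\partial B(r_j)}|v|=o(r_j^\kappa)$, so that
\[d_{r_j}:=\bigl(\tilde H(r_j,v)/r_j^{n+a}\bigr)^{1/2}=o(r_j^\kappa).\]
By Proposition \ref{blow-ups}, after passing to a further subsequence the rescalings $v_{r_j}(x,y):=v(r_jx,r_jy)/d_{r_j}$ converge in $C^1_{\rm loc}$ to a nontrivial $\kappa$-homogeneous global solution of the zero-obstacle problem; since $x_0\in\Sigma_\kappa(u)$, the singular-point characterization at the start of this section forces the blow-up to be a nonzero polynomial $p_\kappa\in\tilde{\mathfrak P}_{a,\kappa}^+$. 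Applying Proposition \ref{monneau} with this specific $p_\kappa$, the almost-monotonicity of $r\mapsto\tilde{\mathcal M}^{x_0}(r,v,p_\kappa)+\tfrac{C_M}{\gamma}r^\gamma$ guarantees that the limit $L=\lim_{r\downarrow 0}\tilde{\mathcal M}^{x_0}(r,v,p_\kappa)$ exists, and a direct computation along $r_j$ (using $d_{r_j}/r_j^\kappa\to 0$ together with the $L^2$-convergence $v_{r_j}\to p_\kappa$ on $\partial B(1)$) identifies $L=\int_{\partial B(1)}|y|^a p_\kappa^2$.

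The main obstacle, and the principal novelty compared to the zero-obstacle situation, will be to extract a contradiction while carrying along the perturbative error from Monneau's formula. The almost-monotonicity yields
\[\tilde{\mathcal M}^{x_0}(r,v,p_\kappa)\ge \int_{\partial B(1)}|y|^a p_\kappa^2 \;-\; \tfrac{C_M}{\gamma}r^\gamma,\]
which, after expanding the square and exploiting the $\kappa$-homogeneity of $p_\kappa$, rearranges into
\[\frac{d_r^2}{r^{2\kappa}}\int_{\partial B(1)}|y|^a v_r^2 \;-\; \frac{2\,d_r}{r^\kappa}\int_{\partial B(1)}|y|^a v_r\,p_\kappa \;\ge\; -C r^\gamma.\]
Dividing by $d_r/r^\kappa>0$ and passing to the limit along $r_j$ would give the desired contradiction $-2\int|y|^a p_\kappa^2\ge 0$, \emph{provided} the error $C\,r^{\kappa+\gamma}/d_r$ tends to zero. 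To control it I will use the lower bound in Lemma \ref{bound-H}: for any $\delta>0$ one has $d_r\ge c\,r^{\kappa+\delta}$ for all $r$ sufficiently small, and choosing $\delta<\gamma$ gives $r^{\kappa+\gamma}/d_r\le C r^{\gamma-\delta}\to 0$, closing the argument.
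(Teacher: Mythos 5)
Your proposal is correct and follows essentially the same route as the paper: the upper bound from Lemma \ref{bound-H} together with the sub-mean-value estimate of \cite[Lemma 3.4]{BFR}, and the lower bound by contradiction via the blow-up along $r_j$, the Monneau almost-monotonicity applied to the limiting polynomial $p_\kappa$, and the control of the error term $r^{\kappa+\gamma}/d_r$ through the lower growth bound of Lemma \ref{bound-H}. Your explicit tracking of the $C_M\gamma^{-1}r^{\gamma}$ correction and of the exponent $\delta<\gamma$ is, if anything, slightly more careful than the paper's write-up.
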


\begin{proof}
We may assume that $x_0=0$.
The upper bound follows from \eqref{growth-H-kappa} and \cite[Lemma 3.4]{BFR}.

To prove the lower bound, we assume that for a sequence $r=r_j\to0$ we have $\sup_{\partial\BB}|v|= o(r^\kappa)$.
Then,
\begin{equation}\label{contr}
d_r=\left(\frac{1}{r^{n+a}}\int_{\partial\BB}|y|^a v^2\right)^{1/2}=o(r^\kappa).
\end{equation}
Passing to a subsequence if necessary we may assume that
\[v_r(x,y)=\frac{v(rx,ry)}{d_r}\to p_\kappa(x,y)\qquad \textrm{uniformly on}\quad \partial\BBone,\]
for some nonzero $p_\kappa\in\tilde{\mathfrak P}_{a,\kappa}^+$.
Now, for such $p_\kappa$ we use the Monneau formula in Proposition \ref{monneau}.
If \eqref{contr} holds then we have
\[\tilde{\mathcal M}_\kappa(0+,v,p_\kappa)=\int_{\partial\BBone}|y|^a p_\kappa^2=\frac{1}{r^{n+a+\kappa}}\int_{\partial\BB}p_\kappa^2.\]
Therefore, using the monotonicity of $\tilde{\mathcal M}_\kappa(r,v,p_\kappa)+C_Mr^{\gamma}$, we will have that
\[C_mr^\gamma+\frac{1}{r^{n+a+\kappa}}\int_{\partial\BB}|y|^a(v-p_\kappa)^2\geq \frac{1}{r^{n+a+\kappa}}\int_{\partial\BB}|y|^ap_\kappa^2\]
or, equivalently,
\[\frac{1}{r^{n+a+\kappa}}\int_{\partial\BB}|y|^a(v^2-2v p_\kappa)\geq -C_mr^\gamma.\]
After rescaling we obtain
\[\frac{1}{r^{2\kappa}}\int_{\partial\BBone} |y|^a(d_r^2v_r^2-2d_rr^\kappa v_rp_\kappa)\geq -C_Mr^\gamma,\]
and thus
\[\int_{\partial\BBone} |y|^a\left(\frac{d_r}{r^\kappa}v_r^2-2 v_rp_\kappa\right)\geq -C_M\frac{r^{\kappa+\gamma}}{d_r}.\]
By Lemma \ref{bound-H} we have $r^{\kappa+\gamma}/d_r\to0$ as $r\to0$.
Moreover, recall that $v_r\to p_\kappa$ as $r\to0$.
Thus, letting $r\to0$ in the last inequality and using \eqref{contr}, we find
\[-\int_{\partial\BBone}p_\kappa^2\geq0,\]
a contradiction.
\end{proof}

We next prove uniqueness and continuity of blow-ups.

\begin{thrm}\label{uniqeness}
Let $u$ solve the obstacle problem for the fractional Laplacian, and let $\kappa$ be as in \eqref{kappa}.

Then, there exists a modulus of continuity $\omega:\R^+\to\R^+$ such that, for any $x_0\in \Gamma_\kappa(u)$, we have
\[u(x)-\varphi(x)=p_\kappa^{x_0}(x-x_0)+\omega(|x-x_0|)|x-x_0|^\kappa\]
for some polynomial $p_\kappa\in \tilde{\mathfrak P}_{a,\kappa}^+$.
In addition, the mapping $\Gamma_\kappa\ni x_0\mapsto p_\kappa^{x_0}\in\tilde{\mathfrak P}_{a,\kappa}^+$ is continuous, with
\[\int_{\partial\BBone} |y|^a(p_\kappa^{x_0'}-p_\kappa^{x_0})^2\leq \omega(|x_0'-x_0|)\]
for all $x_0,x_0'\in \Gamma_\kappa(u)$.
\end{thrm}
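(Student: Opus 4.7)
The plan is to adapt the proof of Proposition \ref{uniqeness-0} from the zero-obstacle setting, now using the Monneau-type formula with error term of Proposition \ref{monneau} in place of a pure monotonicity identity. The key observation is that although $r \mapsto \tilde{\mathcal M}^{x_0}(r,v,p_\kappa)$ need not itself be monotone, the shifted quantity
\[F^{x_0}(r;\, p_\kappa) \;:=\; \tilde{\mathcal M}^{x_0}(r,v^{x_0},p_\kappa) + \frac{C_M}{\gamma}\, r^\gamma\]
is monotone non-decreasing on $(0,r_0)$, and the correction $\frac{C_M}{\gamma}r^\gamma$ vanishes as $r \to 0^+$. Every monotonicity-based step below is carried out for $F^{x_0}$ rather than $\tilde{\mathcal M}^{x_0}$.

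First I would prove uniqueness of the blow-up at a fixed $x_0 \in \Sigma_\kappa(u)$. Consider the ``fixed-homogeneity'' rescalings $\bar v_r(x,y) := v^{x_0}(rx,ry)/r^\kappa$. Lemma \ref{nondegeneracy} gives $c \le \|\bar v_r\|_{L^\infty(\partial B(1))} \le C$, and the $C^{1,\alpha}$ estimates invoked in Proposition \ref{blow-ups} make $\{\bar v_r\}$ precompact in $C^1_{\rm loc}(\R^{n+1})$. Repeating the argument of Proposition \ref{blow-ups} (with $r^\kappa$ replacing $d_r$, legitimate since $d_r \sim r^\kappa$ by Lemma \ref{nondegeneracy}), every subsequential limit is a nonzero element of $\tilde{\mathfrak P}_{a,\kappa}^+$. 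Fix one such limit $p_\kappa^{x_0}$ along $r_j \to 0^+$. A change of variables identifies $\int_{\partial B(1)} |y|^a (\bar v_{r_j} - p_\kappa^{x_0})^2$ with $\tilde{\mathcal M}^{x_0}(r_j, v^{x_0}, p_\kappa^{x_0})$, so the latter tends to $0$. Hence $F^{x_0}(r_j;\, p_\kappa^{x_0}) \to 0$, and monotonicity of $F^{x_0}$ forces $F^{x_0}(r;\, p_\kappa^{x_0}) \to 0$ on the full parameter, so $\tilde{\mathcal M}^{x_0}(r, v^{x_0}, p_\kappa^{x_0}) \to 0$. Any other subsequential blow-up $q_\kappa$ would then satisfy $\int_{\partial B(1)} |y|^a (p_\kappa^{x_0} - q_\kappa)^2 = 0$, giving $q_\kappa = p_\kappa^{x_0}$. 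Restricting the $C^1_{\rm loc}$ convergence $\bar v_r \to p_\kappa^{x_0}$ to $\{y=0\}$ and rescaling back yields the expansion $u(x)-\varphi(x) = p_\kappa^{x_0}(x-x_0) + o(|x-x_0|^\kappa)$ with a point-dependent modulus $\omega_{x_0}$.

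To upgrade $\omega_{x_0}$ to a modulus $\omega$ uniform in $x_0 \in \Gamma_\kappa(u)$, I would run the standard compactness argument of \cite[Lemma 7.3 and Proposition 7.7]{PSU}: were a uniform modulus to fail, one could extract sequences $x_0^{(j)} \in \Gamma_\kappa(u)$ and $\rho_j \to 0^+$ violating the expansion; continuity of $x_0 \mapsto v^{x_0}$, together with closedness of $\Gamma_\kappa(u)$ (a consequence of the upper semicontinuity of the generalized frequency from Proposition \ref{Almgren}), lets one pass to a limit point and contradict the single-point expansion just established.

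For the continuity of $x_0 \mapsto p_\kappa^{x_0}$, I would mirror the last paragraph of Proposition \ref{uniqeness-0}, using $F^{x_0}$ in place of $\tilde{\mathcal M}^{x_0}$. Given $\varepsilon > 0$, pick $r_\varepsilon$ with $F^{x_0}(r_\varepsilon;\, p_\kappa^{x_0}) < \varepsilon$. Continuous dependence of $v^{x_0}$ on $x_0$ gives $F^{x_0'}(r_\varepsilon;\, p_\kappa^{x_0}) < 2\varepsilon$ for $x_0' \in \Gamma_\kappa(u)$ sufficiently close to $x_0$, and monotonicity of $r \mapsto F^{x_0'}(r;\, p_\kappa^{x_0})$ propagates this inequality to all $r \in (0,r_\varepsilon]$. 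Letting $r \to 0^+$ and using that $\tilde{\mathcal M}^{x_0'}(0^+, v^{x_0'}, p) = \int_{\partial B(1)} |y|^a (p_\kappa^{x_0'} - p)^2$ for any $p \in \tilde{\mathfrak P}_{a,\kappa}^+$ yields the required estimate, with uniformity provided by the same compactness trick. The main obstacle throughout is bookkeeping the error $C_M r^\gamma$: it is harmless in the limit $r \to 0^+$ but prevents one from ever concluding $\tilde{\mathcal M}^{x_0} \equiv 0$, so all arguments must be phrased purely in terms of the vanishing of $F^{x_0}$ as $r \to 0^+$.
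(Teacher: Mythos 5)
Your proposal is correct and follows essentially the same route as the paper: fixed-homogeneity rescalings controlled by Lemma \ref{nondegeneracy}, compactness as in Proposition \ref{blow-ups}, the Monneau-type almost-monotonicity of Proposition \ref{monneau} to upgrade subsequential vanishing of $\tilde{\mathcal M}^{x_0}$ to vanishing along the full parameter (the paper works with $\tilde{\mathcal M}_\kappa+C_M r^\gamma$, which is exactly your shifted functional $F^{x_0}$), and then the same continuity and compactness arguments for the uniform modulus. The only difference is presentational — you make the monotone correction explicit from the outset, whereas the paper absorbs the $C_M r_\varepsilon^\gamma$ error term inequality by inequality.
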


\begin{proof}
Let $v^{x_0}$ be defined as in \eqref{v}, and let
\[v_r^{x_0}(x,y)=\frac{v^{x_0}(x_0+rx,ry)}{r^\kappa}.\]
By Lemma \ref{nondegeneracy} we have that
\[C^{-1}\rho^\kappa\leq \sup_{B(\rho)}|v_r^{x_0}|\leq C\rho^\kappa,\]
for all $\rho\in (0,r_0/r)$.
Thus, exactly as in Proposition \ref{blow-ups} we get
\begin{equation}\label{71}
v_{r_j}^{x_0}\rightarrow v_0^{x_0}\qquad\textrm{in}\quad C^1_{\rm loc}(\R^{n+1})\quad \textrm{along a subsequence}\ r_j\to0.
\end{equation}
Moreover, $v_0^{x_0}$ is not identically zero, and it is an homogeneous polynomial $p_\kappa\in\tilde{\mathfrak P}_{a,\kappa}^+$.

Hence, using \eqref{71} we get
\[\tilde{\mathcal M}_\kappa(0+,v_{x_0},p_\kappa^{x_0}) =\lim_{r_j\to0}\int_{\partial\BBone}|y|^a(v_{r_j}^{x_0}-p_\kappa^{x_0})^2=0.\]
Thus, the monotonicity formula in Proposition \ref{monneau} implies
\begin{equation}\label{72}
\int_{\partial\BBone}|y|^a(v_r^{x_0}-p_\kappa^{x_0})^2= \tilde{\mathcal M}_\kappa(r,v_{x_0},p_\kappa^{x_0}) \longrightarrow0\qquad \textrm{as}\quad r\downarrow0
\end{equation}
(not just along a subsequence).
This immediately implies that the blow-up is unique, and since $v^{x_0}(x,0)=u(x)-\varphi(x)$, we deduce that $u(x)-\varphi(x)=p_\kappa^{x_0}(x-x_0)+o(|x-x_0|^\kappa)$.
The fact that the rest $o(|x-x_0|^\kappa)$ is uniform with respect to $x_0$ follows from a simple compactness argument, as in \cite[Lemma 7.3 and Proposition 7.7]{PSU}.

We now prove continuous dependence of $p_\kappa^{x_0}$ with respect to $x_0$.
Given $\varepsilon>0$ if follows from \eqref{72} that there exists $r_\varepsilon=r_\varepsilon(x_0)>0$ such that
\[\tilde{\mathcal M}_\kappa(r_\varepsilon,v^{x_0},p_\kappa^{x_0})<\varepsilon.\]
Now, by continuous dependence of $v^{x_0}$ with respect to $x_0$, there exists $\delta_\varepsilon=\delta_\varepsilon(x_0)>0$ such that
\[\tilde{\mathcal M}_\kappa(r_\varepsilon,v^{x_0'},p_\kappa^{x_0})<2\varepsilon\]
for all $x_0'\in \Gamma_\kappa(u)$ satisfying $|x_0'-x_0|<\delta_\varepsilon$.
Then, it follows from Proposition \ref{monneau} that
\[\tilde{\mathcal M}_\kappa(r,v^{x_0'},p_\kappa^{x_0})<2\varepsilon+C_Mr_\varepsilon^\gamma\]
for all $r\in(0,r_\varepsilon]$.
Letting $r\to0$ we obtain
\[\int_{\partial\BBone}|y|^a(p_\kappa^{x_0'}-p_\kappa^{x_0})^2=\tilde{\mathcal M}_\kappa(r0+,v_{x_0'},p_\kappa^{x_0}) \leq 2\varepsilon+C_Mr_\varepsilon^\gamma.\]
Since $\varepsilon$ is arbitrary, we deduce that $p_\kappa^{x_0}$ is continuous with respect to $x_0$.
Finally, the uniform continuity follows exactly as in the proof of \cite[Theorem 2.8.4]{GP}.
\end{proof}

Finally, we give the:

\begin{proof}[Proof of Theorem \ref{th-main}]
The result follows from Theorem \ref{uniqeness} and the exact same argument as in the proof of \cite[Theorem 2.6.5]{GP}.
\end{proof}

\end{document}